\newcommand*\tightdisplay{%
  \setlength{\abovedisplayskip}{2pt}
  \setlength{\belowdisplayskip}{2pt}
}
\newcommand{\zr}[1]{}
\newcommand{\edit}[1]{#1}
\newtheorem{assumption}{Assumption}
\newtheorem{theorem}{Theorem}
\newtheorem{corollary}{Corollary}
\newtheorem{remark}{Remark}
\newtheorem{lemma}{Lemma}
\newtheorem{definition}{Definition}
\newcommand{\indep}{\perp \!\!\! \perp}
\def\P{\mathbb{P}}
\def\P{\mathbb{P}}
\newcommand{\E}{\mathbb E}								
\renewcommand{\P}{\mathbb{P}}							
\newcommand{\R}{\mathbb{R}}								
\newcommand{\indicator}{\mathbbm 1}						
\newcommand{\convp}{\overset p \rightarrow}             
\newcommand{\convd}{\overset d \rightarrow}             
\newcommand{\convdp}{\overset {d,p} \longrightarrow}    
\newcommand{\WAIPW}{\mathrm{WAIPW}}
\newcommand{\WIPW}{\mathrm{WIPW}}
\newcommand{\WIPWS}{\mathrm{WIPWS}}
\newcommand{\convpp}{\overset {p,p} \longrightarrow}    
\let\oldnl\nl
\newcommand{\nonl}{\renewcommand{\nl}{\let\nl\oldnl}} 
\title{Assumption-lean weak limits and tests \\
for two-stage adaptive experiments}
\begin{document}

\author{Ziang Niu}
\author{Zhimei Ren}
\affil{Department of Statistics and Data Science, University of Pennsylvania}
\maketitle

\begin{abstract}
	Adaptive experiments are becoming increasingly popular in real-world applications for effectively maximizing in-sample welfare and efficiency by data-driven sampling. Despite their growing prevalence, however, the statistical foundations for valid inference in such settings remain underdeveloped. Focusing on two-stage adaptive experimental designs, we address this gap by deriving new weak convergence results for mean outcomes and their differences. In particular, our results apply to a broad class of estimators, the \textit{weighted inverse probability weighted} (WIPW) estimators. In contrast to prior works, our results require significantly weaker assumptions and sharply characterize phase transitions in limiting behavior across different signal regimes. Through this common lens, our general results unify previously fragmented results under the two-stage setup. \edit{We further establish quantitative convergence rates in bounded-Lipschitz distance that reveal the fundamental trade-off between exploitation and inferential stability.} To address the challenge of potential non-normal limits in conducting inference, we propose a computationally efficient and provably valid \edit{simulation-based} method for \edit{obtaining critical values of the non-normal limiting distributions under the null, enabling practical} hypothesis testing. Our results and approaches are sufficiently general to accommodate various adaptive experimental designs, including batched bandit and subgroup enrichment experiments. \edit{Simulations and semi-synthetic studies demonstrate the practical value of our approach and reveal that neither normality-based nor non-normality-based testing methods uniformly dominate in power; the relative advantage depends on the structure of the outcome distribution.}
\end{abstract}
\textbf{Keywords:} adaptive experiment, simulation-based inference, data-dependent weighting, normal approximation, weak convergence.

\section{Introduction}

Adaptive experiments are able to achieve substantial efficiency gains compared with traditional non-adaptive experimental designs. They often allocate resources more effectively and require fewer samples or observations to attain the same statistical power or estimation precision. Such designs have been successfully applied in areas such as clinical trials~\citep{sampson2005drop,hu2006theory,magnusson2013group}, online learning~\citep{slivkins2019introduction,lattimore2020bandit}, mobile health interventions~\citep{klasnja2019efficacy,liao2020personalized}, and online education platforms~\citep{rafferty2019statistical,kizilcec2020scaling}.

However, the adaptive design of these experiments introduces dependencies among observations, violating the independence and identical distribution (i.i.d.) assumptions underlying classical inference methods. As a result, widely used estimators—such as the sample mean and inverse probability weighted estimator—may exhibit bias and non-normal sampling distributions under adaptive data collection~\citep{bowden2017unbiased,shin2019sample,Hadad2021,shin2021bias}. In practice, analyzing adaptive experiments using conventional statistical tools while ignoring the dependencies 
can lead to severe selection bias~\citep{dwork2015reusable}. The limited theoretical understanding of the statistical behavior of adaptive experiments continues to hinder the development of valid and generalizable inference methods. This represents a major barrier to their reliable use in real-world applications.

In this paper, we study the \textit{two-stage adaptive experiments}, 
a design framework that has been widely adopted in practice~\citep{sampson2005drop,sladek2007genome,sill2009drop,wu2010interval,gasperini2019genome,lin2021inference,kasy2021adaptive,che2023adaptive,schraivogel2023pooled}. 
The typical data generating process in these experiments can be outlined as follows: 
there are two stages of data collection, the \textit{pilot stage} and the \textit{follow-up stage}. 
In the pilot stage, i.i.d.~data $\mathcal{D}_P \sim \P_P$ are collected, and informs a selection algorithm $\mathcal{S}(\mathcal{D}_P)$. 
Then in the follow-up stage, new data $\mathcal{D}_F \sim \P_F(\mathcal{D}_P)$ are gathered according to the output of selection algorithm $\mathcal{S}$, 
resulting in data that are conditionally i.i.d.\ given $\mathcal{D}_P$. Figure~\ref{fig:adaptive-experiment-illustration} illustrates the two-stage sampling process.
The choice of $\mathcal{S}$ depends on the goal of the experiment. Common objectives include welfare maximization~\citep{sampson2005drop,wu2010interval,che2023adaptive} and scientific exploration~\citep{sladek2007genome,gasperini2019genome}. 

\begin{figure}[!ht]
	\centering
	\includegraphics[width=.7\textwidth]{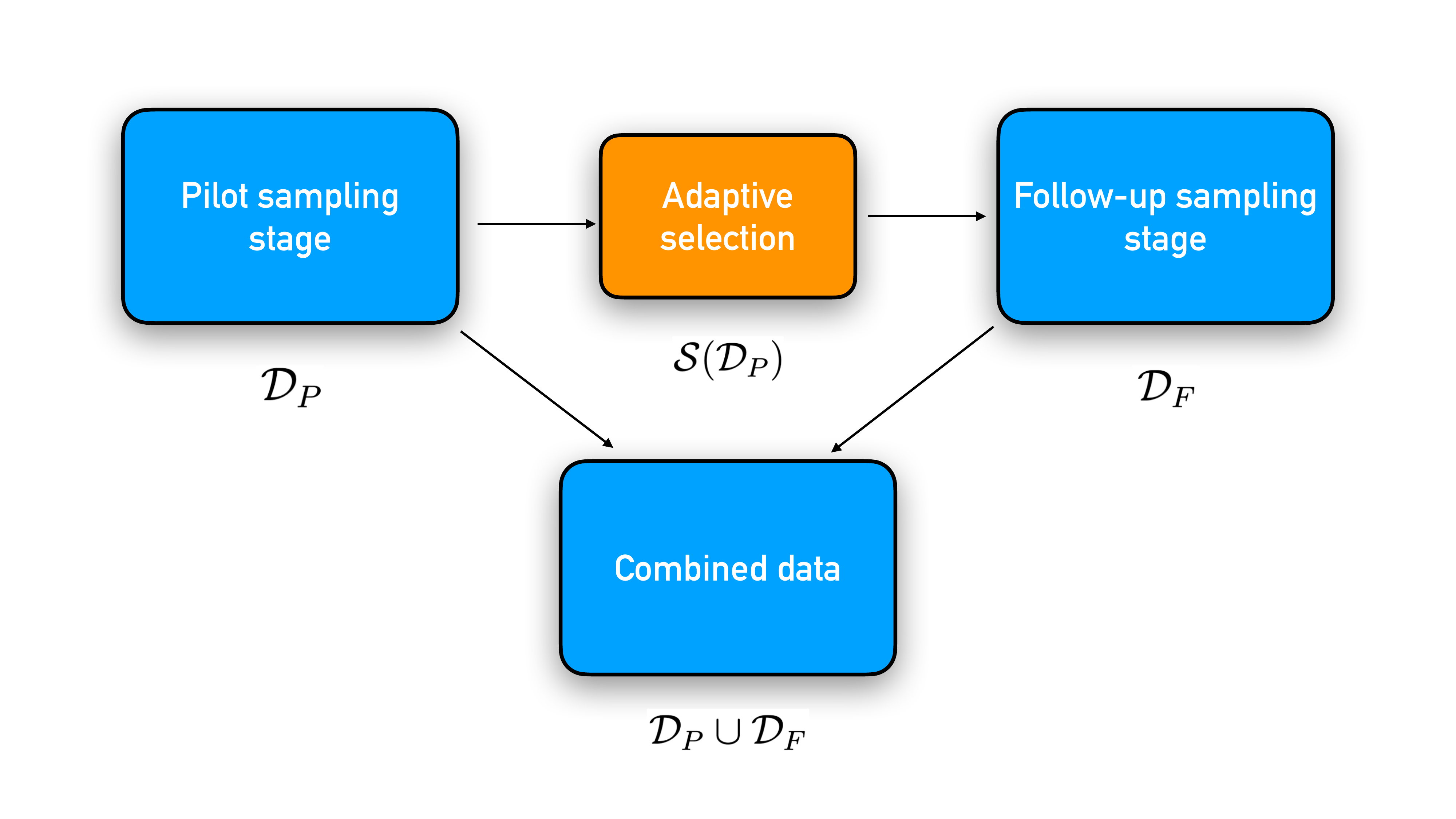}
    \caption{Illustration of two-stage experiment.}
	\label{fig:adaptive-experiment-illustration}
\end{figure}

The main challenge of conducting valid statistical inference with data $\mathcal{D}_P \cup \mathcal{D}_F$ is to handle the complex dependence structure introduced by the selection algorithm $\mathcal{S}$.

\subsection{Relevant literature}

Existing work on inference for adaptive experiments largely falls into two main categories, distinguished by the type of inference they offer.

\textit{Conditional inference} provides valid inference conditional on the output of selection algorithm $\mathcal{S}(\mathcal{D}_P)$. Approaches that achieve this guarantee include data splitting~\citep{cox1975note}, 
data carving~\citep{fithian2014optimal,chen2023optimal}, and randomization-based selective inference~\citep{freidling2024selective}. This type of guarantee captures the effect of the selection procedure and adjusts for conditional bias. However, when the estimand is a marginal parameter (e.g., outcome mean), conditional inference typically incurs an efficiency loss \citep{hu2006theory,marschner2021general}. Moreover, due to the potential complex conditioning event, methods developed for conditional inference can be computationally demanding, with the exception of data splitting. 

\textit{Marginal inference} accounts for all sources of randomness, leading to more straightforward interpretation when the inferential target is the marginal estimand. 
Towards this end, different approaches have been proposed, with finite-sample or asymptotic guarantees. 
Among the former, some seek to achieve exact finite-sample validity~\citep{sampson2005drop,sill2009drop,wu2010interval,neal2011interval,Nair2023}. 
However, these methods are relatively restrictive as they are either computationally intensive and/or are highly sensitive to distributional assumptions. 
Anytime-valid inference methods~\citep{johari2015always,howard2021time,howard2022sequential,maharaj2023anytime,ramdas2023game,waudby2024estimating} provide finite-sample validity via probabilistic bounds. These bounds, however, are usually conservative and can substantially reduce power (although they generalize well beyond two-stage settings as in Figure~\ref{fig:adaptive-experiment-illustration}). In contrast, asymptotic inferential methods~\citep{Zhang2020,Hadad2021,lin2021inference,adusumilli2023optimal,Hirano2023} tend to be statistically more efficient than conditional or anytime-valid approaches. Relying solely on large-sample behavior, asymptotic methods are generally agnostic to outcome distributions.

Our work falls into the category of marginal inference with asymptotic validity. The works of \citet{Zhang2020,Hadad2021,adusumilli2023optimal,Hirano2023} are the most related ones. 
\citet{Zhang2020,Hadad2021} prove asymptotic normality of outcome means under strong signal-strength or outcome-distribution assumptions, and their results extend beyond the two-stage adaptive experiments studied in this paper. \citet{Hirano2023} provide general representation of the limiting distribution of test statistics 
in a multi-stage setup, similar to that in Figure~\ref{fig:adaptive-experiment-illustration}. Later, \citet{adusumilli2023optimal} uses these representations to study the optimality of the tests under the same setup. Their results are built upon Le Cam's theory on limits of experiments~\citep{le1972limits}, 
which are valid only under contiguous alternatives and smooth (semi-)parametric outcome distributions. Despite the elegance of the results, the general representation requires verifying the existence of certain weak limits, which must be addressed on a case-by-case basis and therefore poses a barrier for practitioners. A more detailed comparison with the related literature is provided in Section~\ref{sec:phase_transition}.

From the inferential perspective, strong assumptions about signal strength and data generating distributions in this line of work can have significant practical consequences. 
Misusing the limiting distribution in hypothesis testing may yield Type-I error inflation (see an example in Figure \ref{fig:failure_Hadad} in Appendix \ref{sec:inspection_Victor}). Likewise, the stringent distributional requirements on $\mathbb{P}_{F}$ and $\mathbb{P}_{P}$ are often unrealistic when outcomes display complex or non-standard behavior. A vivid illustration arises in exploratory biological studies, such as single-cell CRISPR screens~\citep{dixit2016perturb,gasperini2019genome}. In these experiments, the main outcomes, gene-expression measurements, typically exhibit overdispersion, measurement error, and technical batch effects, all of which may violate the prespecified distributional assumptions. These caveats highlight the need for a robust inferential framework that can accommodate a broader range of signal strengths and remain agnostic to the outcome distribution.

Beyond inference, assumption-lean weak limits offer a compelling tool for experimental design and power analysis. Alternatively, simulation methods have been proposed to design new adaptive experiments~\citep[Chapter VII;][]{us2019adaptive}, which can bypass the derivation of limiting distributions. Simulation-based strategies typically rely on stringent parametric assumptions, are largely heuristic, and become computationally intensive when navigating a vast parameter space. In contrast, distribution-agnostic weak limits provide a far more efficient alternative—so long as one can sample from the limiting distribution effectively. Pursuing this direction, \citet{che2023adaptive} prove a joint weak-limit result for batched multi-armed bandits and leverage it to design batch-level allocation rules that minimize Bayes simple regret. 
Their framework, however, prioritizes regret minimization rather than hypothesis testing, 
and is not sufficient for  inference or
for designing experiments that aim to maximize statistical testing power. 

\subsection{Our contributions}\label{sec:contribution}

To address these gaps in asymptotic inference of the two-stage experiments, 
we study the asymptotic distribution of a broad class of \textit{weighted inverse probability weighted} (WIPW) statistics. 
This class includes several widely used statistics, such as the IPW statistic~\citep{bowden2017unbiased} and the variance-stabilizing IPW statistic~\citep{luedtke2016statistical,bibaut2021post,Hadad2021}. We establish weak convergence results under minimal assumptions. Building on this foundation, we propose a valid and computationally efficient simulation-based procedure for hypothesis testing in the presence of non-normal limiting null distributions. Specifically, our main contributions are summarized as follows.

\begin{enumerate}
	\item\textbf{Assumption-lean weak limits:} 
	We derive new weak convergence results for WIPW estimators in two-stage experimental settings. These results apply to a wide range of signal strengths under minimal distributional assumptions. 
	Such generality ensures valid inference across the null (zero signal), contiguous (weak signal), and fixed (strong signal) regimes, addressing key demands in hypothesis testing. 
	Our analysis also uncovers a smooth transition of the limiting distribution across signal regimes, 
	offering a unified perspective that connects several existing results in the literature. 
	\edit{Our general results can be readily applied to two real-world adaptive experimental designs:
	\emph{batched bandit experiments} and \emph{subgroup enrichment experiments}.
	Although these experiments arise in distinct scientific contexts,
	both can be naturally accommodated within our theoretical framework.
	The proofs of main results hinge on a set of probabilistic tools that explicitly account for the dependence induced by adaptive data collection.
	To this end, we develop new results on conditional normal approximation (Lemma~\ref{lem:CLT_BL})
	and conditional continuous mapping (Lemmas~\ref{lem:continuous_map_varying}),
	extending classical unconditional tools \citep{raivc2018multivariate}
	to our two-stage adaptive setting. Together, these ingredients yield a quantitative bounded-Lipschitz approximation---i.e., an explicit bound on the convergence rate
	(Theorem~\ref{thm:quantitative_CLT_W_N})---which reveals the trade-off between aggressive exploitation and inferential stability through the choice of the clipping rate. Our proof techniques align and generalize those employed in recent adaptive experiments literature (see Remark~\ref{rmk:CA-approach}).}
	\item\textbf{A fast simulation-based testing procedure:}
	Building on the general weak convergence results, we define a class of asymptotically valid tests using WIPW test statistics. The critical values in the tests are determined by quantiles of the non-normal limiting distributions under the null. To obtain the analytically intractable critical values, we propose a computationally efficient and provably valid simulation-based procedure to estimate the critical values. The procedure rests on the key insight that the derived weak limits can be expressed as a randomly weighted sum of dependent Gaussian random variables. By leveraging the new simulation-based procedure, valid and practical hypothesis tests can be conducted despite the complexity of the limiting distribution. Importantly, the procedure is nonparametric and thus is agnostic to the outcome distribution. Moreover, it has time complexity that is independent of the sample size (conditional on estimated nuisance parameters), making it highly scalable. \edit{We also prove a set of new results on the rate of convergence of Type-I error towards the nominal level using the quantitative CLT approximation~(Corollary~\ref{cor:type-I-error-rate}).}
	\edit{\item\textbf{Numerical benchmarking with synthetic and semi-synthetic data:} We conduct extensive numerical simulations and a semi-synthetic data analysis based on the Systolic Blood Pressure Intervention Trial (SPRINT)~\citep{ambrosius2014design} to benchmark the power of nine testing methods: six WIPW simulation-based tests (with two scaling and three weighting schemes), sample-splitting IPW, the Batched Difference-in-Means (BDM) test based on normal limits, and a concentration-inequality-based test. The results demonstrate the practical utility of our methods in realistic settings. Our results reveal that neither the normality-based BDM test nor the non-normal limit tests uniformly dominate the other in power; the relative advantage depends on the structure of the data generating process. Under discrete outcomes, the BDM test can be more powerful, while under continuous outcomes, the WIPW tests with adaptive weighting achieve higher power.}
\end{enumerate}

Moreover, our results can be readily applied to the design of adaptive experiments, as they offer a clear understanding of the limiting behavior of the test statistics. Code to
reproduce these analyses is available at \url{https://github.com/ZiangNiu6/AdaInf-manuscript}.

\subsection{Organization of the paper}

Section~\ref{sec:warmup} introduces the two-stage adaptive data collection procedure and the WIPW test statistic. 
In Section~\ref{sec:weak_convergence_WIPW}, 
we present the formal results on weak convergence and the simulation-based methodology, instantiate our general theory in various adaptive experiments, 
and establish the connection to existing works. 
In Section \ref{sec:finite-sample}, we evaluate the finite-sample performance of the derived tests. We conclude the paper with a discussion in Section \ref{sec:discussion}.

\section{Data generating procedure and test statistic}\label{sec:warmup}

\subsection{Two-stage adaptive data collection}\label{sec:data_collection_selection}

We denote the sample sizes for the pilot and follow-up stages as \( N_1 \) and \( N_2 \), respectively, and treat them as given. The total sample size is defined as \( N \equiv N_1 + N_2 \), and the sample size ratio for the two stages is fixed as \( q_t \equiv N_t / N \in (0, 1) \) for \( t \in \{1, 2\} \). Throughout this paper, we adopt the triangular array framework, allowing the distribution to vary with \( N \). To emphasize this dependence, we use the subscript \( N \) when defining the random variables. Also, we define \([I] \equiv \{1, \ldots, I\}\) for any integer \(I \geq 1\).

In our setup, there are two competing treatments indexed by \( 0 \) and \( 1 \). Let \( A \in \{0,1\} \) denote the assigned treatment. Suppose $(A_{uN}^{(t)}, Y_{uN}^{(t)})_{u \in [N_t]}$ denotes the observed data at stage~$t$, where $Y_{uN}^{(t)}$ is the observed outcome corresponding to assigned treatment $A_{uN}^{(t)}$. Let $\mathcal{H}_t = \sigma\big((A_{uN}^{(t)}, Y_{uN}^{(t)})_{u \in [N_t]}\big)$ be the $\sigma$-algebra generated by the observed data at stage $t$. Additionally, define \(\mathcal{H}_0 \equiv  \{\varnothing, \Omega\}\). Adopting the potential outcome framework, for \( N \) subjects, 
the potential outcomes are denoted as $\{(Y_{uN}^{(t)}(0), Y_{uN}^{(t)}(1)) : t \in [2], u \in [N_t]\}$. They are independently and identically distributed as \( (Y_{uN}(0), Y_{uN}(1)) \) for any fixed \( N \). To identify the distribution of potential outcome variables, we assume the following \textit{consistency} and \textit{unconfoundedness} conditions throughout this paper. 
\begin{itemize}
    \item \emph{Consistency:}
    $Y_{uN}^{(t)} = A_{uN}^{(t)}Y_{uN}^{(t)}(1) + (1 - A_{uN}^{(t)})Y_{uN}^{(t)}(0), \quad u \in [N_t], \ t \in [2]$;
    \item \emph{Unconfoundedness:} 
    $(Y_{uN}^{(t)}(0), Y_{uN}^{(t)}(1)) \indep A_{uN}^{(t)} \mid \mathcal{H}_{t-1}, \quad u \in [N_t], \ t \in [2]$.
\end{itemize}
These are two assumptions that are commonly made in the literature of causal inference~\citep{imbens2015causal}. The consistency assumption states that the observed outcome is equal to the potential outcome under the assigned treatment. The unconfoundedness assumption states that the potential outcomes are independent of the treatment assignment, given the information from previous stages. Now we describe the observed data generating procedure. 

\begin{enumerate}
    \item \textbf{Pilot stage:} 
    In the pilot stage, we observe \( (A_{uN}^{(1)}, Y_{uN}^{(1)}) \) for \( u \in [N_1] \), with treatment assignment probabilities \( e(s) \equiv \P[A_{uN}^{(1)} = s] \) for \( s \in \{0, 1\} \), where \( e(0) + e(1) = 1 \).  A selection algorithm \( \mathcal{S} \) determines treatment assignment for the follow-up stage. 
	For an estimator $S_N^{(1)}(0)-S_N^{(1)}(1)$ for the difference-in-means $\E[Y_{uN}(0)]-\E[Y_{uN}(1)]$, the sampling probabilities are then updated based on $\mathcal{S}(S_N^{(1)}(0)-S_N^{(1)}(1))$.
	
    \item \textbf{Follow-up stage:} 
    In the follow-up stage, we define the new sampling probabilities as
	{\tightdisplay
	\begin{align*}
		\P[A_{uN}^{(2)} = 0 \mid \mathcal{H}_1] = \mathcal{S}(S_N^{(1)}(0) - S_N^{(1)}(1))\quad\text{and}\quad \P[A_{uN}^{(2)} = 1 \mid \mathcal{H}_1]=1-\P[A_{uN}^{(2)} = 0 \mid \mathcal{H}_1].
	\end{align*}}
	With these updated probabilities, we collect the data \( (A_{uN}^{(2)}, Y_{uN}^{(2)}) \) for \( u \in [N_2] \). These observations are independently and identically distributed, \textit{conditional} on the information from the pilot stage (\( \mathcal{H}_1 \)).
\end{enumerate}

We make one comment on the selection algorithm $\mathcal{S}$. 

\begin{remark}[Generality of $\mathcal{S}$]
	Our main results readily generalize to settings where the selection algorithm depends on more complex functions of the data beyond the simple difference in means. However, for clarity of presentation and broad applicability, we focus on selection algorithms $\mathcal{S}$ based on the estimator of the difference-in-means, \( S_N^{(1)}(1) - S_N^{(1)}(0) \). In fact, such algorithm reflects several important practical objectives in adaptive experimentation. A natural sampling strategy is to assign higher probability to the treatment yielding a higher average outcome in the pilot stage. This strategy aligns well with the goal of revenue maximization, a widely studied objective in e-commerce, online recommendation systems, and inventory control~\citep{bakshy2018ae,che2024optimization}. Similar selection algorithms are also applicable to the identification of optimal policies in political science~\citep{offer2021adaptive}. Moreover, such adaptive assignment rules can support objectives related to welfare and ethics~\citep{burnett2020adding} by minimizing the allocation of inferior treatments. From another perspective, the treatment indicators \(0\) and \(1\) may represent subgroups within a population. Selecting the subgroup that appears more beneficial based on observed outcomes is a common practice in clinical trials. These so-called \emph{subgroup enrichment designs} have been extensively studied in the literature~\citep{magnusson2013group,tanniou2016subgroup,lin2021inference}.
\end{remark}

The choice of the interim statistic $S_N^{(1)}(0)-S_N^{(1)}(1)$ for estimating the difference in treatment means using pilot data is flexible. Throughout the paper, we consider the (scaled) IPW estimator for \( S_N^{(1)}(s) \), defined as:
{\tightdisplay
\begin{align}\label{eq:definition_selection_stat}
	S_N^{(1)}(s) \equiv \frac{1}{N_1^{1/2}} \sum_{u=1}^{N_1} \frac{\indicator(A_{uN}^{(1)} = s) Y_{uN}^{(1)}}{e(s)}\quad\text{for } s \in \{0, 1\}.
\end{align}}
This estimator is a popular choice in the literature on batched bandit algorithms \citep{pmlr-v32-agarwalb14,dimakopoulou2017estimation} and 
multi-stage clinical trials~\citep{shen2014inverse,bowden2017unbiased}. Our general theoretical framework can be extended to accommodate more sophisticated estimators, such as the augmented IPW estimator \citep{dimakopoulou2021online}. For simplicity and consistency between the estimators used for selection and inference, we adopt the IPW estimator.

\subsection{Weighted IPW test statistic}\label{sec:test_statistic}

We are interested in estimating the mean of the potential outcomes $\E[Y_{uN}(s)]$ for $s\in\{0,1\}$, as well as the difference in means $\E[Y_{uN}(0)] - \E[Y_{uN}(1)]$. The WIPW estimator is a natural choice for estimating these quantities. As the name suggests, the WIPW estimator can be viewed as a weighted average of the IPW estimators from the two stages:
\begin{align}\label{eq:WIPW_estimator}
    \WIPW(s)\equiv \sum_{t=1}^2 \frac{N_t h_{N}^{(t)}(s)}{\sum_{t=1}^2 N_t h_{N}^{(t)}(s)} \hat{\Lambda}_{N}^{(t)}(s)\quad\text{where}\quad h_{N}^{(t)}(s)\text{ is a weight function},
\end{align}
and $\hat{\Lambda}_{N}^{(t)}(s)$ is a IPW statistic using data from stage $t$, and defined as
{\tightdisplay
\begin{align*}
	\hat{\Lambda}_{N}^{(t)}(s) \equiv \frac{1}{N_t}\sum_{u=1}^{N_t} \hat{\Lambda}_{uN}^{(t)}(s) \quad\text{where}\quad \hat{\Lambda}_{uN}^{(t)}(s) \equiv \frac{\indicator(A_{uN}^{(t)} = s) Y_{uN}^{(t)}}{e_N(s, \mathcal{H}_{t-1})}
\end{align*}}
and $e_N(s, \mathcal{H}_{t-1}) \equiv \P[A_{uN}^{(t)} = s |\mathcal{H}_{t-1}]$. The choice of weights \(h_{N}^{(t)}(s)\) plays a critical role in the performance of the WIPW estimator. In what follows, we describe how the weights can be selected in practice.

\paragraph{A broad class of weighting choices.}
We consider the class of weights in the form \(h_{N}^{(t)}(s) \equiv e_N^{m}(s, \mathcal{H}_{t-1}) / N^{1/2}\), where different choices of $m$ allow for a variety of weighting strategies:
\begin{itemize}
    \item \textbf{\(m = 0\):} \textit{Constant weighting}, \(h_{N}^{(t)}(s) \equiv 1 / N^{1/2}\);
    \item \textbf{\(m = 1/2\):} \textit{Adaptive weighting}, \(h_{N}^{(t)}(s) \equiv e_N^{1/2}(s,\mathcal{H}_{t-1}) / N^{1/2}\).
\end{itemize}
The constant weighting corresponds to the usual IPW estimator~\citep{bowden2017unbiased}. The adaptive weighting method, is sometimes referred to as the variance-stabilizing weighting \citep{luedtke2016statistical,Hadad2021,bibaut2024demistifying}. It is particularly useful for compensating the high variability in \(\hat{\Lambda}_{N}^{(2)}(s)\), arising from the potential downsampling in the follow-up stage caused by selection algorithm $\mathcal{S}$. Other data-dependent weighting choices under the same class of statistics include \textbf{\(m = 1\)}. The resulting statistic can be shown to be (asymptotically) equivalent to the sample mean statistic after proper augmentation (Lemma~\ref{lem:sample_mean_test_statistic}). Our results in the main text can be applied to $m=0$ and $m=1/2$ and we extend the results to $m=1$ in Appendix \ref{sec:extension_m_1}. 

\paragraph{Two scaling schemes.}

Motivated by the normalized statistic proposed in~\citet{Hadad2021}, we study the following two test statistics based on the WIPW estimator, depending on whether normalization is applied. Define the normalization $\hat S_N\equiv (N\hat V_N(0)+N\hat V_N(1))^{1/2}$, where 
the variance estimator $\hat{V}_N(s)$ is defined as
{\tightdisplay
\begin{align}\label{eq:variance-estimator}
	\hat{V}_N(s) \equiv \sum_{t=1}^2 \Big(\frac{N_t h_{N}^{(t)}(s)}{\sum_{t=1}^2 N_t h_{N}^{(t)}(s)}\Big)^2 \frac{1}{N_t^2}\sum_{u=1}^{N_t} \big(\hat{\Lambda}_{uN}^{(t)}(s) - \WIPW(s)\big)^2.
\end{align}}
Then we can define the corresponding unnormalized and normalized test statistics as $T_N \equiv \WIPW(0) - \WIPW(1)$ and $ 
W_N \equiv T_N / \hat S_N$. It is commonly understood that tests based on normalized and unnormalized statistics are asymptotically equivalent as the normalization factor converges to a constant.
This holds true in many settings with i.i.d. data, as exemplified by the Wald and score tests. 
With adaptively collected data, however, normalization can impact the performance of the testing procedure. We refer the readers to simulation results in Section \ref{sec:simulation}.

\paragraph{A peek at the sampling distribution.}

Understanding the asymptotic distributions of test statistics $\WIPW(s),T_N$ and $W_N$ is important for downstream inferential tasks. To build intuition about the sampling distributions, we begin with a simulation study. Consider the scaled difference in means: $c_N \equiv \sqrt{N}(\E[Y_{uN}(0)] - \E[Y_{uN}(1)])$. \textit{Without loss of generality, we assume \(c_N \leq 0\) in this paper.} We plot the sampling distribution of centered statistic \(\sqrt{N}T_N - c_N\), where $c_N$ takes values in \( \{0, -5, -10, -15\}\) and repeat the computation $5000$ independent times. 
The detailed simulation setup is provided in Appendix \ref{sec:illustration_simulation} and the results are shown in Figure \ref{fig:sampling_distribution}.

\begin{figure}[!ht]
    \centering
    \includegraphics[width=.8\textwidth]{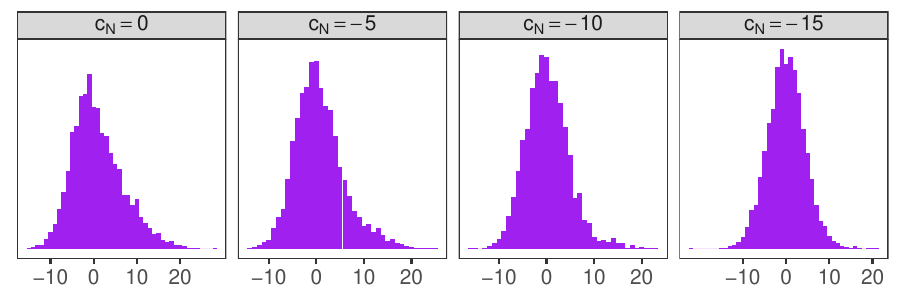}
    \caption{Sampling distribution of \(\sqrt{N} T_N - c_N\) with adaptive weighting (\(m = 1/2\)).}
    \label{fig:sampling_distribution}
\end{figure}

In the figure, we observe the following pattern: when \(c_N = 0\) (left-most panel), 
the sampling distribution of the estimator is highly skewed; 
as the magnitude of \(c_N\) increases, the distribution becomes more symmetric and eventually approaches a near-normal distribution (right-most panel). 
The shape transition of the sampling distribution from \(c_N = 0\) to \(c_N = -15\) suggests the limiting distribution of the test statistic depends on the signal strength \(c_N\). The results presented in the next section provide an exact characterization of such dependence.

\section{Theoretical results}\label{sec:weak_convergence_WIPW} 

The organization of this section is as follows. We state our main weak limit results in Section \ref{sec:weak_convergence}, followed by several remarks in Section \ref{sec:remarks_weak_convergence}. 
Section~\ref{sec:phase_transition} describes the phase transition of the limiting distribution of the test statistic across different signal strengths.
In Section \ref{sec:bootstrap_procedure}, we propose a simulation-based procedure for hypothesis testing based on the weak limits. 

\subsection{General theory: signal-dependent weak limits}\label{sec:weak_convergence}

We first explicitly define the selection algorithm $\mathcal{S}(S_N^{(1)}(0)-S_N^{(1)}(1))$. We allow the $\mathcal{S}$ to vary across sample size $N$ so we will write $\mathcal{S}_N$ to emphasize such dependence. Suppose $e(s,x)\in[0,1]$ is a sampling function for $s\in\{0,1\}$ such that $e(0,x)+e(1,x)=1$ for any $x\in\mathbb{R}$. Then define the sampling function
{\tightdisplay
\begin{align}\label{eq:clip}
	\mathcal{S}_N(S_N^{(1)}(0) - S_N^{(1)}(1))\equiv \min\{1-l_N, \max\{l_N,e(0,S_N^{(1)}(0)-S_N^{(1)}(1))\}\},
\end{align}}
where $l_N$ is a positive sequence $l_N\in[0,1/2)$. 
We note that the minimum and maximum functions are mainly used for ensuring both treatments are assigned with nonzero probability in the follow-up stage when $l_N>0$. This is a reasonable assumption for many practical applications/algorithms, as it is often desirable to maintain a certain level of exploration in the follow-up stage. Such purpose includes reducing the risk of assigning treatment to the inferior group or stabilizing the variance of the downstream test statistic. Such clipping strategy has been widely adopted in the literature of adaptive experiments~\citep{Zhang2020,Hadad2021}. The results in this section assume $l_N\in(0,1/2)$, i.e. strictly positive, but we will extend the results to $l_N=0$ in Section \ref{sec:extension_m_1}. We defer the discussion on the choice of $l_N$ to Remark~\ref{rmk:positivity-l-N} and \ref{rmk:early-dropping}. 

Define the extended $\mathbb{R}$ space as $\bar{\mathbb{R}}\equiv \mathbb{R}\cup \{-\infty\}$. We consider the following assumptions.
\begin{assumption}[Moment conditions]\label{assu:moment_condition}
	For any $s\in\{0,1\}$, we have $0< \inf_{N}\big(\E\left[Y_{uN}^2(s)\right]-\E\left[Y_{uN}(s)\right]^2\big)\leq \sup_{N}\big(\E\left[Y_{uN}^4(s)\right]\big)^{1/2}<\infty$. Moreover, for $p\in\{1,2\}, \lim_{N\rightarrow\infty}\E[Y_{uN}^{p}(s)]$ exist. Recalling $c_N\equiv \sqrt{N}(\E[Y_{uN}(0)]-\E[Y_{uN}(1)])$, we assume $\lim_{N\rightarrow\infty}c_N=c$ for $c\in[-\infty,0]$. 
\end{assumption}

\begin{assumption}[Sampling designs]\label{assu:sampling_design}
	There exist $1>c_u>c_l>0$ such that the first-stage sampling probability $e(s)\in(c_l,c_u)$ for any $s\in\{0,1\}$. The sampling function $e(s,x)$ satisfies $e(0,x)+e(1,x)=1$ for any $x\in\bar{\mathbb{R}}$. Moreover, one of the following assumptions holds:
	\begin{enumerate}
		\item\textbf{Lipschitz condition:} For any $s\in\{0,1\}$, $e(s,x)$ is a Lipschitz function over $x\in\bar{\mathbb{R}}$, with universal Lipschitz constant $L>0$ and $e(s,-\infty)$ takes values in $\{0,1\}$;
		\item\textbf{Step-function condition:} There exist some $m_1\in\mathbb{R},K\in\mathbb{N}$ and continuous function $g:\bar{\mathbb{R}}\rightarrow\bar{\mathbb{R}}$ with $g(-\infty)=-\infty$, such that for any $s\in\{0,1\}$, $e(s,x)=\sum_{k=1}^K c_{k}\indicator(g(x)\in C_k)$ where $ c_k\in [0,1]$ and $C_k$ are disjoint sets. In particular, $C_1=[-\infty, m_1]$ and $C_k$ are open sets for $k\geq 2$, such that $\cup_{k=2}^{K}C_k=(m_1,\infty)$.
 	\end{enumerate}
\end{assumption}

\begin{assumption}[Constant weighting]\label{assu:constant_weighting}
	Suppose $m=0$ is used and clipping rate $l_N$ (introduced in~\eqref{eq:clip}) satisfies $0<c_l<\bar l=l_N<c_u<1/2$ for any $N\in\mathbb{N}$.
\end{assumption}

\begin{assumption}[Adaptive weighting]\label{assu:adaptive_weighting}
	Suppose $m=1/2$ is used and clipping rate $l_N$ satisfies $l_N\in (0,1/2)$ for any $N\in\mathbb{N}$. Moreover, $\lim_{N\rightarrow\infty}l_N=0$ and $Nl_N\rightarrow\infty$.
\end{assumption}

We postpone the discussion on the assumptions to Section~\ref{sec:remarks_weak_convergence}. Before stating the theorem, we first describe the limiting distributions of the WIPW estimator and the test statistics $T_N,W_N$. 

\paragraph{Form of limiting distributions.}

To ease the presentation, we use $(a)_{2\times 2}$ to denote a symmetric matrix with dimension $2$, diagonal values to be $1$ and off-diagonal values to be $a$. The limiting distributions of both $\WIPW(s)$ and the test statistics $T_N$ and $W_N$, 
can be expressed as weighted sums of two dependent Gaussian vectors, $A^{(t)}\equiv (A^{(t)}(0), A^{(t)}(1))^\top$ for $t\in[2]$. Intuitively, $A^{(1)}$ corresponds to the randomness in the pilot stage 
and $A^{(2)}$ to that in the follow-up stage and is dependent on $A^{(1)}$. 
Concretely, the distributions of $A^{(t)}$ can be defined as
{\tightdisplay
\begin{align*}
	A^{(1)} \sim N(\bm 0, \bm \Sigma^{(1)})\quad\text{and}\quad A^{(2)}|A^{(1)} \sim N(\bm 0, \bm \Sigma^{(2)}(A^{(1)})),
\end{align*}}
where $\bm\Sigma^{(1)}\equiv (\mathrm{Cov}^{(1)})_{2\times 2}$ and $\bm \Sigma^{(2)}(A^{(1)})\equiv (\mathrm{Cov}^{(2)}(A^{(1)}))_{2\times 2}$. The covariance $\mathrm{Cov}^{(2)}(A^{(1)})$ depends on the realization of $A^{(1)}$. \edit{To make the dependence explicit, define the limiting assignment probabilities $H^{(1)}(s)\equiv e(s)$ and $H^{(2)}(s)\equiv \max\{\lim_{N\to\infty}l_N,\, e(s, \mathcal{S}^\infty(A^{(1)}, c))\}$, where $\mathcal{S}^\infty$ maps the first-stage Gaussian realization and signal strength to a second-stage sampling probability (see Appendix~\ref{sec:explicit_form_asymptotic_distribution} for the precise definition). The scaled variances are $V^{(t)}(s) \equiv \lim_{N\to\infty}\E[Y_{uN}^2(s)] - H^{(t)}(s)(\lim_{N\to\infty}\E[Y_{uN}(s)])^2$. Then the off-diagonal covariances take the form
{\tightdisplay
\begin{align}\label{eq:cov_main_text}
	\mathrm{Cov}^{(t)} = -\frac{(H^{(t)}(0)\,H^{(t)}(1))^{1/2}}{(V^{(t)}(0)\,V^{(t)}(1))^{1/2}}\lim_{N\to\infty}\E[Y_{uN}(0)]\,\E[Y_{uN}(1)],\quad t=1,2.
\end{align}}
Note $\mathrm{Cov}^{(1)}$ is deterministic, while $\mathrm{Cov}^{(2)}(A^{(1)})$ is random through $H^{(2)}(s)$.}
\begin{itemize}
	\item \textbf{Weak limits of $\WIPW(s)$.} Suppose $\mathcal{W}$ stands for weighting scheme and takes values in $\{\mathcal{A},\mathcal{C}\}$, standing for adaptive and constant weighting, respectively. We use $\bar{\mathbb{W}}_{\mathcal{W}}(s)$ to denote the limiting distributions of $\WIPW(s)$ after proper centering and scaling. We will use $\overset{d}{=}$ to denote equality in distribution. Then $\bar{\mathbb{W}}_{\mathcal{W}}(s)$ can be written as
	{\tightdisplay
	\begin{align}\label{eq:limiting_representation_single_outcome}
		\bar{\mathbb{W}}_{\mathcal{W}}(s)\overset{d}{=}\sum_{t=1}^2 A^{(t)}(s) \bar w_{\mathcal{W}}^{(t)}(s)\quad\text{for any }s\in\{0,1\},
	\end{align}}
	\edit{where the weights under constant weighting are $\bar w_{\mathcal{C}}^{(t)}(s) = (q_t\, V^{(t)}(s) / H^{(t)}(s))^{1/2}$, and under adaptive weighting,
	$\bar w_{\mathcal{A}}^{(t)}(s) = \big(q_t\, (H^{(t)}(s))^{1/2} / (\sum_{t'} q_{t'} (H^{(t')}(s))^{1/2})^2 \cdot V^{(t)}(s) / H^{(t)}(s)\big)^{1/2}$,
	with $q_t \equiv N_t/N$. Note $\bar w_{\mathcal{W}}^{(2)}(s)$ may depend on $A^{(1)}$ through $H^{(2)}(s)$.}
	
	\item \textbf{Weak limits of $T_N$ and $W_N$.} We use $\mathbb{W}_{\mathcal{V}}^{\mathcal{W}}$ to denote the limiting distributions for $T_N$ ($\mathcal{V}=\mathcal{U}$, unnormalized) and $W_N$ ($\mathcal{V}=\mathcal{N}$, normalized) after proper centering and scaling. We emphasize the dependence of $\mathbb{W}_{\mathcal{V}}^{\mathcal{W}}$ on the limiting signal strength $c$ by writing $\mathbb{W}_{\mathcal{V}}^{\mathcal{W}}=\mathbb{W}_{\mathcal{V}}^{\mathcal{W}}(c)$. The limiting distributions can be expressed directly in terms of $\bar w_{\mathcal{W}}^{(t)}(s)$:
	{\tightdisplay
	\begin{align}\label{eq:limiting_representation}
		\mathbb{W}_{\mathcal{U}}^{\mathcal{W}}(c)\overset{d}{=}\sum_{t=1}^2 A^{(t)}(0)\, \bar w_{\mathcal{W}}^{(t)}(0) -\sum_{t=1}^2 A^{(t)}(1)\, \bar w_{\mathcal{W}}^{(t)}(1),\quad
		\mathbb{W}_{\mathcal{N}}^{\mathcal{W}}(c)\overset{d}{=}\frac{\mathbb{W}_{\mathcal{U}}^{\mathcal{W}}(c)}{\big(\sum_{s=0}^1\sum_{t=1}^2 (\bar w_{\mathcal{W}}^{(t)}(s))^2\big)^{1/2}}.
	\end{align}}
	Note both $\bar w_{\mathcal{W}}^{(t)}(s)$ and $A^{(t)}(s)$ also depend on the signal strength $c$ but we omit this dependence for simplicity.
\end{itemize}

We summarize the definition of different limiting distributions in Table~\ref{tab:limit-dist}.
\small
\begin{table}[!ht]
	\centering
	\caption{Summary of different limiting distributions. $\mathcal{W}$ takes values in $\{\mathcal{A},\mathcal{C}\}$.}
	\label{tab:limit-dist}
	\begin{tabular}{c|c|c}
	  \textbf{Test statistic} &  \textbf{Estimand} & \textbf{Weak limit}  \\
	  \hline
	  $\WIPW(s)$ & $\E[Y_{uN}(s)]$  & $\bar{\mathbb{W}}_{\mathcal{W}}$  \\
	  $T_N$ & $\E[Y_{uN}(0)]-\E[Y_{uN}(1)]$  & $\mathbb{W}_{\mathcal{U}}^{\mathcal{W}}$    \\
	  $W_N$ & $\E[Y_{uN}(0)]-\E[Y_{uN}(1)]$ & $\mathbb{W}_{\mathcal{N}}^{\mathcal{W}}$  \\
	\end{tabular}
\end{table}
\normalsize

All limiting weights are expressed through $\bar w_{\mathcal{W}}^{(t)}(s)$, with the normalized case $\mathbb{W}_{\mathcal{N}}^{\mathcal{W}}$ obtained by rescaling $\mathbb{W}_{\mathcal{U}}^{\mathcal{W}}$. Now we are ready to state the main results.

\begin{theorem}[Weak convergence]\label{thm:weak_convergence_W_N}
	Suppose Assumptions \ref{assu:moment_condition}-\ref{assu:sampling_design} hold. Recall the definition $c_N\equiv \lim_{N\rightarrow\infty}\sqrt{N}(\E[Y_{uN}(0)]-\E[Y_{uN}(1)])$. The following statements hold.
	\begin{enumerate}
		\item Suppose Assumption~\ref{assu:constant_weighting} holds. Then, $\sqrt{N}(\WIPW(s)-\E[Y_{uN}(s)])$ converges weakly to $\bar{\mathbb{W}}_{\mathcal{C}}(s)$ for any $s\in\{0,1\}$. Moreover, $\sqrt{N}T_N-c_N$ and $W_N - c_N/\hat S_N$ converge weakly to $\mathbb{W}_{\mathcal{U}}^{\mathcal{C}}(c)$ and $\mathbb{W}_{\mathcal{N}}^{\mathcal{C}}(c)$, respectively.
		\item Suppose Assumption~\ref{assu:adaptive_weighting} holds. Then, $\sqrt{N}(\WIPW(s)-\E[Y_{uN}(s)])$ converges weakly to $\bar{\mathbb{W}}_{\mathcal{A}}(s)$ for any $s\in\{0,1\}$. Moreover, $\sqrt{N}T_N-c_N$ and $W_N - c_N/\hat S_N$ converge weakly to $\mathbb{W}_{\mathcal{U}}^{\mathcal{A}}(c)$ and $\mathbb{W}_{\mathcal{N}}^{\mathcal{A}}(c)$, respectively.
	\end{enumerate}
\end{theorem}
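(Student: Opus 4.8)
The plan is to prove the two statements in parallel, using the decomposition of $\WIPW(s)$ into a pilot-stage term and a follow-up-stage term, and to argue weak convergence of the joint vector $(\sqrt{N}(\WIPW(0)-\E[Y_{uN}(0)]), \sqrt{N}(\WIPW(1)-\E[Y_{uN}(1)]))$ first, from which the statements for $\sqrt{N}T_N - c_N$ and $W_N - c_N/\hat S_N$ follow by the continuous mapping theorem (for the latter, using the continuity of $(x_0,x_1)\mapsto x_0-x_1$ and division by a normalization factor that itself has a well-defined weak limit). The core object is the pair of rescaled partial sums $(S_N^{(1)}(0), S_N^{(1)}(1))$ together with the follow-up IPW averages. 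First I would establish a conditional normal approximation for the pilot-stage contribution: since $\mathcal{D}_P$ is genuinely i.i.d., $(S_N^{(1)}(0)-\E, S_N^{(1)}(1)-\E)$ converges jointly to $A^{(1)}\sim N(\bm 0, \bm\Sigma^{(1)})$ by the classical multivariate CLT, using the moment conditions in Assumption~\ref{assu:moment_condition} to control variances and guarantee Lindeberg-type bounds, and $e(s)\in(c_l,c_u)$ to bound the inverse-probability weights. The signal strength enters only through $c_N \to c$, which shifts the mean of $S_N^{(1)}(0)-S_N^{(1)}(1)$ by $\sqrt{q_1}\,c_N$ (up to scaling), so the argument $x$ fed into the sampling function $\bar e(0,\cdot)$ converges in distribution to $A^{(1)}(0)-A^{(1)}(1) + \sqrt{q_1}\,c$, or to $-\infty$ when $c=-\infty$.

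Next I would handle the follow-up stage. Conditional on $\mathcal{H}_1$, the follow-up data are i.i.d.\ with sampling probability $\bar e_N(s,\mathcal{H}_1) = \mathcal{S}_N(S_N^{(1)}(0)-S_N^{(1)}(1))$, which is bounded away from $0$ and $1$ by $l_N$ (Assumption~\ref{assu:constant_weighting}: $l_N$ bounded below by $c_l$; Assumption~\ref{assu:adaptive_weighting}: $l_N\to 0$ but $Nl_N\to\infty$, so the per-term variances $1/\bar e_N$ grow slowly enough). Here is where I would invoke the new conditional CLT, Lemma~\ref{lem:CLT_BL}, to get a Berry–Esseen-type bound on the conditional law of $\sqrt{N}(\hat\Lambda_N^{(2)}(s)-\E[Y_{uN}(s)])$ given $\mathcal{H}_1$, with the conditional covariance being a continuous (under the Lipschitz condition) or piecewise-continuous (under the step-function condition) function of $S_N^{(1)}(0)-S_N^{(1)}(1)$ — matching $\bm\Sigma^{(2)}(A^{(1)})$ in the limit. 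Combining the pilot-stage limit with this conditional limit gives joint weak convergence of $(A^{(1)}_N, A^{(2)}_N)$ to $(A^{(1)}, A^{(2)})$ with the stated hierarchical Gaussian structure; the weights $N_t h_N^{(t)}(s)/(\sum_t N_t h_N^{(t)}(s))$ must then be shown to converge (jointly with the $A$'s) to the limiting weights $\bar w_{\mathcal{W}}^{(t)}(s)$ — for $m=0$ these are deterministic constants $q_t$, while for $m=1/2$ they depend on $\bar e_N(s,\mathcal{H}_1)^{1/2}$ and hence on $A^{(1)}$ through $\bar e(0,A^{(1)}(0)-A^{(1)}(1)+\sqrt{q_1}c)$. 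Assembling via Lemma~\ref{lem:continuous_map_varying} and Lemma~\ref{lem:sufficient_condition_CMT} then yields the representation \eqref{eq:limiting_representation_single_outcome}, and subtracting the two coordinates gives \eqref{eq:limiting_representation}; the variance estimator $\hat V_N(s)$ converges to the corresponding quadratic form in the limiting weights and covariances by a law-of-large-numbers argument conditional on $\mathcal{H}_1$, delivering the $W_N$ statement.

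The main obstacle I anticipate is the handling of the two sampling-design cases under Assumption~\ref{assu:sampling_design} simultaneously, particularly the step-function case: there the map $x\mapsto\bar e(0,x)$ is discontinuous, so the ordinary continuous mapping theorem fails, and one must verify that the limiting random variable $A^{(1)}(0)-A^{(1)}(1)+\sqrt{q_1}c$ (or $g$ of it) places zero mass on the discontinuity set — which is where the structural restrictions $C_1=[-\infty,m_1]$, $C_k$ open, $g$ continuous with $g(-\infty)=-\infty$ are used, together with the fact that a non-degenerate Gaussian (when $c$ is finite, so $\bm\Sigma^{(1)}$ is non-degenerate) or the degenerate atom at $-\infty$ (when $c=-\infty$) assigns no mass to a finite set of boundary points. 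This is precisely what Lemma~\ref{lem:sufficient_condition_CMT} should be designed to certify, so the real work is checking its hypotheses in each of the two cases, and handling the boundary regime $c=-\infty$ (where the pilot statistic diverges and the follow-up sampling probability converges to a constant in $\{l, 1-l\}$ or to $0/1$) as a separate but simpler sub-case. A secondary subtlety, specific to $m=1/2$ with $l_N\to 0$, is ensuring the downsampling does not make the follow-up variance blow up uncontrollably: the condition $Nl_N\to\infty$ is exactly what keeps $\sqrt{N}\,\hat\Lambda_N^{(2)}(s)$ tight, and I would verify this via a conditional Chebyshev bound combined with the adaptive weighting $\bar e_N^{1/2}(s,\mathcal{H}_1)$ that cancels the leading $1/\bar e_N(s,\mathcal{H}_1)$ factor.
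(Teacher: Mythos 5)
Your proposal follows essentially the same route as the paper's proof: a classical multivariate CLT for the pilot-stage IPW sums, the conditional bounded-Lipschitz CLT (Lemma~\ref{lem:CLT_BL}) for the follow-up stage given $\mathcal{H}_1$, joint convergence of the standardized sums together with the sampling probabilities, variances and covariance square roots (handled in the paper by stacking everything into the vectors $E_N^{(1)},E_N^{(2)}$ and using Skorohod plus Lemmas~\ref{lem:continuous_map_varying}, \ref{lem:sufficient_condition_CMT} and \ref{lem:as_convergence_sampling_function} to cover the Lipschitz, step-function and $c=-\infty$ regimes), followed by consistency of the variance estimator and assembly via Slutsky and the continuous mapping theorem. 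The only slip is minor: under constant weighting the estimator's combination weights are indeed the deterministic $q_t$, but the limiting weights $\bar w_{\mathcal{C}}^{(2)}(s)$ in the representation remain random through $H^{(2)}(s)$ and $V^{(2)}(s)$ when $c$ is finite, which your joint-convergence plan already accommodates.
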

\edit{
\noindent Theorem~\ref{thm:weak_convergence_W_N} states a qualitative CLT result and its proof can be found in Appendix \ref{sec:proof_weak_convergence}. Now, we present a quantitative CLT result.
\begin{theorem}[A quantitative CLT under Lipschitz sampling function]\label{thm:quantitative_CLT_W_N}
	Suppose Assumptions \ref{assu:moment_condition}-\ref{assu:sampling_design}(1) hold. Defining $\mathcal{L}_{Y_{uN}^2}(s)\equiv \big|\E[Y_{uN}^2(s)]-\lim_{N\rightarrow\infty}\E[Y_{uN}^2(s)]\big|$ and $\mathcal{L}_{Y_{uN}}(s)\equiv \big|\E[Y_{uN}(s)]-\lim_{N\rightarrow\infty}\E[Y_{uN}(s)]\big|$ for any $s\in\{0,1\}$, then suppose 
	\begin{align}\label{eq:Y-limit-rate}
		\mathcal{L}_{Y_{uN}^2}(s)=O(N^{-1})\quad\text{and}\quad\mathcal{L}_{Y_{uN}}(s)=O(N^{-1}).
	\end{align}
	Furthermore, define $d_{\mathrm{BL}}(X,Y)\equiv \sup_{\|f\|_{\mathrm{BL}}\leq 1}|\E[f(X)]-\E[f(Y)]|$, where the definition of $\|f\|_{\mathrm{BL}}$ can be found in Appendix~\ref{def:BL-function}. Then, the following statements hold.
	\begin{enumerate}
		\item Suppose Assumption~\ref{assu:constant_weighting} holds. Then,
		\begin{align*} 
			d_{\mathrm{BL}}(\sqrt{N}T_N-c_N,\mathbb{W}_{\mathcal{U}}^{\mathcal{C}}(c))=O(N^{-1/2})\quad\text{and}\quad d_{\mathrm{BL}}(W_N - c_N/\hat S_N,\mathbb{W}_{\mathcal{N}}^{\mathcal{C}}(c))=O(N^{-1/2})
		\end{align*}
		\item Suppose Assumption~\ref{assu:adaptive_weighting} holds. Then, defining $L_N\equiv l_N^{1/2}+N^{-1/2}l_N^{-1/2}$, we have 
		\begin{align*} 
			d_{\mathrm{BL}}(\sqrt{N}T_N-c_N,\mathbb{W}_{\mathcal{U}}^{\mathcal{A}}(c))=O(L_N)\quad\text{and}\quad d_{\mathrm{BL}}(W_N - c_N/\hat S_N,\mathbb{W}_{\mathcal{N}}^{\mathcal{A}}(c))=O(L_N).
		\end{align*}
	\end{enumerate}
\end{theorem}
}

\subsection{Remarks on the assumptions, results and technical challenges}\label{sec:remarks_weak_convergence}

We present several remarks on the assumptions and technical challenges of proving Theorems~\ref{thm:weak_convergence_W_N} and~\ref{thm:quantitative_CLT_W_N}. 

\begin{remark}[Comments on Assumptions~\ref{assu:moment_condition}-\ref{assu:adaptive_weighting}]\label{rmk:positivity-l-N}
	
	Assumption~\ref{assu:moment_condition} is a mild regularity assumption. Assumption~\ref{assu:sampling_design} imposes mild restrictions on the sampling function,
	providing substantial flexibility for the choice of sampling function $e(s,x)$ 
	(and hence the selection algorithm $\mathcal{S}$) to encode different experimental designs. \edit{To demonstrate the generality of these assumptions, we present two classes of experiments in Appendix~\ref{sec:application} (see also Remark~\ref{rmk:case-study}).} Now we comment on Assumption~\ref{assu:constant_weighting} and Assumption~\ref{assu:adaptive_weighting}. 
	Constant weighting, informed by Assumption \ref{assu:constant_weighting}, 
	requires the minimum sampling probability to be uniformly bounded away from $0$. In the causal inference literature, Assumption \ref{assu:constant_weighting} is known as the positivity assumption~\citep{crump2009dealing,imbens2015causal}. The adaptive weighting enables less stringent requirement on the sampling probability in the second stage encouraging further exploitation. Assumption \ref{assu:adaptive_weighting} allows minimum sampling probability in the second stage to go to zero at the rate slower than $1/N$. Similar assumption has also been adopted in \citet{Zhang2020,Hadad2021}. 
\end{remark}

\begin{remark}[Early-dropping experiments]\label{rmk:early-dropping}
	Relevant to Remark~\ref{rmk:positivity-l-N}, one kind of adaptive sampling Theorem~\ref{thm:weak_convergence_W_N} does not cover is the so-called early-dropping experiments~\citep{sampson2005drop,sill2009drop}. In these experiments, the inferior treatment will be dropped from the follow-up stage. In this case, $\mathcal{S}_N(S_N^{(1)}(0) - S_N^{(1)}(1)) $ can be $0$ or $1$. We will show in Appendix~\ref{sec:extension_m_1} that when $m=1$, we can get rid of the clipping~\eqref{eq:clip} by allowing $l_N=0$.
\end{remark}

\edit{
\begin{remark}[Application to different adaptive experiments]\label{rmk:case-study}
	To demonstrate the generality of our framework, we present two classes of adaptive experimental designs in Appendix~\ref{sec:application}: \emph{batched bandit experiments} (modified Thompson sampling and $\varepsilon$-greedy algorithm) and \emph{subgroup enrichment designs} (enrichment based on effect size or interim $p$-values). In each case, we verify that the sampling function satisfies Assumption~\ref{assu:sampling_design}, so that Theorem~\ref{thm:weak_convergence_W_N} applies. Notably, the subgroup enrichment design based on interim $p$-values involves a nuisance parameter, which falls outside the scope of Theorem~\ref{thm:weak_convergence_W_N}; we extend our results to accommodate this setting in Theorem~\ref{thm:weak_convergence_W_N_nuisance} in Appendix~\ref{sec:extension_nuisance}.
\end{remark}
}

\edit{
\begin{remark}[Technical relationship to~\citet{chen2023optimal,che2023adaptive}]\label{rmk:CA-approach}
	Our proof shares a common high-level template with \citet{chen2023optimal,che2023adaptive}. Theorem C.1 in~\citet{chen2023optimal} establishes a Gaussian limit representation for the \emph{path} of assignment probabilities and batch-wise standardized means in multi-stage adaptive experiments: the next-batch increment is asymptotically Gaussian conditional on the past, with a covariance that depends on the limiting assignment probabilities. Our proof of Theorem~\ref{thm:weak_convergence_W_N} follows the same mechanism in the specialized two-stage/two-arm setting: (i) a CLT for the stage-1 root, (ii) a conditional CLT for the stage-2 root given the stage-1 history, and (iii) bounded-Lipschitz and continuous-mapping arguments to obtain joint weak convergence. In fact, the similar technique was also employed in~\citet{che2023adaptive} when proving their Theorem 1.

	The WIPW test statistics considered in this paper incorporate the difference-in-means estimator as a special case (see Appendix~\ref{sec:extension_m_1}), but go beyond it by encompassing a broader family of IPW-type statistics. Our main results concern these \emph{pooled IPW-type statistics}, for which the Gaussian limit representation is not sufficient on its own. In particular, a joint approximation for the \emph{entire} vector of primitives entering the IPW weights or denominators is established (Step~2 in Appendix~\ref{sec:proof_step_2}), requiring some more delicate continuous mapping results (Lemmas~\ref{lem:continuous_map_varying}). Moreover, we strengthen the weak convergence result to quantitative bounded-Lipschitz bounds in Theorem~\ref{thm:quantitative_CLT_W_N} when sampling function is smooth enough. Regarding clipping, vanishing probabilities directly affect moments of IPW-type terms and studentization. We therefore allow $l_N\to 0$ but make its impact explicit via a bounded-Lipschitz error bound of the form $d_{\mathrm{BL}}(\mathcal{L}(E_N),\mathcal{L}(W))\;\lesssim\; l_N^{1/2} + (N l_N)^{-1/2}$, where $E_N$ is the concatenated random variables which will contribute to the final test statistic (see Lemma~\ref{lem:quantitative-CLT}). We defer further discussion to Remark~\ref{rmk:technical_challenges}.
\end{remark}
}

\edit{
\begin{remark}[On the quantitative bounded-Lipschitz CLT and the role of clipping]\label{rmk:technical_challenges}
	Theorem~\ref{thm:quantitative_CLT_W_N} controls the approximation error in bounded-Lipschitz distance, $d_{\mathrm{BL}}(\mathcal{L}(E_N),\mathcal{L}(W))$, which requires establishing convergence rates from the test-function approach \citep[Ch.~11]{Dudley_2002} under the dependence induced by adaptive sampling. Under adaptive weighting with vanishing clipping (Assumption~\ref{assu:adaptive_weighting}), the resulting bound for IPW-type test statistis considered in this paper takes the form $d_{\mathrm{BL}}(\mathcal{L}(E_N),\mathcal{L}(W))\lesssim\; l_N^{1/2} + (N l_N)^{-1/2}$. The term $(N l_N)^{-1/2}$ reflects the conditional normal-approximation error with an \emph{effective sample size} of order $N l_N$, since the minimum assignment probability is of order $l_N$ and the IPW weights amplify higher moments accordingly. The term $l_N^{1/2}$ captures the additional distortion introduced by propagating the CLT through nonlinear maps involving truncation/clipping and square-root/inverse transformations (e.g., the stage-2 sampling map and studentization). This bound makes explicit the trade-off between aggressive exploitation (small $l_N$) and inferential stability. In particular, $N l_N\to\infty$ is necessary for the bound to vanish, and balancing the two terms suggests the heuristic choice $l_N \asymp N^{-1/2}$, yielding $d_{\mathrm{BL}}(\mathcal{L}(E_N),\mathcal{L}(W))=O(N^{-1/4})$. Technically, obtaining such rates requires a conditional quantitative CLT for bounded-Lipschitz test functions (Lemma~\ref{lem:CLT_BL}) to handle the nonlinear dependence of the second-stage contributions on first-stage random quantities. Such quantitative results are particularly appealing when studying the Type-I error rate of simulation-based test procedure proposed in Section~\ref{sec:bootstrap_procedure} (see Corollary~\ref{cor:type-I-error-rate}).
\end{remark}
}

\subsection{Phase transition and implication on hypothesis testing}
\label{sec:phase_transition}
A key strength of our results in Theorem~\ref{thm:weak_convergence_W_N} is the weak limits can be expressed explicitly using weighted sums of two dependent Gaussian variables $A^{(t)}$ as shown in \eqref{eq:limiting_representation_single_outcome} and \eqref{eq:limiting_representation}. This allows us to understand the limiting distributions of different test statistics in a more intuitive way. We will discuss how the limiting distributions of $T_N$ and $W_N$ change as signal strength $c$ changes. Specifically, the shapes of limiting distributions are determined by the covariance $\mathrm{Cov}^{(2)}(A^{(1)})$ and random weights $\bar w_{\mathcal{W}}^{(2)}(s)$, which are both influenced by the signal strength \( c \). Consider the following two regimes:
\begin{itemize}
	\item \textbf{Strong signal regime.} When $c=-\infty$, i.e., the absolute difference between two expected outcomes is much larger than $1/\sqrt{N}$, it can be shown that $\mathrm{Cov}^{(2)}(A^{(1)})$ and $\bar w_{\mathcal{W}}^{(2)}(s)$ are deterministic constants. This implies that the fluctuation of the first stage estimator does not affect the final limit through the limiting covariance. Therefore the final limit $\mathbb{W}_{\mathcal{V}}^{\mathcal{W}}(-\infty)$ follows a Gaussian distribution.   
	\item \textbf{Zero and weak signal regimes.} When $c\in (-\infty,0]$, the limiting distribution may no longer be a normal distribution. This is because $A^{(1)}$ will appear in the conditional distribution of $A^{(2)}|A^{(1)}$ through the limiting covariance $\mathrm{Cov}^{(2)}(A^{(1)})$. Similarly, $\bar w_{\mathcal{W}}^{(2)}(s)$ depends on the realization of $A^{(1)}$. Therefore, when the signal is ``weak'', the non-normal behavior is the ``price'' one needs to pay for choosing to use the adaptive sampling scheme.
\end{itemize}

Additional insights on the non-normal limiting behaviors from double-dipping and data generating process perspectives can be found in Appendix \ref{sec:intuition_limits}. To get better intuition, 
we simulate $\mathbb{W}_{\mathcal{V}}^{\mathcal{W}}(c)$ 
with $\mathcal{V}=\mathcal{U},\mathcal{W}=\mathcal{A}$---this is the limiting distribution 
corresponding to the sampling distribution presented in Figure~\ref{fig:sampling_distribution}. 
We vary the limiting signal strength $c$ and show the simulated results in Figure \ref{fig:transition}, 
which align closely with those in Figure \ref{fig:sampling_distribution}. 
As $c$ approaches $-\infty$, i.e., as the signal gets stronger, the limiting distribution of $\mathbb{W}_{\mathcal{V}}^{\mathcal{W}}(c)$ 
approaches a normal distribution. Such phase transition, as informed by Theorem \ref{thm:smooth_transition}, is smooth with respect to signal strength $c$ under \textit{1-Wasserstein distance}, $d_{W_1}$.

\begin{figure}[!ht]
    \centering
    \includegraphics[width=.8\textwidth]{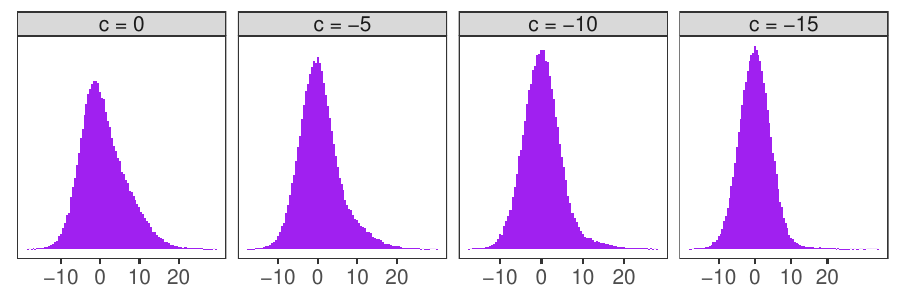}
    \caption{Distribution \(\mathbb{W}_{\mathcal{U}}^{\mathcal{A}}(c)\) as a function of limiting signal strength $c$.}
    \label{fig:transition}
\end{figure}

\begin{theorem}[Smooth transition of limiting distributions]\label{thm:smooth_transition}
	Suppose the assumptions of Theorem \ref{thm:weak_convergence_W_N} hold. Then for $\mathcal{V}\in \{\mathcal{U},\mathcal{N}\}$ and $\mathcal{W}\in \{\mathcal{A},\mathcal{C}\}$, we have $d_{W_1}(\mathbb{W}_{\mathcal{V}}^{\mathcal{W}}(-\infty), \mathbb{W}_{\mathcal{V}}^{\mathcal{W}}(c))$ converges to 0 as $c$ approaches $-\infty$.
\end{theorem}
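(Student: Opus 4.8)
\textbf{Proof plan for Theorem~\ref{thm:smooth_transition}.}

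The plan is to exploit the explicit representation \eqref{eq:limiting_representation}, which expresses every limiting law $\mathbb{W}_{\mathcal{V}}^{\mathcal{W}}(c)$ as a fixed measurable functional of the pair $(A^{(1)}, A^{(2)})$, where $A^{(1)} \sim N(\bm 0, \bm\Sigma^{(1)})$ is a \emph{fixed} Gaussian vector (its law does not depend on $c$, since $\mathrm{Cov}^{(1)}$ is determined by the pilot-stage quantities only), and $A^{(2)} \mid A^{(1)} \sim N(\bm 0, \bm\Sigma^{(2)}(A^{(1)}))$ with a conditional covariance that does depend on $c$ through the selection function. The key observation is that $W_1$ is an \emph{integral-probability-metric} bound: if we can build, for each $c$, a coupling of $\mathbb{W}_{\mathcal{V}}^{\mathcal{W}}(-\infty)$ and $\mathbb{W}_{\mathcal{V}}^{\mathcal{W}}(c)$ on a common probability space whose $L^1$ distance tends to $0$, we are done. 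The natural coupling is to realize both limits from the \emph{same} draw of $A^{(1)}$ and a shared Gaussian seed for $A^{(2)}$: write $A^{(2)} = \bm\Sigma^{(2)}(A^{(1)})^{1/2} Z$ and $A^{(2)}_{-\infty} = \bm\Sigma^{(2)}_\infty{}^{1/2} Z$ for one standard Gaussian $Z$ independent of $(A^{(1)}, Z')$, where $Z'$ is whatever extra randomness enters the weights $w_{\mathcal{W},\mathcal{V}}^{(2)}$.

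First I would isolate the $c$-dependence. Concretely: the pilot-stage block $\sum_{t}A^{(1)}(s)w_{\mathcal{W}}^{(1)}(s)$ is \emph{identical} for all $c$ under this coupling, so only the second-stage block moves. Second, I would show pointwise (in $A^{(1)}$, a.s.) convergence of the ingredients as $c\to-\infty$: (i) $\bm\Sigma^{(2)}(A^{(1)}) \to \bm\Sigma^{(2)}_\infty$ and (ii) $w_{\mathcal{W},\mathcal{V}}^{(2)}(s) \to w_{\mathcal{W},\mathcal{V},\infty}^{(2)}(s)$, both deterministic constants. This is exactly the content flagged in the "strong signal regime" bullet of Section~\ref{sec:phase_transition}: when $c=-\infty$ the selection probability $\bar e(0, S_N^{(1)}(0)-S_N^{(1)}(1))$ degenerates (the argument drifts to $-\infty$, so $\bar e(0,\cdot)$ hits its limiting value in $\{0,1\}$ under the Lipschitz branch, or lands in the fixed cell $C_1$ under the step-function branch), killing the dependence of the second-stage covariance and weights on the realization of $A^{(1)}$. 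I would read off the explicit forms from Appendix~\ref{sec:explicit_form_asymptotic_distribution} and check that as $c\to -\infty$ they converge to the $c=-\infty$ expressions; continuity of $\bar e(s,\cdot)$ at $-\infty$ (Lipschitz case) or the structure $C_1 = [-\infty,m_1]$ with $C_k$ open (step-function case) is what guarantees this.

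Next I would upgrade pointwise convergence to $L^1$ convergence of the coupled difference. Writing $D_c \equiv \bigl(\mathbb{W}_{\mathcal{V}}^{\mathcal{W}}(c) - \mathbb{W}_{\mathcal{V}}^{\mathcal{W}}(-\infty)\bigr)$ under the coupling, $D_c$ is a finite sum of terms of the form $\bigl(w_{\mathcal{W},\mathcal{V}}^{(2)}(s) \bm\Sigma^{(2)}(A^{(1)})^{1/2} - w_{\mathcal{W},\mathcal{V},\infty}^{(2)}(s)\bm\Sigma^{(2)}_\infty{}^{1/2}\bigr)_{\text{row }s}\, Z$. Each such term converges to $0$ a.s.\ by the previous step; to pass to $\E|D_c|\to 0$ I would invoke dominated convergence, for which I need an integrable envelope uniform in $c$. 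The envelope is easy for the covariance factors since $\bm\Sigma^{(2)}(A^{(1)})$ is built from the outcome moments and the clipped sampling probabilities, both uniformly bounded (moments by Assumption~\ref{assu:moment_condition}, probabilities by $l_N$-clipping — and in the adaptive case one must be slightly careful, but the limiting objects $\bar w_{\mathcal{A}}^{(2)}(s)$ are the ones that enter and they are controlled); the weights $w_{\mathcal{W},\mathcal{V}}^{(2)}(s)$ are bounded by construction in the constant-weighting case and, for the normalized statistic, by $1$; then $|D_c| \le C(1 + \|A^{(1)}\|)\|Z\|$ which is integrable. Finally, $W_1(\mathbb{W}_{\mathcal{V}}^{\mathcal{W}}(-\infty), \mathbb{W}_{\mathcal{V}}^{\mathcal{W}}(c)) \le \E|D_c| \to 0$ gives the claim.

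The main obstacle I anticipate is the uniform integrability / envelope step in the \emph{adaptive weighting} regime ($\mathcal{W}=\mathcal{A}$), since there $l_N\to 0$ and the very point of Remark~\ref{rmk:technical_challenges} is that higher-order moments of the pre-limit statistics can blow up; one must verify that the \emph{limiting} weights $\bar w_{\mathcal{A}}^{(2)}(s)$ and covariance entries $\mathrm{Cov}^{(2)}(A^{(1)})$ — as opposed to their finite-$N$ counterparts — remain $L^1$-dominated uniformly over $c\in[-\infty,0]$, using their explicit forms from Appendix~\ref{sec:explicit_form_asymptotic_distribution}. A secondary subtlety is the step-function branch of Assumption~\ref{assu:sampling_design}: there $\bar e(0,\cdot)$ is discontinuous, so the map $A^{(1)} \mapsto \bm\Sigma^{(2)}(A^{(1)})$ need not be continuous, and one must argue that as $c\to-\infty$ the random argument $g\bigl(S^{(1)}(0)-S^{(1)}(1)\bigr)$ (in the limit, an $A^{(1)}$-dependent quantity shifted by $c$) falls into the fixed cell $C_1$ with probability tending to $1$, so the discontinuity set is avoided asymptotically — this is where $g(-\infty)=-\infty$ and $C_1=[-\infty,m_1]$ are used, and it replaces "a.s.\ pointwise convergence" by "convergence in probability plus uniform integrability," which still delivers $L^1$ convergence of $D_c$ and hence the $W_1$ bound.
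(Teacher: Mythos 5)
Your proposal is correct and shares the paper's overall strategy—exploit the explicit representation \eqref{eq:limiting_representation}, realize $\mathbb{W}_{\mathcal{V}}^{\mathcal{W}}(c)$ and $\mathbb{W}_{\mathcal{V}}^{\mathcal{W}}(-\infty)$ on a common space, reduce everything to the convergence $\bar e(s,S((A^{(1)},V^{(1)}),c))\to\bar e(s,-\infty)$ guaranteed by Assumption~\ref{assu:sampling_design}, and finish with dominated convergence—but it organizes the second-stage comparison differently. You couple through a shared Gaussian seed, writing $A^{(2)}=(\bm\Sigma^{(2)}(A^{(1)}))^{1/2}Z$ and $A^{(2)}_{-\infty}=(\bm\Sigma^{(2)}_{-\infty})^{1/2}Z$, so you must additionally prove almost sure convergence of the matrix square roots (routine via Lemma~\ref{lem:holder_continuity_Frobenius} together with the lower bounds in Lemmas~\ref{lem:uniform_lower_upper_bound_variance} and~\ref{lem:upper_bound_cov}) before dominating. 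The paper never compares covariances at all: it splits coordinatewise by the triangle inequality, keeps the same Gaussian factor $A^{(2)}(s)$ in both terms so that only the scalar weights $w^{(2)}(s)$ versus $w^{(2)}_{-\infty}(s)$ change, uses that $W_1(A^{(2)}(s)w^{(2)}_{-\infty}(s),S_{2,s})=0$ because each coordinate of $A^{(2)}$ is marginally $N(0,1)$ regardless of $c$, and then bounds the remaining term by Cauchy--Schwarz, reducing the whole theorem to $\E[|\bar e(s,S((A^{(1)},V^{(1)}),c))-\bar e(s,-\infty)|^2]\to0$ by dominated convergence. Your route pays for the square-root step but avoids the coordinatewise splitting, which in the paper arguably deserves its own justification since the second-stage pieces are not independent of the first stage; the paper's route is shorter and dodges $\bm\Sigma^{(2)}(A^{(1)})$ entirely. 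Two small notes on your plan: the adaptive-weighting envelope you flag as the main obstacle is in fact harmless, since the limiting weight satisfies $(\bar w^{(2)}_{\mathcal{A}}(s))^2=q_2V^{(2)}(s)/\bigl(\sum_{t}q_t(H^{(t)}(s))^{1/2}\bigr)^2\le q_2\lim_{N\to\infty}\E[Y_{uN}^2(s)]/(q_1^2e(s))$ deterministically because $H^{(1)}(s)=e(s)$ is bounded away from zero; and for $\mathcal{V}=\mathcal{N}$ the pilot-stage block is not literally fixed across $c$, because the normalization $(\sum_{s,t}(\bar w^{(t)}_{\mathcal{W}}(s))^2)^{1/2}$ involves the $c$-dependent second-stage weights, so the first-stage weights also move with $c$—they converge by the same argument, and the paper likewise only treats the $(\mathcal{U},\mathcal{C})$ case explicitly and declares the remaining cases similar.
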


\noindent Proof of Theorem \ref{thm:smooth_transition} can be found in Appendix \ref{sec:proof_smooth_transition}, where the definition of Wasserstein distance can also be found. Gathering these insights, we now discuss the implications of our results on hypothesis testing.

\paragraph{Implication on hypothesis testing.}

The strength of our results lies in establishing weak convergence under minimal moment conditions, mild assumptions on the sampling functions, and broad signal strength regimes. These assumption-lean properties are not merely of theoretical interest—they carry practical significance for downstream hypothesis testing. To demonstrate the implication of our results on the hypothesis testing, consider the null hypothesis $H_{0N}:\E[Y_{uN}(0)] - \E[Y_{uN}(1)]=0$ and two alternatives within the general hypothesis $H_{1N}: \E[Y_{uN}(0)] - \E[Y_{uN}(1)] \neq 0$:
{\tightdisplay
\[
H_{2N}: \E[Y_{uN}(0)] - \E[Y_{uN}(1)] = \frac{b_2}{\sqrt{N}} \quad \text{and}\quad
H_{3N}: \E[Y_{uN}(0)] - \E[Y_{uN}(1)] = \frac{b_3}{N^\beta},
\]}
where $b_2, b_3 \in (-\infty, 0)$ and $\beta \in [0, 1/2)$. The contiguous alternative $H_{2N}$ corresponds to a weak signal regime, while $H_{3N}$ reflects a strong signal setting. Ideally, a test should control the Type-I error under $H_{0N}$ and achieve non-trivial power under $H_{2N}$, while attaining power approaching one under $H_{3N}$. It is also desirable for the test to remain assumption-lean with respect to the potential outcome distributions.
Our results accommodate this full range of signal strengths while maintaining minimal assumptions on the sampling functions and underlying distributions.

\paragraph{Comparison to existing literature.}
To highlight the significance of our results, we compare our results to those in related work. \citet{Zhang2020} establish asymptotic normality via stage-wise normalization under general hypotheses. Their method, however, relies on restrictive outcome distribution assumptions. \edit{While normality-based approaches may lose information and yield inferior power in some settings, our numerical study (Section~\ref{sec:finite-sample}) reveals that this is not universal: the relative power advantage between normal-limit and non-normal-limit tests depends on the structure of the data generating process. This nuanced picture further highlights the importance of understanding non-normal methods; in particular, our weak convergence results can be readily used to derive the power function and guide the choice of testing procedure.} \citet{Hadad2021} analyze the WIPW estimator for $m = 1/2$, but only in strong signal regimes (e.g., $H_{3N}$), excluding $H_{0N}$ and contiguous local alternatives. Both \citet{Zhang2020} and \citet{Hadad2021} achieve asymptotic normality under strong signals, potentially at the cost of power. \citet{Hirano2023} and \citet{adusumilli2023optimal} derive asymptotic representations for general test statistics under batched designs and contiguous alternatives $H_{2N}$, relying on classical limit-of-experiment theory by~\citet{le1972limits}. However, to apply Le Cam's theory, one must first establish the required weak convergence of certain statistics (see, for example, Theorem 2 in~\citep{Hirano2023}). Their results also assume quadratic mean differentiability (QMD) (or smooth semiparametric models), which may be restrictive in practice. In particular, their results apply only when the score function of the parametric distribution class is known. A complementary line of work uses diffusion approximations with increasing batch numbers \citep{fan2021diffusion,kuang2024weak}, which differs from our two-stage setup with growing per-stage samples. Further discussion appears in Appendix~\ref{sec:inspection_literature}.

\subsection{Asymptotically valid simulation-based tests}\label{sec:bootstrap_procedure}

Theorem~\ref{thm:weak_convergence_W_N} characterizes the limiting behavior of WIPW test statistics, forming the basis for constructing asymptotically valid tests. 
In this section, we focus on testing whether or not the difference in means $\E[Y_{uN}(0)]-\E[Y_{uN}(1)]=0$ for demonstration. Similar results can be established for other hypotheses, for example, single outcome hypothesis $\E[Y_{uN}(s)]=0$ for some $s\in\{0,1\}$. Based on Theorem~\ref{thm:weak_convergence_W_N}, we can define the following asymptotically valid tests: for any $\mathcal{W} \in \{\mathcal{A}, \mathcal{C}\}$,
{\tightdisplay
\begin{align}\label{eq:four_tests}
    \phi_{\mathcal{U}}^{\mathcal{W}} \equiv \indicator \big(\sqrt{N}T_N \geq \mathbb{Q}_{1-\alpha}(\mathbb{W}_{\mathcal{U}}^{\mathcal{W}}(0))\big)\quad\text{and}\quad\phi_{\mathcal{N}}^{\mathcal{W}} \equiv \indicator \big(W_N \geq \mathbb{Q}_{1-\alpha}(\mathbb{W}_{\mathcal{N}}^{\mathcal{W}}(0))\big).
\end{align}}
The critical values \(\mathbb{Q}_{1-\alpha}(\mathbb{W}_{\mathcal{V}}^{\mathcal{W}}(0))\) are the \((1-\alpha)\)-th quantiles of the limiting distribution \(\mathbb{W}_{\mathcal{V}}^{\mathcal{W}}(0)\) defined in Theorem~\ref{thm:weak_convergence_W_N}. However, tests in~\eqref{eq:four_tests} cannot be implemented in practice for two reasons. First, the asymptotic distributions \(\mathbb{W}_{\mathcal{V}}^{\mathcal{W}}(0)\) are generally non-normal (see also the left-most panel in Figure \ref{fig:transition}). Second, the limiting distribution involves unknown nuisance parameters, which have to be estimated using observed data. In this section, we propose a simulation-based procedure to address these challenges and construct valid tests, which can be implemented in practice.

\paragraph{A fast simulation-based procedure.} 

Note that the expression of limiting distribution in~\eqref{eq:limiting_representation} is a weighted sum of two dependent Gaussian variables. Motivated by such observation, we propose a simulation-based procedure to obtain the quantile information $\mathbb{Q}_{1-\alpha}(\mathbb{W}_{\mathcal{V}}^{\mathcal{W}}(0))$. For the ease of presentation, we will omit the definition of nuisance estimators and present a simplified algorithm in Algorithm~\ref{alg:sampling_weighting_simplified}. The complete procedure with the estimators of nuisance parameters can be found in Appendix~\ref{sec:bootstrap_algorithm}.
\small
\begin{algorithm}[!ht]
    \SetAlgoNlRelativeSize{0} 

    \nonl \textbf{Input:} Mean estimators $\hat\E[Y_{uN}(s)]$ and $\hat\E[Y_{uN}^2(s)]$; estimators $\hat{\mathrm{Cov}}^{(1)}$ and $\hat{\mathrm{Cov}}^{(2)}(\cdot)$; weighting scheme $\mathcal{W}\in\{\mathcal{A},\mathcal{C}\}$; sampling function $e(s,\cdot)$; clipping rate $l_N$. \\

    \textbf{First stage sampling:} Compute $\hat{\bm{\Sigma}}^{(1)} = (\hat{\mathrm{Cov}}^{(1)})_{2 \times 2}$. Then sample 
	\begin{align*}
		\tilde{A}^{(1,b)} = (\hat{\bm{\Sigma}}^{(1)})^{1/2} S_1^{(b)}\quad\text{where}\quad S_1^{(b)} \sim N(0, \bm{I}_2).
	\end{align*}
    
    \textbf{Second stage sampling:} Compute $\hat{\bm{\Sigma}}^{(2,b)} = (\hat{\mathrm{Cov}}^{(2)}(\tilde{A}^{(1,b)}))_{2 \times 2}$. Then sample 
	\begin{align*}
		\tilde{A}^{(2,b)} = (\hat{\bm{\Sigma}}^{(2,b)})^{1/2} S_2^{(b)}\quad\text{where}\quad S_2^{(b)} \sim N(0, \bm{I}_2).
	\end{align*}

    \textbf{Weighting procedure:} Compute $\hat{\bar{w}}_{\mathcal{W}}^{(t,b)}(s)$ for $\bar{w}_{\mathcal{W}}^{(t,b)}(s)$ with the sample $\tilde{A}^{(t,b)}$ and inputs $ \hat\E[Y_{uN}(s)],\hat\E[Y_{uN}^2(s)],e(s,\cdot)$ and $l_N$. Defining $(\tilde{A}^{(t,b)}(0),\tilde{A}^{(t,b)}(1))^{\top}\equiv\tilde{A}^{(t,b)}$, generate sample
	{\tightdisplay
    \begin{align*}
		\mathcal{D}_{\mathcal{W}}^{(b)} = \sum_{t=1}^2 \hat{\bar{w}}_{\mathcal{W}}^{(t,b)}(0)\, \tilde{A}^{(t,b)}(0) - \sum_{t=1}^2 \hat{\bar{w}}_{\mathcal{W}}^{(t,b)}(1)\,\tilde{A}^{(t,b)}(1).
	\end{align*}}
	For normalized tests, rescale: $\mathcal{D}_{\mathcal{W},\mathcal{N}}^{(b)} = \mathcal{D}_{\mathcal{W}}^{(b)} / (\sum_{s,t} (\hat{\bar{w}}_{\mathcal{W}}^{(t,b)}(s))^2)^{1/2}$.

    \textbf{Repeated sampling:} Repeat steps 1-3 to get $B$ simulation samples.

    \nonl \textbf{Output:} Simulation sample $\{\mathcal{D}_{\mathcal{W}}^{(b)}:b\in[B]\}$ (or $\{\mathcal{D}_{\mathcal{W},\mathcal{N}}^{(b)}\}$ for normalized tests).
    \caption{Simplified two-stage sampling and weighting procedure}
    \label{alg:sampling_weighting_simplified}
\end{algorithm}
\normalsize

We make the following remarks regarding the proposed simulation-based procedure.

\begin{remark}[Computational efficiency]
	The computational cost of generating a simulation sample in Algorithm~\ref{alg:sampling_weighting_simplified} is $O(1)$, leading to a total cost of $O(B)$ 
	for the whole procedure. This is much more efficient than the general nonparametric bootstrap method
	even with linear computation cost for computing each test statistic, which takes $O(BN)$ 
	to generate $B$ samples.
\end{remark}

\begin{remark}[A nonparametric simulation-based approach]
	Notably, Algorithm~\ref{alg:sampling_weighting_simplified} mimics a two-stage asymptotic experiment to approximate the limiting distribution of the test statistic in~\eqref{eq:limiting_representation}. From this perspective, our approach is conceptually aligned with the simulation-based methods that leverage asymptotic representation results, as proposed in \citet{Hirano2023}. Our method is nonparametric and free of distributional assumptions, thereby enabling more robust and widely applicable inference.
\end{remark}
\paragraph{Asymptotically valid simulation-based tests.}

Based on Algorithm~\ref{alg:sampling_weighting_simplified}, we construct the simulation-based tests using the simulation samples $\mathcal{D}_{\mathcal{W}}^{(b)}$ (and its normalized version $\mathcal{D}_{\mathcal{W},\mathcal{N}}^{(b)}$). Denote the $\sigma$-algebra generated by the observed data as $\mathcal{G}_N\equiv \sigma(\mathcal{H}_1\cup\mathcal{H}_2)$. Writing $\mathcal{D}_{\mathcal{W},\mathcal{U}}\overset{d}{=}\mathcal{D}_{\mathcal{W}}^{(b)}$ and $\mathcal{D}_{\mathcal{W},\mathcal{N}}\overset{d}{=}\mathcal{D}_{\mathcal{W},\mathcal{N}}^{(b)}$, conditional on $\mathcal{G}_N$, define for $\mathcal{W} \in \{\mathcal{A}, \mathcal{C}\}$: 
\begin{align*}
	\hat{\phi}_{\mathcal{U}}^{\mathcal{W}} \equiv \indicator\big(\sqrt{N} T_N > \mathbb{Q}_{1 - \alpha}(\mathcal{D}_{\mathcal{W}, \mathcal{U}}\mid \mathcal{G}_N)\big)\quad\text{and}\quad \hat{\phi}_{\mathcal{N}}^{\mathcal{W}} \equiv \indicator\big(W_N > \mathbb{Q}_{1 - \alpha}(\mathcal{D}_{\mathcal{W},\mathcal{N}}\mid \mathcal{G}_N)\big).
\end{align*}
The following theorem shows the validity of the simulation-based procedure and resulting tests. 

\begin{theorem}[Validity of simulation-based tests $\hat \phi_{\mathcal{V}}^{\mathcal{W}}$]\label{thm:bootstrap}
	Suppose Assumption \ref{assu:moment_condition}-\ref{assu:sampling_design} hold. Then, the following statements hold.
	\begin{enumerate}
		\item Suppose Assumption \ref{assu:constant_weighting} holds, we have for $\mathcal{V}\in \{\mathcal{U},\mathcal{N}\}$,
		{\tightdisplay
		\begin{align*}
			\sup_{x\in\mathbb{R}}\left|\P\left[\mathcal{D}_{\mathcal{C},\mathcal{V}}\leq x|\mathcal{G}_N\right]-\P[\mathbb{W}_\mathcal{V}^{\mathcal{C}}(0)\leq x]\right|\convp 0\quad\text{and}\quad\lim_{N\rightarrow\infty}\E_{H_{0N}}[\hat \phi_{\mathcal{V}}^{\mathcal{C}}]=\alpha;
		\end{align*}}
		\item Suppose Assumption \ref{assu:adaptive_weighting} holds, we have for $\mathcal{V}\in \{\mathcal{U},\mathcal{N}\}$,
		{\tightdisplay
		\begin{align*}
			\sup_{x\in\mathbb{R}}\left|\P\left[\mathcal{D}_{\mathcal{A},\mathcal{V}}\leq x|\mathcal{G}_N\right]-\P[\mathbb{W}_\mathcal{V}^{\mathcal{A}}(0)\leq x]\right|\convp 0\quad\text{and}\quad\lim_{N\rightarrow\infty}\E_{H_{0N}}[\hat \phi_{\mathcal{V}}^{\mathcal{A}}]=\alpha.
		\end{align*}}
	\end{enumerate}
	\edit{
	Furthermore, under Assumption~\ref{assu:sampling_design}(1), we have: for any $\mathcal{W}\in \{\mathcal{A},\mathcal{C}\}$ and $\mathcal{V}\in\{\mathcal{U},\mathcal{N}\}$,
	\begin{align*}
		\sup_{x\in\mathbb{R}}\left|\P\left[\mathcal{D}_{\mathcal{W},\mathcal{V}}\leq x|\mathcal{G}_N\right]-\P[\mathbb{W}_\mathcal{V}^{\mathcal{W}}(0)\leq x]\right|=O_p(N^{-1/4}).
	\end{align*}
	}
\end{theorem}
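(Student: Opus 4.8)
The plan is to prove Theorem~\ref{thm:bootstrap} in two parts: (i) the uniform (Kolmogorov) consistency of the conditional bootstrap law $\mathcal{D}_{\mathcal{W},\mathcal{V}}$ for the target limiting law $\mathbb{W}_{\mathcal{V}}^{\mathcal{W}}(0)$, and (ii) the resulting Type-I error control of the plug-in tests $\hat\phi_{\mathcal{V}}^{\mathcal{W}}$. The key structural observation I would exploit is that, under the null $H_{0N}$, we are in the regime $c = 0$, so Theorem~\ref{thm:weak_convergence_W_N} already pins down the target distribution as the explicit randomly-weighted Gaussian sum in~\eqref{eq:limiting_representation}, with weights $w_{\mathcal{W},\mathcal{V}}^{(t)}(s)$ and covariances $\mathrm{Cov}^{(1)},\mathrm{Cov}^{(2)}(\cdot)$ that are continuous (a.s.\ continuous, in the step-function case) functionals of a finite-dimensional Gaussian vector and of the outcome moments $\E[Y_{uN}(s)],\E[Y_{uN}^2(s)]$. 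Algorithm~\ref{alg:sampling_weighting_simplified} produces a draw from exactly the same functional form, but with the population quantities replaced by the nuisance estimators $\hat{\mathrm{Cov}}^{(1)},\hat{\mathrm{Cov}}^{(2)}(\cdot),\hat\E[Y_{uN}(s)],\hat\E[Y_{uN}^2(s)]$. So the proof is really a \emph{plug-in continuity} argument: show the nuisance estimators are consistent (in probability) for their targets, then transfer this through the continuous map defining the weak limit.

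Concretely, the steps I would carry out are as follows. First, establish consistency of all nuisance estimators under $H_{0N}$: $\hat\E[Y_{uN}(s)]\convp \lim_N\E[Y_{uN}(s)]$, $\hat\E[Y_{uN}^2(s)]\convp \lim_N\E[Y_{uN}^2(s)]$, and hence $\hat{\mathrm{Cov}}^{(1)}\convp \mathrm{Cov}^{(1)}$ and $\hat{\mathrm{Cov}}^{(2)}(\cdot)\convp\mathrm{Cov}^{(2)}(\cdot)$ locally uniformly (the latter needs the moment bounds of Assumption~\ref{assu:moment_condition} and the law-of-large-numbers structure over the two conditionally-i.i.d.\ samples; the adaptive-weighting case additionally needs $N l_N\to\infty$ so that the second-stage IPW-type moments concentrate despite the vanishing clip — this is where the argument is most delicate, paralleling the technical heart of Theorem~\ref{thm:weak_convergence_W_N}). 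Second, on the event that the estimators are close to their limits, couple $\mathcal{D}_{\mathcal{W},\mathcal{V}}$ to an ideal draw $\mathcal{D}_{\mathcal{W},\mathcal{V}}^{\star}\overset{d}{=}\mathbb{W}_{\mathcal{V}}^{\mathcal{W}}(0)$ built from the \emph{same} underlying standard Gaussians $S_1^{(b)},S_2^{(b)}$ but using the true covariances/weights; bound the $L^1$ (or in-probability) distance between the two using Lipschitz/continuity properties of $x\mapsto (\hat{\bm\Sigma})^{1/2}x$, of the sampling function $\bar e$ (Lipschitz case) or its a.s.-continuity (step-function case, using that the Gaussian limit puts zero mass on the jump set $\{m_1\}\cup\partial C_k$), and of the variance-normalization map when $\mathcal{V}=\mathcal{N}$ (here one needs the denominator $\sum_{s,t}(\bar w_{\mathcal{W}}^{(t)}(s))^2$ to be bounded away from $0$, which follows from Assumption~\ref{assu:moment_condition}'s lower bound on the outcome variances and the positivity imposed by $l_N$ in the adaptive case / $\bar l$ in the constant case). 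Third, conclude $W_1(\mathcal{L}(\mathcal{D}_{\mathcal{W},\mathcal{V}}\mid\mathcal{G}_N),\mathbb{W}_{\mathcal{V}}^{\mathcal{W}}(0))\convp 0$, and upgrade from Wasserstein to the claimed Kolmogorov (sup over $x$) statement using that $\mathbb{W}_{\mathcal{V}}^{\mathcal{W}}(0)$ has a continuous (indeed, bounded) density — this needs a small lemma that the randomly-weighted Gaussian sum is absolutely continuous, which holds because conditional on $A^{(1)}$ it is a nondegenerate Gaussian with variance bounded below.

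Finally, for the test validity: since $\sqrt N T_N - c_N\convd \mathbb{W}_{\mathcal{U}}^{\mathcal{W}}(c)$ by Theorem~\ref{thm:weak_convergence_W_N} and under $H_{0N}$ we have $c_N=0$, $c=0$, the statistic $\sqrt N T_N$ converges to $\mathbb{W}_{\mathcal{U}}^{\mathcal{W}}(0)$; combining with the bootstrap consistency just proved and the continuity of the limiting CDF, a standard Slutsky/Polya-type argument (e.g.\ Lemma 23.3 in van der Vaart, or a direct $\varepsilon$-argument) gives $\P_{H_{0N}}[\sqrt N T_N > \mathbb{Q}_{1-\alpha}(\mathcal{D}_{\mathcal{W},\mathcal{U}}\mid\mathcal{G}_N)]\to\alpha$; the normalized case is identical after noting $\hat S_N$ concentrates on a positive constant so $W_N\convd \mathbb{W}_{\mathcal{N}}^{\mathcal{W}}(0)$ as well. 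I expect the main obstacle to be Step~two's uniform control of $\hat{\mathrm{Cov}}^{(2)}(\cdot)$ and the second-stage weight $\hat w_{\mathcal{W},\mathcal{V}}^{(2,b)}(s)$ under adaptive weighting with $l_N\to 0$: one must show the plug-in error propagates stably even though the true second-stage inverse-probability weights can be as large as $1/l_N$, which requires carefully matching the rate $N l_N\to\infty$ against the fluctuation of the pilot-stage estimator $S_N^{(1)}(0)-S_N^{(1)}(1)$ — essentially re-running, at the bootstrap level, the same delicate truncation bookkeeping that underlies Lemma~\ref{lem:CLT_BL} and the proof of Theorem~\ref{thm:weak_convergence_W_N}.
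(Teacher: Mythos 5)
Your proposal is essentially the paper's argument: the paper also couples the bootstrap draw to an ideal draw built from the \emph{same} standard Gaussians $S_1^{(b)},S_2^{(b)}$ with the true covariances and weights, reduces everything to consistency of $\hat\E[Y_{uN}(s)],\hat\E[Y_{uN}^2(s)]$ (already Lemma~\ref{lem:consistency_WIPW}), and propagates the plug-in error through the Lipschitz continuity of the matrix square root (Lemma~\ref{lem:holder_continuity_Frobenius}) and the continuity/zero-boundary-mass properties of $\bar e$ in the Lipschitz and step-function cases, before invoking the same Slutsky/Polya-type argument for the level of the test. The one genuine divergence is the final metric-upgrading step: the paper stays within its bounded-Lipschitz test-function framework and applies the conditional CMT (Lemma~\ref{lem:sufficient_condition_CMT}) together with conditional Polya (Lemma~\ref{lem:cond_polya}), which needs only continuity of the limiting CDF, whereas you pass through the conditional $W_1$ distance and then upgrade to the Kolmogorov statement via a bounded-density lemma for $\mathbb{W}_{\mathcal V}^{\mathcal W}(0)$; your route works (the conditional-on-$A^{(1)}$ variance is indeed bounded below, using that $H^{(2)}(0)+H^{(2)}(1)=1$ and $|\mathrm{Cov}^{(2)}|<1$ by Lemma~\ref{lem:upper_bound_cov}), but it requires proving a strictly stronger regularity property of the limit than the paper needs. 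One point of emphasis is off: you anticipate having to ``re-run the truncation bookkeeping'' of Theorem~\ref{thm:weak_convergence_W_N} at the bootstrap level under adaptive weighting, but the bootstrap draw involves no data resampling and (under Assumption~\ref{assu:adaptive_weighting}) the plug-in second-stage function $\hat H^{(2)}(x,s)=\bar e(s,S((x,\hat V^{(1)}),0))$ carries no clipping at all, so the only data-dependence is through the already-consistent moment estimators and no new $1/l_N$-type control is needed there.
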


The proof of Theorem~\ref{thm:bootstrap} can be found in Appendix~\ref{sec:proof_bootstrap}. 
There is an implicit assumption we make behind Theorem \ref{thm:bootstrap}, 
which is the knowledge of the sampling function $e(s,\cdot)$ and $l_N$. In practice, this assumption is reasonable since the selection algorithm or the experimental protocol is usually pre-specified before the data is collected and thus is at the hand of the experiment designer. In fact, this is the case in many empirical studies including \citet{collins2007multiphase,li2010contextual,offer2021adaptive,hannah_a_jin_2023_8192805}.

\edit{
	A direct corollary of Theorem~\ref{thm:quantitative_CLT_W_N} and Theorem~\ref{thm:bootstrap} is the convergence rate of the Type-I error towards the nominal level when Lipschitz sampling function is used. The key step is a standard anti-concentration inequality relating the bounded-Lipschitz distance to the Kolmogorov distance: $\sup_{x}|F_X(x)-F_Y(x)|\lesssim d_{\mathrm{BL}}(X,Y)^{1/2}$, which converts the $d_{\mathrm{BL}}$ rates from Theorems~\ref{thm:quantitative_CLT_W_N} and~\ref{thm:bootstrap} into uniform CDF approximation rates.
	\begin{corollary}[Type-I error rate of simulation-based test]\label{cor:type-I-error-rate}
		Suppose the conditions in Theorem~\ref{thm:quantitative_CLT_W_N} hold. Then, the following statements hold. 
		\begin{enumerate}
			\item Suppose Assumption~\ref{assu:constant_weighting} holds. Then, $|\E_{H_{0N}}[\hat \phi_{\mathcal{V}}^{\mathcal{C}}]-\alpha|=O(N^{-1/4})$;
		    \item Suppose Assumption~\ref{assu:adaptive_weighting} holds. Then, $|\E_{H_{0N}}[\hat \phi_{\mathcal{V}}^{\mathcal{A}}]-\alpha|=O((l_N^{-1/2}N^{-1/2}+l_N^{1/2})^{1/2})$.
		\end{enumerate}
	\end{corollary}
}

\section{Finite-sample evaluation}\label{sec:finite-sample}

In this section, we conduct extensive numerical simulations and a semi-synthetic data analysis to investigate the finite-sample performance of the tests studied in the previous section. 
In particular, we include simulation-based tests with two scaling (normalized and unnormalized) and three weighting schemes ($m=0$, $m=1/2$, and $m=1$) proposed in Section~\ref{sec:bootstrap_procedure}. The significance level is taken to be $0.05$ throughout this section.

\subsection{Numerical simulation}\label{sec:simulation}

\paragraph{Data generation procedure.} \edit{We consider the following five potential outcome distributions:
\begin{enumerate}
	\item \textbf{Gaussian:} $Y_{uN}(0)\sim N(\theta,1),\ Y_{uN}(1)\sim N(0,0.25)$;
	\item \textbf{Bernoulli:} $Y_{uN}(0)\sim \mathrm{Bern}(\theta+0.5),\ Y_{uN}(1)\sim \mathrm{Bern}(0.5)$;
	\item \textbf{Poisson:} $Y_{uN}(0)\sim \mathrm{Pois}(1+\theta),\ Y_{uN}(1)\sim \mathrm{Pois}(1)$;
	\item \textbf{Student:} $Y_{uN}(0)\sim \theta + t(4),\ Y_{uN}(1)\sim t(10)$;
	\item \textbf{Mixture Normal:} $Y_{uN}(0)\sim 0.5\, N(\theta - 1,1) + 0.5\, N(\theta + 1,1),\ Y_{uN}(1)\sim 0.5\, N(-1,1) + 0.5\, N(1,1)$.
\end{enumerate}}
In the pilot stage, we employ the equal sampling in the pilot stage $e(1)=0.5$, which mimics the common practice in real world when there is no prior information which treatment is better. Inspired by batched bandit setup, we consider two selection algorithms: the modified version of Thompson sampling~\eqref{eq:modified-cliped-TS} and $\varepsilon$-greedy algorithm~\eqref{eq:eps-greedy}, both with clipping $l_N=\varepsilon/2$. In the second stage, we sample two treatments based on the results of these selection algorithms. The task is to test if the treatment effect is different from $0$, i.e. $\theta=\E[Y_{uN}(0)]-\E[Y_{uN}(1)]=0$, or not.

\edit{In addition to the four tests with IPW weighting ($m=0$ and $m=1/2$, each with normalized and unnormalized statistics) and sample splitting, we include tests using adaptive weighting with $m=1$ (the difference-in-means exponent; see Appendix~\ref{sec:extension_m_1}), yielding two additional IPW tests (normalized and unnormalized). We also include two non-IPW methods as benchmarks: the Batched Difference-in-Means (BDM) test~\citep{Zhang2020} and a concentration-inequality-based test~\citep{abbasi2011improved}. The BDM test computes within-batch difference-in-means and combines them, avoiding inverse probability weighting altogether. The concentration test constructs confidence intervals using Hoeffding-type concentration inequalities, providing finite-sample validity at the cost of conservativeness. In total, we compare $9$ tests. We refer the readers to Appendix~\ref{sec:additional_simulation} for additional details.}

\paragraph{Parameter setup.} In both selection algorithms, we set \edit{$\varepsilon \in\{ 0.05, 0.1, 0.2\}$. The signal strength $\theta$ varies over a distribution-dependent range: $\theta \in [-0.3, 0.3]$ for Gaussian, Poisson, and Mixture Normal; $\theta \in [-0.4, 0.4]$ for Student; and $\theta \in [-0.2, 0.2]$ for Bernoulli, each with $9$ equally spaced grid points, and $\theta = 0$ corresponds to the null hypothesis. The number of total samples $N = 1000$ and each batch has the same sample size $N_1 = N_2 = 500$. For negative values of $\theta$, we use the left-sided test, and for positive values, we use the right-sided test.}

\begin{figure}[!ht]
	\centering
	\includegraphics[width=0.85\textwidth]{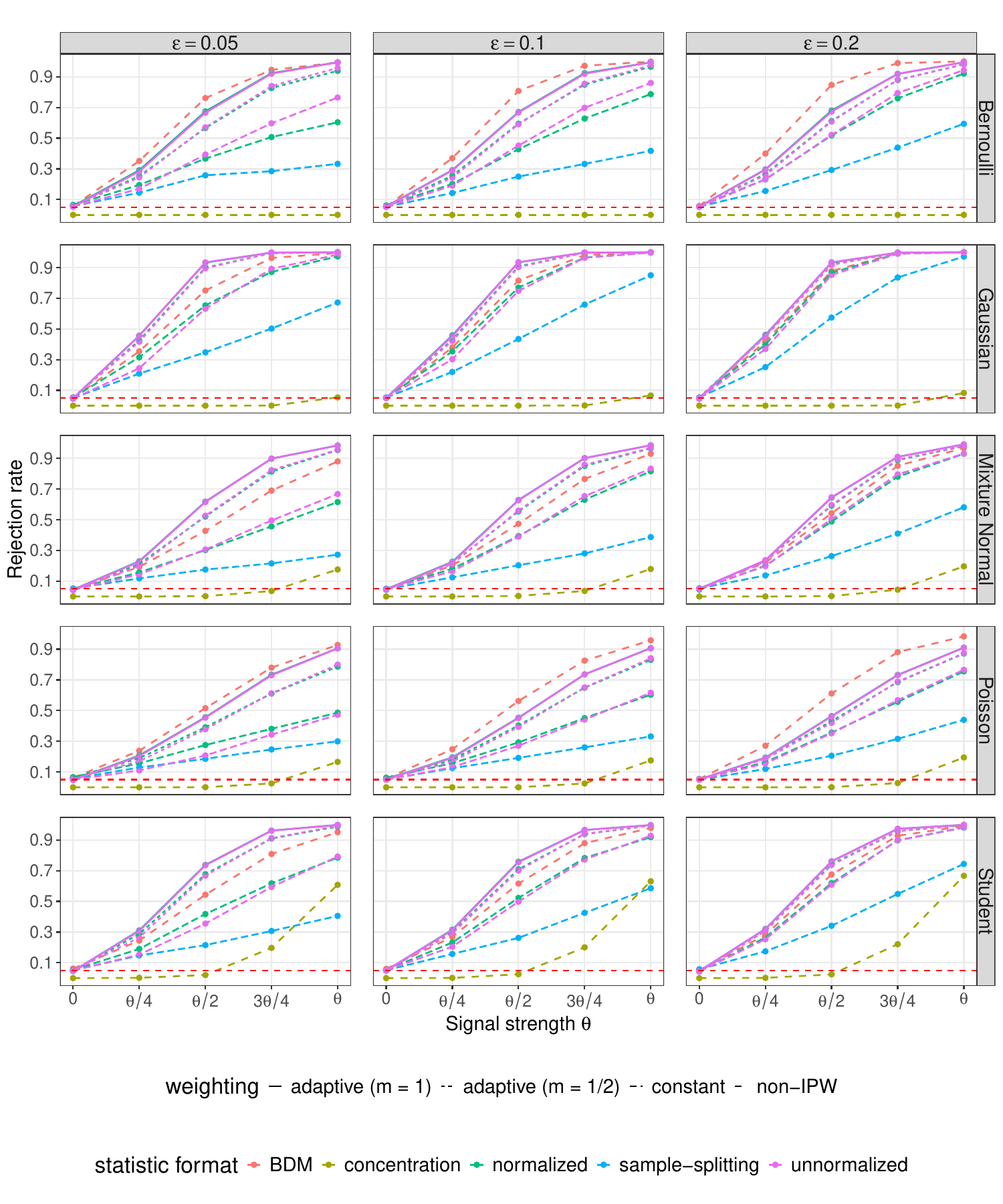}

	\caption{\edit{Right-sided rejection rate for the $9$ tests across five distributions under Thompson sampling. The signal strength is normalized to $[0, \theta]$. The simulation is repeated for $2000$ times with $5000$ simulation samples per test (same below).}}
	\label{fig:simulation-rejection-plot-thompson}
\end{figure}

\begin{figure}[!ht]
	\centering
	\includegraphics[width=0.85\textwidth]{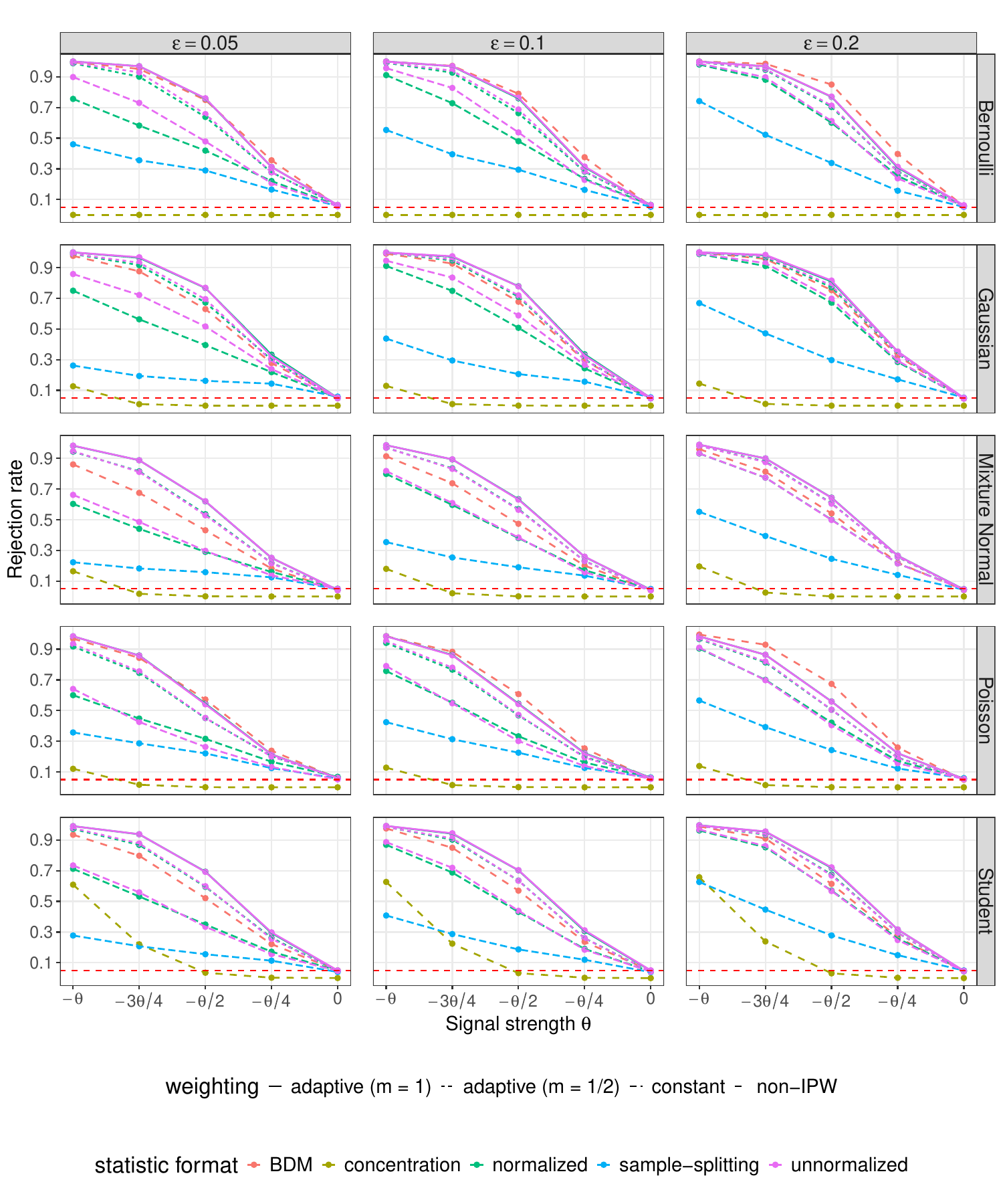}

	\caption{\edit{Left-sided rejection rate for the $9$ tests across five distributions under Thompson sampling.}}
	\label{fig:simulation-rejection-plot-thompson-left}
\end{figure}

\paragraph{Results analysis and interpretation.}

For the ease of presentation, we select the representative results with Thompson sampling being the selection algorithm. For additional simulation results, we refer the readers to Appendix \ref{sec:additional_simulation}. From Figure~\ref{fig:simulation-qq-plot-thompson} in Appendix~\ref{sec:thompson-sampling-simulation}, we observe that all the tests can produce relatively well-calibrated $p$-values. This validates the simulation-based procedure proposed in Algorithm \ref{alg:sampling_weighting_simplified}. The power results are summarized in Figures~\ref{fig:simulation-rejection-plot-thompson} and~\ref{fig:simulation-rejection-plot-thompson-left} for right-sided and left-sided tests, respectively. It is unsurprising to see that our approach using pooled two-stage data can improve power over sample splitting. Moreover, we make the following intriguing observations.

\edit{\paragraph{Power comparison between non-normal and normal-limit tests.} Our extensive simulation reveals a nuanced picture of the power comparison between tests based on non-normal limits and the BDM test, which relies on a normal limit. Under discrete potential outcomes (Bernoulli, Poisson), the BDM test can be more powerful than our methods, while under continuous potential outcomes (Gaussian, Student, Mixture Normal), our tests with adaptive weighting achieve higher power, consistent with the Gaussian case studied in~\citet{Hirano2023}. However, this pattern is not universal: in the semi-synthetic data analysis of Section~\ref{sec:semi-sythetic-data}, which uses discrete but sparse outcomes, the BDM test has lower power than the adaptive IPW methods.}

\edit{\paragraph{Adaptive weighting improves power and is robust to $\varepsilon$.} Tests with adaptive weighting ($m=1/2$ and $m=1$) show substantial power improvement compared to tests with constant weighting, regardless of which scaling is used. The improvement is especially pronounced when $\varepsilon$ is small, because constant weighting reduces the WIPW estimator to the usual IPW estimator, which is highly variable when the downsampling on one arm in the second stage is substantial. Furthermore, tests based on constant weighting (including sample splitting) are more sensitive to $\varepsilon$, whereas the adaptive weighting scheme adjusts the weights based on the observed data, making it robust to the choice of $\varepsilon$. In experimental practice, employing the adaptive weighting scheme for inference enables more aggressive exploitation strategies in sampling.}

\edit{\paragraph{Other empirical insights.} Unlike randomized controlled experiments, normalization can make a difference in the power of the tests, even asymptotically (see Theorem \ref{thm:weak_convergence_W_N} on results with $T_N$ and $W_N$). Additionally, the power performance differs between left-sided and right-sided tests. Under Bernoulli and Mixture Normal outcomes, the left-sided test tends to reject more often than the right-sided test at the same absolute signal magnitude across all methods. The asymmetry is less pronounced under Gaussian outcomes. These observations highlight the potential need for side-dependent experimental design strategies.}

\subsection{Semi-synthetic data analysis}\label{sec:semi-sythetic-data}

To further investigate the performance of different tests on real data,  
we conduct a semi-synthetic data analysis designed to better mimic real-world settings.  
The data is derived from the Systolic Blood Pressure Intervention Trial (SPRINT) \citep{ambrosius2014design}. This is a randomized controlled trial and evaluates whether a new treatment program for lowering systolic blood pressure reduces the risk of cardiovascular disease (CVD). The population is divided into treatment (new treatment) and control (placebo) group and primary clinical outcome is the occurrence of a major CVD event. We use permutation to generate semi-synthetic data from the original SPRINT data, which allows us to control the signal strength and evaluate the performance of different tests under various conditions. Given the potential benefits of the new treatment for patients, we adopt a two-stage experimental design and apply the $\varepsilon$-greedy algorithm~\eqref{eq:eps-greedy} to adaptively adjust data sampling in the follow-up stage. We refer the readers to Appendix~\ref{sec:semi-synthetic-data-generation} for more details on the data generation procedure.

\begin{figure}[!ht]
	\centering  
	\begin{subfigure}{\textwidth}
	  \centering
	  \includegraphics[width=0.85\textwidth]{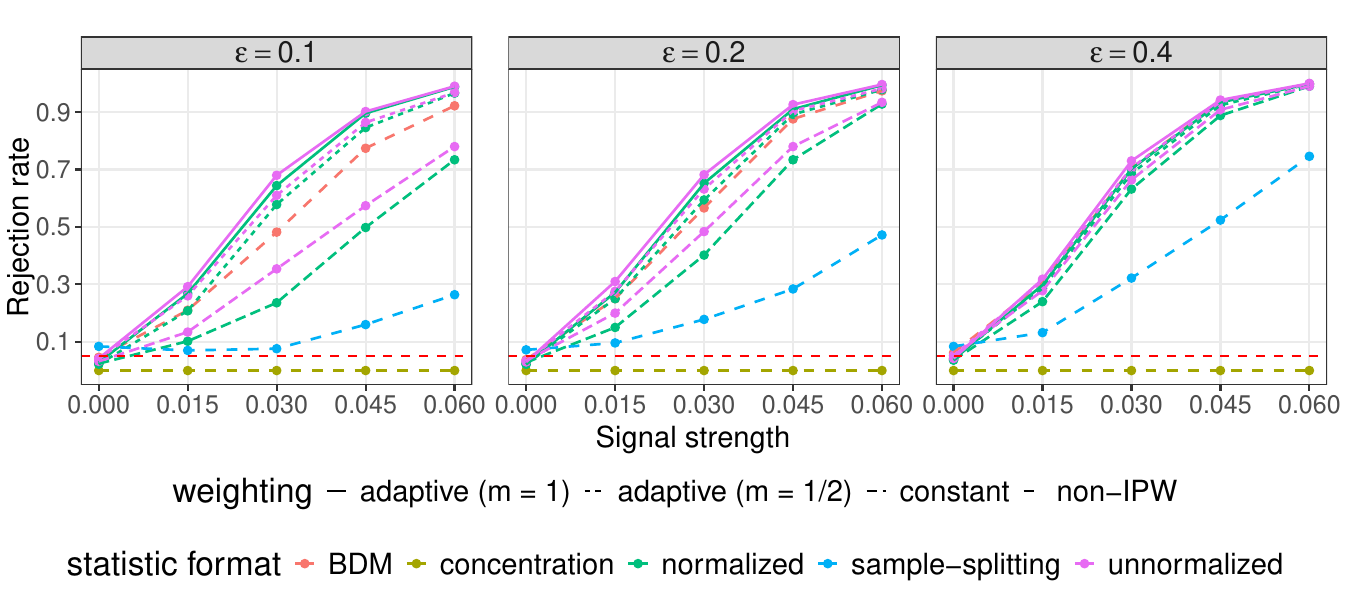}
	\end{subfigure}
	\caption{\edit{Type-I error and power for the nine tests under semi-synthetic data.}}
	\label{fig:semi-synthetic-data}
\end{figure}

The results are presented in Figure \ref{fig:semi-synthetic-data}. Perhaps a bit surprisingly, we observe that the test based on sample splitting suffers from Type-I error inflation in the absence of signal. This issue is primarily due to the sparsity of the outcome: the average rate of CVD occurrence is less than $0.1$. Such sparsity may prevent the central limit theorem from taking effect, posing a particular challenge for sample splitting, which uses only half of the data. We also find that the choice of $\varepsilon$ influences calibration performance: in the $\varepsilon$-greedy algorithm, a smaller $\varepsilon$ results in a smaller effective sample size in the second stage. Consequently, Type-I error inflation in the sample-splitting method is mitigated as $\varepsilon$ increases. In contrast, our methods control Type-I error well and $\varepsilon$ has a much smaller effect on these tests, which combine data from both stages and thus have higher effective sample size. A further investigation of calibration performance is provided in Appendix~\ref{sec:additional_semi_synthetic}. Regarding power, the benefit of adaptive sampling remains evident compared to the sample-splitting approach. Among the WIPW tests proposed in this paper, the unnormalized test exhibits slightly higher power than the normalized test, although the difference is modest. \edit{The BDM test controls Type-I error but has lower power than the adaptive IPW methods. This contrasts with the Bernoulli simulation in Section~\ref{sec:simulation}, where the BDM test is more powerful under high success rates. The reversal arises because the semi-synthetic outcomes are discrete but sparse (CVD event rate $< 0.1$), suggesting that the relative advantage of normal-limit versus non-normal-limit tests depends critically on the sparsity structure of the outcomes, not merely their discreteness. This further highlights the importance of understanding the asymptotic behavior of these test statistics, as our weak convergence results provide the necessary tools for power analysis and method selection. The concentration test is extremely conservative, producing all $p$-values equal to $1$ across all settings. A further investigation of this phenomenon is provided in the Bernoulli small success rate simulation in Appendix~\ref{sec:bernoulli_small_p}.}

\section{Conclusion and discussion}\label{sec:discussion}

In this paper, we establish a set of general and assumption-lean weak convergence results for the WIPW estimator under a two-stage adaptive sampling scheme. These results are largely agnostic to any specific outcome distribution, allowing for broad applicability across a wide range of potential outcomes. Moreover, they accommodate a broad spectrum of signal strengths, making them especially useful for downstream hypothesis testing. To facilitate asymptotically valid tests based on these weak convergence results, we propose a simulation-based procedure for obtaining critical values under the null that is highly scalable. Finally, we validate our theoretical claims through extensive numerical simulations and a semi-synthetic data analysis, illustrating strong finite-sample performance of the proposed tests. 

There are two directions that can be directly pursued with the results and techniques developed in this paper.
\begin{itemize}
	\item \textbf{Experimental design:} In practice, designing the adaptive experiments often requires balancing statistical goals (e.g., power) with non-statistical considerations (e.g., regret, welfare) under budget constraints. Investigating the optimal design of adaptive experiments that trade off these competing objectives is a compelling direction for future study. See, for instance, recent work on adaptive experimental design in \citet{che2023adaptive,liang2023experimental,simchi2023multi,li2024double}. Our results on limiting distributions and proposed simulation-based procedure simplify power calculations, which in turn can inform the design of adaptive experiments. 
	
	\item \textbf{Covariate adjustment:} In randomized controlled experiments, it is well established that appropriately adjusting for predictive covariates can improve the efficiency of statistical inference and increase the power of hypothesis testing \citep{lin2013agnostic}. It would be valuable to explore how such covariate adjustments can be incorporated into the analysis of data collected from adaptive experiments, and how they may enhance the efficiency of the proposed tests. These investigations require the study of asymptotic efficiency of different tests and the techniques developed in this paper may provide a useful starting point for exploring this line of research. In Appendix \ref{sec:extension_augmentation}, we present preliminary results on augmenting the WIPW test statistics.
\end{itemize}

There are several limitations in our current work that point to promising directions for future research. We summarize them below.

\begin{itemize}
	
	\item \textbf{Beyond two-stage experiments:} In this paper, we focus on a two-stage adaptive sampling scheme. However, other adaptive designs exist that fall outside this framework, such as fully adaptive sampling schemes \citep{lai1985asymptotically}, early-dropping experiments~\citep{sampson2005drop,sill2009drop} and experiments with adaptive stopping rules \citep{bauer1994evaluation}. We have sketched the extension of our results to the latter two classes of adaptive sampling strategies in Section~\ref{sec:extension_m_1} and \ref{sec:extension_stopping_time}, respectively. For the fully adaptive sampling schemes, we refer the readers to \citet{khamaru2024inference,han2024ucb,ren2024lai} for recent works on this topic. 
	
	\item \textbf{Statistical optimality:} The statistical optimality of the proposed tests remains an open question. \edit{We investigate this question by providing preliminary comparisons of power between the $m = 1/2$ and $m = 1$ weightings in Appendix~\ref{sec:power-comparison}, where we center the outcomes to isolate the effect of the weighting scheme on power.} It would be interesting to study semiparametrically efficient test statistic for testing the hypothesis $H_{0N}:\E[Y_{uN}(0)] = \E[Y_{uN}(1)]$ under suitable sub-classes of the general nonparametric data generating process. Although \citet{Hirano2023,adusumilli2023optimal} derive power functions via the Neyman-Pearson lemma, devising practical tests that attain the stated optimal power remains an open problem. Furthermore, for more complex scenarios—such as composite alternatives—the corresponding optimality theory remains undeveloped. \edit{Notably, our simulations reveal that normality-based methods (e.g., the BDM test) can be more powerful than non-normal-limit methods under certain outcome distributions, suggesting that the choice of optimal test may depend on the structure of the data generating process. A theory that characterizes when each class of tests is preferred would be of considerable practical value.} 
\end{itemize}

\section*{Acknowledgement}

The authors thank Zijun Gao for sharing the dataset used in the semi-synthetic analyses. The authors also thank the Wharton Student Paper Review Group (SPRG) at the Department of Statistics and Data Science for helpful feedback.

\printbibliography

\newpage
\appendix

\begin{center}
\Large\bfseries Appendix: Supplementary Material
\end{center}
\vspace{1em}

\renewcommand{\cftsubsecnumwidth}{3em}  
\renewcommand{\cftsubsubsecnumwidth}{3.5em}  
\renewcommand{\thetheorem}{S\arabic{theorem}}
\renewcommand{\thelemma}{S\arabic{lemma}}
\renewcommand{\theproposition}{S\arabic{proposition}}
\renewcommand{\thefigure}{S\arabic{figure}}
\renewcommand{\thetable}{S\arabic{table}}
\renewcommand{\thesection}{S\arabic{section}}
\renewcommand{\theequation}{E\arabic{equation}}
\setcounter{theorem}{0}
\setcounter{lemma}{0}
\setcounter{proposition}{0}
\setcounter{figure}{0}
\setcounter{table}{0}
\setcounter{equation}{0}
\setcounter{section}{0}

\paragraph{Notation.}
Throughout the appendix, we will use $(a,b)$ to denote a column vector for $a\in\mathbb{R}^{k},b\in \mathbb{R}^{d}$ when there is no ambiguity. In other words, we use $(a,b)$ to represent $(a^\top,b^\top)^\top$ and we omit the transpose operator for ease of presentation. Thus when we write $f(x,y)$ for $x,y$ as vectors, we will use $f(x,y)$ to denote the function $f((x^\top,y^\top)^\top)$. We use $\mathcal{C}^2(\mathcal{X})$ as the class of functions that are twice continuously differentiable on the domain $\mathcal{X}$. We use $\bm 0_k$ to denote a $k$-dimension vector with each dimension being $0$ and $\bm I_k$ to denote the identify matrix with dimension $k$. We will drop the subscript $k$ if there is no ambiguity. We use $\mathbb{N}_{+}$ to denote the positive natural number. We denote $\partial C_k$ as the boundary set of $C_k$. We denote $\nabla g$ as the gradient of a differentiable function $g$. We denote $a_N\lesssim b_N$ if there exists $c>0$ such that $|a_N/b_N|\leq c$ for large enough $N$. 
\textit{Without loss of generality, we will only prove the results in all the main text when $q_t=1/2$ for $t\in[2]$, i.e., two batches have the same sample size.}

\section{Explicit form of the asymptotic distributions}\label{sec:explicit_form_asymptotic_distribution}

We define the limiting probabilities 
\small
\begin{align}\label{eq:limiting_probabilities}
	H^{(1)}(s)\equiv \lim_{N\rightarrow\infty}e_N(s,\mathcal{H}_0)=e(s),~
	H^{(2)}(s)\equiv \max\Big\{\lim_{N\rightarrow\infty}l_N, e(s,\mathcal{S}^\infty((A^{(1)},V^{(1)}),c))\Big\}.
\end{align}
\normalsize
The function $\mathcal{S}^\infty(x,y):\mathbb{R}^{4}\times \bar{\mathbb{R}}\rightarrow\bar{\mathbb{R}}$ is defined as
\begin{align}\label{eq:def_h_function}
	\mathcal{S}^\infty(x,y)\equiv x_1\cdot x_3^{1/2}/e^{1/2}(0)-x_2\cdot x_4^{1/2}/e^{1/2}(1)+y/\sqrt{2},
\end{align}
where $x=(x_1,x_2,x_3,x_4)^\top\in\mathbb{R}^4$ and $y\in \bar{\mathbb{R}}$.  Also define the scaled asymptotic variance as 
\begin{align}\label{eq:limiting_variance}
	V^{(t)}(s) \equiv \lim_{N\rightarrow\infty}\E[Y_{uN}^2(s)]-H^{(t)}(s)(\lim_{N\rightarrow\infty}\E[Y_{uN}(s)])^2,
\end{align}
and denote $R^{(t)}(s)\equiv (H^{(t)}(s)/V^{(t)}(s))^{1/2}$. Now we define the covariances for $A^{(t)}$ as follows.

\paragraph{Distribution of $A^{(1)}$.}

The covariance $\mathrm{Cov}^{(1)}$ can be defined as
\begin{align}\label{eq:covariance_A_1}
	\mathrm{Cov}^{(1)}\equiv - \left(\frac{H^{(1)}(0)H^{(1)}(1)}{V^{(1)}(0)V^{(1)}(1)}\right)^{1/2}\lim_{N\rightarrow\infty}\left(\E[Y_{uN}(0)]
	\E[Y_{uN}(1)]\right).
\end{align}

\paragraph{Distribution of $A^{(2)}$.}

The asymptotic covariance structure $\mathrm{Cov}^{(2)}(A^{(1)})$ can be written as
\begin{align}\label{eq:covariance_A_2}
	\mathrm{Cov}^{(2)}(A^{(1)})\equiv - \left(\frac{H^{(2)}(0)H^{(2)}(1)}{V^{(2)}(0)V^{(2)}(1)}\right)^{1/2}\lim_{N\rightarrow\infty}\left(\E[Y_{uN}(0)]
	\E[Y_{uN}(1)]\right).
\end{align}
Now we define the weights $\bar w_{\mathcal{W}}^{(t)}(s)$. To this end, we need the following auxiliary random variables,
\begin{align*}
	M_{\mathcal{A}}^{(t)}(s)\equiv q_t\Big(\frac{(H^{(t)}(s))^{1/2}}{\sum_{t=1}^2 q_t (H^{(t)}(s))^{1/2}}\Big)^2\quad\text{and}\quad M_{\mathcal{C}}^{(t)}(s)\equiv q_t.
\end{align*}
Then we can write the weights as 
\begin{align*}
	\bar w_{\mathcal{W}}^{(t)}(s)=\big(M_{\mathcal{W}}^{(t)}(s)/(R^{(t)}(s))^2\big)^{1/2}\quad\text{for any }s\in\{0,1\}\quad\text{and}\quad \mathcal{W}\in\{\mathcal{A},\mathcal{C}\}.
\end{align*}

\edit{
\section{Case study: application to different adaptive experiments}\label{sec:application}

In this section, we demonstrate the applicability of Theorem~\ref{thm:weak_convergence_W_N} to a variety of adaptive experimental designs. Specifically, we focus on two widely used paradigms: \emph{batched bandit experiments} and \emph{subgroup enrichment designs}. These designs have been studied in recent statistical and machine learning literature~\citep{russo2016simple,lin2021inference,che2024optimization,freidling2024selective}.

\paragraph{Batched bandit experiments.}

Batched bandit experiments typically employ adaptive algorithms to balance exploration and exploitation. Two commonly studied strategies are \emph{Thompson sampling} and the \emph{$\varepsilon$-greedy algorithm}. Thompson sampling is a Bayesian approach that selects actions according to their posterior probabilities of being optimal. The $\varepsilon$-greedy algorithm chooses the empirically best arm with probability $1 - \varepsilon/2$ and explores the inferior arm with probability $\varepsilon/2$ when there are two arms. We will show that Theorem~\ref{thm:weak_convergence_W_N} applies to two-batch bandit experiments employing these algorithms.

\begin{itemize}
    \item \textbf{Modified Thompson sampling:}
    Assuming suitable prior distributions, the posterior for each expected outcome $\E[Y_{uN}(s)]$, conditional on pilot data, is (approximately) the normal distribution $N(S_N^{(1)}(s), 1/2)/\sqrt{N_1}$. This yields a sampling function:
	{\tightdisplay
    \begin{align*}
        e(s, x) = (1 - \Phi(x)) \indicator(s = 1) + \Phi(x) \indicator(s = 0),
    \end{align*}}
    which is Lipschitz continuous over $x \in \bar{\mathbb{R}}$ and satisfies the \textbf{Lipschitz condition} of Assumption~\ref{assu:sampling_design}. Incorporating a clipping rate $l_N$ (see Eq.~\eqref{eq:clip}), the follow-up sampling probability $\P[A_{uN}^{(2)} = 0|\mathcal{H}_{1}]$ becomes a modified Thompson sampling rule:
	{\tightdisplay
    \begin{align}\label{eq:modified-cliped-TS}
        \max\{l_N, \min\{1-l_N,\Phi(S_N^{(1)}(0) - S_N^{(1)}(1))\}\}.
    \end{align}}
	This algorithm has been used in~\citet{Hadad2021}.

    \item \textbf{$\varepsilon$-greedy algorithm:}
    Consider the non-smooth sampling function:
	{\tightdisplay
    \begin{align*}
        e(s, x) = \indicator(x < 0)\indicator(s = 1) + \indicator(x \geq 0)\indicator(s = 0),
    \end{align*}}
    which satisfies \textbf{Step-function condition} in Assumption~\ref{assu:sampling_design}. With clipping rate $l_N$, the follow-up sampling probability $\P[A_{uN}^{(2)} = 0|\mathcal{H}_{1}]$ corresponds to an $\varepsilon$-greedy algorithm with $\varepsilon = 2l_N$:
	{\tightdisplay
    \begin{align}\label{eq:eps-greedy}
        (1-l_N)\indicator(S_N^{(1)}(0) \geq S_N^{(1)}(1)) + l_N\indicator(S_N^{(1)}(0) < S_N^{(1)}(1)).
    \end{align}}
\end{itemize}

\paragraph{Subgroup enrichment experiments.}

The assignment variable $A$ may indicate subgroup membership rather than treatment assignment. In this context, $Y_{uN}(0)$ and $Y_{uN}(1)$ represent outcomes for two distinct subgroups. Adaptive enrichment designs aim to identify and focus on the subgroup that benefits more from the treatment, based on interim results from the pilot stage. Common strategies include enrichment based on estimated effect size or interim $p$-value~\citep{us2019adaptive,ben2024adaptive}.

\begin{itemize}
    \item \textbf{Enrichment based on effect size:}
    The sampling function in this case is:
	{\tightdisplay
    \begin{align*}
        e(s, x) = \indicator(x < \beta)\indicator(s = 1) + \indicator(x \geq \beta)\indicator(s = 0),
    \end{align*}}
    where $\beta$ is a pre-specified threshold for the effect size.

    \item \textbf{Enrichment based on interim $p$-values:}
    Let $\hat{\sigma}$ denote the estimated standard deviation of $S_N^{(1)}(0) - S_N^{(1)}(1)$ under the null hypothesis $H_{0N}$. Define left-sided and right-sided $p$-values as $p_l = \Phi(x / \hat{\sigma})$ and $p_r = 1 - p_l$, respectively. Given a pre-specified significance level $\alpha$, the sampling function based on interim $p$-values becomes:
	{\tightdisplay
    \begin{align*}
        e(s, x) = \indicator(p_l < \alpha)\indicator(s = 0) + \indicator(p_r < \alpha)\indicator(s = 1) + \indicator(p_l \in [\alpha, 1 - \alpha]) \cdot 0.5,
    \end{align*}}
    which introduces randomization when the interim result is inconclusive.
\end{itemize}
The follow-up sampling probabilities $\P[A_{uN}^{(2)} = 0|\mathcal{H}_{1}]$ and $\P[A_{uN}^{(2)} = 1|\mathcal{H}_{1}]$ can be similarly obtained based on these sampling functions.
}

\section{Simulation detail for Figures~\ref{fig:sampling_distribution}-\ref{fig:transition}}\label{sec:illustration_simulation}

We present the simulation details for Figure \ref{fig:sampling_distribution} and Figure \ref{fig:transition} respectively. 

\paragraph{Figure \ref{fig:sampling_distribution}.}
To generate Figure \ref{fig:sampling_distribution}, we consider the following potential outcome model: 
\begin{align}\label{eq:simulation-model}
	Y_{uN}(0)\sim N(0,1),\ Y_{uN}(1)\sim N(-c_N/\sqrt{N},9),\ c_N\in\{0,-5,-10,-15\}.
\end{align}
We set the sample size $N=1000$ and the batch size $N_1=N_2=500$. 
The initial sampling $e(0)=e(1)=0.5$ and $\varepsilon$-greedy algorithm is used with $\varepsilon=0.05$. The simulations are repeated $5000$ times.

\paragraph{Figure \ref{fig:transition}.}

We use the covariance structure~\eqref{eq:covariance_A_1} and \eqref{eq:covariance_A_2}, to simulate the limiting distribution~\eqref{eq:limiting_representation}. We set $q_t=1/2$ and assume the knowledge of the first and second moments $\E[Y_{uN}(s)],\E[Y_{uN}^2(s)]$ from model~\eqref{eq:simulation-model}. Set the limiting signal strength $c\in \{0,-5,-10,-15\}$. The simulations are repeated $100,000$ times.

\section{Details of simulation-based algorithm in Section~\ref{sec:bootstrap_procedure}}\label{sec:bootstrap_algorithm}

\subsection{Nuisance parameter estimation}\label{sec:bootstrap_nuisance}

Analogous to the estimator $\hat{\E}[Y_{uN}(s)]$ in \eqref{eq:WIPW_estimator}, we estimate $\E[Y_{uN}^2(s)]$ using:
\small
\begin{align}\label{eq:WIPWS_estimator}
	\WIPWS(s) \equiv \sum_{t=1}^2 \frac{N_th_{N}^{(t)}(s)}{\sum_{t=1}^2 N_t h_{N}^{(t)}(s)} \cdot \frac{1}{N_t} \sum_{u=1}^{N_t} \tilde{\Lambda}_{uN}^{(t)}(s)\quad \text{and}\quad
	\tilde{\Lambda}_{uN}^{(t)}(s) \equiv \frac{\indicator(A_{uN}^{(t)}=s)(Y_{uN}^{(t)})^2}{\P[A_{uN}^{(t)}=s|\mathcal{H}_{t-1}]}.
\end{align}
\normalsize
Using these, we estimate the first-stage variances and covariances as:
\small
\begin{align*}
	\hat{V}^{(1)}(s) = \hat{\E}[Y_{uN}^2(s)] - H^{(1)}(s)(\hat{\E}[Y_{uN}(s)])^2
	\quad \text{and}\quad
	\hat{\mathrm{Cov}}^{(1)} = -\frac{(\bar{H}^{(1)})^{1/2} \hat{\E}[Y_{uN}(0)] \hat{\E}[Y_{uN}(1)]}{(\hat{V}^{(1)}(0) \hat{V}^{(1)}(1))^{1/2}},
\end{align*}
\normalsize
where $\bar{H}^{(1)} \equiv H^{(1)}(0)H^{(1)}(1)$. To define $\hat{\mathrm{Cov}}^{(2)}(\cdot)$, first define $\hat V^{(1)}\equiv (\hat V^{(1)}(0),\hat V^{(1)}(1))$ and consider the function $\hat{H}^{(2)}(x) \equiv \hat{H}^{(2)}(x,0) \cdot \hat{H}^{(2)}(x,1)$, where
\begin{align*}
	\hat{H}^{(2)}(x, s) \equiv  
	\begin{cases}
        \max\{\bar{l}, e(s, \mathcal{S}^\infty((x, \hat{V}^{(1)}), 0))\} & \text{if Assumption \ref{assu:constant_weighting} holds}; \\
        e(s, \mathcal{S}^\infty((x, \hat{V}^{(1)}), 0)) & \text{if Assumption \ref{assu:adaptive_weighting} holds}. 
    \end{cases}
\end{align*}
Furthermore, we define $\hat{V}^{(2)}(x) \equiv \hat{V}^{(2)}(x, 0)\cdot \hat{V}^{(2)}(x, 1)$, where $\hat{V}^{(2)}(x,s) \equiv \hat{\mathbb{E}}[Y_{uN}^2(s)] - \hat{H}^{(2)}(x, s) (\hat{\mathbb{E}}[Y_{uN}(s)])^2$. Finally, we can define the second-stage covariance function as
\begin{align*}
	\hat{\mathrm{Cov}}^{(2)}(x) \equiv -(\hat{H}^{(2)}(x)/\hat{V}^{(2)}(x))^{1/2} \hat{\mathbb{E}}[Y_{uN}(0)] \hat{\mathbb{E}}[Y_{uN}(1)].
\end{align*}

\subsection{Simulation-based algorithm}\label{sec:bootstrap_algorithm_detail}

Now we consider the simulation-based procedure. 

\begin{enumerate}
	\item\textbf{First stage sampling:} Sample $S_1^{(b)} \sim N(0, \bm{I}_2)$ and let $\tilde{A}^{(1,b)} = (\hat{\bm{\Sigma}}^{(1)})^{1/2} S_1^{(b)}$, where $\hat{\bm{\Sigma}}^{(1)} = (\hat{\mathrm{Cov}}^{(1)})_{2 \times 2}$.
	\item \textbf{Second stage sampling:} Sample $S_2^{(b)} \sim N(0, \bm{I}_2)$ and let $\tilde{A}^{(2,b)} = (\hat{\bm{\Sigma}}^{(2,b)})^{1/2} S_2^{(b)}$, where $\hat{\bm{\Sigma}}^{(2,b)} = (\hat{\mathrm{Cov}}^{(2)}(\tilde{A}^{(1,b)}))_{2 \times 2}$.
	\item \textbf{Weighting procedure:} Compute weights $\hat{\bar{w}}_{\mathcal{W}}^{(t,b)}(s)$ by replacing $H^{(1)}(s),H^{(2)}(s)$ and $V^{(1)}(s),V^{(2)}(s)$ in~\eqref{eq:limiting_representation} by $H^{(1)}(s),\hat{H}^{(2)}(\tilde{A}^{(1,b)},s)$, and $\hat{V}^{(1)}(s),\hat{V}^{(2)}(\tilde{A}^{(1,b)},s)$, respectively. Then obtain the simulation sample:
    \[
    \mathcal{D}_{\mathcal{W}}^{(b)} = \sum_{t=1}^2 \hat{\bar{w}}_{\mathcal{W}}^{(t,b)}(0)\, \tilde{A}^{(t,b)}(0) - \sum_{t=1}^2 \hat{\bar{w}}_{\mathcal{W}}^{(t,b)}(1)\, \tilde{A}^{(t,b)}(1),
    \]
	where $\tilde{A}^{(t,b)}(s)$ is the $(s+1)$-th coordinate of $\tilde{A}^{(t,b)}$. For normalized tests, rescale: $\mathcal{D}_{\mathcal{W},\mathcal{N}}^{(b)} = \mathcal{D}_{\mathcal{W}}^{(b)} / (\sum_{s,t} (\hat{\bar{w}}_{\mathcal{W}}^{(t,b)}(s))^2)^{1/2}$.
	\item\textbf{Repeat sampling:} Repeat steps 1-3 for $B$ iterations to obtain $B$ simulation samples.
\end{enumerate}

\section{Additional simulation results}\label{sec:additional_simulation}

\subsection{Additional simulation results with Thompson sampling}\label{sec:thompson-sampling-simulation}

The calibration results are summarized as QQ plots in Figure \ref{fig:simulation-qq-plot-thompson}.

\begin{figure}[!p]
	\centering
	\includegraphics[width=0.95\textwidth]{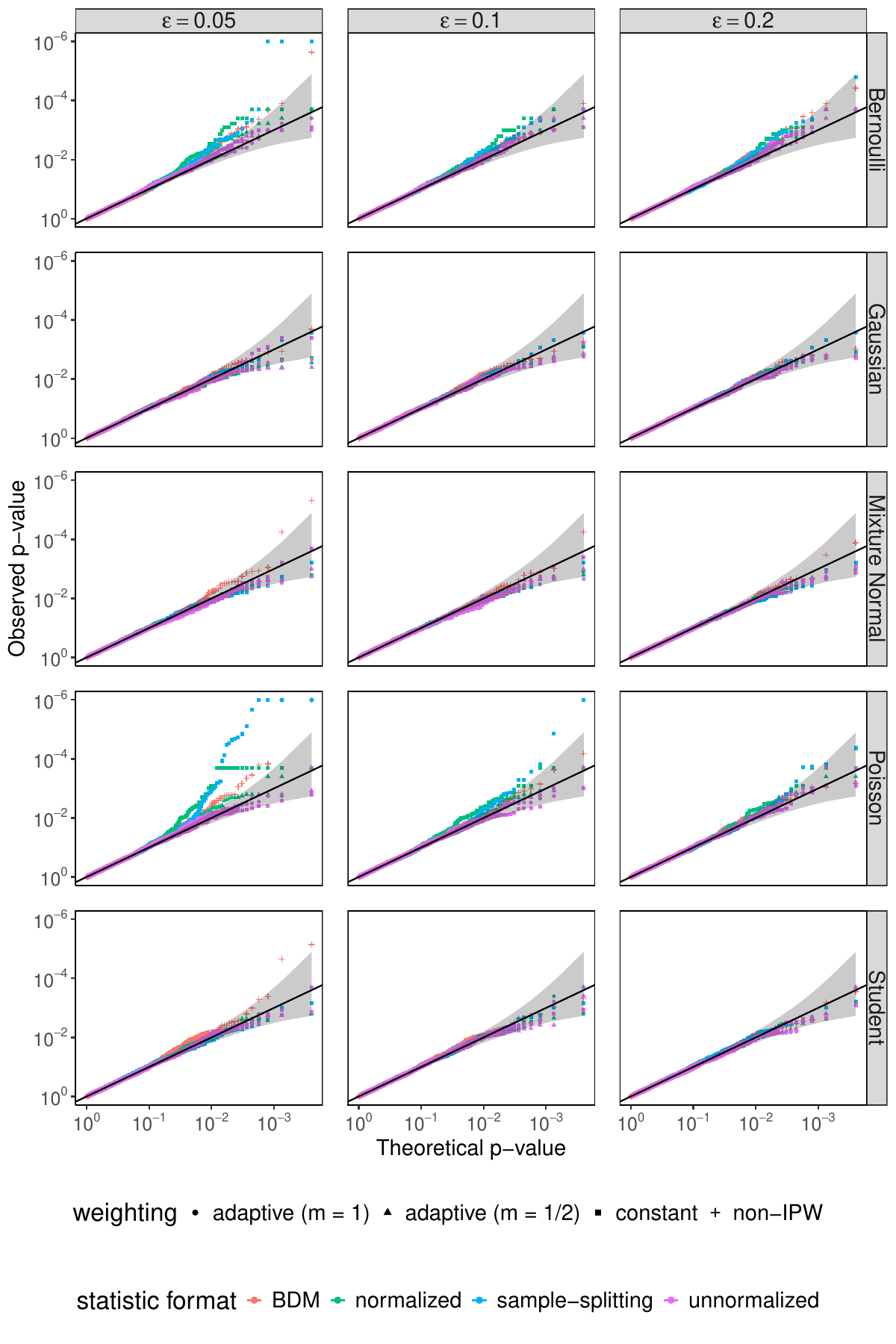}

	\caption{\edit{QQ plots for the $9$ tests (excluding concentration) under Thompson sampling (right-sided). The simulation is repeated for $2000$ times with $5000$ simulation samples per test.}}
	\label{fig:simulation-qq-plot-thompson}
\end{figure}

\edit{
\begin{figure}[!p]
	\centering
	\includegraphics[width=0.95\textwidth]{figures-and-tables/simulation/thompson_left_rejection_plot.pdf}

	\caption{Left-sided rejection rate for the $9$ tests across five distributions under Thompson sampling. The simulation is repeated for $2000$ times with $5000$ simulation samples per test.}
	\label{fig:simulation-rejection-plot-thompson-left}
\end{figure}

\begin{figure}[!p]
	\centering
	\includegraphics[width=0.95\textwidth]{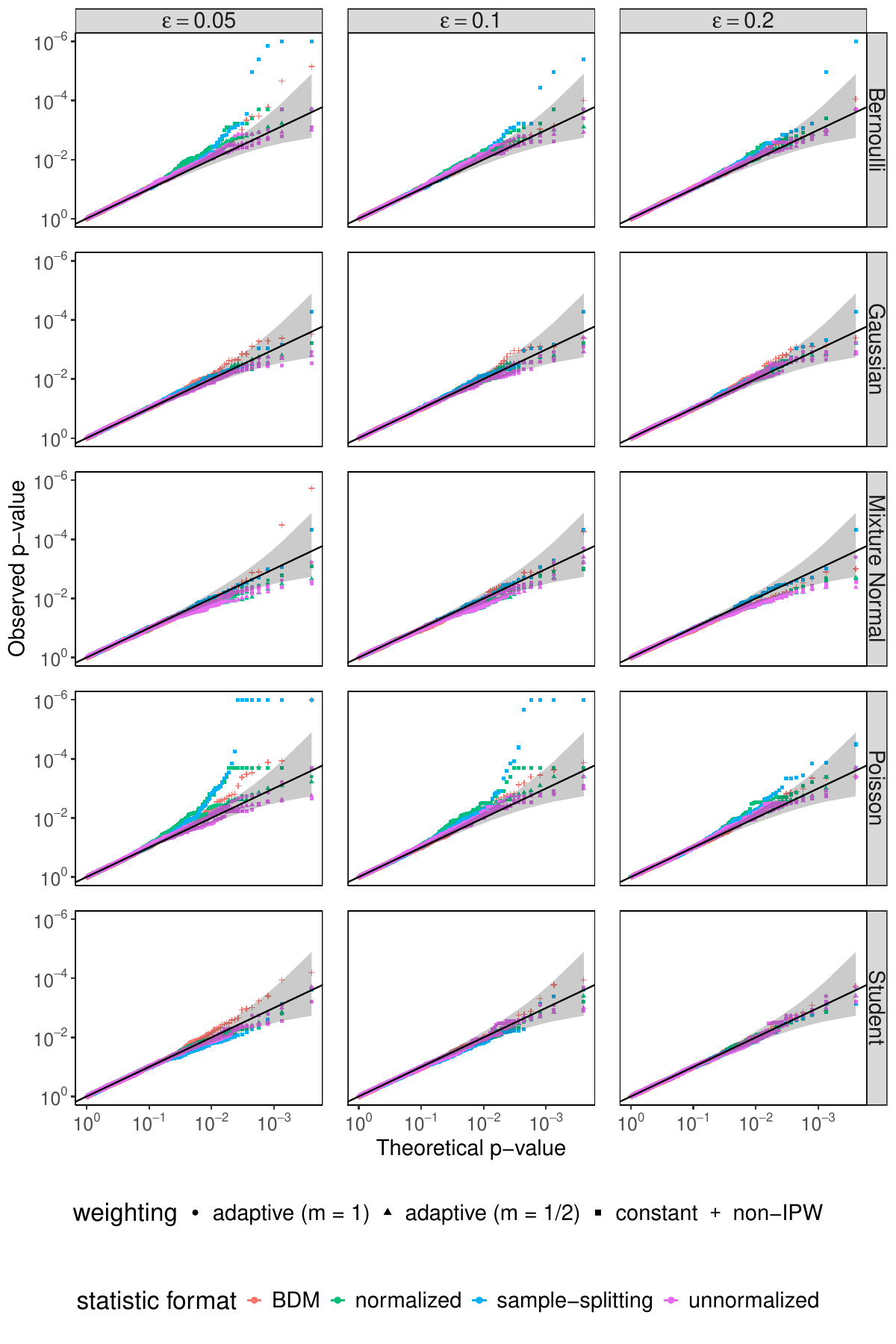}

	\caption{QQ plots for the $9$ tests (excluding concentration) under Thompson sampling (left-sided). The simulation is repeated for $2000$ times with $5000$ simulation samples per test.}
	\label{fig:simulation-qq-plot-thompson-left}
\end{figure}
}

\subsection{Additional simulation results with $\varepsilon$-greedy algorithm}\label{sec:epsilon-greedy-simulation}

\edit{We show the additional results for the simulation in Section \ref{sec:simulation} with $\varepsilon$-greedy selection algorithm applied. The $\varepsilon$ is chosen within $\{0.05, 0.1, 0.2\}$. Results are shown in Figure \ref{fig:simulation-qq-plot-eps-greedy} and \ref{fig:simulation-rejection-plot-eps-greedy}.}

\begin{figure}[!p]
	\centering
	\includegraphics[width=0.93\textwidth]{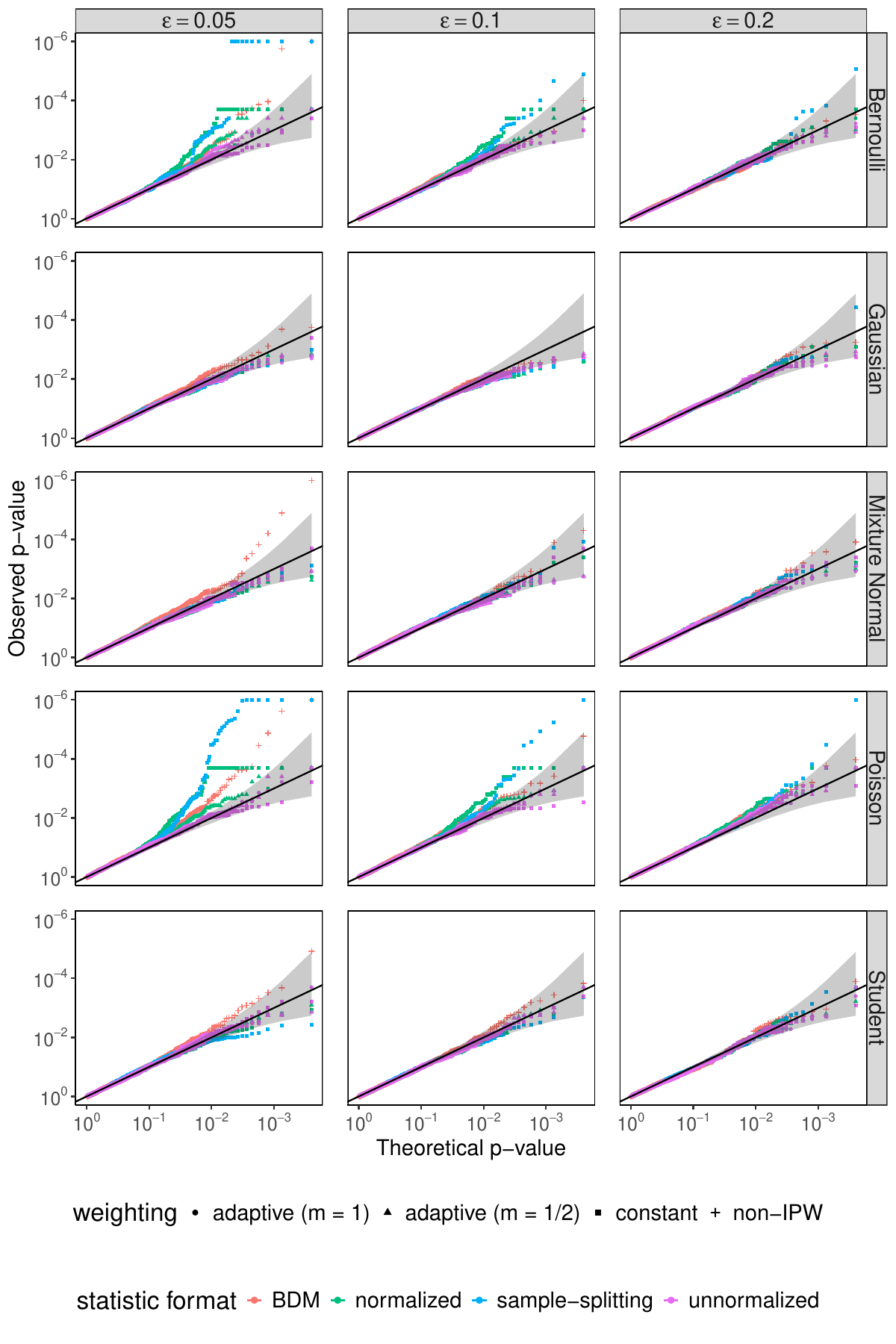}

	\caption{\edit{QQ plots for the $9$ tests (excluding concentration) under $\varepsilon$-greedy sampling (right-sided). The simulation is repeated for $2000$ times with $5000$ simulation samples per test.}}
	\label{fig:simulation-qq-plot-eps-greedy}
\end{figure}

\begin{figure}[!p]
	\centering
	\includegraphics[width=0.93\textwidth]{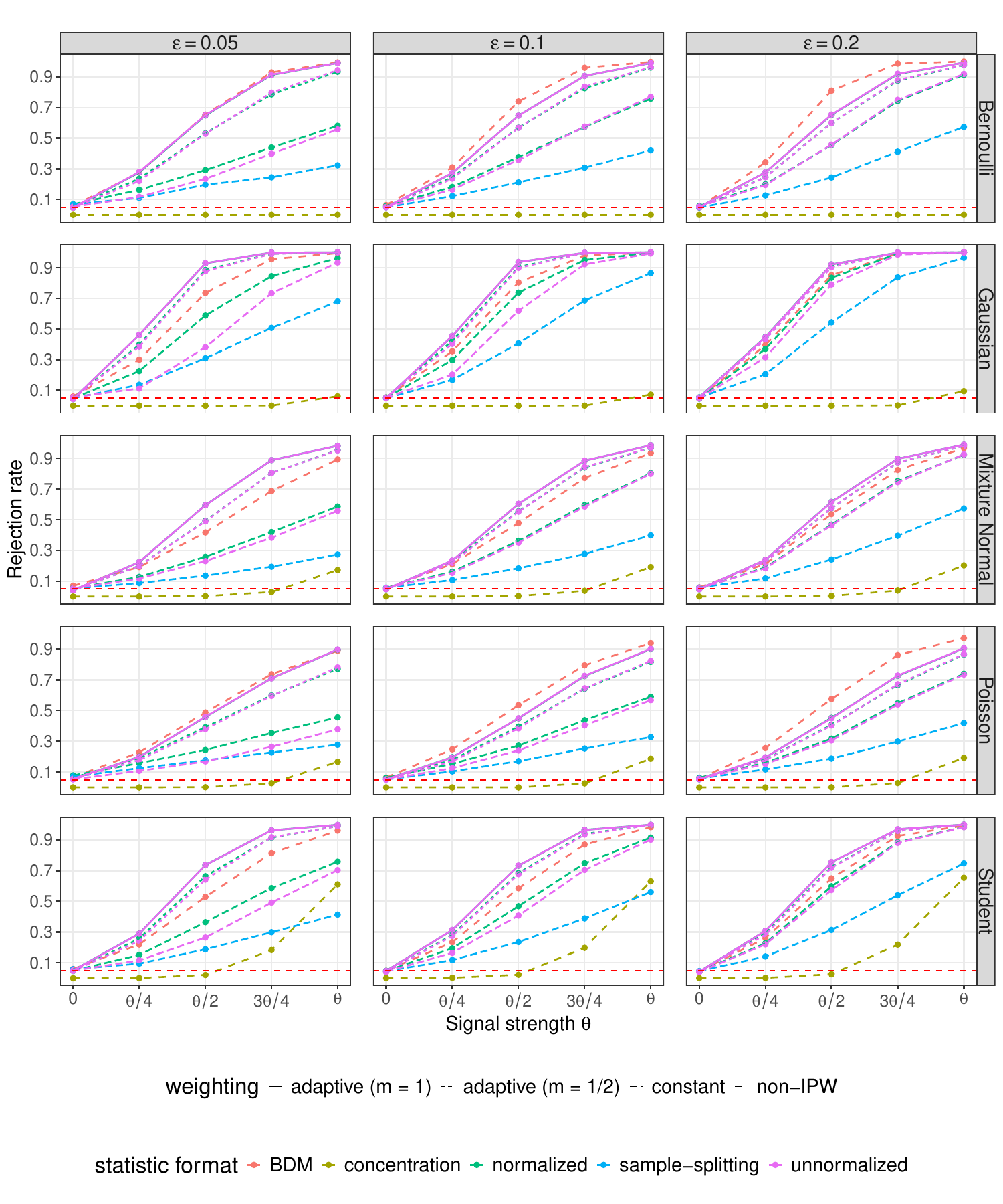}

	\caption{\edit{Right-sided rejection rate for the $9$ tests across five distributions under $\varepsilon$-greedy sampling. The simulation is repeated for $2000$ times with $5000$ simulation samples per test.}}
	\label{fig:simulation-rejection-plot-eps-greedy}
\end{figure}

\edit{
\begin{figure}[!p]
	\centering
	\includegraphics[width=0.93\textwidth]{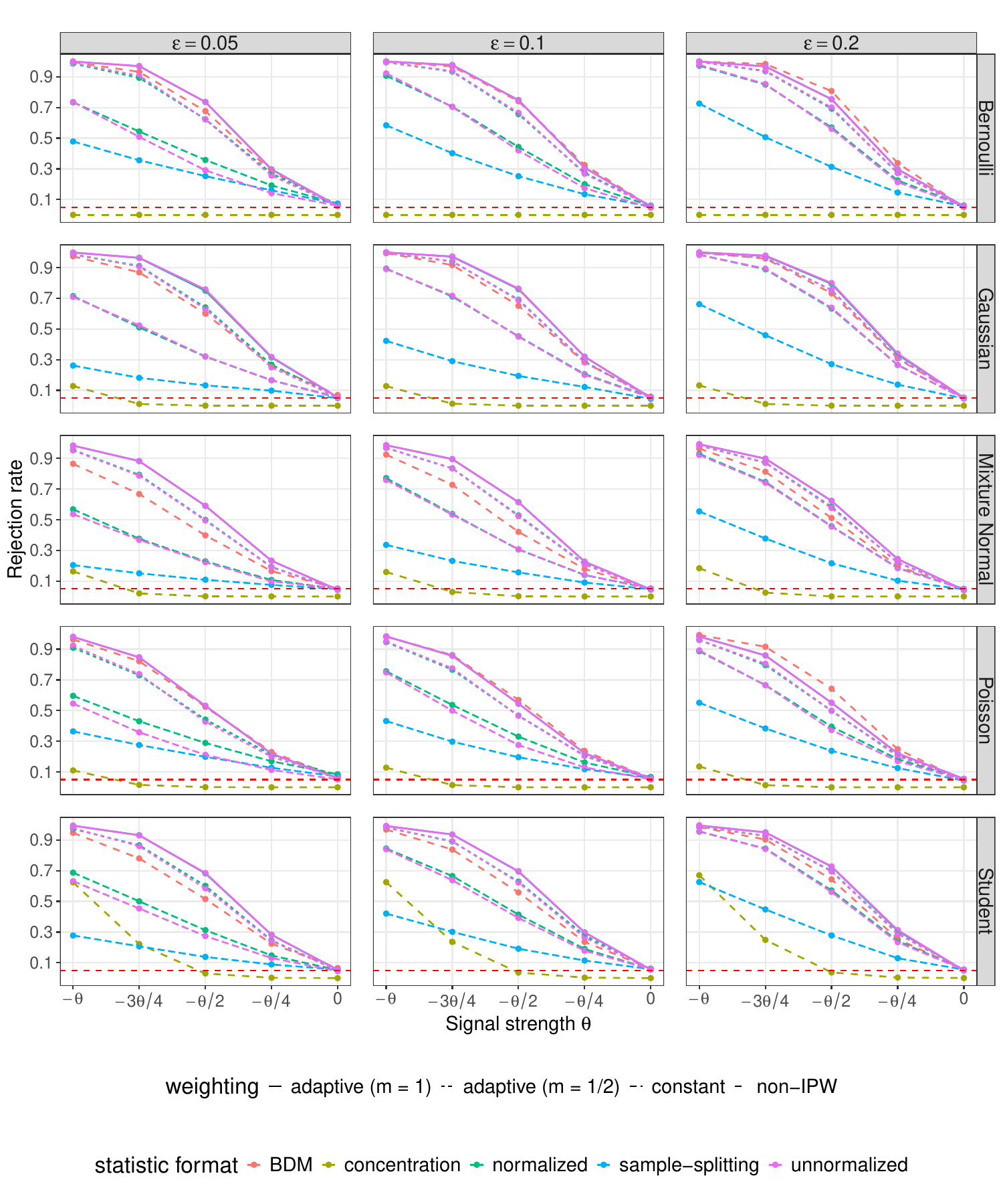}

	\caption{Left-sided rejection rate for the $9$ tests across five distributions under $\varepsilon$-greedy sampling. The simulation is repeated for $2000$ times with $5000$ simulation samples per test.}
	\label{fig:simulation-rejection-plot-eps-greedy-left}
\end{figure}

\begin{figure}[!p]
	\centering
	\includegraphics[width=0.93\textwidth]{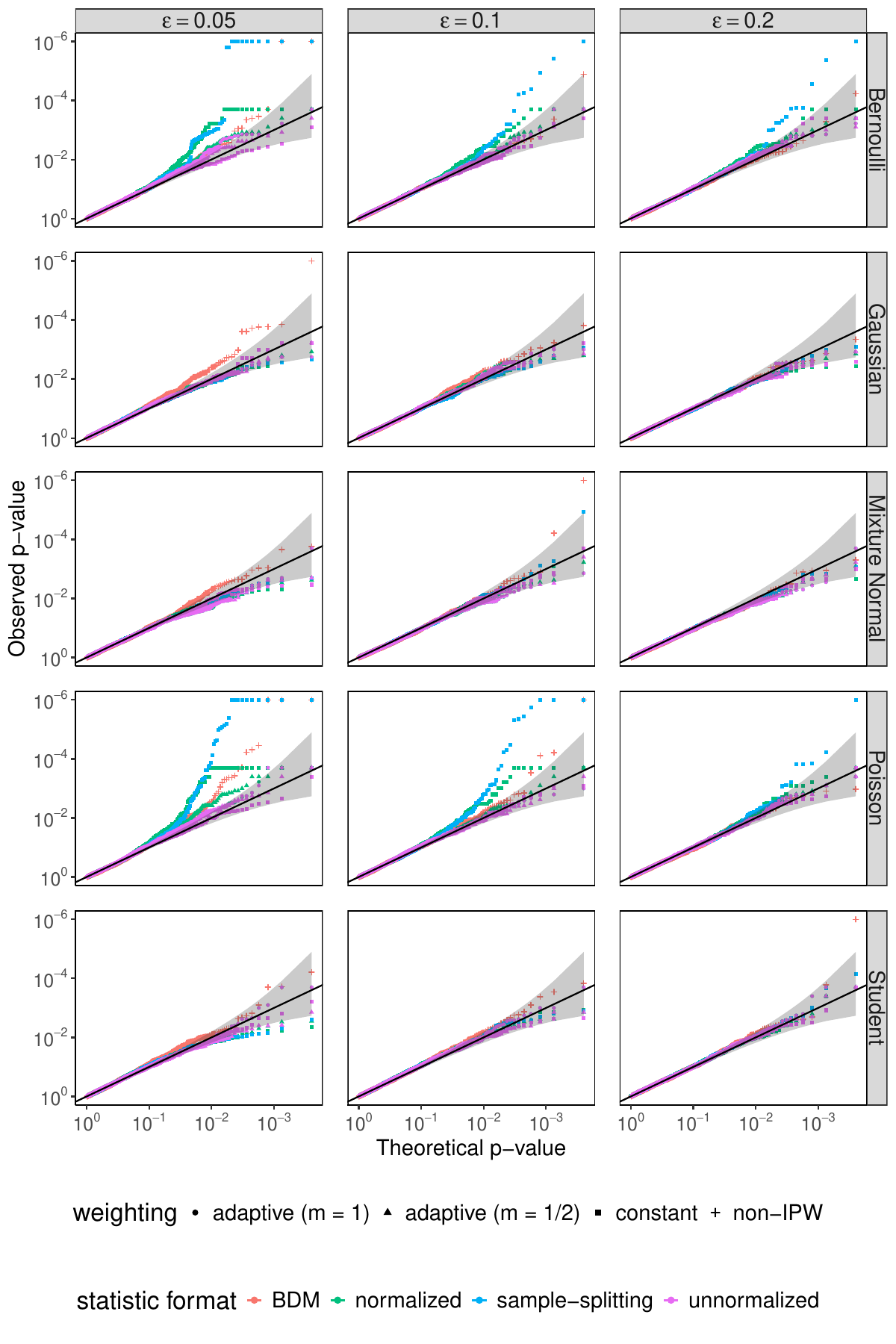}

	\caption{QQ plots for the $9$ tests (excluding concentration) under $\varepsilon$-greedy sampling (left-sided). The simulation is repeated for $2000$ times with $5000$ simulation samples per test.}
	\label{fig:simulation-qq-plot-eps-greedy-left}
\end{figure}
}

\section{Additional details in Section~\ref{sec:semi-sythetic-data}}\label{sec:additional_semi_synthetic}

\subsection{Data generating process}\label{sec:semi-synthetic-data-generation}

We generate the semi-synthetic data, apply the tests, and evaluate their performance through the following procedures.

\begin{enumerate}
    \item \textbf{Permute data to break the dependence.}  
    We first permute the outcomes within the whole population, generating $B = 500$ permuted samples.  
    This permutation effectively removes any treatment effect, ensuring that the treatment and control groups have the same expected outcome level.
    
    \item \textbf{Add signal back to the data.}  
    For these $500$ permuted samples, we manually introduce a treatment effect by increasing the mean outcome (i.e. the major CVD event occurrence) in the control group, 
    since the new treatment is intended to reduce the risk of CVD. Let $N_c^0$ denote the total number of control-group participants who did not experience a CVD event. We set $n_0$ of these zero outcomes to $1$, where $n_0 \sim \mathrm{Bin}(N_c^0,\eta)$. The added signal $\eta$ varies within the set $\{0, 0.015, 0.03, 0.045, 0.06\}$.
    
    \item \textbf{Adaptively sample the data to maximize welfare.}  
    For each permuted sample, we simulate adaptive sampling. We first draw $N_1 = 1000$ random samples. Because the new treatment could be beneficial for the patients, we apply the $\varepsilon$-greedy algorithm~\eqref{eq:eps-greedy} to collect additional $N_2 = 1000$ samples in the second stage, encouraging assignment of new treatment. We vary $\varepsilon \in \{0.1, 0.2, 0.4\}$.
    
    \item \textbf{Evaluate Type-I error control and power.}
    We apply the \edit{nine} tests introduced in Section~\ref{sec:simulation} to the synthetically generated data. We consider the right-sided test to see if the CVD event rate in the control group ($\E[Y_{uN}(0)]$) is higher than that in the treatment group ($\E[Y_{uN}(1)]$). We evaluate Type-I error control before introducing signal and statistical power after introducing the signal.
\end{enumerate}

\subsection{Additional results for semi-synthetic data analysis}

We present additional results for the semi-synthetic data analysis in Section~\ref{sec:semi-sythetic-data}. Following the same procedure outlined in Section~\ref{sec:semi-sythetic-data}, we use $5000$ permuted sample to compute the $p$-values for the \edit{$9$} tests. The QQ-plot is shown in Figure \ref{fig:semi-synthetic-qq-plot}. \edit{The concentration test is excluded from the QQ plot as it produces all $p$-values equal to $1$.} We can see the message is similar to the one in Figure \ref{fig:simulation-qq-plot-eps-greedy} in Section~\ref{sec:semi-sythetic-data} when there is no signal.

\begin{figure}[!htb]
	\centering
	\begin{subfigure}{\textwidth}
		\centering
		\includegraphics[width=0.95\textwidth]{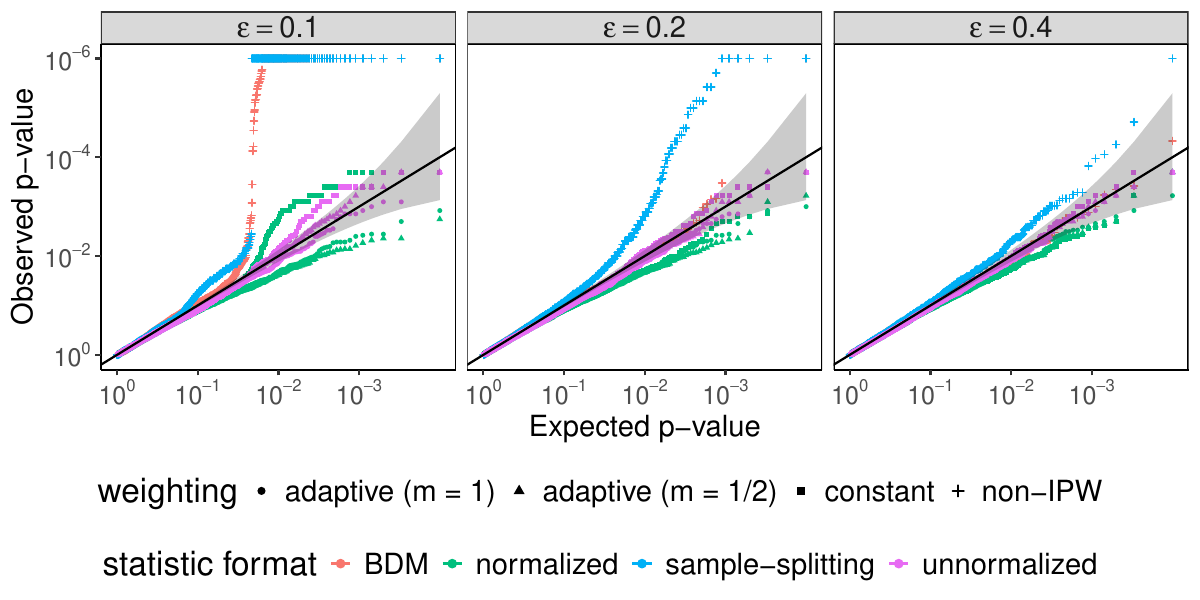}
	\end{subfigure}
	\caption{\edit{QQ-plot for the semi-synthetic data analysis (excluding concentration test).}}
	\label{fig:semi-synthetic-qq-plot}
\end{figure}

\edit{
\section{Bernoulli with small success rate}\label{sec:bernoulli_small_p}

The semi-synthetic data analysis in Section~\ref{sec:semi-sythetic-data} involves a binary outcome with a small baseline success rate (CVD event rate $\approx 0.05$). To better understand inference in this challenging regime, we conduct a dedicated simulation that directly mimics the semi-synthetic setup.

\paragraph{Data generation procedure.}
We generate Bernoulli outcomes with a small baseline success probability $p = 0.05$. The treatment effect is introduced by flipping zeros to ones with probability $\eta$ in the treatment group, giving an effective treatment mean of $0.05 + 0.95\eta$. This mirrors the signal mechanism in the semi-synthetic data analysis, where signal is added by converting non-events to events. We set the total sample size $N = 2000$ ($N_1 = N_2 = 1000$) and vary the signal strength $\eta \in \{0, 0.015, 0.03, 0.045, 0.06, 0.075\}$, matching the semi-synthetic signal range. We consider both Thompson sampling and $\varepsilon$-greedy allocation with $\varepsilon \in \{0.05, 0.1, 0.2\}$.

\paragraph{Results.}
The rejection rate and QQ plots are presented in Figures~\ref{fig:bernoulli-small-p-rejection-thompson}--\ref{fig:bernoulli-small-p-qq-eps-greedy}. Since the signal is one-directional (the treatment can only increase the success rate), we focus on the right-sided test.

\begin{figure}[!htb]
	\centering
	\includegraphics[width=0.95\textwidth]{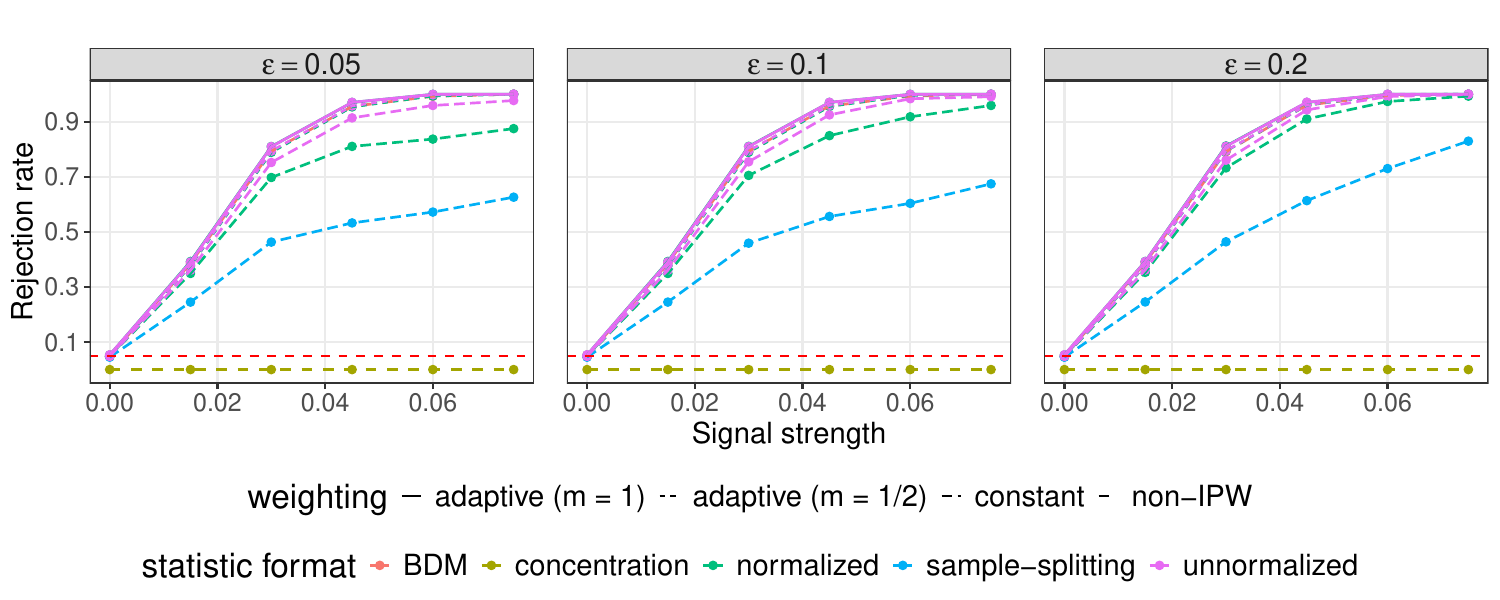}
	\caption{Rejection rate for the Bernoulli small success rate ($p = 0.05$) simulation under Thompson sampling.}
	\label{fig:bernoulli-small-p-rejection-thompson}
\end{figure}

\begin{figure}[!htb]
	\centering
	\includegraphics[width=0.95\textwidth]{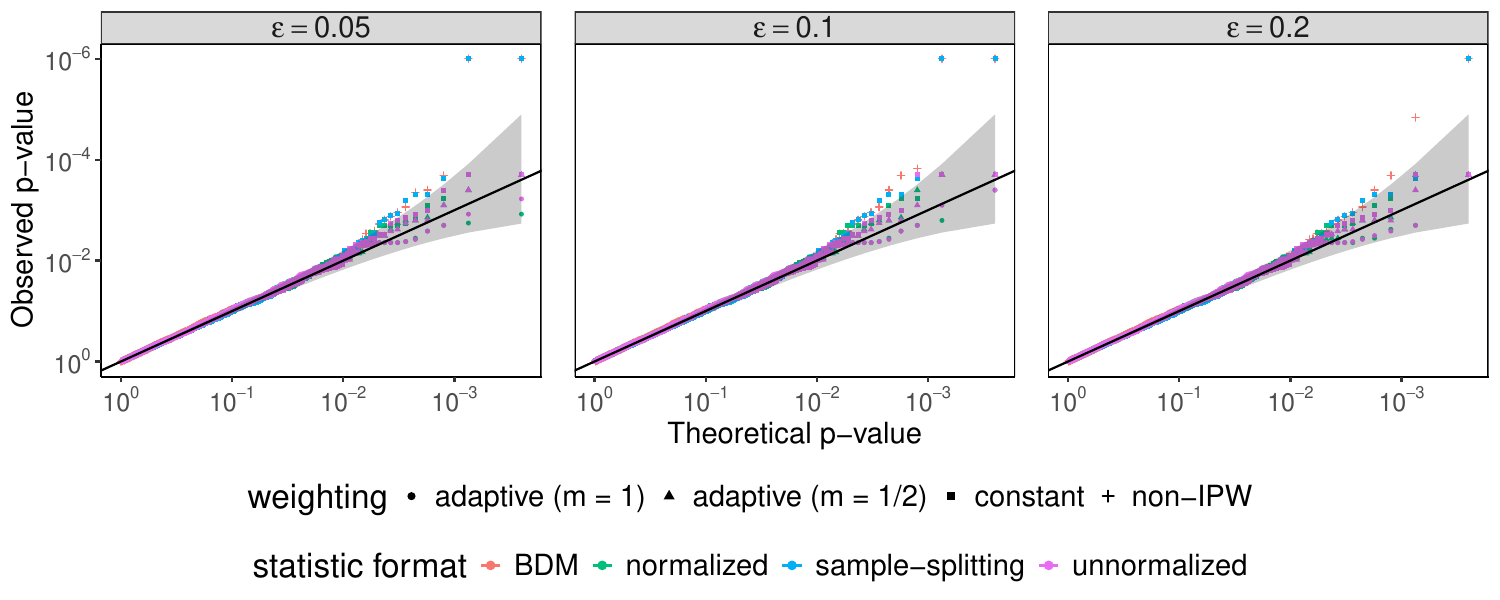}
	\caption{QQ plot for the Bernoulli small success rate ($p = 0.05$) simulation under Thompson sampling (excluding concentration test).}
	\label{fig:bernoulli-small-p-qq-thompson}
\end{figure}

\begin{figure}[!htb]
	\centering
	\includegraphics[width=0.95\textwidth]{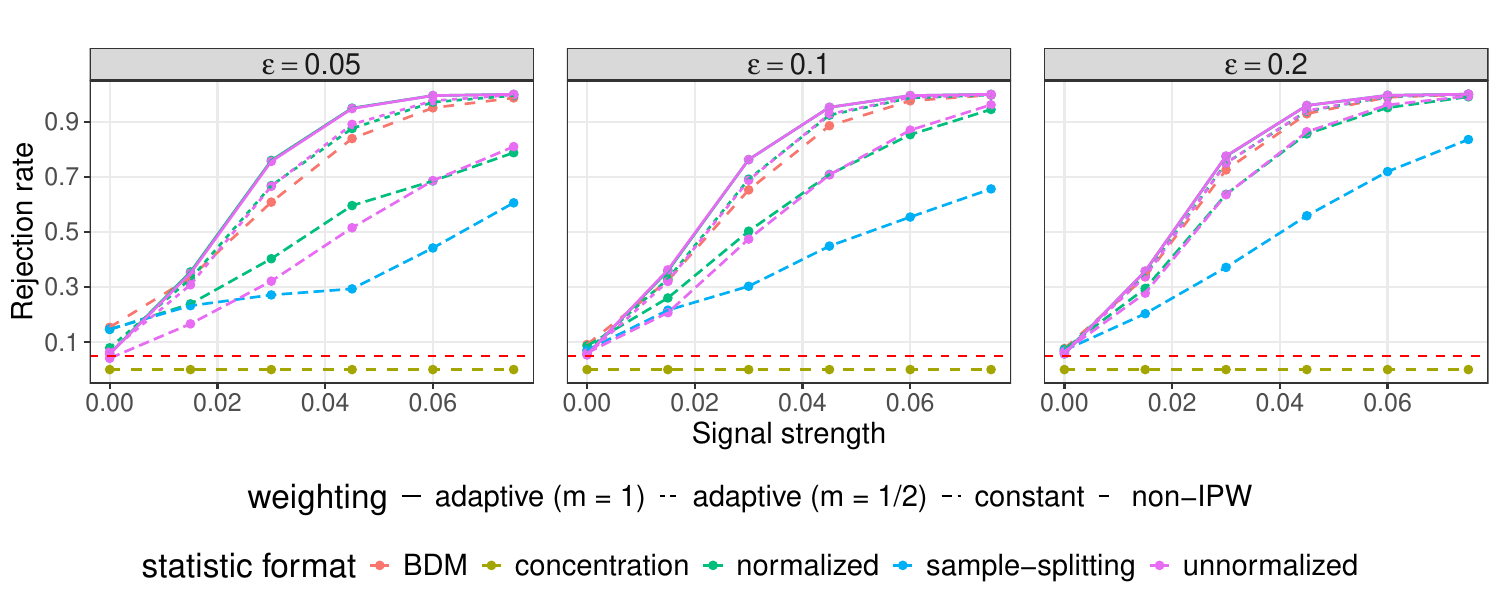}
	\caption{Rejection rate for the Bernoulli small success rate ($p = 0.05$) simulation under $\varepsilon$-greedy sampling.}
	\label{fig:bernoulli-small-p-rejection-eps-greedy}
\end{figure}

\begin{figure}[!htb]
	\centering
	\includegraphics[width=0.95\textwidth]{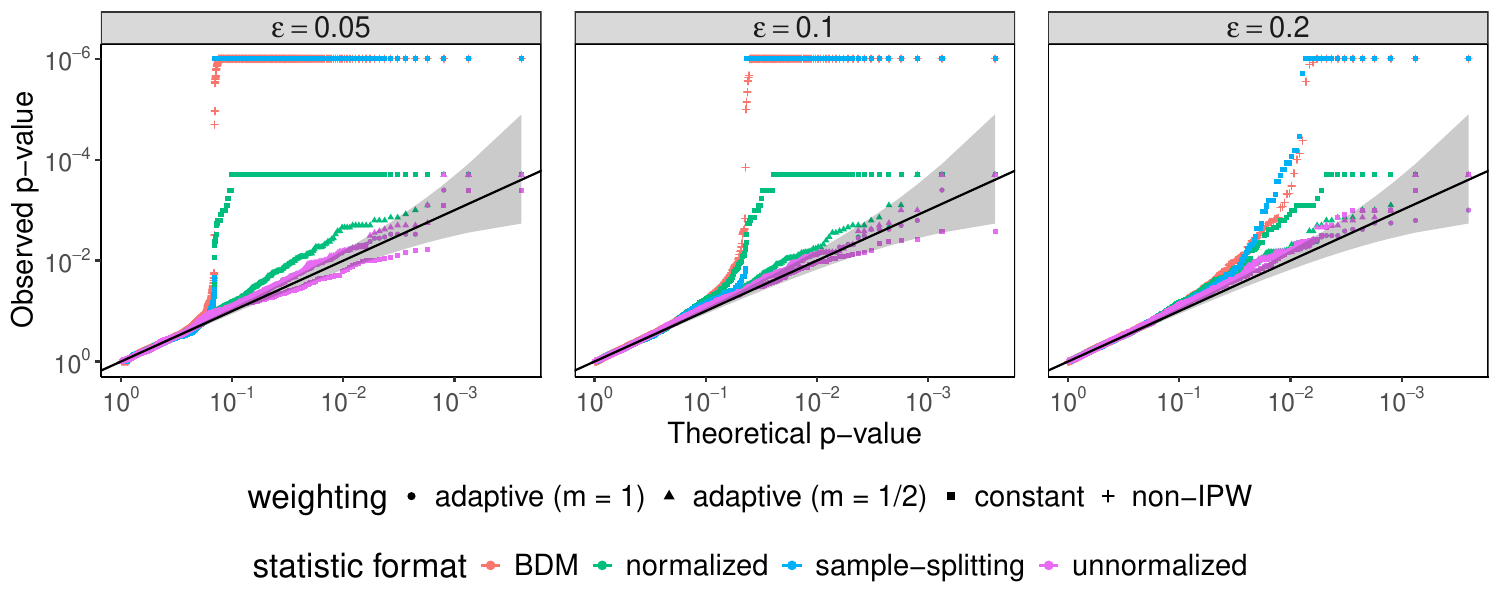}
	\caption{QQ plot for the Bernoulli small success rate ($p = 0.05$) simulation under $\varepsilon$-greedy sampling (excluding concentration test).}
	\label{fig:bernoulli-small-p-qq-eps-greedy}
\end{figure}

The results confirm several findings from the semi-synthetic analysis:
\begin{itemize}
    \item The concentration test is extremely conservative across all settings, producing $p$-values equal to $1$ in virtually all replications. This is expected: concentration-based bounds are designed for worst-case finite-sample validity and are not optimized for power.
    \item The BDM test controls Type-I error but exhibits lower power than the adaptive IPW methods, consistent with the semi-synthetic findings.
    \item Among IPW methods, adaptive weighting provides substantial power gains over constant weighting, and the proposed tests maintain proper calibration even with the sparse binary outcome.
\end{itemize}
}

\section{Intuition from data collection and double-dipping}\label{sec:intuition_limits}

In this section, we discuss the intuition behind the different phases of the limiting distribution, drawing on the data collection procedure outlined in Section~\ref{sec:data_collection_selection}. We argue that the key driver of the various limiting behaviors is the signal strength. To build intuition, consider the $\varepsilon$-greedy selection algorithm as an illustrative example, and suppose we use a fixed clipping rate $l_N = \varepsilon > 0$. Then the updated treatment assignment probability for treatment $s$ after the pilot stage is given by
\small
\[
e_N(s, \mathcal{H}_{1}) = \frac{\varepsilon}{2} \cdot \indicator(D_N(s) < 0) + \left(1 - \frac{\varepsilon}{2} \right) \cdot \indicator(D_N(s) \geq 0),\ D_N(s) \equiv S_N^{(1)}(s) - S_N^{(1)}(1 - s).
\]
\normalsize

When the signal strength $c_N$ converges to a finite constant, the pilot-stage data does not provide sufficiently strong evidence for the $\varepsilon$-greedy algorithm to confidently prefer one arm over the other based on the summary statistic $D_N(s)$. As a result, the assignment remains uncertain, and this added randomness in the selection procedure leads to a non-normal limiting distribution. In contrast, when the signal strength is strong (i.e., $c = -\infty$), the algorithm can confidently conclude that treatment $1$ is superior to treatment $0$ in terms of the expected potential outcome, with ignorable randomness in the selection. In this case, the limiting distribution approaches a normal distribution.

An alternative perspective comes from the concept of ``double-dipping.'' It is widely believed that sample splitting is necessary to avoid selection bias~\citep{fithian2014optimal}. When the signal is weak (i.e., $c$ is finite), the selection procedure exhibits non-negligible randomness, and reusing data without accounting for this selection randomness can be problematic. However, when the signal is strong and the selection becomes deterministic, the impact of ``double-dipping'' becomes negligible. In this regime (i.e., $c = -\infty$), the two-stage data collection process can be viewed as a non-adaptive procedure: sample treatment $0$ ($1$) with probability $e(0)$ ($e(1)$) in the first stage and with probability $\varepsilon/2$ ($1 - \varepsilon/2$) in the second stage. Standard asymptotic inference applies, ensuring asymptotic validity. 

\section{A closer look at literature}\label{sec:inspection_literature}

\subsection{Investigating \citet{Zhang2020}}\label{sec:inspection_Kelly}

They show that the non-normal limiting distribution can happen for classical sample mean statistic under a batched bandit setup (see Figure 1 in their paper). To address this issue, the same paper proposes a batch-wise H\'{a}jek estimator. In particular, for each batch $t\in[2]$, the test statistic can be computed as
\small
\begin{align}\label{eq:batch-wise_Hajek}
	\sqrt{\frac{(\sum_{u=1}^{N_t}A_{uN}^{(t)})(\sum_{u=1}^{N_t}(1-A_{uN}^{(t)}))}{N}}\left(\frac{\sum_{u=1}^{N_t}(1-A_{uN}^{(t)})Y_{uN}^{(t)}}{\sum_{u=1}^{N_t}(1-A_{uN}^{(t)})}-\frac{\sum_{u=1}^{N_t}A_{uN}^{(t)}Y_{uN}^{(t)}}{\sum_{u=1}^{N_t}A_{uN}^{(t)}}-\Delta_n\right)
\end{align}
\normalsize
where $\Delta_n\equiv \E[Y_{uN}(0)]-\E[Y_{uN}(1)]$. A crucial assumption they make to recover the conditional asymptotic normality is that the conditional variance of the observed outcome is constant, i.e., $\mathrm{Var}[Y_{u}^{(t)}|\mathcal{H}_{t-1}]=\textnormal{Cons.}\in (0,\infty)$. This assumption is very stringent and essentially rules out the possible heterogeneity in the distribution of potential outcomes $Y_{u}(0),Y_{u}(1)$. Let us consider a concrete data generating model to illustrate the failure of this assumption. Consider the potential outcome distribution $Y_{u}(0)\sim N(0,1)$ and $Y_u(1)\sim N(0,4)$. Then we can compute, assuming the $\varepsilon$-greedy algorithm between stages, using the consistency assumption
\begin{align*}
	\mathrm{Var}[Y_{u}^{(t)}|\mathcal{H}_{t-1}]
	&
	=\E[(Y_{u}^{(t)})^2|\mathcal{H}_{t-1}]-(\E[Y_{u}^{(t)}|\mathcal{H}_{t-1}])^2\\
	&
	=e(0,\mathcal{H}_{t-1})\E[Y_u(0)^2]+e(1,\mathcal{H}_{t-1})\E[Y_u(1)^2]\\
	&
	=e(0,\mathcal{H}_{t-1})+4e(1,\mathcal{H}_{t-1})\\
	&
	=1+3e(1,\mathcal{H}_{t-1})\\
	&
	=1 + 3\left(1-\frac{\varepsilon}{2}\right)\indicator(S_N^{(1)}(0)<S_N^{(1)}(1))+\frac{3\varepsilon}{2}\indicator(S_N^{(1)}(0)\geq S_N^{(1)}(1)).
\end{align*}

\edit{
\paragraph{On variance standardization.} The computation above shows that the constant conditional variance assumption in \citet{Zhang2020} fails under heterogeneous potential outcome variances. However, arm-specific variance standardization --- dividing by the arm-specific variance $\sigma_s^2$ so that the batch-wise statistics have unit variance per arm --- can circumvent this issue and restore conditional normality for the batch-wise statistics, as observed by \citet{Hirano2023}. Regarding power, \citet{Hirano2023} further show (Section~5.2) that in the Gaussian response setup, tests based on the non-normal pooled statistic are more powerful than those based on batch-wise normalized statistics. Our simulations in Section~\ref{sec:simulation} confirm and extend this finding beyond the Gaussian case: while pooled IPW methods generally outperform batch-wise methods under continuous outcomes, the BDM test (the batch-wise method of \citet{Zhang2020}) can be more powerful under discrete outcomes. This advantage is not universal, however --- under sparse discrete outcomes (Section~\ref{sec:semi-sythetic-data} and Appendix~\ref{sec:bernoulli_small_p}), our methods with adaptive weighting achieve higher power than BDM. These findings suggest that no single method uniformly dominates, and the relative power advantage depends on the outcome distribution and its sparsity structure.
}

\subsection{Investigating \citet{Hadad2021}}\label{sec:inspection_Victor}

\citet{Hadad2021} consider a class of WIPW estimators. Their paper observes that when estimating the expected outcome $\E[Y_{uN}(s)]$, even the classical inverse probability weighted estimator can exhibit non-normal behavior (see Figure 1 in \citeauthor{Hadad2021}). To address this, \citet{Hadad2021} shows asymptotic normality can be recovered through the use of adaptive weighting—similar in spirit to the method proposed in our work (see Theorem 4 in their paper). However, their theoretical guarantees rely on a key assumption: that the ratio of variance estimators converges to a constant. This assumption, however, fails to hold under the null hypothesis $H_{0N}$.

Indeed, as we will demonstrate shortly, the ratio of variance estimators converges to a non-degenerate, positive random variable whenever $c \in (-\infty, \infty)$—that is, under both the null hypothesis $H_{0N}$ and the weak signal regime $H_{2N}$. Moreover, our simulations reveal that applying a normal approximation in the absence of this convergence can lead to inflated Type-I error rates when using the WIPW estimator. Consequently, the theoretical results in \citet{Hadad2021} are not directly applicable for hypothesis testing, due to the unknown limiting distribution under the null.
 
To be specific, Theorem 4 in \citet{Hadad2021} hinges on the assumption that $\hat V_N(0)/\hat V_N(1)$ converges weakly to a constant, where $\hat V_N(s)$ is defined as in Equation~\eqref{eq:variance-estimator}. However, under the null $H_{0N}$ and local alternative $H_{2N}$, this condition generally fails. As shown in \textbf{step 2} and \textbf{step 3} in section \ref{sec:proof_roadmap}: 
with adaptive weighting ($m=1/2$) and $q_t=1/2$,
\begin{align}\label{eq:variance-ratio-convergence}
	\hat V_N(0)/\hat V_N(1)\convd \frac{\sum_{t=1}^2 V^{(t)}(0)/ (\sum_{t=1}^2 (H^{(t)}(0))^{1/2})^2}{\sum_{t=1}^2 V^{(t)}(1)/ (\sum_{t=1}^2 (H^{(t)}(1))^{1/2})^2}.
\end{align}
where $V^{(t)}(s)$ and $H^{(t)}(s)$ are defined as in~\eqref{eq:limiting_variance} and \eqref{eq:limiting_probabilities}.

To further illustrate this issue, we present empirical evidence showing that using the normal distribution to calibrate the test statistic leads to inflated Type-I error. Consider a two-batch setup under the following model:
\begin{align*}
	Y_u(0)\sim N(2, 1),\ Y_u(1)\sim N(2, 9).
\end{align*}
with a total sample size of $N = 500$ and equal batch sizes $N_1 = N_2 = 250$. We apply the $\varepsilon$-greedy algorithm with $\varepsilon = 0.05$, and compute the weighted augmented IPW estimator (WAIPW), in line with the original setup in \citet{Hadad2021}:
\begin{align*}
	\WAIPW(s)\equiv \sum_{t=1}^2 \frac{h_{N}^{(t)}(s)}{\sum_{t=1}^2 h_N^{(t)}(s)}\Gamma_{N}^{(t)}(s),
\end{align*}
where 
\begin{align*}
	\Gamma_{N}^{(t)}(s)\equiv \frac{\sum_{u=1}^{N_t}\Gamma_{u}^{(t)}(s)}{N_t}+\hat \E[Y_u(s)]\quad\text{and}\quad\Gamma_u^{(t)}(s)\equiv \frac{\indicator(A_u^{(t)}=s)(Y_{u}^{(t)}-\hat \E[Y_u(s)])}{\P[A_{u}^{(t)}=s|\mathcal{H}_{t-1}]}.
\end{align*}
According to Theorem 4 in \cite{Hadad2021}, we know 
\begin{align*}
	\frac{\mathrm{WAIPW}(0)-\mathrm{WAIPW}(1)}{(\tilde V_N(0)+\tilde V_N(1))^{1/2}}\convd N(0,1),\ \tilde V_N(s)\equiv \frac{\sum_{t=1}^2(h_{N}^{(t)}(s))^2\sum_{u=1}^{N_t}  \left(\Gamma_{u}^{(t)}(s)\right)^2}{(\sum_{t=1}^2h_{N}^{(t)}(s)N_t)^2}.
\end{align*} 
Setting $\hat \E[Y_u(0)]=2$ and $\hat \E[Y_u(1)]=6$, we can satisfy the required condition that for \textbf{at least one} $s\in\{0,1\},\hat \E[Y_{u}(s)]\rightarrow \E[Y_{u}(s)]$. Then we use $N(0,1)$ to calibrate the test statistic and obtain the Type-I error control results as shown in Figure \ref{fig:failure_Hadad}. The results show substantial Type-I error inflation when the normal approximation is used.

\begin{figure}[!htb]
	\centering
	\includegraphics[width=0.95\textwidth]{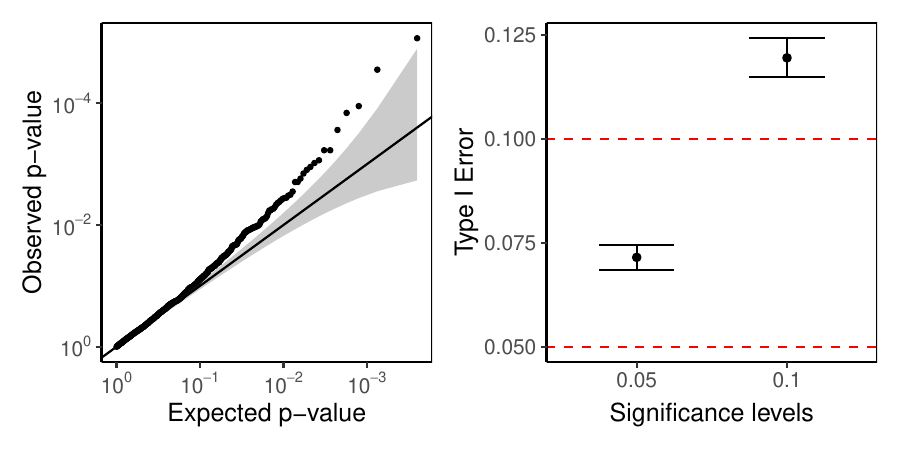}
	\caption{Type-I error inflation using normal approximation in \citet{Hadad2021}. The simulation is repeated for $2000$ times.}
	\label{fig:failure_Hadad}
\end{figure}

\subsection{Investigating \citet{Hirano2023,adusumilli2023optimal}}

To address non-normality directly, \citet{Hirano2023} develop asymptotic representations for a broad class of test statistics under batched designs, which are subsequently used to derive power functions and optimal tests in \citet{adusumilli2023optimal}. The validity of this theory requires establishing weak convergence of a vector of statistics, as well as assuming that the potential outcome distributions are differentiable in quadratic mean (QMD). In contrast, our Theorem~\ref{thm:weak_convergence_W_N} focuses on the widely used WIPW class of statistics and provides explicit weak convergence results that go beyond the QMD framework, based on transparent and interpretable assumptions. 

\subsection{Investigating \citet{che2023adaptive}}\label{sec:inspection_Che}

While \citet{che2023adaptive} broaden the framework of \citet{Hirano2023} to encompass settings beyond QMD, 
their emphasis lies in experimental design within batched bandit designs rather than in inferential validation. Specifically, they analyze the statistic  
\[
\frac{1}{\sqrt{N_t}}\sum_{u=1}^{N_t}A_{uN}^{(t)}\,Y_{uN}^{(t)},
\]  
deriving its asymptotic distribution under conditions similar to ours. Their results, 
however, are not sufficient for the purpose of inference.
For example, one can show that—after an additional scaling by $\sqrt{N_t}$—this statistic generally fails to converge to the target estimand $\E[Y_{uN}(s)]$, except in the degenerate case $\E[Y_{uN}(s)]=0$.
Moreover, the asymptotic distribution therein contains unknown parameters, 
making it infeasible to construct valid hypothesis tests or confidence intervals based on this statistic.

By contrast, our work prioritizes the inferential integrity of adaptive experiments. Our results incorporate consistent and pivotal estimators, which
guarantees that hypothesis tests are valid and that confidence intervals faithfully reflect uncertainty around the desired parameter.

\section{Probabilistic preliminaries}\label{sec:auxiliary_Notation}

\subsection{Bounded Lipschitz test function class}\label{sec:BL_distance}

We will use the following definition of convergence in distribution throughout the appendix.
\begin{definition}[Convergence in distribution]\label{def:convergence_distribution}
	Suppose $W_N\in\mathbb{R}^d$ is a sequence of random variables and $W\in\mathbb{R}^d$ is a random variable. We say $W_N$ converge in distribution to $W$ if for any bounded and continuous function $f:\mathbb{R}^d\rightarrow \mathbb{R}$, we have
	\begin{align*}
		\E[f(W_N)]\rightarrow \E[f(W)].
	\end{align*}
	Moreover, we use the notation $W_N\convd W$ to denote the convergence in distribution. If $W_N$ and $W$ are univariate, we will interchangeably use the following equivalent deifnition
	\begin{align*}
		\P[W_N\leq t]\rightarrow \P[W\leq t], \ \text{for each } t \in \R \text{ at which } t \mapsto \P[W \leq t] \text{ is continuous.}
	\end{align*}
\end{definition}

Beyond the bounded continuous functions, there are other classes of functions that are useful in the context of weak convergence. In particular, we will use the bounded Lipschitz function class to prove our main results. Suppose $f$ is a function from $\mathbb{R}^d$ to $\mathbb{R}$. We call $f$ is a $m$-Lipschitz function as long as $\|f\|_{\mathrm{L}}\leq m$, where \(\|f\|_{\mathrm{L}} \equiv \sup_{x \neq y \in \mathbb{R}^d} |f(x) - f(y)| / \|x - y\|_2\) and $\|\cdot\|_2$ is the Euclidean norm. Then we define 
\begin{align}\label{def:BL-function}
	\|f\|_{\mathrm{BL}}\equiv \|f\|_{\mathrm{L}}+\|f\|_{\infty}\quad\text{where}\quad \|f\|_{\infty}\equiv \sup_{x\in\mathbb{R}^d}|f(x)|.
\end{align}
\noindent The following lemma shows that the BL function class is rich enough to characterize the weak convergence of random variables.
\begin{lemma}[Theorem 11.3.3 in \citet{Dudley_2002}]\label{lem:Dudley_equivalence}
	Suppose the sequence of random variable $W_N\in\mathbb{R}$ and $W\in\mathbb{R}$ have law $\P_N$ and $\P$, respectively. Then the following two statements are equivalent:
	\begin{enumerate}
		\item $W_N\convd W$;
		\item $\left|\int f(x)\mathrm{d}\P_N(x)-\int f(x)\mathrm{d}\P(x)\right|\rightarrow0$ for any $f$ such that $\|f\|_{\mathrm{BL}}<\infty$.
	\end{enumerate}
\end{lemma}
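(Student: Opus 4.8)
The forward implication (1) $\Rightarrow$ (2) is immediate: every $f$ with $\|f\|_{\mathrm{BL}}<\infty$ is in particular bounded and continuous, so the convergence of $\int f\,\mathrm d\P_N$ to $\int f\,\mathrm d\P$ is just a special case of Definition~\ref{def:convergence_distribution}. All the content lies in the reverse implication (2) $\Rightarrow$ (1), which I would establish by first deducing the closed-set (portmanteau) inequality from convergence against bounded Lipschitz functions, and then upgrading that set-level estimate back to convergence of integrals against an arbitrary bounded continuous function.

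The first and crucial step passes from (2) to the one-sided estimate $\limsup_{N\to\infty}\P_N(F)\le\P(F)$ for every closed $F\subseteq\mathbb{R}$. The key device is the distance-based Lipschitz approximant: for $\varepsilon>0$ put $d(x,F)\equiv\inf_{y\in F}|x-y|$ and $f_\varepsilon(x)\equiv(1-d(x,F)/\varepsilon)^+$. Then $\|f_\varepsilon\|_\infty=1$ and $\|f_\varepsilon\|_{\mathrm L}\le 1/\varepsilon$, so $\|f_\varepsilon\|_{\mathrm{BL}}<\infty$, and one has the sandwich $\indicator_F\le f_\varepsilon\le\indicator_{F^\varepsilon}$ with $F^\varepsilon\equiv\{x:d(x,F)<\varepsilon\}$. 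Combining these bounds with (2) gives
\[
\limsup_{N\to\infty}\P_N(F)\le\limsup_{N\to\infty}\int f_\varepsilon\,\mathrm d\P_N=\int f_\varepsilon\,\mathrm d\P\le\P(F^\varepsilon).
\]
Since $F$ is closed, $F^\varepsilon\downarrow F$ as $\varepsilon\downarrow 0$, so letting $\varepsilon\to0$ and using continuity of $\P$ from above yields the closed-set inequality; taking complements gives the equivalent open-set inequality $\liminf_{N\to\infty}\P_N(G)\ge\P(G)$ for every open $G$.

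The final step upgrades the open-set estimate to convergence of $\int g\,\mathrm d\P_N$ for an arbitrary bounded continuous $g$, which is exactly statement (1). After shifting and scaling I may assume $0\le g\le M$. Writing $\int g\,\mathrm d\mu=\int_0^M\mu(\{g>t\})\,\mathrm dt$ by the layer-cake formula and noting $\{g>t\}$ is open (by continuity of $g$), Fatou's lemma and the open-set inequality give
\[
\liminf_{N\to\infty}\int g\,\mathrm d\P_N\ge\int_0^M\liminf_{N\to\infty}\P_N(\{g>t\})\,\mathrm dt\ge\int_0^M\P(\{g>t\})\,\mathrm dt=\int g\,\mathrm d\P.
\]
Applying the same bound to $M-g$, whose super-level sets $\{g<M-t\}$ are again open, produces $\limsup_{N\to\infty}\int g\,\mathrm d\P_N\le\int g\,\mathrm d\P$, so $\int g\,\mathrm d\P_N\to\int g\,\mathrm d\P$ and (1) follows; the univariate CDF reformulation in Definition~\ref{def:convergence_distribution} is then recovered by specializing $g$ to continuous approximants of $\indicator_{(-\infty,t]}$ at continuity points $t$ of the limit CDF.

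The main obstacle is the construction in the first step: one must choose test functions that are simultaneously bounded Lipschitz (so that hypothesis (2) applies) and sharp enough to pinch an indicator, and then order the two limits correctly — first $N\to\infty$ at fixed $\varepsilon$, then $\varepsilon\to0$ — so that the approximation error vanishes \emph{without} invoking tightness of $\{\P_N\}$, which is unavailable here. Once this set-level inequality is secured, the layer-cake and Fatou argument is routine.
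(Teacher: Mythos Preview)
Your proof is correct and is essentially the standard portmanteau argument for this classical result. Note, however, that the paper does not supply its own proof of this lemma: it is stated with attribution to Theorem~11.3.3 of \citet{Dudley_2002} and used as an off-the-shelf tool, so there is no in-paper proof to compare against. Your argument matches the textbook route (bounded Lipschitz approximants of closed-set indicators to obtain the portmanteau inequality, then layer-cake plus Fatou to recover convergence against all bounded continuous functions), and nothing more is needed here.
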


Despite the fruitful results in the literature on normal approximation on independent observations \citep{chatterjee2008multivariate} and weakly dependent observation \citet{chen2004normal}, these existing results do not apply directly to our case since the adaptive sampling scheme introduces a \textit{strong} dependence structure. We will develop new tools for proving our results, based on these existing results. Thus we review the relevant results in the following section. First comes the finite-sample bound proved in \citet{raivc2018multivariate}.

\begin{lemma}[\citet{raivc2018multivariate}, Theorem 2.15 and Equation 3.5]\label{lem:CLT_BL_distance}
	Let $W_{1N},\ldots, W_{NN}$ be a sequence of independent, identically distributed random vectors in $\mathbb{R}^k$. Suppose
	\begin{align*}
		\E[W_{uN}]=0,\ \E[W_{uN}W_{uN}^\top ]=\bm I_{k}.
	\end{align*}
	Let $W_N\equiv \sum_{u=1}^N W_{uN}/\sqrt{N}$ and $Z\sim N(\bm 0,\bm I_k)$. Then for any $g$ such that $\|g\|_{\mathrm{BL}}<\infty$,
	\begin{align*}
		\left|\E[g(W_N)]-\E[g(Z)]\right|\lesssim_{k} \|g\|_{\mathrm{BL}}\frac{\E[\|W_{uN}\|_2^3]+1}{\sqrt{N}}.
	\end{align*}
\end{lemma}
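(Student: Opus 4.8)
This is a multivariate Berry--Esseen bound for smooth test functions, and the cleanest route is the Lindeberg replacement (``smart path'') argument, using that $W_{uN}$ and a standard Gaussian agree in their first two moments. Concretely, I would enlarge the probability space to carry $Z_1,\dots,Z_N\iidsim N(\bm 0,\bm I_k)$ independent of $(W_{uN})_u$, so that $N^{-1/2}\sum_u Z_u\overset{d}{=}Z$, and telescope along the hybrid sums $U_u\equiv N^{-1/2}\big(\sum_{v\le u}Z_v+\sum_{v>u}W_{vN}\big)$, $u=0,\dots,N$:
\[
\E[g(W_N)]-\E[g(Z)]=\sum_{u=1}^N\big(\E[g(U_{u-1})]-\E[g(U_u)]\big),\quad U_{u-1}=V_u+\tfrac{W_{uN}}{\sqrt N},\ \ U_u=V_u+\tfrac{Z_u}{\sqrt N},
\]
where $V_u\equiv N^{-1/2}\big(\sum_{v<u}Z_v+\sum_{v>u}W_{vN}\big)$ is independent of the swapped pair $(W_{uN},Z_u)$.

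Next I would Taylor-expand $g$ around $V_u$ with exact integral remainder and use the moment structure to cancel the leading terms. The first-order terms vanish because $\E[W_{uN}]=\E[Z_u]=\bm 0$ and $V_u$ is independent of the pair; the ``diagonal'' second-order terms both equal $N^{-1}\E[\operatorname{tr}\nabla^2 g(V_u)]$ since $\E[W_{uN}W_{uN}^\top]=\E[Z_uZ_u^\top]=\bm I_k$, and therefore cancel between the two increments. Each summand then collapses to a genuinely super-quadratic remainder,
\[
\E[g(U_{u-1})]-\E[g(U_u)]=\tfrac1N\E\!\int_0^1(1-t)\Big(W_{uN}^\top\big[\nabla^2 g(V_u+\tfrac{tW_{uN}}{\sqrt N})-\nabla^2 g(V_u)\big]W_{uN}-\big(\text{same with }Z_u\big)\Big)dt.
\]

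For the Gaussian increment $Z_u$ one may integrate by parts in its coordinates, trading a factor of $Z_u$ for a derivative of $g$ and obtaining a bona fide third-order estimate of size $\lesssim N^{-3/2}\|\nabla g\|_{\mathrm L}\E\|Z_u\|_2^3$; tracking the resulting one-dimensional Gaussian integral is what produces the constant $\sqrt{2\pi}/3$. For the non-Gaussian increment $W_{uN}$ no further derivative is available, so I would fall back on the two a priori facts $\|\nabla^2 g(V_u+h)-\nabla^2 g(V_u)\|_{\mathrm{op}}\le 2\|\nabla g\|_{\mathrm L}$ and $|g(V_u+h)-g(V_u)|\le\|g\|_{\mathrm L}\|h\|_2$, and split the $W_{uN}$-contribution by truncating $\|W_{uN}\|_2$ at a level of order $\sqrt N$: the atypically large part has probability controlled by Markov's inequality applied to $\E\|W_{uN}\|_2^4$ and is absorbed by the zeroth-order Lipschitz bound, generating the term $\tfrac{\|g\|_{\mathrm L}}{\sqrt N}(\E\|W_{uN}\|_2^4)^{1/2}$, while the typical part feeds the $\|\nabla g\|_{\mathrm L}$ third-moment term. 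Summing the $N$ increments removes one factor $\sqrt N$, and bounding $\E\|Z_u\|_2^3$ by the corresponding moments of $W_{uN}$ folds the Gaussian side into the stated expression.

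The step I expect to be the main obstacle is the control of the non-Gaussian remainder. Since only $\nabla g$, and not $\nabla^2 g$, is assumed Lipschitz, there is no direct third-order Taylor estimate, and the crude second-order bound fails to vanish after summation; a plain Gaussian mollification of $g$ restores smoothness but only yields a geometric-mean bound in $\|g\|_{\mathrm L}$ and $\|\nabla g\|_{\mathrm L}$. Arranging the truncation (equivalently, the perturbative bookkeeping of \citet{chatterjee2008multivariate}) so that these two quantities enter as an \emph{additive} pair of $O(N^{-1/2})$ contributions is the delicate part; for the present paper one simply invokes this result as a black box.
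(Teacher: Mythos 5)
First, note that the paper does not prove this statement at all: Lemma~\ref{lem:CLT_BL_distance} is imported verbatim as Theorem~3.1 of \citet{chatterjee2008multivariate} and used as a black box (the paper's own technical work starts only at Lemma~\ref{lem:W_1_bound}, where the smoothing argument you mention is carried out). So the relevant comparison is with the cited reference, whose proof is Stein's method with a multivariate exchangeable pair $(W,W')$ obtained by resampling a uniformly chosen summand, not a Lindeberg swap. There, the $\frac{\|g\|_{\mathrm L}}{2\sqrt N}(\E\|W_{uN}\|_2^4)^{1/2}$ term arises from the fluctuation of the conditional covariance $\E[(W'-W)(W'-W)^\top\mid W]$ around $\tfrac{2}{N}\bm I_k$ (controlled by Cauchy--Schwarz through the fourth moment), multiplied by a Hessian bound on the Ornstein--Uhlenbeck/Stein solution that depends only on $\|g\|_{\mathrm L}$ via Gaussian integration by parts inside the semigroup; the $\frac{\sqrt{2\pi}}{3\sqrt N}\|\nabla g\|_{\mathrm L}\E\|W_{uN}\|_2^3$ term comes from the cubic Taylor remainder paired with a Lipschitz bound on that solution's Hessian proportional to $\|\nabla g\|_{\mathrm L}$. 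This is the device that produces the additive dependence on $\|g\|_{\mathrm L}$ and $\|\nabla g\|_{\mathrm L}$.

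Your Lindeberg sketch has a genuine gap exactly at the step you flag, and the fixes you propose do not close it. Since $g$ is only $C^2$, the per-swap second-order remainder admits no bound better than $\tfrac{1}{N}\|\nabla g\|_{\mathrm L}\E\|W_{uN}\|_2^2$ on the \emph{typical} event: the bracket $\nabla^2 g(V_u+tW_{uN}/\sqrt N)-\nabla^2 g(V_u)$ is merely bounded by $2\|\nabla g\|_{\mathrm L}$, with no quantitative smallness in $\|W_{uN}\|/\sqrt N$, so after summing the $N$ increments you are left with a term of order $\|\nabla g\|_{\mathrm L}\,k$ that does not vanish; truncating $\|W_{uN}\|_2$ at level $c\sqrt N$ only handles the rare large-increment event (which is indeed absorbed by the zeroth-order Lipschitz bound) and does nothing for this dominant contribution. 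Likewise, the Gaussian-side integration by parts you invoke to ``trade a factor of $Z_u$ for a derivative of $g$'' differentiates the Hessian and therefore requires third derivatives of $g$, which are not assumed; the constant $\sqrt{2\pi}/3$ cannot be recovered this way. In short, the additive $O(N^{-1/2})$ bound in $\|g\|_{\mathrm L}$ and $\|\nabla g\|_{\mathrm L}$ is not reachable by Lindeberg replacement plus truncation alone; some analogue of the Stein-solution regularity estimates (or another mechanism converting the fourth moment into a covariance-fluctuation term) is needed, which is precisely what the exchangeable-pair proof supplies and what the present paper simply cites.
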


The following Corollary is a direct application of Lemma~\ref{lem:CLT_BL_distance}.
\begin{corollary}[Upper bound with bounded Lipschitz function]\label{cor:W_1_bound}
	Suppose $W_{uN}\in\mathbb{R}^k$ are i.i.d. random variables for any fixed $N\in\mathbb{N}_+$. Then if we have $\E[W_{uN}]=\bm 0,\E[W_{uN}W_{uN}^\top]=\bm I_k$, then for any sequence of function $r_N$ such that $\|r_N\|_{\mathrm{BL}}\leq 1$, we have
	\begin{align*}
		\left|\E\left[r_N\left(\frac{1}{\sqrt{N}}\sum_{u=1}^N W_{uN}\right)\right]-\E\left[r_N\left(Z\right)\right]\right|\lesssim_{k} \frac{\E[\|W_{uN}\|_2^3]+1}{\sqrt{N}}.
	\end{align*}
	where $Z\sim N(\bm 0,\bm I_k)$.
\end{corollary}

\subsection{Preliminaries on regular conditional distribution}\label{sec:RCD_preliminary}

To better understand the argument involving conditional distribution, we briefly discuss the basic definition of regular conditional distribution. Let $\mathcal{B}(\mathbb{R}^N)$ be the Borel $\sigma$-algebra on $\mathbb{R}^N$ and $\Omega,\mathcal{F}_N$ be the sample space and a sequence of $\sigma$-algebras. For any $N\in\mathbb{N}_+,\kappa_N:\Omega\times \mathcal{B}(\mathbb{R}^N)$ is a regular conditional distribution of $W_N\equiv (W_{1N},\ldots,W_{NN})$ given $\mathcal{F}_N$ if 
\begin{align*}
	\omega\mapsto \kappa_N(\cdot,B) \text{ is measurable with respect to $\mathcal{F}_N$ for any fixed $B\in\mathcal{B}(\mathbb{R}^N)$};
	&
	\\
	B\mapsto \kappa_N(\omega,B) \text{ is a probability measure on }(\mathbb{R}^N,\mathcal{B}(\mathbb{R}^N)) \text{ for any }\omega\in\Omega;
	&
	\\
	\kappa_N(\omega,B)=\P[(W_{1N},\ldots,W_{NN})\in B|\mathcal{F}_N](\omega)\text{ for almost all }\omega\in\Omega\text{ and all }B\in\mathcal{B}(\mathbb{R}^N).&
\end{align*}
The following lemma from \citet[Theorem 8.37]{Lista2017} ensures the general existence of regular conditional distribution.
\begin{lemma}[Theorem 8.37 in \citet{Lista2017}]\label{lem:Klenke_Thm_8.37}
  Suppose $(\Omega,\mathcal{G},\P)$ is the Probability triple. Let $\mathcal{F}\subset \mathcal{G}$ be a sub-$\sigma$-algebra. Let $Y$ be a random variable with values in a Borel space $(E,\mathcal{E})$ (for example, $E$ is Polish, $E=\mathbb{R}^k$). Then there exists a regular conditional distribution $\kappa_{Y,\mathcal{F}}$ of $Y$ given $\mathcal{F}$.
\end{lemma}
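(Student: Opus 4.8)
\textbf{Proof plan for Lemma~\ref{lem:Klenke_Thm_8.37}.}

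The statement is a classical result on the existence of regular conditional distributions for random variables valued in a Borel space, and the plan is to follow the standard measure-theoretic construction, which is exactly the argument given in \citet[Theorem 8.37]{Lista2017}. First I would reduce the general Borel-space case to the case $(E,\mathcal{E}) = (\mathbb{R},\mathcal{B}(\mathbb{R}))$: since $(E,\mathcal{E})$ is a Borel space, there is a bi-measurable bijection $\varphi$ between $E$ and a Borel subset of $\mathbb{R}$ (or of $[0,1]$), and a regular conditional distribution for $\varphi(Y)$ given $\mathcal{F}$ can be pushed forward through $\varphi^{-1}$ to yield one for $Y$; measurability in $\omega$ and the probability-measure property in the second argument are both preserved under this transport, and the defining identity $\kappa(\omega,B) = \P[Y \in B \mid \mathcal{F}](\omega)$ a.s.\ transfers because $\varphi$ is bi-measurable. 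So it suffices to construct the RCD on the real line.

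On $\mathbb{R}$ the construction proceeds through conditional distribution functions. For each rational $q \in \mathbb{Q}$, fix a version of the conditional expectation $F(\omega,q) \equiv \E[\indicator(Y \le q) \mid \mathcal{F}](\omega)$. Using the countability of $\mathbb{Q}$ and the monotonicity, right-continuity-in-the-limit, and normalization properties of conditional expectation (each holding almost surely), one finds a single $\mathcal{F}$-measurable event $\Omega_0$ with $\P[\Omega_0] = 1$ on which $q \mapsto F(\omega,q)$ is nondecreasing on $\mathbb{Q}$, satisfies $\lim_{q \to -\infty} F(\omega,q) = 0$, $\lim_{q \to +\infty} F(\omega,q) = 1$, and $\lim_{q \downarrow r} F(\omega,q) = F(\omega,r)$ for every rational $r$. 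For $\omega \in \Omega_0$ one then defines $\tilde F(\omega,x) \equiv \inf\{ F(\omega,q) : q \in \mathbb{Q},\, q > x\}$, which is a genuine cumulative distribution function in $x$; for $\omega \notin \Omega_0$ one sets $\tilde F(\omega,\cdot)$ equal to some fixed CDF (say that of $\delta_0$). Let $\kappa_{Y,\mathcal{F}}(\omega,\cdot)$ be the Lebesgue--Stieltjes probability measure associated with $\tilde F(\omega,\cdot)$. By construction $B \mapsto \kappa_{Y,\mathcal{F}}(\omega,B)$ is a probability measure for every $\omega$. For the measurability of $\omega \mapsto \kappa_{Y,\mathcal{F}}(\omega,B)$, one checks it first for $B = (-\infty,x]$ (where it follows from $\tilde F(\cdot,x)$ being an $\mathcal{F}$-measurable limit of $\mathcal{F}$-measurable maps) and then invokes a Dynkin/$\pi$--$\lambda$ argument to pass to all of $\mathcal{B}(\mathbb{R})$.

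It remains to verify the defining identity $\kappa_{Y,\mathcal{F}}(\omega, B) = \P[Y \in B \mid \mathcal{F}](\omega)$ for a.e.\ $\omega$ and all $B \in \mathcal{B}(\mathbb{R})$. One shows $\E[\indicator_A \kappa_{Y,\mathcal{F}}(\cdot,B)] = \P[A \cap \{Y \in B\}]$ for every $A \in \mathcal{F}$: this holds for $B = (-\infty,q]$, $q \in \mathbb{Q}$, directly from the choice of $F(\cdot,q)$ as a version of $\E[\indicator(Y \le q)\mid\mathcal{F}]$, extends to all half-lines $(-\infty,x]$ by monotone limits, and then to the full Borel $\sigma$-algebra by another $\pi$--$\lambda$ argument, using that both sides are finite measures in $B$. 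The only mild subtlety — and the place requiring the most care — is the bookkeeping that produces the single full-measure event $\Omega_0$ on which all the CDF properties hold simultaneously: each property fails only on a null set, and one takes the (countable) union of these exceptional null sets over $q \in \mathbb{Q}$ and over the finitely many limiting properties. This is the main obstacle in the sense that it is the step where one must be careful to use countability to avoid an uncountable union of null sets; everything else is routine. Since this is precisely the content of \citet[Theorem 8.37]{Lista2017}, we do not reproduce the full details here.
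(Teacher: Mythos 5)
Your sketch is correct and is precisely the standard argument behind the cited result: the paper itself gives no proof of this lemma, citing Theorem 8.37 of the referenced textbook, and your reduction to the real line via a Borel isomorphism followed by the conditional-CDF construction on the rationals (with the countable-union-of-null-sets bookkeeping and the $\pi$--$\lambda$ extension) is exactly that textbook proof. No gaps to flag.
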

\noindent Result from \citet[Theorem 8.38]{Lista2017} guarantees that the conditional expectation and the integral of measurable function with respect to regular conditional distribution are almost surely same.

\begin{lemma}[Modified version of Theorem 8.38 in \citet{Lista2017}]\label{lem:Klenke_Thm_8.38}
  Let $Y$ be a random variable $(\Omega,\mathcal{G},\mathbb{P})$ taking values in a Borel space $(E,\mathcal{E})$. Let $\mathcal{F}\subset \mathcal{G}$ be a $\sigma$-algebra and let $\kappa_{Y,\mathcal{F}}$ be a regular conditional distribution of $Y$ given $\mathcal{F}$. Further, let $f:E\rightarrow\mathbb{R}$ be measurable and $\E[|f(Y)|]<\infty$. Then we can define a version of the conditional expectation using regular conditional distribution, i.e., 
  \begin{align*}
    \E[f(Y)|\mathcal{F}](\omega)=\int f(y)\mathrm{d}\kappa_{Y,\mathcal{F}}(\omega,y),\ \forall \omega\in\Omega.
  \end{align*}
\end{lemma}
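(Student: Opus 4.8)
The plan is to establish the identity by the standard ``good functions'' (monotone-class) argument, reducing the claim to indicator functions and then lifting it to all integrable $f$. Write $\Phi_f(\omega) \equiv \int f(y)\,\mathrm{d}\kappa_{Y,\mathcal{F}}(\omega,y)$ for the right-hand side. To show that $\Phi_f$ is a version of $\E[f(Y)\mid\mathcal{F}]$, I must verify two things: (i) $\Phi_f$ is $\mathcal{F}$-measurable as a function of $\omega$, and (ii) $\E[\Phi_f\,\indicator_A] = \E[f(Y)\,\indicator_A]$ for every $A\in\mathcal{F}$. These are exactly the defining properties of the conditional expectation, so establishing (i) and (ii) identifies $\Phi_f$ as a legitimate version, which is all the statement asserts.

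First I would treat the base case $f=\indicator_B$ for $B\in\mathcal{E}$. Here $\Phi_{\indicator_B}(\omega)=\kappa_{Y,\mathcal{F}}(\omega,B)$, which is $\mathcal{F}$-measurable in $\omega$ by the first defining property of the regular conditional distribution. The third defining property gives $\kappa_{Y,\mathcal{F}}(\omega,B)=\P[Y\in B\mid\mathcal{F}](\omega)=\E[\indicator_B(Y)\mid\mathcal{F}](\omega)$ for $\P$-almost every $\omega$, so $\Phi_{\indicator_B}$ is an a.s.\ representative of $\E[\indicator_B(Y)\mid\mathcal{F}]$; integrating over any $A\in\mathcal{F}$ then yields (ii). Because $B\mapsto\kappa_{Y,\mathcal{F}}(\omega,B)$ is a probability measure for each fixed $\omega$ (the second defining property), the map $f\mapsto\Phi_f$ is linear and monotone. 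Hence (i) and (ii) extend by linearity to nonnegative simple functions, and then to arbitrary nonnegative measurable $f$ via the monotone convergence theorem, applied on the inside (to the measure $\kappa_{Y,\mathcal{F}}(\omega,\cdot)$ for each $\omega$, so that $\Phi_f$ arises as an increasing limit of $\mathcal{F}$-measurable functions and is therefore $\mathcal{F}$-measurable) and on the outside (to the expectation in (ii)).

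Finally, for general integrable $f$ I would split $f=f^+-f^-$. The hypothesis $\E[|f(Y)|]<\infty$ enters precisely here: applying the nonnegative case to $|f|$ gives $\E[\Phi_{|f|}]=\E[|f(Y)|]<\infty$, so $\Phi_{|f|}(\omega)<\infty$ for $\P$-a.e.\ $\omega$, which guarantees that $\Phi_{f^+}(\omega)$ and $\Phi_{f^-}(\omega)$ are both finite a.s.\ and that $\Phi_f=\Phi_{f^+}-\Phi_{f^-}$ is well defined. Subtracting the two nonnegative identities delivers (i) and (ii) for $f$, completing the argument.

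The main obstacle is the careful bookkeeping of the ``almost every $\omega$'' qualifiers. The regular conditional distribution matches $\P[Y\in B\mid\mathcal{F}]$ only off a $B$-dependent null set, so one cannot assert the pointwise equality $\Phi_{\indicator_B}(\omega)=\E[\indicator_B(Y)\mid\mathcal{F}](\omega)$ simultaneously for all uncountably many $B$. The resolution, standard but essential, is that the argument never needs a single common null set: property (ii) is an integrated identity that requires only the a.s.\ statement for each fixed $B$ (and later each fixed $f$), and the monotone-convergence steps pass through expectations rather than demanding almost-sure monotone limits valid on a universal null set. Confirming that measurability of the parametrized integral $\omega\mapsto\Phi_f(\omega)$ is preserved across the simple-function and monotone-limit steps is the other point requiring care, but it holds because increasing limits of measurable functions are measurable.
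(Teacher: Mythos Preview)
The paper does not supply its own proof of this lemma; it is stated as a (slightly modified) citation of Theorem~8.38 in Klenke's textbook and is used as a black box. Your monotone-class argument is the standard textbook proof of this fact and is correct, including the care you take with the $B$-dependent null sets and the integrability splitting $f=f^+-f^-$.
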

\noindent Throughout this paper we will fix a version of the conditional expectation and the version is defined by applying Lemma \ref{lem:Klenke_Thm_8.38}. The following lemma is useful as well. 

\begin{lemma}[Conditional expectation with variable measurable with respect to $\mathcal{F}$]\label{lem:new_klenke_thm_8.38}
	Let $Y$ be a random variable $(\Omega,\mathcal{G},\mathbb{P})$ with values in a Borel space $(E,\mathcal{E})$. Let $\mathcal{F}\subset \mathcal{G}$ be a $\sigma$-algebra and let $\kappa_{Y,\mathcal{F}}$ be a regular conditional distribution of $Y$ given $\mathcal{F}$. Suppose $X\in\mathcal{F}$ is a random variable in another Borel space $(B,\mathcal{B})$. Further, let $f:E\times B\rightarrow\mathbb{R}$ be measurable and $\E[|f(X, Y)|]<\infty$. Then we can define a version of the following conditional expectation using regular conditional distribution:
  \begin{align*}
    \E[f(X, Y)|\mathcal{F}](\omega)=\int f(X(\omega),y)\mathrm{d}\kappa_{Y,\mathcal{F}}(\omega,y),\ \forall \omega\in\Omega.
  \end{align*}
\end{lemma}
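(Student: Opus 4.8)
\textbf{Proof plan for Lemma~\ref{lem:new_klenke_thm_8.38}.}
The plan is to reduce this ``parametrized'' statement to the unparametrized version already recorded as Lemma~\ref{lem:Klenke_Thm_8.38}, by treating the $\mathcal{F}$-measurable variable $X$ together with $Y$ as a single random element and exploiting the fact that conditioning on $\mathcal{F}$ leaves $X$ ``frozen.'' Concretely, first I would form the product Borel space $(B\times E, \mathcal{B}\otimes\mathcal{E})$ and the random element $(X,Y):\Omega\to B\times E$; since $B$ and $E$ are Borel spaces, so is their product, so a regular conditional distribution $\kappa_{(X,Y),\mathcal{F}}$ of $(X,Y)$ given $\mathcal{F}$ exists by Lemma~\ref{lem:Klenke_Thm_8.37}. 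Applying Lemma~\ref{lem:Klenke_Thm_8.38} to the measurable function $f$ on $B\times E$ (which is integrable by hypothesis) gives, for all $\omega$,
\begin{align*}
	\E[f(X,Y)\mid\mathcal{F}](\omega)=\int_{B\times E} f(x,y)\,\mathrm{d}\kappa_{(X,Y),\mathcal{F}}(\omega,(x,y)).
\end{align*}

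The second step is to identify $\kappa_{(X,Y),\mathcal{F}}$ explicitly. Because $X$ is $\mathcal{F}$-measurable, for every Borel $B_0\in\mathcal{B}$ we have $\P[X\in B_0\mid\mathcal{F}]=\indicator(X\in B_0)$ a.s., so the $B$-marginal of $\kappa_{(X,Y),\mathcal{F}}(\omega,\cdot)$ is the point mass $\delta_{X(\omega)}$ for almost every $\omega$. Disintegrating, one obtains that for almost every $\omega$,
\begin{align*}
	\kappa_{(X,Y),\mathcal{F}}(\omega,\cdot)=\delta_{X(\omega)}\otimes\kappa_{Y,\mathcal{F}}(\omega,\cdot),
\end{align*}
where $\kappa_{Y,\mathcal{F}}$ is the regular conditional distribution of $Y$ given $\mathcal{F}$ fixed earlier in the paper. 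To make this rigorous without invoking a general disintegration theorem, I would instead verify directly that $\omega\mapsto \delta_{X(\omega)}\otimes\kappa_{Y,\mathcal{F}}(\omega,\cdot)$ satisfies the three defining properties of a regular conditional distribution of $(X,Y)$ given $\mathcal{F}$: measurability in $\omega$ (product of an $\mathcal{F}$-measurable point mass with the $\mathcal{F}$-measurable kernel $\kappa_{Y,\mathcal{F}}$), being a probability measure in the second argument, and the key identity $\int \indicator_{B_0\times E_0}(x,y)\,\mathrm{d}(\delta_{X}\otimes\kappa_{Y,\mathcal{F}})=\indicator(X\in B_0)\,\P[Y\in E_0\mid\mathcal{F}]=\P[(X,Y)\in B_0\times E_0\mid\mathcal{F}]$ a.s., the last equality again using $X\in\mathcal{F}$; a $\pi$-$\lambda$ argument extends this from measurable rectangles to all of $\mathcal{B}\otimes\mathcal{E}$. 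By essential uniqueness of regular conditional distributions, this candidate agrees a.s.\ with $\kappa_{(X,Y),\mathcal{F}}$.

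The final step is to substitute this product form into the integral from the first step and apply (the deterministic-base-measure case of) Fubini's theorem to the product measure $\delta_{X(\omega)}\otimes\kappa_{Y,\mathcal{F}}(\omega,\cdot)$, which collapses the outer integral against the point mass and yields
\begin{align*}
	\E[f(X,Y)\mid\mathcal{F}](\omega)=\int_E f(X(\omega),y)\,\mathrm{d}\kappa_{Y,\mathcal{F}}(\omega,y)
\end{align*}
for almost every $\omega$, which is the claimed version. I expect the only genuinely delicate point to be the a.s.\ identification $\kappa_{(X,Y),\mathcal{F}}=\delta_X\otimes\kappa_{Y,\mathcal{F}}$ — in particular being careful that it is an identity up to a $\P$-null set of $\omega$'s, matching the ``$\forall\omega\in\Omega$'' convention only after one fixes the version of the conditional expectation as the paper does following Lemma~\ref{lem:Klenke_Thm_8.38}; the remaining steps (existence of the product-space RCD, Fubini against a point mass) are routine.
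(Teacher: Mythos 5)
Your proposal follows essentially the same route as the paper's own proof: apply Lemma~\ref{lem:Klenke_Thm_8.38} to the pair $(X,Y)$, identify $\kappa_{(X,Y),\mathcal{F}}(\omega,\cdot)$ as the product of $\kappa_{Y,\mathcal{F}}(\omega,\cdot)$ with the point mass at $X(\omega)$ using $\mathcal{F}$-measurability of $X$, and finish with Fubini. If anything, your explicit $\pi$-$\lambda$ extension and attention to the $\omega$-null-set issue make the identification step slightly more careful than the paper's rectangle-only argument, but the substance is the same.
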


\subsection{Definition of conditional convergence}

Since we are under the setup where the data is generated adaptively, we need to extensively work with the conditional convergence where the conditioning is on the data collected in the first stage. We need to first define the notions of conditional convergence, with the definition of regular conditional distribution. In particular, we adopt the definition of conditional convergence in distribution and probability from \citet{niu2024reconciling}.

\begin{definition} \label{def:conditional-convergence-distribution}
    For each $N$, let $W_N$ be a random variable and let $\mathcal F_N$ be a $\sigma$-algebra. Then, we say $W_N$ converges in distribution to a random variable $W$ conditionally on $\mathcal F_N$ if
    \begin{equation}
        \P[W_N \leq t \mid \mathcal F_N] \convp \P[W \leq t] \ \text{for each } t \in \R \text{ at which } t \mapsto \P[W \leq t] \text{ is continuous.}
    \end{equation}
    We denote this relation via $W_N \mid \mathcal F_N \convdp W$.
\end{definition}

\begin{definition} \label{def:conditional-convergence-probability}
	For each $N$, let $W_N$ be a random variable and let $\mathcal F_N$ be a $\sigma$-algebra. Then, we say $W_N$ converges in probability to a constant $c$ conditionally on $\mathcal F_N$ if $W_N$ converges in distribution to the delta mass at $c$ conditionally on $\mathcal F_N$ (recall Definition~\ref{def:conditional-convergence-distribution}). We denote this convergence by ${W_N \mid \mathcal F_N \convpp c}$. In symbols, 
	\begin{equation}
		W_N \mid \mathcal F_N \convpp c \quad \text{if} \quad W_N \mid \mathcal F_N \convdp \delta_c.
	\end{equation}
\end{definition}

\subsection{Proof of Lemma \ref{lem:new_klenke_thm_8.38}}

\begin{proof}[Proof of Lemma \ref{lem:new_klenke_thm_8.38}]
	By Lemma \ref{lem:Klenke_Thm_8.38}, we know that 
	\begin{align*}
		\E[f(X,Y)|\mathcal{F}](\omega)=\int f(x,y)\mathrm{d}\kappa_{(X,Y),\mathcal{F}}(\omega,(x,y)).
	\end{align*}
	Now we prove that for almost every $\omega\in\Omega$,
	\begin{align*}
		\kappa_{(X,Y),\mathcal{F}}(\omega,S)=\kappa_{Y,\mathcal{F}}(\omega,S_1)\cdot \indicator(X(\omega)\in S_2),\ \forall S=S_1\times S_2\subset E\times B.
	\end{align*}
	In other words, $\kappa_{(X,Y),\mathcal{F}}(\omega,\cdot)$ is product measure of another measure $\kappa_{Y,\mathcal{F}}(\omega,\cdot)$ and counting measure supported on the value $X(\omega)$. This can be proved by using the definition of regular conditional distribution. For any $S=S_1\times S_2\subset E\times B$, we have
	\begin{align*}
		\kappa_{(X,Y),\mathcal{F}}(\omega,S)=\P[(X,Y)\in S|\mathcal{F}](\omega)
		&
		=\indicator(X(\omega)\in S_2)\P[Y\in S_1|\mathcal{F}](\omega)\\
		&
		=\kappa_{Y,\mathcal{F}}(\omega,S_1)\cdot \indicator(X(\omega)\in S_2)
	\end{align*}
	for almost every $\omega\in\Omega$. Thus by Fubini's theorem, we conclude
	\begin{align*}
		\int f(x,y)\mathrm{d}\kappa_{(X,Y),\mathcal{F}}(\omega,(x,y))=\int f(X(\omega),y)\mathrm{d}\kappa_{Y,\mathcal{F}}(\omega,y).
	\end{align*}
\end{proof}

\section{Useful lemmas and the proofs}\label{sec:aux_lemma}

\subsection{Lemma statements}

\begin{lemma}[Conditional Polya's theorem, Theorem 5 in \citet{niu2024reconciling}]\label{lem:cond_polya} 
	Let $W_N$ be a sequence of random variables. If $W_N \mid \mathcal F_N \convdp W$ for some random variable $W$ with continuous CDF, then
	\begin{equation}
		\sup_{t \in \R}|\P[W_N \leq t \mid \mathcal F_N] - \P[W \leq t]| \convp 0.
	\end{equation}
\end{lemma}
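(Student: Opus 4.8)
The plan is to run the classical proof of Pólya's theorem \emph{pathwise}, using the fact that, after passing to the regular conditional distribution of $W_N$ given $\mathcal F_N$ (which exists by Lemma~\ref{lem:Klenke_Thm_8.37} since $\R$ is a Borel space), the map $t \mapsto F_N(t) \equiv \P[W_N \le t \mid \mathcal F_N]$ is, for a.e.\ $\omega$, a genuine cumulative distribution function in $t$ — nondecreasing, right-continuous, with limits $0$ and $1$. This uniformity in $t$ on a single full-probability event is what lets the deterministic Pólya bound be invoked; the only randomness then enters through evaluation at finitely many fixed points, where conditional convergence in distribution supplies convergence in probability.

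Concretely, fix $\epsilon > 0$. Since the CDF $F(t) \equiv \P[W \le t]$ is continuous with $F(-\infty) = 0$ and $F(+\infty) = 1$, I would choose reals $t_1 < \cdots < t_m$ with $F(t_1) < \epsilon$, $F(t_m) > 1 - \epsilon$, and $F(t_{j+1}) - F(t_j) < \epsilon$ for $j = 1, \dots, m-1$. For any $\omega$ and any $t \in \R$: if $t_j \le t \le t_{j+1}$, monotonicity of both $F_N(\cdot)$ and $F$ gives $F_N(t) - F(t) \le [F_N(t_{j+1}) - F(t_{j+1})] + [F(t_{j+1}) - F(t_j)]$ and, symmetrically, $F(t) - F_N(t) \le [F(t_j) - F_N(t_j)] + [F(t_{j+1}) - F(t_j)]$; the tail regions $t < t_1$ and $t > t_m$ are handled directly using $F(t_1) < \epsilon$ and $1 - F(t_m) < \epsilon$. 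Combining these cases yields the pathwise inequality
\[
\sup_{t \in \R} |F_N(t) - F(t)| \;\le\; \max_{1 \le j \le m} |F_N(t_j) - F(t_j)| \,+\, \epsilon ,
\]
where the left-hand side is measurable because right-continuity lets us replace the supremum over $\R$ by the supremum over $\Q$.

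It then remains to control the finite maximum. Because $F$ is continuous, each $t_j$ is a continuity point of $F$, so Definition~\ref{def:conditional-convergence-distribution} applied to $W_N \mid \mathcal F_N \convdp W$ gives $F_N(t_j) = \P[W_N \le t_j \mid \mathcal F_N] \convp F(t_j)$ for every $j$. Hence, for any $\eta > 0$, choosing $\epsilon = \eta/2$ above,
\[
\P\Big[\sup_{t} |F_N(t) - F(t)| > \eta\Big] \le \sum_{j=1}^m \P\big[|F_N(t_j) - F(t_j)| > \eta/2\big] \longrightarrow 0 ,
\]
since $m$ is fixed, which is precisely $\sup_t |F_N(t) - F(t)| \convp 0$. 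I do not anticipate a real obstacle: the argument is essentially bookkeeping. The only point that genuinely needs care is the very first one — ensuring the ``$F_N$ is a CDF'' property holds simultaneously for all $t$ on one common null set (via the regular conditional distribution), since it is precisely this uniformity that allows the classical Pólya bound to be applied pathwise; after that, the tail partition and the measurability of the supremum are routine.
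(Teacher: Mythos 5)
Your proof is correct. Note, however, that the paper itself does not prove this lemma — it is imported verbatim by citation from \citet{niu2024reconciling} — so there is no in-paper argument to compare against; your pathwise adaptation of the classical P\'olya argument is the standard way to establish it. The two points that genuinely need care are exactly the ones you flag and handle: fixing the version of $\P[W_N \le t \mid \mathcal F_N]$ through the regular conditional distribution so that $t \mapsto F_N(t)(\omega)$ is a bona fide CDF simultaneously for all $t$ on one full-measure event (which makes the deterministic grid inequality valid pathwise and, via right-continuity, makes the supremum measurable), and then using conditional convergence in distribution only at the finitely many grid points, where continuity of $F$ guarantees every $t_j$ is a continuity point. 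An equivalent alternative, in the spirit of Lemma~\ref{lem:sub_subseq}, would be to pass to subsequences along which $F_N(t) \to F(t)$ almost surely for all rational $t$ and invoke the deterministic P\'olya theorem pathwise; this trades your explicit union bound for a subsequence extraction but is otherwise the same argument.
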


\begin{lemma}[Slutsky's theorem, Theorem 13.18 in \citet{Dudley_2002}]\label{lem:Slutsky}
	Let $X_1,X_2\ldots$ and $Y_1,Y_2,\ldots,$ be random variables with values in $\mathbb{R}^{k}$. Suppose $X_N\convd X$ and $\|X_N-Y_N\|_2\convp 0$. Then $Y_N\convd X$.
\end{lemma}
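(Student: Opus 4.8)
The plan is to establish weak convergence of $Y_N$ through the bounded-Lipschitz test-function characterization recalled in Lemma~\ref{lem:Dudley_equivalence} (used here in its multivariate form, which is in fact what Theorem~11.3.3 of \citet{Dudley_2002} delivers). Fix any $f:\mathbb{R}^k\to\mathbb{R}$ with $\|f\|_{\mathrm{BL}}\le 1$; by Lemma~\ref{lem:Dudley_equivalence} it suffices to show $\E[f(Y_N)]\to\E[f(X)]$. First I would split
\[
\big|\E[f(Y_N)]-\E[f(X)]\big|\le \big|\E[f(Y_N)]-\E[f(X_N)]\big|+\big|\E[f(X_N)]-\E[f(X)]\big|,
\]
where the second term tends to $0$ directly from $X_N\convd X$ and Lemma~\ref{lem:Dudley_equivalence}.

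For the first term, since $f$ is $1$-Lipschitz and satisfies $\|f\|_\infty\le 1$, we have the pointwise bound $|f(Y_N)-f(X_N)|\le \min\{\|Y_N-X_N\|_2,\,2\}$, hence
\[
\big|\E[f(Y_N)]-\E[f(X_N)]\big|\le \E\big[\min\{\|Y_N-X_N\|_2,\,2\}\big].
\]
Now I would invoke $\|Y_N-X_N\|_2\convp 0$ together with the uniform bound $2$: for any $\varepsilon>0$,
\[
\E\big[\min\{\|Y_N-X_N\|_2,\,2\}\big]\le \varepsilon+2\,\P\big[\|Y_N-X_N\|_2>\varepsilon\big]\longrightarrow \varepsilon,
\]
and letting $\varepsilon\downarrow 0$ shows this expectation vanishes. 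Combining the two pieces gives $\E[f(Y_N)]\to\E[f(X)]$ for every $f$ with $\|f\|_{\mathrm{BL}}\le 1$, and a final appeal to Lemma~\ref{lem:Dudley_equivalence} yields $Y_N\convd X$.

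There is essentially no deep obstacle here — this is the classical Slutsky argument — but the one step warranting a little care is the passage from convergence in probability of $\|Y_N-X_N\|_2$ to convergence of the truncated expectation, i.e.\ the fact that a uniformly bounded sequence tending to $0$ in probability also tends to $0$ in $L^1$; the two-term bound displayed above makes this transparent. As an alternative one could run the same reasoning with characteristic functions, noting $|e^{\mathrm{i}t^\top Y_N}-e^{\mathrm{i}t^\top X_N}|\le \min\{\|t\|_2\|Y_N-X_N\|_2,\,2\}$ has vanishing expectation, so that $\phi_{Y_N}(t)-\phi_{X_N}(t)\to 0$ and Lévy's continuity theorem concludes; I would keep the bounded-Lipschitz version since it reuses the machinery already assembled in the paper.
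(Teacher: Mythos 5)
Your argument is correct: the bounded--Lipschitz splitting, the pointwise bound $|f(Y_N)-f(X_N)|\le\min\{\|Y_N-X_N\|_2,2\}$ (valid because $\|f\|_{\mathrm{BL}}\le 1$ gives both $\|f\|_{\mathrm{L}}\le 1$ and $\|f\|_\infty\le 1$), and the $\varepsilon$-truncation step converting convergence in probability into vanishing of the truncated expectation are all sound, and your remark that the test-function characterization of weak convergence holds in the multivariate setting (as Dudley's Theorem 11.3.3 indeed covers, even though the paper's Lemma~\ref{lem:Dudley_equivalence} is stated only for real-valued variables) closes the only potential gap. The paper itself offers no proof of this lemma — it is simply imported from \citet{Dudley_2002} — so there is nothing to compare against; your write-up is the standard argument and is consistent with the bounded-Lipschitz machinery the paper already uses elsewhere.
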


\begin{lemma}[\citet{durrett2019probability}, Theorem 2.3.2]\label{lem:sub_subseq}
	A sequence of random variables $W_N$ converges to a limit $W$ in probability if and only if every subsequence of $W_N$ has a further subsequence that converges to $W$ almost surely.
\end{lemma}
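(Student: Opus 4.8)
The plan is to establish the two implications separately, using throughout that convergence in probability is metrizable, e.g.\ by $d(X,Y)\equiv\E[\min\{|X-Y|,1\}]$, so that statements about convergence in probability of a sequence reduce to statements in a metric space; I will also use the elementary fact that almost sure convergence implies convergence in probability (apply the dominated convergence theorem to $\min\{|W_{N}-W|,1\}\le 1$).

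For the forward direction, suppose $W_N\to W$ in probability and fix an arbitrary subsequence $(W_{N_k})_k$. It too converges to $W$ in probability, so $\P[|W_{N_k}-W|>2^{-j}]\to0$ as $k\to\infty$ for each fixed $j$. I would then extract indices $k_1<k_2<\cdots$ by a fast-rate argument so that $\P[|W_{N_{k_j}}-W|>2^{-j}]\le 2^{-j}$ for all $j$, and apply the Borel--Cantelli lemma to the events $\{|W_{N_{k_j}}-W|>2^{-j}\}$: almost surely only finitely many occur, hence $W_{N_{k_j}}\to W$ a.s. This gives the required sub-subsequence.

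For the converse, I would argue by contradiction. If $W_N\not\to W$ in probability, there exist $\varepsilon>0$, $\delta>0$ and a subsequence $(W_{N_k})_k$ with $\P[|W_{N_k}-W|>\varepsilon]\ge\delta$ for every $k$. By hypothesis this subsequence admits a further subsequence $(W_{N_{k_j}})_j$ with $W_{N_{k_j}}\to W$ a.s., hence $W_{N_{k_j}}\to W$ in probability, so $\P[|W_{N_{k_j}}-W|>\varepsilon]\to0$; this contradicts the uniform lower bound $\delta$. Therefore $W_N\to W$ in probability.

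All steps are routine; the only point requiring mild care is the fast-rate extraction together with the Borel--Cantelli step in the forward direction (equivalently, one may package the converse as the standard metric-space fact that $x_n\to x$ iff every subsequence has a further subsequence converging to $x$, once $d$ is verified to metrize convergence in probability). Since the statement is quoted verbatim from \citet{durrett2019probability}, one may alternatively simply cite that reference in lieu of the self-contained argument above.
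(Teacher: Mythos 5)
Your argument is correct and is essentially the standard proof of the cited result: the forward direction via fast-rate extraction plus Borel--Cantelli, and the converse by contradiction using that almost sure convergence implies convergence in probability. The paper does not prove this lemma itself but simply cites \citet{durrett2019probability}, so your self-contained write-up matches the textbook argument and nothing further is needed.
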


\begin{lemma}[Skorohod's representation theorem]\label{lem:skorohod}
	Let \( (\mu_N)_{N \in \mathbb{N}} \) be a sequence of probability measures on a metric space \( S \) such that \( \mu_N \) converges weakly to some probability measure \( \mu_{\infty} \) on \( S \) as \( N\to \infty \). Suppose also that the support of \( \mu_{\infty} \) is separable. Then there exist \( S \)-valued random variables \( W_N \) defined on a common probability space \( (\Omega, \mathcal{F}, \P) \) such that the law of \( W_N \) is \( \mu_N \) for all \( N \) (including \( N = \infty \)) and such that \( (W_N)_{N \in \mathbb{N}} \) converges to \( W_{\infty} \), \( \P \)-almost surely.
\end{lemma}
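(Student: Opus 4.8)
The plan is to realize all the $W_N$ on the single probability space $(\Omega,\mathcal F,\P)=([0,1],\mathcal B([0,1]),\mathrm{Leb})$, driven by one uniform variable, and to build them so that they track $W_\infty$ at finer and finer spatial resolutions. The prototype is the real line: when $S=\R$ one simply sets $W_N(u)=F_N^{-1}(u)$ for the quantile function $F_N^{-1}$ of $\mu_N$, and weak convergence of the CDFs forces $F_N^{-1}\to F_\infty^{-1}$ at every continuity point, hence almost everywhere. For a general metric space there is no quantile function, so I would replace ``intervals ordered by the CDF'' with ``a nested tree of small continuity sets,'' using the Portmanteau theorem as the only analytic input: if $B$ is a $\mu_\infty$-continuity set (i.e.\ $\mu_\infty(\partial B)=0$) then $\mu_N(B)\to\mu_\infty(B)$.

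First I would reduce to the separable closed support $S_0\equiv\mathrm{supp}(\mu_\infty)$. Although the $\mu_N$ need not be concentrated near $S_0$, the set $\{x:d(x,S_0)\ge\delta\}$ is closed and has $\mu_\infty$-measure zero, so Portmanteau gives $\mu_N(\{x:d(x,S_0)\ge\delta\})\to0$ for every $\delta>0$; thus the mass of $\mu_N$ escaping a neighbourhood of $S_0$ is asymptotically negligible and can be collected into a single ``junk'' cell. Next, using separability of $S_0$, I would construct for each level $m$ a countable measurable partition $\{B_{m,i}\}_i$ of $S$ with $\mathrm{diam}(B_{m,i})<2^{-m}$, each a $\mu_\infty$-continuity set, and with the level-$(m+1)$ partition refining the level-$m$ partition. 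The continuity property is arranged by the standard observation that around any point only countably many radii give spheres of positive $\mu_\infty$-measure, so balls with continuity boundaries of arbitrarily small radius exist; disjointifying a countable cover and intersecting across levels yields the nested refining family.

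With the partitions fixed, I would define $W_N$ by the inverse-CDF construction inside the cell tree: enumerate the cells in a fixed order, assign to cell $B_{m,i}$ a subinterval of $[0,1]$ of length $\mu_N(B_{m,i})$ placed consecutively, subdivide these intervals compatibly as $m$ increases (using that the partitions are nested), and let $W_N(u)$ be the limit point determined by the shrinking cells containing $u$. This yields a measurable map with law exactly $\mu_N$. Because $\mu_N(B_{m,i})\to\mu_\infty(B_{m,i})$ for every fixed cell and the cells are listed in the same order, at each fixed level $m$ the interval layout for $\mu_N$ converges cell-by-cell to that for $\mu_\infty$; consequently the Lebesgue measure of the ``bad'' set $G_{m,N}\equiv\{u:W_N(u)\text{ and }W_\infty(u)\text{ lie in different level-}m\text{ cells}\}$ tends to $0$ as $N\to\infty$, and off $G_{m,N}$ one has $d(W_N(u),W_\infty(u))<2^{-m}$.

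Finally I would upgrade this level-wise convergence in measure to almost-sure convergence by a diagonal argument: choose a sequence $m_N\uparrow\infty$ slowly enough that $\mathrm{Leb}(G_{m_N,N})$ is summable in $N$, so Borel--Cantelli shows that for almost every $u$ one eventually has $d(W_N(u),W_\infty(u))<2^{-m_N}\to0$. The main obstacle is precisely this reconciliation: one must secure the exact marginal law $W_N\sim\mu_N$ for every $N$ simultaneously with genuine almost-sure (not merely in-probability or subsequential) convergence, over a space that is only separable on $\mathrm{supp}(\mu_\infty)$. The nested-continuity-set partition is what makes the two requirements compatible — it gives exact marginals through the tree construction while making ``same cell'' a controllable, measure-vanishing event — and the slow diagonalization over resolution levels is what converts it into the stated almost-sure statement.
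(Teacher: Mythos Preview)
The paper does not prove this lemma; Skorohod's representation theorem is stated as a classical result and invoked without proof, so there is no paper argument to compare against.

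Your outline is a recognisable sketch of one standard proof and would go through cleanly if $S$ were Polish. Under the hypothesis actually stated---only $\mathrm{supp}(\mu_\infty)$ separable, no completeness assumed---there is a real gap at the step ``let $W_N(u)$ be the limit point determined by the shrinking cells containing $u$'': without completeness a nested sequence of cells with diameters tending to zero can have empty intersection, so the limit point need not exist, and without separability of $S$ itself cells of diameter $<2^{-m}$ cannot partition all of $S$ (your junk cell carries positive $\mu_N$-mass but has no limit-point structure, yet you still have to place that mass at actual points of $S$ with the correct conditional law). Textbook proofs in this generality (e.g.\ Dudley, \emph{Real Analysis and Probability}, Theorem~11.7.2) avoid the infinite descent entirely: one constructs each $W_N$ at a single finite resolution $m_N$ and uses a measurable map into each cell to realize $\mu_N(\cdot\mid B_{m_N,i})$ directly. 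A smaller point: the Borel--Cantelli step is not guaranteed as written---``$m_N\uparrow\infty$ slowly enough that $\mathrm{Leb}(G_{m_N,N})$ is summable'' is not always achievable from $\mathrm{Leb}(G_{m,N})\to0$ alone---but for your specific interval layout one can instead observe that for each fixed $m$ the set $\limsup_N G_{m,N}$ is contained in the countable set of $\mu_\infty$-interval endpoints and is therefore null, which yields almost-sure convergence without summability.
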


\begin{lemma}[Lipschitz continuity of square root matrix under F-norm]\label{lem:holder_continuity_Frobenius}
	Suppose $A,B\in\mathbb{R}^{k\times k}$ are two positive definite matrices with eigevalues bounded away from $m$, then we have 
	\begin{align*}
		\|A^{1/2}-B^{1/2}\|_{\mathrm{F}}\lesssim_{k,m} \|A-B\|_{\mathrm{F}}.
	\end{align*}
\end{lemma}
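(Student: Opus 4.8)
The plan is to move the problem from the Frobenius norm, where nothing special happens, to the operator norm $\|\cdot\|_{\mathrm{op}}$, where the square-root map has strong monotonicity properties. For any $M\in\mathbb R^{k\times k}$ one has $\|M\|_{\mathrm F}\le\sqrt k\,\|M\|_{\mathrm{op}}$ and $\|M\|_{\mathrm{op}}\le\|M\|_{\mathrm F}$, both immediate from comparing the singular values. Hence it suffices to prove the operator-norm H\"older estimate
\[
\|A^{1/2}-B^{1/2}\|_{\mathrm{op}}\le\|A-B\|_{\mathrm{op}}^{1/2},
\]
after which $\|A^{1/2}-B^{1/2}\|_{\mathrm F}\le\sqrt k\,\|A^{1/2}-B^{1/2}\|_{\mathrm{op}}\le\sqrt k\,\|A-B\|_{\mathrm{op}}^{1/2}\le\sqrt k\,\|A-B\|_{\mathrm F}^{1/2}$ finishes the proof.

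For the operator-norm bound I would set $c\equiv\|A-B\|_{\mathrm{op}}$ (if $c=0$ then $A=B$ and there is nothing to prove, so assume $c>0$). Since $\pm(A-B)\preceq cI$ in the Loewner order, we have $A\preceq B+cI$ and $B\preceq A+cI$. The one substantive input is L\"owner's theorem that $t\mapsto\sqrt t$ is operator monotone on $[0,\infty)$; applying it gives $A^{1/2}\preceq(B+cI)^{1/2}$ and $B^{1/2}\preceq(A+cI)^{1/2}$. It then remains to bound $(B+cI)^{1/2}$ from above: the matrix $M\equiv B^{1/2}+\sqrt c\,I\succeq0$ satisfies $M^2=B+2\sqrt c\,B^{1/2}+cI\succeq B+cI$, so operator monotonicity of the square root applied once more yields $M=(M^2)^{1/2}\succeq(B+cI)^{1/2}$, i.e.\ $(B+cI)^{1/2}\preceq B^{1/2}+\sqrt c\,I$, and symmetrically $(A+cI)^{1/2}\preceq A^{1/2}+\sqrt c\,I$. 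Chaining the inequalities gives $A^{1/2}-B^{1/2}\preceq\sqrt c\,I$ and $B^{1/2}-A^{1/2}\preceq\sqrt c\,I$, which is precisely $\|A^{1/2}-B^{1/2}\|_{\mathrm{op}}\le\sqrt c=\|A-B\|_{\mathrm{op}}^{1/2}$.

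The only non-elementary ingredient is the operator monotonicity of $t\mapsto\sqrt t$, which is really the crux; everything else is a one-line comparison of eigenvalues or singular values, and the argument in fact only uses $A,B\succeq0$, so positive-definiteness is more than enough. One could instead try the hands-on route of expanding $A-B=A^{1/2}(A^{1/2}-B^{1/2})+(A^{1/2}-B^{1/2})B^{1/2}$ and pairing with $A^{1/2}-B^{1/2}$ in the Frobenius inner product, which gives $\mathrm{tr}\big((A^{1/2}+B^{1/2})(A^{1/2}-B^{1/2})^2\big)=\langle A-B,\,A^{1/2}-B^{1/2}\rangle_{\mathrm F}\le\|A-B\|_{\mathrm F}\|A^{1/2}-B^{1/2}\|_{\mathrm F}$; but lower bounding the left-hand side introduces a factor $\lambda_{\min}(A^{1/2}+B^{1/2})$, producing only a Lipschitz bound with an eigenvalue-dependent constant rather than the dimension-controlled $\sqrt k$-H\"older estimate, so I would not pursue that path.
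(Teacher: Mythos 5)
Your proof is correct, and your opening reduction is exactly the paper's first step: both of you pass from the Frobenius to the operator norm via $\|M\|_{\mathrm F}\le\sqrt{k}\,\|M\|_{2}$ and $\|M\|_{2}\le\|M\|_{\mathrm F}$, so everything hinges on the operator-norm estimate $\|A^{1/2}-B^{1/2}\|_{2}\le\|A-B\|_{2}^{1/2}$. Where you genuinely diverge is in how that estimate is proved. You invoke L\"owner's theorem twice: from $\pm(A-B)\preceq c I$ with $c=\|A-B\|_{2}$ you get $A^{1/2}\preceq (B+cI)^{1/2}\preceq B^{1/2}+\sqrt{c}\,I$ (the last step via $(B^{1/2}+\sqrt{c}\,I)^2\succeq B+cI$) and symmetrically, which is a clean and correct argument. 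The paper instead argues elementarily: take a unit eigenvector $x$ of $A^{1/2}-B^{1/2}$ whose eigenvalue $\mu$ has largest magnitude, use the identity $A-B=(A^{1/2}-B^{1/2})A^{1/2}+B^{1/2}(A^{1/2}-B^{1/2})$ to get $x^\top(A-B)x=\mu\,x^\top(A^{1/2}+B^{1/2})x$, and conclude $\mu^{2}\le|\mu|\,x^\top(A^{1/2}+B^{1/2})x=|x^\top(A-B)x|\le\|A-B\|_{2}$, so no operator-monotonicity input is needed. Your route buys generality and reusability (it works verbatim for merely positive semidefinite $A,B$, and the monotonicity of $t\mapsto\sqrt{t}$ is a standard tool worth having), while the paper's buys self-containedness. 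It is worth noting that the ``hands-on'' identity you considered and discarded is essentially what the paper uses—only evaluated on a single extremal eigenvector at the operator-norm level rather than paired in the Frobenius inner product, which is precisely how it escapes the $\lambda_{\min}(A^{1/2}+B^{1/2})$-dependent constant that made you abandon that path.
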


\begin{lemma}[Boundedness on the covariance]\label{lem:upper_bound_cov}
	Suppose $X_N,Y_N$ are two sequences of random variables, with finite first moment, satisfying 
	\begin{enumerate}
		\item $\E[X_N^{p}]$ and $\E[Y_N^{p}]$ converge to finite constants for $p=1,2$;
		\item $\liminf_{N\rightarrow\infty}\mathrm{Var}[X_N]>0,\ \liminf_{N\rightarrow\infty}\mathrm{Var}[Y_N]>0$.
	\end{enumerate}
	Suppose a random sequence $a_N\in (0,1)$ almost surely. Then we have 
	\begin{align}\label{eq:upper_bound_cov}
		\limsup_{N\rightarrow\infty}\left|\frac{a_N^{1/2}\E[X_N]}{(\E[X_N^2]-a_N\E[X_N]^2)^{1/2}}\times \frac{(1-a_N)^{1/2}\E[Y_N]}{(\E[Y_N^2]-(1-a_N)\E[Y_N]^2)^{1/2}}\right|<C<1
	\end{align}
	almost surely and the constant $C$ only depends on the limit of the moments $\lim_{N\rightarrow\infty}\E[X_N^p]$ and $\lim_{N\rightarrow\infty}\E[Y_N^p]$ for $p\in\{1,2\}$.
\end{lemma}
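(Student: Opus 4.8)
The plan is to reduce the claim to an elementary inequality via algebraic simplification and then pass to the limit using the moment-convergence hypotheses. Fix $N$ and abbreviate $\mu_X \equiv \E[X_N]$, $\sigma_X^2 \equiv \mathrm{Var}[X_N] = \E[X_N^2] - \mu_X^2 > 0$, and similarly $\mu_Y, \sigma_Y^2$. Since $\E[X_N^2] = \sigma_X^2 + \mu_X^2$, direct substitution gives
\begin{align*}
	\E[X_N^2] - a_N\E[X_N]^2 = \sigma_X^2 + (1 - a_N)\mu_X^2, \qquad \E[Y_N^2] - (1 - a_N)\E[Y_N]^2 = \sigma_Y^2 + a_N\mu_Y^2,
\end{align*}
both strictly positive for $a_N \in (0,1)$. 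Writing $b_N \equiv 1 - a_N$ and letting $r_N \geq 0$ denote the quantity whose $\limsup$ is bounded in \eqref{eq:upper_bound_cov}, one gets
\begin{align*}
	r_N^2 = \frac{a_N b_N \,\mu_X^2 \mu_Y^2}{(\sigma_X^2 + b_N\mu_X^2)(\sigma_Y^2 + a_N\mu_Y^2)}.
\end{align*}

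Next I would bound $r_N^2$ uniformly over all realizations of $a_N \in (0,1)$. If $\mu_X\mu_Y = 0$ then $r_N = 0$ and there is nothing to prove; otherwise, expanding the denominator and dividing numerator and denominator by $a_N b_N\mu_X^2\mu_Y^2 > 0$ yields
\begin{align*}
	r_N^2 = \Bigl(1 + \frac{\sigma_X^2\sigma_Y^2}{a_N b_N\mu_X^2\mu_Y^2} + \frac{\sigma_X^2}{b_N\mu_X^2} + \frac{\sigma_Y^2}{a_N\mu_Y^2}\Bigr)^{-1} \leq \Bigl(1 + \frac{\sigma_X^2}{b_N\mu_X^2} + \frac{\sigma_Y^2}{a_N\mu_Y^2}\Bigr)^{-1}.
\end{align*}
By AM--GM the parenthesised sum is at least $2\sigma_X\sigma_Y/(\sqrt{a_N b_N}\,|\mu_X\mu_Y|)$, and since $a_N b_N \leq 1/4$ whenever $a_N + b_N = 1$, it is in turn at least $D_N \equiv 4\sigma_X\sigma_Y/|\mu_X\mu_Y|$. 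Hence $r_N^2 \leq (1 + D_N)^{-1}$, a bound that does not involve $a_N$ at all (and which also covers the degenerate case $\mu_X\mu_Y = 0$, with the convention $1/0 = \infty$).

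Finally I would take limits. By convergence of $\E[X_N^p], \E[Y_N^p]$ for $p \in \{1,2\}$, each of $\mu_X, \mu_Y, \sigma_X^2, \sigma_Y^2$ converges; since $\sigma_X^2 = \mathrm{Var}[X_N] \to \lim_N \mathrm{Var}[X_N] \geq \liminf_N \mathrm{Var}[X_N] > 0$ and likewise for $Y$, the limiting variances are positive. Therefore $D_N \to D_\infty \in (0, \infty]$, where $D_\infty = 4\sqrt{(\lim_N\mathrm{Var}[X_N])(\lim_N\mathrm{Var}[Y_N])}\bigm/\bigl|(\lim_N\E[X_N])(\lim_N\E[Y_N])\bigr|$, interpreted as $+\infty$ if either limiting mean is $0$. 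As $a_N \in (0,1)$ almost surely, the pathwise bound applies a.s., so
\begin{align*}
	\limsup_{N\to\infty} r_N \leq \limsup_{N\to\infty}(1 + D_N)^{-1/2} = (1 + D_\infty)^{-1/2} < 1 \quad \text{a.s.},
\end{align*}
and taking $C \equiv \tfrac12\bigl((1+D_\infty)^{-1/2} + 1\bigr) \in (0,1)$ — which depends only on the limiting first and second moments of $X_N$ and $Y_N$ — gives \eqref{eq:upper_bound_cov}. The argument is entirely elementary; the only points needing a little care, neither a genuine obstacle, are keeping the estimate uniform over the random sequence $a_N$ (handled by $a_N b_N \leq 1/4$) and the degenerate cases where a limiting mean vanishes.
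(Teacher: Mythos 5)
Your proof is correct and follows essentially the same route as the paper's: both rewrite the squared quantity so that its reciprocal is $1$ plus positive terms, then eliminate the dependence on $a_N$ by elementary bounds (the paper drops factors using $a_N,1-a_N\leq 1$; you use AM--GM together with $a_N(1-a_N)\leq 1/4$) and conclude from the convergence of the moments and the positive liminf of the variances. The only cosmetic difference is that you absorb the degenerate zero-mean case into the same uniform bound rather than treating it as a separate case, as the paper does.
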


\subsection{Proof of Lemma \ref{lem:holder_continuity_Frobenius}}

\begin{proof}[Proof of Lemma \ref{lem:holder_continuity_Frobenius}]
	It suffices to prove 
	\begin{align}\label{eq:holder_continuity_2_Norm}
		\|A^{1/2}-B^{1/2}\|_{2}\leq \|A-B\|_{2},
	\end{align}
	where $\|A\|_2$ is the operator norm $A$. This is because if the statement~\eqref{eq:holder_continuity_2_Norm} holds, then
	\begin{align*}
		\|A^{1/2}-B^{1/2}\|_{\mathrm{F}}\leq \sqrt{k}\|A^{1/2}-B^{1/2}\|_{2}\leq \sqrt{k}\|A-B\|_{2}\leq \sqrt{k}\|A-B\|_{\mathrm{F}}.
	\end{align*}
	Now we prove \eqref{eq:holder_continuity_2_Norm}. If vector $x$ with $\|x\|=1$ is an eigenvector of $\sqrt{A}-\sqrt{B}$ with eigenvalue $\mu$ then 
	\begin{align*}
		x^\top (A-B)x=x^\top (\sqrt{A}-\sqrt{B})\sqrt{A}x+x^\top \sqrt{B}(\sqrt{A}-\sqrt{B})x=\mu x^\top (\sqrt{A}+\sqrt{B})x.
	\end{align*}
	Now if we choose $\mu=\pm \|\sqrt{A}-\sqrt{B}\|_2$ (depending on the sign of the eigenvalue which has the largest magnitude), we have
	\begin{align*}
		\|\sqrt{A}-\sqrt{B}\|_2\leq\frac{1}{x^\top (\sqrt{A}+\sqrt{B})x} |x^\top (A-B)x|\lesssim_{m} \|A-B\|_2.
	\end{align*}
	This completes the proof.
\end{proof}

\subsection{Proof of Lemma \ref{lem:upper_bound_cov}}

\begin{proof}[Proof of Lemma \ref{lem:upper_bound_cov}]

	We now divide the proof into two cases.
	\begin{enumerate}
		\item \textbf{When $\lim_{N\rightarrow\infty}\E[X_N]=0$ or $\lim_{N\rightarrow\infty}\E[Y_N]=0$.} Since 
	\begin{align*}
		\liminf_{N\rightarrow\infty}(\E[X_N^2]-a_N\E[X_N]^2)\geq \liminf_{N\rightarrow\infty}(\E[X_N^2]-\E[X_N]^2)>0
	\end{align*}
	and 
	\begin{align*}
		\liminf_{N\rightarrow\infty}(\E[Y_N^2]-(1-a_N)\E[Y_N]^2)\geq \liminf_{N\rightarrow\infty}(\E[Y_N^2]-\E[Y_N]^2)>0,
	\end{align*}
	we know the claim is true with $C=0$ almost surely.
	\item\textbf{When both $\lim_{N\rightarrow\infty}\E[X_N]\neq 0$ and $\lim_{N\rightarrow\infty}\E[Y_N]\neq 0$:} Define the sequence $E_N\equiv a_N\E[Y_N^2](\E[X_N^2]-\E[X_N^2]) +(1-a_N)(\E[Y_N^2]-\E[Y_N]^2)\E[X_N^2]$. We know 
	\begin{align*}
		D_N 
		&
		\equiv (\E[X_N^2]-a_N\E[X_N]^2)(\E[Y_N^2]-(1-a_N)\E[Y_N]^2)\\
		&
		= a_N(1-a_N)(\E[X_N]\E[Y_N])^2+ E_N\\
		&
		\equiv C_N +E_N,
	\end{align*}
	Note that conclusion~\eqref{eq:upper_bound_cov} is equivalent to proving $\limsup_{N\rightarrow\infty}|C_N^{1/2}/D_N^{1/2}|<1$ almost surely. To this end, we observe that
	\begin{align*}
		\frac{D_N}{C_N}
		&
		=1+\frac{E_N}{C_N}\\
		&
		=1+\frac{a_N\E[Y_N^2](\E[X_N^2]-\E[X_N]^2)}{C_N}+\frac{(1-a_N)(\E[Y_N^2]-\E[Y_N]^2)\E[X_N^2]}{C_N}\\
		&
		=1+\frac{\E[Y_N^2](\E[X_N^2]-\E[X_N]^2)}{(1-a_N)(\E[X_N]\E[Y_N])^2}+\frac{(\E[Y_N^2]-\E[Y_N]^2)\E[X_N^2]}{a_N (\E[X_N]\E[Y_N])^2}\\
		&
		\equiv 1+R_{1N}+R_{2N}.
	\end{align*}
	We can bound 
	\begin{align*}
		\liminf_{N\rightarrow\infty}R_{1N}
		&
		\geq \liminf_{N\rightarrow\infty}\frac{\E[Y_N^2](\E[X_N^2]-\E[X_N]^2)}{(\E[X_N]\E[Y_N])^2}\\
		&
		=\lim_{N\rightarrow\infty}\frac{\E[Y_N^2]}{(\E[Y_N])^2}\lim_{N\rightarrow\infty}\frac{(\E[X_N^2]-\E[X_N]^2)}{(\E[X_N])^2}>0
	\end{align*}
	almost surely, since $\lim_{N\rightarrow\infty}(\E[X_N^2]-\E[X_N]^2)=\lim_{N\rightarrow\infty}\mathrm{Var}[X_N]>0$. Similarly, we can prove $\liminf_{N\rightarrow\infty}R_{2N} >0$ almost surely. Thus we have 
	\begin{align*}
		\liminf_{N\rightarrow\infty}\frac{D_N}{C_N}\geq 1+\liminf_{N\rightarrow\infty}R_{1N}+\liminf_{N\rightarrow\infty}R_{2N}>1
	\end{align*}
	almost surely. Thus the desired bound can be obtained by checking
	\begin{align*}
		&
		\limsup_{N\rightarrow\infty}\left|\frac{a_N^{1/2}\E[X_N]}{(\E[X_N^2]-a_N\E[X_N]^2)^{1/2}}\times \frac{(1-a_N)^{1/2}\E[Y_N]}{(\E[Y_N^2]-(1-a_N)\E[Y_N]^2)^{1/2}}\right|\\
		&
		=
		\limsup_{N\rightarrow\infty}\left|\frac{C_N^{1/2}}{D_N^{1/2}}\right|=\frac{1}{\liminf_{N\rightarrow\infty}D_N^{1/2}/C_N^{1/2}}<1
	\end{align*}
	almost surely.
	\end{enumerate}
	
\end{proof}

\section{New results on conditional CLT and CMT}\label{sec:new_conditional_CLT}

\subsection{A new conditional CLT}

\begin{lemma}[Conditional CLT under Lipschitz class]\label{lem:CLT_BL}
	Consider $\sigma$-alegbras $\mathcal{F}_N$. Suppose $W_{1N},\ldots,W_{NN}\in\mathbb{R}^k$ are i.i.d. random variables conditional on $\mathcal{F}_N$ for any fixed $N\in\mathbb{N}$. Define $W_N\equiv \sum_{u=1}^N W_{uN}/\sqrt{N}$.  Suppose
	\begin{align*}
		\E[W_{uN}|\mathcal{F}_N]=\bm 0,\ \E[W_{uN}W_{uN}^\top|\mathcal{F}_N]=\bm I_{k}.
	\end{align*}
	Furthermore, consider random variables $X_N\in\mathbb{R}^k$ and $X_N$ is measurable with respect to $\mathcal{F}_N$. Then for any measurable function $g_N:\mathbb{R}^k\mapsto\mathbb{R}^d$ and any Lipschitz function $f:\mathbb{R}^{d+k}\rightarrow\mathbb{R}$ such that $\|f\|_{\mathrm{BL}}\leq 1$, we have
	\begin{align*}
		\E[f(g_N(X_N),W_N)|\mathcal{F}_N]-\E[f(g_N(X_N),Z)|\mathcal{F}_N]\lesssim_{k} \frac{\E[\|W_{uN}\|_2^3\mid\mathcal{F}_N]+1}{\sqrt{N}}\quad\text{almost surely},
	\end{align*}
	where $Z\sim N(0,\bm I_{k})$.
\end{lemma}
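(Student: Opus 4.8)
The plan is to deduce the conditional statement from the unconditional bounded-Lipschitz CLT (Lemma~\ref{lem:W_1_bound}) by fixing a regular conditional distribution (RCD) and arguing pointwise in the underlying sample point $\omega$, upgrading the in-probability moment hypotheses to almost-sure convergence along a subsequence via Lemma~\ref{lem:sub_subseq}.

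First I would fix, for each $N$, a version $\kappa_N(\cdot,\cdot)$ of the RCD of $(W_{1N},\dots,W_{NN})$ given $\mathcal{F}_N$, whose existence is guaranteed by Lemma~\ref{lem:Klenke_Thm_8.37}. Since $f$ is bounded, Lemma~\ref{lem:new_klenke_thm_8.38}, applied with $X=X_N\in\mathcal{F}_N$, $Y=(W_{1N},\dots,W_{NN})$ and the measurable map $(x,y)\mapsto f\big(g_N(x),(y_1+\cdots+y_N)/\sqrt N\big)$, shows that for a.e.\ $\omega$
\[
\E\big[f(g_N(X_N),W_N)\,\big|\,\mathcal{F}_N\big](\omega)=\int f\Big(g_N(X_N(\omega)),\ \tfrac{1}{\sqrt N}\sum_{u=1}^N w_u\Big)\,\kappa_N(\omega,\mathrm dw).
\]
Conditional independence and equality of the conditional laws of the $W_{uN}$ mean that off a null set $\kappa_N(\omega,\cdot)=\mu_{N,\omega}^{\otimes N}$ for some law $\mu_{N,\omega}$ on $\mathbb{R}^k$; the a.s.\ identities $\E[W_{uN}\mid\mathcal{F}_N]=\bm 0$ and $\E[W_{uN}W_{uN}^\top\mid\mathcal{F}_N]=\bm I_k$ then become $\int w\,\mathrm d\mu_{N,\omega}=\bm 0$ and $\int ww^\top\,\mathrm d\mu_{N,\omega}=\bm I_k$ (for each fixed $N$ this is a finite family of a.s.\ statements, hence they hold simultaneously off a single null set — finiteness of $[N]$ is essential here). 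Finally, taking $Z\sim N(\bm 0,\bm I_k)$ independent of $\mathcal{F}_N$, the matching right-hand quantity is $\E[f(g_N(X_N),Z)\mid\mathcal{F}_N](\omega)=\E\!\big[f(g_N(X_N(\omega)),Z)\big]$, the expectation being over $Z$ alone.

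Next I would write $\Delta_N\equiv\E[f(g_N(X_N),W_N)\mid\mathcal{F}_N]-\E[f(g_N(X_N),Z)\mid\mathcal{F}_N]$ and invoke Lemma~\ref{lem:sub_subseq}: to prove $\Delta_N\convp 0$ it suffices that every subsequence has a further subsequence along which $\Delta_N\to0$ a.s. Given a subsequence, the hypotheses $\E[\|W_{uN}\|_2^4\mid\mathcal{F}_N]/N\convp0$ and $\E[\|W_{uN}\|_2^3\mid\mathcal{F}_N]/\sqrt N\convp0$ allow, by two nested applications of Lemma~\ref{lem:sub_subseq}, passage to a further subsequence — relabelled $N$ — along which, for a.e.\ $\omega$, $\int\|w\|_2^4\,\mathrm d\mu_{N,\omega}/N\to0$ and $\int\|w\|_2^3\,\mathrm d\mu_{N,\omega}/\sqrt N\to0$. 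Fix such an $\omega$, also lying in the full-measure set of the previous step; set $a_N\equiv g_N(X_N(\omega))\in\mathbb{R}^d$ and $r_N(\cdot)\equiv f(a_N,\cdot):\mathbb{R}^k\to\mathbb{R}$. Restricting a bounded Lipschitz function to an affine slice increases neither the Lipschitz seminorm nor the sup-norm, so $\|r_N\|_{\mathrm{BL}}\leq\|f\|_{\mathrm{BL}}\leq1$. Under $\mu_{N,\omega}^{\otimes N}$ the $W_{uN}$ are genuinely i.i.d.\ with mean $\bm 0$, covariance $\bm I_k$, and the scaled third and fourth moments required by Lemma~\ref{lem:W_1_bound}, which therefore yields $\big|\,\int r_N\big(\tfrac{1}{\sqrt N}\sum_{u=1}^N w_u\big)\,\mathrm d\mu_{N,\omega}^{\otimes N}-\E[r_N(Z)]\,\big|\to0$; this is exactly $\Delta_N(\omega)\to0$. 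Since it holds for a.e.\ $\omega$, $\Delta_N\to0$ a.s.\ along the subsequence, and Lemma~\ref{lem:sub_subseq} gives $\Delta_N\convp0$, as claimed.

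I expect the main obstacle to be the measure-theoretic bookkeeping rather than any estimate: one must justify carefully that conditional independence and the first/second-moment identities genuinely descend to the fixed kernel $\kappa_N(\omega,\cdot)$ off a single null set, and that the $o_p(1)$ moment conditions — statements in probability, not a.s. — are threaded correctly through the sub-subsequence principle without clashing with the a.s.\ structure already fixed. The remaining ingredients — slice-stability of the $\mathrm{BL}$-norm, and Lemma~\ref{lem:W_1_bound} itself — are routine.
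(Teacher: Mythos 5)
Your proposal is correct and follows essentially the same route as the paper's proof: fix a regular conditional distribution, reduce to an almost-sure statement along sub-subsequences via Lemma~\ref{lem:sub_subseq} (upgrading the $o_p(1)$ moment conditions), and then apply the unconditional bounded-Lipschitz CLT (Lemma~\ref{lem:W_1_bound}) pointwise in $\omega$ with the sliced test function $r_N(\cdot)=f(g_N(X_N(\omega)),\cdot)$. Your extra bookkeeping on the product structure $\kappa_N(\omega,\cdot)=\mu_{N,\omega}^{\otimes N}$ only makes explicit what the paper leaves implicit.
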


\noindent Lemma \ref{lem:CLT_BL} is an application of Corollary~\ref{cor:W_1_bound} with $r_N(x)=f(g_N(X_N(\omega)),x)$ and regular conditional distribution $\P[(W_{1N},\ldots, W_{NN})\in B\mid \mathcal{F}_N](\omega)$ for any $\omega\in \Omega$.

\subsection{New conditional CMT results}\label{sec:conditional_CMT}

In this section, we extend the classical continuous mapping theorem to conditional convergence. The following lemma is the classical result.

\begin{lemma}[Classical CMT]\label{lem:continuous_mapping_lem}
	Let $W_N,W$ be random variables on a metric space $S$. Suppose a function $g:S\mapsto S'$, where $S'$ is another metric space, has the set of discontinuity points $D_g$ such that $\P[W\in D_g]=0$. Then if $W_N\convd W$, the following is true: $g(W_N)\convd g(W)$.
\end{lemma}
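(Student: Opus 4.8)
The plan is to reduce the statement to the closed-set formulation of the Portmanteau theorem. Recall that $W_N \convd W$ in the metric space $S$ is equivalent to $\limsup_N \P[W_N \in F] \le \P[W \in F]$ for every closed set $F \subseteq S$, and convergence in distribution in $S'$ admits the analogous characterization through closed subsets of $S'$. Hence it suffices to fix an arbitrary closed set $F \subseteq S'$ and establish $\limsup_N \P[g(W_N) \in F] \le \P[g(W) \in F]$. Throughout I will assume, as is implicit in the statement, that $g$ is Borel measurable (equivalently that $D_g$ is a Borel set, which it always is), so that $g(W_N)$ and $g(W)$ are genuine random variables.

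First I would rewrite $\P[g(W_N) \in F] = \P[W_N \in g^{-1}(F)] \le \P[W_N \in \overline{g^{-1}(F)}]$, where the closure is taken in $S$. Applying the closed-set Portmanteau inequality to $W_N \convd W$ with the closed set $\overline{g^{-1}(F)}$ yields $\limsup_N \P[W_N \in \overline{g^{-1}(F)}] \le \P[W \in \overline{g^{-1}(F)}]$.

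The key step is the set inclusion $\overline{g^{-1}(F)} \subseteq g^{-1}(F) \cup D_g$. To see this, take $x \in \overline{g^{-1}(F)}$; since $S$ is metric, closures are sequential, so there is a sequence $x_n \in g^{-1}(F)$ with $x_n \to x$. If $g$ is continuous at $x$, then $g(x_n) \to g(x)$, and as each $g(x_n) \in F$ and $F$ is closed we get $g(x) \in F$, i.e. $x \in g^{-1}(F)$; otherwise $x \in D_g$ by definition of the discontinuity set. Combined with the hypothesis $\P[W \in D_g] = 0$, this gives $\P[W \in \overline{g^{-1}(F)}] \le \P[W \in g^{-1}(F)] + \P[W \in D_g] = \P[g(W) \in F]$. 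Chaining the three displayed inequalities delivers $\limsup_N \P[g(W_N) \in F] \le \P[g(W) \in F]$ for every closed $F \subseteq S'$, and the Portmanteau theorem then gives $g(W_N) \convd g(W)$, as claimed.

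Since all the ingredients are elementary, there is no serious obstacle here; the only points warranting care are invoking the correct (closed-set) direction of Portmanteau and the measurability bookkeeping that makes $g(W_N)$ well-defined. An alternative route, available when the support of the law of $W$ is separable, is to apply the Skorohod representation theorem (Lemma~\ref{lem:skorohod}) to realize $W_N \to W$ $\P$-almost surely on a common probability space with matching laws; then $\P$-a.s.\ the limit avoids $D_g$, so $g(W_N) \to g(W)$ almost surely, whence $g(W_N) \convd g(W)$. I would present the Portmanteau argument as the main proof, since it requires no separability assumption, and mention the Skorohod-based argument only as a remark.
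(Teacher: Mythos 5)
Your proof is correct, and it is the standard Portmanteau argument for the continuous mapping theorem: reduce to the closed-set inequality, pass from $g^{-1}(F)$ to its closure, and use the key inclusion $\overline{g^{-1}(F)} \subseteq g^{-1}(F) \cup D_g$ together with $\P[W\in D_g]=0$. The paper states this lemma as a classical result and gives no proof of it, so there is no in-paper argument to compare against; your write-up supplies a complete and fully general proof (no separability of $S$ or of the support of $W$ is needed, unlike the Skorohod route you mention as an alternative). The only slip is a parenthetical: Borel measurability of $g$ is \emph{not} equivalent to $D_g$ being Borel --- the discontinuity set $D_g$ is an $F_{\sigma\delta}$ set, hence Borel, for \emph{any} function $g$, measurable or not --- but the hypothesis you actually invoke (that $g$ is Borel measurable so that $g(W_N)$ and $g(W)$ are genuine random variables) is the standard one and is exactly what the argument requires, so this does not affect the validity of the proof.
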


\noindent To account for the varying sequence and conditioning, we present the following lemmas. These wo results provide new conditional convergence in probability and distribution, respectively. 

\begin{lemma}[CMT: convergence in conditional probability]\label{lem:continuous_map_varying}
	Suppose $X_N,Y_N\in\mathcal{X}\subset \mathbb{R}^k$ and $\|X_N-Y_N\|_2=o_p(1)$. Furthermore, suppose function $f:\mathcal{X}\rightarrow\mathbb{R}$ is uniformly continuous in the support $\mathcal{X}$ and uniformly bounded: $\sup_{x\in\mathcal{X}}|f(x)|<\infty$. Then we have $\E[\left|f(X_N)-f(Y_N)\right|]\rightarrow 0$. Moreover, we have $\E[\left|f(X_N)-f(Y_N)\right||\mathcal{F}_N]\convp 0$.
\end{lemma}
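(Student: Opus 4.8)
The plan is to reduce the claim to the vanishing of a single conditional tail probability, using uniform continuity and uniform boundedness of $f$ to make the reduction uniform in $N$. Fix $\varepsilon>0$. By uniform continuity of $f$ on $\mathcal{X}$, choose $\delta=\delta(\varepsilon)>0$ such that $x,y\in\mathcal{X}$ with $\|x-y\|_2<\delta$ implies $|f(x)-f(y)|<\varepsilon$. Writing $M\equiv\sup_{x\in\mathcal{X}}|f(x)|<\infty$, one has the pointwise bound
\begin{align*}
	|f(X_N)-f(Y_N)|\leq \varepsilon + 2M\,\indicator\big(\|X_N-Y_N\|_2\geq\delta\big)
\end{align*}
on the whole probability space, since on $\{\|X_N-Y_N\|_2<\delta\}$ the difference is below $\varepsilon$ and otherwise it is at most $2M$.

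For the unconditional statement, taking expectations gives $\E[|f(X_N)-f(Y_N)|]\leq \varepsilon + 2M\,\P[\|X_N-Y_N\|_2\geq\delta]$; the last term vanishes because $\|X_N-Y_N\|_2=o_p(1)$, so $\limsup_N \E[|f(X_N)-f(Y_N)|]\leq\varepsilon$, and letting $\varepsilon\downarrow 0$ concludes.

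For the conditional statement, I would take conditional expectations given $\mathcal{F}_N$ (with respect to a fixed version of the conditional expectation, as set up in Appendix~\ref{sec:RCD_preliminary}) in the displayed pointwise bound, obtaining almost surely
\begin{align*}
	\E[|f(X_N)-f(Y_N)|\mid\mathcal{F}_N]\leq \varepsilon + 2M\,P_N,\qquad P_N\equiv\P[\|X_N-Y_N\|_2\geq\delta\mid\mathcal{F}_N].
\end{align*}
The key observation is that $P_N\convp 0$: indeed $P_N\geq 0$ and, by the tower property, $\E[P_N]=\P[\|X_N-Y_N\|_2\geq\delta]\to 0$, so $P_N\to 0$ in $L^1$ and hence in probability by Markov's inequality. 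Consequently, for any $\eta>0$, the event $\{\E[|f(X_N)-f(Y_N)|\mid\mathcal{F}_N]>2\varepsilon\}$ is contained, up to a $\P$-null set, in $\{2M\,P_N>\varepsilon\}$, whose probability tends to $0$. Since $\varepsilon>0$ was arbitrary, $\E[|f(X_N)-f(Y_N)|\mid\mathcal{F}_N]\convp 0$.

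The argument is largely routine, and the only point that genuinely requires care is the reason one cannot simply invoke the classical continuous mapping theorem (Lemma~\ref{lem:continuous_mapping_lem}): because $X_N$ and $Y_N$ are permitted to vary with $N$ and need not be tight, continuity of $f$ alone plus an almost-sure representation does not suffice. Uniform continuity together with uniform boundedness is precisely what makes the single-$\delta$ truncation effective uniformly over $N$, which is the substantive difference from the usual statement; passing from the unconditional $L^1$ bound to conditional convergence in probability is then immediate via the tower property.
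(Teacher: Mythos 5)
Your proof is correct and follows essentially the same approach as the paper: the uniform-continuity/uniform-boundedness truncation for the unconditional $L^1$ statement, and then passing to the conditional statement via nonnegativity plus a vanishing mean (the paper phrases this as uniform integrability of the conditional expectation, you route it through the conditional tail probability $P_N$ and the tower property — the same Markov-inequality argument either way). A marginally shorter finish for the second part is to apply Markov directly to the nonnegative variable $\E[|f(X_N)-f(Y_N)|\mid\mathcal{F}_N]$, whose mean vanishes by the first part, but this is a stylistic difference only.
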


\begin{lemma}[CMT: convergence in conditional distribution]\label{lem:sufficient_condition_CMT}
	Consider the $\sigma$-algebra $\mathcal{F}_N$ and a measurable function $g$ with discontinuity set $D_g$. Suppose random variable $W\sim\mathbb{P}_W$ satisfies $\P[W\in D_g]=0$ and $g(W)$ is a continuous random variable. If the sequence of random variable $W_N$ satisfies
	\begin{align*}
		\left|\E[f(W_N)|\mathcal{F}_N]-\E[f(W)]\right|\convp 0, \text{ for any } f\text{ such that }\|f\|_{\mathrm{BL}}<\infty,
	\end{align*}
	then we have $\sup_{t\in\mathbb{R}}\left|\P[g(W_N)\leq t|\mathcal{F}_N]-\P[g(W)\leq t]\right|\convp 0$.
\end{lemma}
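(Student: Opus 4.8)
The plan is to reduce the statement to the conditional Pólya theorem (Lemma~\ref{lem:cond_polya}) by establishing that $g(W_N) \mid \mathcal{F}_N \convdp g(W)$. Since $g(W)$ is assumed to be a continuous random variable (i.e.\ has continuous CDF), Lemma~\ref{lem:cond_polya} then upgrades the pointwise conditional convergence to the uniform statement $\sup_{t}|\P[g(W_N)\leq t\mid\mathcal{F}_N] - \P[g(W)\leq t]|\convp 0$ that we want. So the crux is to verify the conditional convergence in distribution of $g(W_N)$ given $\mathcal{F}_N$, working from the hypothesis that $|\E[f(W_N)\mid\mathcal{F}_N] - \E[f(W)]|\convp 0$ for every bounded Lipschitz $f$.

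The first step is to translate the bounded-Lipschitz hypothesis into a statement that holds along almost-sure subsequences, so that we can invoke the \emph{unconditional} continuous mapping theorem (Lemma~\ref{lem:continuous_mapping_lem}) pathwise. Concretely, fix a countable convergence-determining subclass $\{f_j\}_{j\geq 1}$ of bounded Lipschitz functions (such a class exists: e.g.\ a countable dense-in-$\|\cdot\|_{\mathrm{BL}}$ family suffices to characterize weak convergence). For each $j$ the hypothesis gives $\E[f_j(W_N)\mid\mathcal{F}_N]\convp \E[f_j(W)]$. By the subsequence criterion (Lemma~\ref{lem:sub_subseq}) combined with a diagonal argument over $j$, every subsequence of $\N$ has a further subsequence $(N_\ell)$ along which, for $\P$-almost every $\omega$, we have $\E[f_j(W_{N_\ell})\mid\mathcal{F}_{N_\ell}](\omega) \to \E[f_j(W)]$ simultaneously for all $j$. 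Fixing such an $\omega$, the regular conditional distributions $\kappa_{W_{N_\ell},\mathcal{F}_{N_\ell}}(\omega,\cdot)$ — which exist by Lemma~\ref{lem:Klenke_Thm_8.37} and realize the conditional expectations via Lemma~\ref{lem:Klenke_Thm_8.38} — converge weakly to $\P_W$, because testing against the countable class $\{f_j\}$ is enough. Now apply the classical CMT (Lemma~\ref{lem:continuous_mapping_lem}) to the map $g$: since $\P[W\in D_g]=0$, we get $g_\#\kappa_{W_{N_\ell},\mathcal{F}_{N_\ell}}(\omega,\cdot) \Rightarrow g_\#\P_W$, i.e.\ $\P[g(W_{N_\ell})\leq t\mid\mathcal{F}_{N_\ell}](\omega) \to \P[g(W)\leq t]$ at every continuity point $t$ of the latter CDF — which, by continuity of $g(W)$, means every $t$.

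The final step assembles these pathwise conclusions. For a fixed continuity point $t$, we have shown that every subsequence of $\big(\P[g(W_N)\leq t\mid\mathcal{F}_N]\big)_N$ admits a further subsequence converging almost surely to the constant $\P[g(W)\leq t]$; by Lemma~\ref{lem:sub_subseq} again, this yields $\P[g(W_N)\leq t\mid\mathcal{F}_N]\convp \P[g(W)\leq t]$. Since this holds for every $t$ (all $t$ being continuity points), Definition~\ref{def:conditional-convergence-distribution} gives $g(W_N)\mid\mathcal{F}_N\convdp g(W)$, and Lemma~\ref{lem:cond_polya} closes the argument. The main obstacle I anticipate is the measurability bookkeeping around the regular conditional distributions: one must ensure that the countable class $\{f_j\}$ genuinely determines weak convergence of the measures $\kappa_{W_N,\mathcal{F}_N}(\omega,\cdot)$ for almost every $\omega$, and that the exceptional null sets (one per $f_j$, plus one per rational continuity point in the final step) can be combined into a single null set — which is routine given countability, but needs to be stated carefully. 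A secondary subtlety is that $D_g$ is only assumed $\P_W$-null, not $\kappa_{W_N,\mathcal{F}_N}(\omega,\cdot)$-null, so the CMT must be applied to the \emph{limit} measure $\P_W$ exactly as in Lemma~\ref{lem:continuous_mapping_lem}, not to the prelimits.
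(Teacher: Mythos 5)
Your proposal is correct and follows essentially the same route as the paper's proof: pass to regular conditional distributions, establish pathwise weak convergence of $\kappa_{W_N,\mathcal{F}_N}(\omega,\cdot)$ to $\P_W$, apply the classical CMT (Lemma~\ref{lem:continuous_mapping_lem}) pathwise, and finish with the conditional Pólya theorem (Lemma~\ref{lem:cond_polya}). You are in fact somewhat more careful than the paper about upgrading ``convergence in probability for each bounded Lipschitz $f$'' to ``convergence for all such $f$ on a common event''---the paper simply asserts an event $\mathcal{E}$ with $\P[\mathcal{E}]\rightarrow 1$, whereas your countable convergence-determining class combined with the diagonal-subsequence argument via Lemma~\ref{lem:sub_subseq} makes that step rigorous.
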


\subsection{Proof of Lemma \ref{lem:continuous_map_varying}}

\begin{proof}[Proof of Lemma \ref{lem:continuous_map_varying}]
	Fix any $\delta>0$, we can choose $\varepsilon(\delta)>0$ such that whenever $\|X_N-Y_N\|_2<\varepsilon(\delta)$ we can guarantee $|f(X_N)-f(Y_N)|\leq \delta$ since $f$ is uniformly continuous in $\mathcal{X}$. Then for any $\delta>0$, we have 
	\begin{align*}
		\E[\left|f(X_N)-f(Y_N)\right|]
		&
		= \E\left[|f(X_N)-f(Y_N)|\mathbf{1}(\|X_N-Y_N\|_2\leq \varepsilon(\delta))\right]\\
		&
		\qquad +\E\left[|f(X_N)-f(Y_N)|\mathbf{1}(\|X_N-Y_N\|_2>\varepsilon(\delta))\right]\\
		&
		\leq \delta+2\sup_{x\in\mathcal{X}}|f(x)|\P[\|X_N-Y_N\|_2>\varepsilon(\delta)].
	\end{align*}
	Letting $N\rightarrow\infty$, we know $\P[\|X_N-Y_N\|_2>\varepsilon(\delta)]\rightarrow0$ so that we have 
	\begin{align*}
		\lim_{N\rightarrow\infty}\E[\left|f(X_N)-f(Y_N)\right|]\leq \delta.
	\end{align*}
	Since $\delta$ is chosen arbitrarily, we have $\lim_{N\rightarrow\infty}\E[\left|f(X_N)-f(Y_N)\right|]=0$ so that we complete the first part proof. By the uniform integrability of $\E[|f(X_N)-f(Y_N)|\mathcal{F}_N]$, guaranteed by the uniform boundedness of $f$, we conclude the second part of the proof.
\end{proof}

\subsection{Proof of Lemma \ref{lem:sufficient_condition_CMT}}

\begin{proof}[Proof of Lemma \ref{lem:sufficient_condition_CMT}]
	Using Lemma \ref{lem:Klenke_Thm_8.38} and definition of regular conditional distribution, we know 
	\begin{align*}
		\left|\int f(x)\mathrm{d}\kappa_{W_N,\mathcal{F}_N}(\omega,x)-\int f(x)\mathrm{d}\mathbb{P}_W(x)\right|\rightarrow0,\text{ for any }f\text{ such that } \|f\|_{\mathrm{BL}}<\infty
	\end{align*}
	for any $\omega\in\mathcal{E}\subset\Omega$, where $\P[\mathcal{E}]\rightarrow1$. Applying Lemma \ref{lem:Dudley_equivalence} to $\kappa_{W_N,\mathcal{F}_N}(\omega,\cdot),\P_W(\cdot)$, we know on the event $\mathcal{E}$, random variable $\tilde{W}_N$ from measure $\kappa_{W_N,\mathcal{F}_N}(\omega,\cdot)$ converges to measure $\P_W(\cdot)$ in distribution. In other words, we have for any bounded and continuous function $h$,
	\begin{align*}
		\E[h(\tilde{W}_N(\omega))]=\int h(x)\mathrm{d}\kappa_{W_N,\mathcal{F}_N}(\omega,x)\rightarrow \int h(x)\mathrm{d}\mathbb{P}_W(x)=\E[h(W)],\ \forall \omega\in\mathcal{E}.
	\end{align*}
	Then by continuous mapping theorem, we have for any $t\in\mathbb{R}$,
	\begin{align*}
		\P[g(\tilde{W}_N(\omega))\leq t]\rightarrow \P[g( W)\leq t],\ \forall \omega\in\mathcal{E}\cap \{W\notin D_g\}.
	\end{align*}
	Again applying Lemma \ref{lem:Klenke_Thm_8.38}, we know $\forall \omega\in\mathcal{E}\cap \{W\notin D_g\}$,
	\begin{align*}
		\left|\P[g(W_N)\leq t|\mathcal{F}_N](\omega)-\P[g(W)\leq t]\right|\rightarrow 0.
	\end{align*}
	Since $\P[\mathcal{E}\cap \{W\notin D_g\}]=\P[\mathcal{E}]\rightarrow 1$ by the assumption that $\P[W\in D_g]=0$, we have 
	\begin{align*}
		\left|\P[g(W_N)\leq t|\mathcal{F}_N]-\P[g(W)\leq t]\right|\convp 0.
	\end{align*}
	Finally, applying Lemma \ref{lem:cond_polya}, we have
	\begin{align*}
		\sup_{t\in\mathbb{R}}\left|\P[g(W_N)\leq t|\mathcal{F}_N]-\P[g(W)\leq t]\right|\convp 0.
	\end{align*}
\end{proof}

\section{Preparation for the proof of Theorem~\ref{thm:weak_convergence_W_N}}

\subsection{Auxiliary definitions for the proof of Theorem~\ref{thm:weak_convergence_W_N}}\label{sec:notation_main_proof}

Recall the definition of $c_N$ and $c$ in Assumption~\ref{assu:moment_condition}. Then consider the function $h(\cdot,c):\mathbb{R}^{12}\rightarrow\bar{\mathbb{R}}$ such that for $x=(x_1,\ldots,x_{10},x_{11},x_{12})\in\mathbb{R}^{12}$,
\begin{align}\label{eq:def_h_c_function}
	h(x,c)= (\sqrt{x_3/x_5},-\sqrt{x_4/x_6})  A (x_1, x_2)^\top+\frac{1}{\sqrt{2}}c\quad\text{where}\quad A=
	\begin{pmatrix}
		x_7 & x_8 \\
		x_9 & x_{10}\\
	\end{pmatrix},
\end{align} 
and $h_N(x)\equiv h(x,c_N)$. Recall the sampling function $e(s,x)$, defined as in Assumption~\ref{assu:sampling_design}. Define for any $x\in\mathbb{R}^{12}$, 
\begin{align*}
	H_N(s,x)\equiv \min\{1-l_N, \max\{l_N, e(s,h_N(x))\}\}
\end{align*}
and 
\begin{align*}
	V_N(s,x)\equiv \E[Y_{uN}^2(s)]-H_N(s,x)\E[Y_{uN}(s)]^2.
\end{align*}
In particular, we define the weight and variance functions: 
\begin{align*}
	H_N^{(2)}(x)\equiv (H_N(0,x),H_N(1,x))\quad\text{and}\quad V_N^{(2)}(x)\equiv (V_N(0,x),V_N(1,x))
\end{align*}
and the covariance function 
\begin{align}\label{eq:cov_N_x_def}
	\mathrm{Cov}_N^{(2)}(x)\equiv \frac{-(H_N(0,x)H_N(1,x))^{1/2}}{(V_{N}(0,x))^{1/2}(V_{N}(1,x))^{1/2}}\E[Y_{uN}(0)]\E[Y_{uN}(1)].
\end{align}
Also, define the matrix function $\bm \Sigma_N^{(2)}(x)\equiv (\mathrm{Cov}_N^{(2)}(x))_{2\times 2}$.

\subsection{Generic lemmas on the random functions}

\begin{lemma}[Asymptotic lower and upper bound of $V_N^{(t)}(s)$ and $V_N(s,x)$]\label{lem:uniform_lower_upper_bound_variance}
	Suppose the Assumption \ref{assu:moment_condition}-\ref{assu:sampling_design} hold and either Assumption \ref{assu:constant_weighting} or Assumption \ref{assu:adaptive_weighting} holds. Then for any $t=1,2$ and $s=0,1$, we have 
	\begin{align*}
		0<\liminf_{N\rightarrow\infty}V_{N}^{(t)}(s)\leq \limsup_{N\rightarrow\infty}V_{N}^{(t)}(s)<\infty,
	\end{align*}
	and 
	\begin{align*}
		0<\liminf_{N\rightarrow\infty}\inf_{x\in\mathbb{R}^{10}}V_{N}(s,x)\leq \limsup_{N\rightarrow\infty}\sup_{x\in\mathbb{R}^{10}}V_{N}(s, x)<\infty.
	\end{align*}
	Similarly, $V^{(t)}(s)$ is uniformly lower and upper bounded for $s=0,1$ and $t=1,2$.
\end{lemma}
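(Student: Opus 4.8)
The plan is to obtain the estimate from a one‑line sandwich inequality, exploiting only the fact that all probability weights appearing in these variances lie in $[0,1]$; the argument is uniform over which of Assumption~\ref{assu:constant_weighting} or Assumption~\ref{assu:adaptive_weighting} is in force.

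First I would record that every sampling function and its clipped version takes values in $[0,1]$. Under either branch of Assumption~\ref{assu:sampling_design}, $\bar e(s,\cdot)$ maps $\bar{\mathbb{R}}$ into $[0,1]$: in the Lipschitz case this is the stated hypothesis together with $\bar e(s,-\infty)\in\{0,1\}$, and in the step‑function case $\bar e(s,x)=\sum_{k=1}^K c_k\indicator(g(x)\in C_k)$ equals, for each $x$, one of the coefficients $c_k\in[0,1]$ or $0$ (including at $x=-\infty$, where $g(-\infty)=-\infty\in C_1$). Consequently $H_N(s,x)=\min\{1-l_N,\max\{l_N,\bar e(s,h_N(x))\}\}\in[l_N,1-l_N]\subseteq[0,1]$ for all $x\in\mathbb{R}^{10}$ and all $N$; the pilot weight $e(s)$ lies in $(c_l,c_u)\subset(0,1)$; the follow‑up weight $\bar e_N(s,\mathcal{H}_1)=\mathcal{S}_N(S_N^{(1)}(0)-S_N^{(1)}(1))$ lies in $[l_N,1-l_N]\subseteq[0,1]$ almost surely; and the limiting weights $H^{(t)}(s)$ of~\eqref{eq:limiting_probabilities} lie in $[0,1]$ almost surely, being either $e(s)$ or $\max\{\lim_N l_N,\bar e(s,\cdot)\}\le 1$. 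Thus in every case the relevant variance has the form $\E[Y_{uN}^2(s)]-H\,\E[Y_{uN}(s)]^2$ (or its $N\to\infty$ analogue) with a weight $H\in[0,1]$.

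Next, since $0\le H\le 1$ and $\E[Y_{uN}(s)]^2\ge 0$, the elementary sandwich
\begin{align*}
	\E[Y_{uN}^2(s)]-\E[Y_{uN}(s)]^2\;\le\;\E[Y_{uN}^2(s)]-H\,\E[Y_{uN}(s)]^2\;\le\;\E[Y_{uN}^2(s)]
\end{align*}
holds. Specializing $H=H_N(s,x)$, the bounds are independent of $x$, giving $\V[Y_{uN}(s)]\le\inf_{x}V_N(s,x)\le\sup_{x}V_N(s,x)\le\E[Y_{uN}^2(s)]$; specializing $H=H_N^{(t)}(s)$ gives $\V[Y_{uN}(s)]\le V_N^{(t)}(s)\le\E[Y_{uN}^2(s)]$ (almost surely for $t=2$); and passing to the limit --- which exists for $\E[Y_{uN}^p(s)]$, $p\in\{1,2\}$, by Assumption~\ref{assu:moment_condition} --- gives $\lim_N\V[Y_{uN}(s)]\le V^{(t)}(s)\le\lim_N\E[Y_{uN}^2(s)]$ almost surely. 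Finally I would invoke Assumption~\ref{assu:moment_condition}: $\inf_N\V[Y_{uN}(s)]>0$ yields the uniform lower bounds, while $\E[Y_{uN}^2(s)]\le(\E[Y_{uN}^4(s)])^{1/2}$ by Cauchy--Schwarz together with $\sup_N(\E[Y_{uN}^4(s)])^{1/2}<\infty$ yields the uniform upper bounds; taking $\liminf$ and $\limsup$ in $N$ then finishes the proof.

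There is no genuine analytic obstacle here. The only points requiring care are organizational: checking the membership $\bar e(s,\cdot)\in[0,1]$ holds uniformly over $\bar{\mathbb{R}}$ and over $N$ in both cases of Assumption~\ref{assu:sampling_design} (in particular at the point $-\infty$), and keeping track of which of the quantities involved are random --- the stage‑$2$ objects $V_N^{(2)}(s)$ and the limiting $V^{(t)}(s)$ --- so that their bounds are asserted to hold almost surely rather than deterministically.
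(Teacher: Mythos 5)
Your proof is correct and follows essentially the same route as the paper's: the sandwich $\V[Y_{uN}(s)]\le \E[Y_{uN}^2(s)]-H\,\E[Y_{uN}(s)]^2\le \E[Y_{uN}^2(s)]$ for a weight $H\in[0,1]$, combined with the lower bound on the variance and the Cauchy--Schwarz bound $\E[Y_{uN}^2(s)]\le(\E[Y_{uN}^4(s)])^{1/2}$ from Assumption~\ref{assu:moment_condition}. The extra bookkeeping you do on verifying $H\in[0,1]$ across the cases of Assumption~\ref{assu:sampling_design} is harmless but not needed beyond what the paper records.
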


\begin{lemma}[Lipschitz property of weight, variance and covariance matrix function]\label{lem:continuity_sqrt_sampling_function}
	Suppose the Assumption \ref{assu:moment_condition}-\ref{assu:sampling_design} hold and either Assumption \ref{assu:constant_weighting} or Assumption \ref{assu:adaptive_weighting} holds. Then there exist universal constants $C_1,C_2,C_3,C_4$ such that for any $x_1,x_2\in\mathbb{R}^{10}$
	\begin{align*}
		|H_{N}(s,x_1)-H_{N}(s,x_2)|
		&
		\leq C_1|e(s,h_N(x_1))-e(s,h_N(x_2))|\\
		|H_{N}^{1/2}(s,x_1)-H_{N}^{1/2}(s,x_2)|
		&
		\leq C_2|e^{1/2}(s,h_N(x_1))-e^{1/2}(s,h_N(x_2))|\\
		|V_{N}(s,x_1)-V_{N}(s,x_2)|
		&
		\leq C_3|e(s,h_N(x_1))-e(s,h_N(x_2))|.
	\end{align*}
	Furthermore, $\mathrm{Cov}_N^{(2)}$ is uniformly upper bounded so that 
	\begin{align*}
		&
		\|(\bm\Sigma_{N}^{(2)}(x_1))^{1/2}-(\bm\Sigma_{N}^{(2)}(x_2))^{1/2}\|_{\mathrm{F}}\\
		&
		\qquad\leq C_4\sum_{s=0,1}\left(|e(s,h_N(x_1))-e(s,h_N(x_2))|+|H_N^{1/2}(s,x_1)-H_N^{1/2}(s,x_2)|\right).
	\end{align*}
\end{lemma}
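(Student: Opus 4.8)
\textbf{Proof plan for Lemma~\ref{lem:continuity_sqrt_sampling_function}.}

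The plan is to establish the four Lipschitz-type bounds one at a time, in the order listed, since each later bound will reuse the structure of the earlier ones. The common thread is that all four functions $H_N(s,\cdot)$, $H_N^{1/2}(s,\cdot)$, $V_N(s,\cdot)$ and $(\bm\Sigma_N^{(2)}(\cdot))^{1/2}$ are built from the single ``atom'' $x\mapsto \bar e(s,h_N(x))$ (or its square root) by composition with elementary functions---clipping, affine combinations with the fixed moments $\E[Y_{uN}^p(s)]$, division, and the matrix square root. So the strategy is: (i) reduce each quantity to a Lipschitz function of $\bar e(s,h_N(x))$ or $\bar e^{1/2}(s,h_N(x))$, using that the relevant outer maps are Lipschitz on the ranges where the arguments live; (ii) invoke Assumption~\ref{assu:moment_condition} (finiteness and convergence of the first two moments) to bound the coefficients uniformly in $N$; and (iii) for the variance lower bounds needed in the division and the matrix square root, cite Lemma~\ref{lem:uniform_lower_upper_bound_variance}, which already gives $0<\liminf_N \inf_x V_N(s,x)\le\limsup_N\sup_x V_N(s,x)<\infty$.

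Concretely, for the first bound I would write $H_N(s,x)=\Pi_{l_N}(\bar e(s,h_N(x)))$ where $\Pi_{l_N}(u)=\min\{1-l_N,\max\{l_N,u\}\}$ is $1$-Lipschitz (uniformly in $N$, since clipping onto an interval is a contraction), so $|H_N(s,x_1)-H_N(s,x_2)|\le|\bar e(s,h_N(x_1))-\bar e(s,h_N(x_2))|$ and $C_1=1$ works. For the second bound, note $\Pi_{l_N}$ commutes with monotone reparametrization, so $H_N^{1/2}(s,x)$ equals the clipping of $\bar e^{1/2}(s,h_N(x))$ onto $[l_N^{1/2},(1-l_N)^{1/2}]$; the same contraction argument gives $C_2=1$. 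For the third bound, $V_N(s,x)=\E[Y_{uN}^2(s)]-H_N(s,x)\E[Y_{uN}(s)]^2$, so $|V_N(s,x_1)-V_N(s,x_2)|=\E[Y_{uN}(s)]^2\,|H_N(s,x_1)-H_N(s,x_2)|$; combining with the first bound and the uniform bound $\sup_N\E[Y_{uN}(s)]^2<\infty$ from Assumption~\ref{assu:moment_condition} gives a universal $C_3$.

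The fourth bound, on the matrix square root of $\bm\Sigma_N^{(2)}(x)$, is the main obstacle and requires the most care. The recipe is: first control $\|\bm\Sigma_N^{(2)}(x_1)-\bm\Sigma_N^{(2)}(x_2)\|_{\mathrm{F}}$ directly. Since $\bm\Sigma_N^{(2)}(x)$ has unit diagonal, only the off-diagonal entry $\mathrm{Cov}_N^{(2)}(x)$ matters; writing it as $-\E[Y_{uN}(0)]\E[Y_{uN}(1)]\cdot (H_N(0,x)H_N(1,x))^{1/2}/\big(V_N(0,x)V_N(1,x)\big)^{1/2}$, I would bound the increment of each factor: the numerator $(H_N(0,x)H_N(1,x))^{1/2}$ is Lipschitz in the pair $(\bar e^{1/2}(0,h_N(x)),\bar e^{1/2}(1,h_N(x)))$ because $(a,b)\mapsto ab$ is Lipschitz on the bounded box $[0,1]^2$; the denominator is bounded below away from zero uniformly by Lemma~\ref{lem:uniform_lower_upper_bound_variance}, so $1/\sqrt{V_N(0,x)V_N(1,x)}$ is Lipschitz as a function of the $V_N(s,x)$'s, which in turn are Lipschitz in $\bar e(s,h_N(x))$ by bound three. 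This yields $\|\bm\Sigma_N^{(2)}(x_1)-\bm\Sigma_N^{(2)}(x_2)\|_{\mathrm{F}}\lesssim \sum_{s}|\bar e^{1/2}(s,h_N(x_1))-\bar e^{1/2}(s,h_N(x_2))|$ (after also bounding $|\bar e(s,h_N(x_1))-\bar e(s,h_N(x_2))|$ by $|\bar e^{1/2}(s,h_N(x_1))-\bar e^{1/2}(s,h_N(x_2))|$ times $\bar e^{1/2}(s,h_N(x_1))+\bar e^{1/2}(s,h_N(x_2))\le 2$). Finally apply Lemma~\ref{lem:holder_continuity_Frobenius}, which gives $\|(\bm\Sigma_N^{(2)}(x_1))^{1/2}-(\bm\Sigma_N^{(2)}(x_2))^{1/2}\|_{\mathrm{F}}\le\sqrt{2}\,\|\bm\Sigma_N^{(2)}(x_1)-\bm\Sigma_N^{(2)}(x_2)\|_{\mathrm{F}}^{1/2}$; taking the square root of the previous bound and using $(\sum_s a_s)^{1/2}\le\sum_s a_s^{1/2}$ produces the stated form with a universal $C_4$. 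The one technical point to watch is that Lemma~\ref{lem:holder_continuity_Frobenius} requires positive definiteness of $\bm\Sigma_N^{(2)}(x)$, which holds because the off-diagonal entry has absolute value bounded by a constant strictly less than $1$---this is exactly the content of Lemma~\ref{lem:upper_bound_cov}---so the matrices are uniformly well-conditioned and the Hölder-$\tfrac12$ bound applies with constants independent of $N$ and $x$.
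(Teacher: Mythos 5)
Your plan matches the paper's proof essentially step for step: the same $1$-Lipschitz clipping argument for $H_N$ and $H_N^{1/2}$, the same reduction of the $V_N$ bound to the $H_N$ bound via the uniformly bounded moment $\E[Y_{uN}(s)]^2$, and the same treatment of the covariance term—splitting the increment of $\mathrm{Cov}_N^{(2)}$ into numerator and denominator pieces, invoking Lemma~\ref{lem:uniform_lower_upper_bound_variance} for the variance lower bound, using $|\bar e - \bar e| \le 2|\bar e^{1/2}-\bar e^{1/2}|$, and finishing with the H\"older-$\tfrac12$ matrix-square-root bound of Lemma~\ref{lem:holder_continuity_Frobenius}. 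Your explicit remark about verifying positive (semi-)definiteness before applying Lemma~\ref{lem:holder_continuity_Frobenius} is a small point of extra care that the paper's write-up leaves implicit; otherwise the two arguments are the same.
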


\subsection{Proof of Lemma \ref{lem:uniform_lower_upper_bound_variance}}

\begin{proof}[Proof of Lemma \ref{lem:uniform_lower_upper_bound_variance}]
	We first prove the result for $V_N^{(t)}(s)$. We divide the proof into two parts.

	\paragraph{Proof of $\liminf_{N\rightarrow\infty}V_N^{(t)}(s)>0$:}
	For any $s,t$, guaranteed by Assumption \ref{assu:moment_condition},
	\begin{align*}
		\liminf_{N\rightarrow\infty}V_{N}^{(t)}(s)
		&
		=\liminf_{N\rightarrow\infty}\left(\E[Y_{uN}^2(s)]-\E[Y_{uN}(s)]^2+(1-e_N(s,\mathcal{H}_{t-1}))\E[Y_{uN}(s)]^2\right)\\
		&
		\geq \liminf_{N\rightarrow\infty}\left(\E[Y_{uN}^2(s)]-\E[Y_{uN}(s)]^2\right)>0.
	\end{align*}
	Thus we have $\liminf_{N\rightarrow\infty}V_{N}^{(t)}(s)>0$ for any $s,t$. 

	\paragraph{Proof of $\limsup_{N\rightarrow\infty}V_N^{(t)}(s)<\infty$:}

	We bound 
	\begin{align*}
		\limsup_{N\rightarrow\infty}V_{N}^{(t)}(s)\leq \limsup_{N\rightarrow\infty}\E[Y_{uN}^2(s)]\leq\limsup_{N\rightarrow\infty}(\E[Y_{uN}^4(s)])^{1/2} <\infty
	\end{align*}
	where the last inequality is due to Assumption \ref{assu:moment_condition}. The proof for $V_N(s,x)$ is similar so that we omit it.
\end{proof}

\subsection{Proof of Lemma \ref{lem:continuity_sqrt_sampling_function}}\label{sec:proof_lem:continuity_sqrt_sampling_function}

\begin{proof}[Proof of Lemma \ref{lem:continuity_sqrt_sampling_function}]

	We prove the claim subsequently.

	\paragraph{Proof for $H_N(s,x)$:}

	Write $H_N(s,x)=\min\{1-l_N, \max\{l_N,e(s,h_N(x))\}\}$. We first notice that $\min\{1-l_N, \max\{l_N,x\}\}$ is a Lipschitz function of $x$ with Lipschitz constant $1$ so that we have
	\begin{align*}
		|H_N(s,x_1)-H_N(s,x_2)|\leq|e(s,h_N(x_1))-e(s,h_N(x_2))|.
	\end{align*}

	\paragraph{Proof for $H_N^{1/2}(s,x)$:}

	Since 
	\begin{align*}
		\sqrt{\min\{1-l_N,\max\{l_N,x\}\}}=\min\{\sqrt{1-l_N},\max\{\sqrt{l_N},\sqrt{x}\}\}
	\end{align*}
	is a Lipschitz function of $\sqrt{x}$, we can conclude 
	\begin{align*}
		|H_N^{1/2}(s,x_1)-H_N^{1/2}(s,x_2)|\leq |e^{1/2}(s,h_N(x_1))-e^{1/2}(s,h_N(x_2))|.
	\end{align*}
	
	\paragraph{Proof for $V_{N}(s, x)$:}

	Recall the definition $V_{N}(s, x_1)=\E[Y_{uN}^2(s)]-H_N(s,x_1)\E[Y_{uN}(s)]^2$ so that we can bound 
	\begin{align*}
		|V_{N}(s, x_1)-V_{N}(s, x_2)|
		&
		\leq \E[Y_{uN}(s)]^2|H_N(s,x_1)-H_N(s,x_2)|\\
		&
		\leq \E[Y_{uN}(s)]^2|e(s,h_N(x_1))-e(s,h_N(x_2))|
	\end{align*}
	where the last inequality is true due to the result already proved for $H_N(s,x)$. Since $\E[Y_{uN}(s)]^2$ is uniformly bounded by Assumption \ref{assu:moment_condition}, we complete the proof for $V_N(s,x)$.

	\paragraph{Proof for $(\bm \Sigma_N^{(2)}(x))^{1/2}$:} It suffices to prove that the eigenvalues of $\bm \Sigma_{N}^{(2)}=\bm\Sigma_{N}^{(2)}(E_{N}^{(1)})$ are uniformly bounded from $0$ and $\infty$. Recall the expression of $\bm\Sigma_{N}^{(2)}$ as in \eqref{eq:sigma_N_E_1n_def} and \eqref{eq:cov_N_x_def} $\bm \Sigma_{N}^{(2)}= (\mathrm{Cov}_N^{(2)}(E_{N}^{(1)}))_{2\times 2}$. To show this, we only need to show that there exists universal constant $C$ such that, adopting the abbreviation $e_N(s)\equiv H_N(s,E_N^{(1)})$, $\limsup_{N\rightarrow\infty}|\mathrm{Cov}_N^{(2)}(E_N^{(1)})|<C<1$. We will apply Lemma \ref{lem:upper_bound_cov} to prove the result. Recall 
    \small
    \begin{align*}
	    \mathrm{Cov}_N^{(2)}(E_N^{(1)})=\frac{(e_N(0)(1- e_N(0)))^{1/2}\E[Y_{uN}(0)]\E[Y_{uN}(1)]}{(\E[Y_{uN}^2(0)]-e_N(0)\E[Y_{uN}(0)]^2)^{1/2}(\E[Y_{uN}^2(1)]-(1- e_N(0))\E[Y_{uN}(1)]^2)^{1/2}}.
    \end{align*}
    \normalsize
    Note $e_N(s)\in (0,1)$ almost surely by the clip assumption, $\lim_{N\rightarrow\infty}\E[Y_{uN}^p(s)]$ converges for $p=1,2$ and $s=0,1$ by Assumption \ref{assu:moment_condition}, and $\liminf_{N\rightarrow\infty}\mathrm{Var}[Y_{uN}(s)]>0$ for $s=0,1$ by Assumption \ref{assu:moment_condition}. Then we can apply Lemma \ref{lem:upper_bound_cov} with $a_N=e_N(0), X_N=Y_{uN}(0)$ and $Y_N=Y_{uN}(1)$, we know $\limsup_{N\rightarrow\infty}|\mathrm{Cov}_N^{(2)}(E_{N}^{(1)})|<C<1$ almost surely where $C$ is a universal constant. Now, we prove the second claim. Applying Lemma~\ref{lem:holder_continuity_Frobenius}, we have
    \begin{align*}
		\|(\bm \Sigma_N^{(2)}(x_1))^{1/2}-(\bm \Sigma_N^{(2)}(x_2))^{1/2}\|_{\mathrm{F}}\lesssim \|\bm \Sigma_N^{(2)}(x_1)-\bm \Sigma_N^{(2)}(x_2)\|_{\mathrm{F}}.
	\end{align*}
	Define $V_N(x)\equiv V_N(0,x)V_N(1,x)$. Then recalling the definition of $\bm \Sigma_N^{(2)}$, we notice
	\small 
	\begin{align*}
		&
		\|\bm \Sigma_N^{(2)}(x_1)-\bm \Sigma_N^{(2)}(x_2)\|_{\mathrm{F}}\\
		&
		=\sqrt{2}|\mathrm{Cov}_N^{(2)}(x_1)-\mathrm{Cov}_N^{(2)}(x_2)|\\
		&
		\leq \sqrt{2}(H_N(0,x_1)H_N(1,x_1))^{1/2}|\E[Y_{uN}(0)]\E[Y_{uN}(1)]|\frac{\left|V_N^{1/2}(x_1)-V_N^{1/2}(x_2)\right|}{V_N^{1/2}(x_1)V_N^{1/2}(x_2)}\\
		&
		\qquad + \sqrt{2}\left|(H_N(0,x_1)H_N(1,x_1))^{1/2}-(H_N(0,x_2)H_N(1,x_2))^{1/2}\right|\frac{|\E[Y_{uN}(0)]\E[Y_{uN}(1)]|}{V_N^{1/2}(x_2)}\\
		&
		\equiv C_{N,1}+C_{N,2}.
	\end{align*}
	\normalsize
	We bound $C_{N,1}$ and $C_{N,2}$ separately.
	\begin{enumerate}
		\item \textbf{Treatment for $C_{N,1}$.} Notice $V_N(0,x_1)$ is uniformly lower and upper bounded, proved as in Lemma \ref{lem:uniform_lower_upper_bound_variance}. Then we denote the uniform lower and upper bounds respectively as $c_v,C_v$, i.e.,
		\begin{align*}
			c_v\leq \liminf_{N\rightarrow\infty}\inf_{x}V_N(s,x)\leq \limsup_{N\rightarrow\infty}\sup_{x}V_N(s,x)\leq C_v\quad\text{ for any }s=0,1.
		\end{align*}
		Then we have
		\small
		\begin{align*}
			\frac{\left|V_N^{1/2}(x_1)-V_N^{1/2}(x_2)\right|}{V_N^{1/2}(x_1)V_N^{1/2}(x_2)}
			&
			\leq \frac{|V_N(x_1)-V_N(x_2)|}{c_v^2(V_N^{1/2}(x_1)+V_N^{1/2}(x_2))}\\
			&
			\leq \frac{|V_N(x_1)-V_N(x_2)|}{2c_v^3}\\
			&
			\leq \frac{C_v}{2c_v^3}(|V_N(0,x_1)-V_N(0,x_2)|+|V_N(1,x_1)-V_N(1,x_2)|)\\
			&
			\leq \frac{C_v}{2c_v^3}(\E[Y_{uN}(s)])^2\sum_{s=0,1}|e(s,h_N(x_1))-e(s,h_N(x_2))|
		\end{align*}
		\normalsize
		where the last inequality is due to the result already proved for $V_N(s,x)$. Therefore, by the bound $H_N(s,x_1)\leq 1$, we have $|C_{N,1}|\lesssim \sum_{s=0,1}|e(s, h_N(x_1))-e(s, h_N(x_2))|$.
	
		\item \textbf{Treatment for $C_{N,2}$.} Since $\E[Y_{uN}(s)]$ and $V_N(x)$ are uniformly lower and upper bounded, we have
		\begin{align*}
			C_{N,2}
			&
			\lesssim \left|(H_N(0,x_1)H_N(1,x_1))^{1/2}-(H_N(0,x_2)H_N(1,x_2))^{1/2}\right|\\
			&
			\leq \sum_{s=0,1}|H_N^{1/2}(s,x_1)-H_N^{1/2}(s,x_2)|.
		\end{align*}
	\end{enumerate}
	We conclude the proof.
\end{proof}

\section{Proof of Theorem \ref{thm:weak_convergence_W_N}}\label{sec:proof_weak_convergence}

\subsection{General proof roadmap for weak convergence result}\label{sec:proof_roadmap}

Before presenting the general proof roadmap, we first define the following notations. 

\paragraph{Random variables in the limiting distributions.}

We first recall the following definitions.
\begin{align}\label{eq:V_2_def}
	V^{(t)}(s)= \lim_{N\rightarrow\infty}\E[Y_{uN}^2(s)]-H^{(t)}(s)\lim_{N\rightarrow\infty}\E[Y_{uN}(s)]^2
\end{align}
and 
\small
\begin{align}\label{eq:H_N_def}
	H^{(1)}(s)= e(s)\quad\text{and}\quad H^{(2)}(s)= 
	\begin{cases}
		\max\{\bar l, e(s,\mathcal{S}^\infty((A^{(1)},V^{(1)}),c))\} & \text{under Assumption \ref{assu:constant_weighting}}\\
		e(s,\mathcal{S}^\infty((A^{(1)},V^{(1)}),c)) & \text{under Assumption \ref{assu:adaptive_weighting}}
	\end{cases}.
\end{align}
\normalsize 
Define $A^{(t)} \equiv (A^{(t)}(0),A^{(t)}(1)),V^{(t)}\equiv (V^{(t)}(0),V^{(t)}(1))$ and $H^{(t)}\equiv (H^{(t)}(0),H^{(t)}(1))$. Furthermore, recall the asymptotic covariance matrix $\bm \Sigma^{(t)}\equiv (\mathrm{Cov}^{(t)})_{2\times 2}$, where the covariance is defined as in Appendix~\ref{sec:explicit_form_asymptotic_distribution}. We will just denote $\mathrm{Cov}^{(2)}=\mathrm{Cov}^{(2)}(A^{(1)})$ for simplicity.

\paragraph{Random variables related to observed data.}

For the ease of presentation, we rewrite the weight vector as
\begin{align*}
	H_N^{(t)}=(H_N^{(t)}(0),H_N^{(t)}(1)),\ H_N^{(1)}(s)=e(s),\ H_N^{(2)}(s)\equiv H_N(s, E_N^{(1)})=e_N(s,\mathcal{H}_{t-1})
\end{align*}
and the variance vector as
\begin{align*}
	V_{N}^{(t)}\equiv (V_{N}^{(t)}(0),V_{N}^{(t)}(1))\quad\text{where}\quad V_N^{(t)}(s)\equiv \E[Y_{uN}^2(s)]-H_N^{(t)}(s)\E[Y_{uN}(s)]^2.
\end{align*}
Further, we will write $(H_N^{(t)})^{1/2}\equiv ((H_N^{(t)}(0))^{1/2},(H_N^{(t)}(1))^{1/2})$ and similarly, we define $(H^{(t)})^{1/2}\equiv((H^{(t)}(0))^{1/2},(H^{(t)}(1))^{1/2})$. Also, to slightly abuse the notation, we define
\begin{align*}
	\Lambda_{N}^{(t)}\equiv (\Lambda_{N}^{(t)}(0),\Lambda_{N}^{(t)}(1))\quad\text{where}\quad\Lambda_N^{(t)}(s)\equiv \frac{(H_N^{(t)}(s))^{1/2}}{(V_{N}^{(t)}(s))^{1/2}}\frac{1}{N_t^{1/2}}\sum_{u=1}^{N_t}(\hat{\Lambda}_{uN}^{(t)}(s)-\E[Y_{uN}(s)]).
\end{align*}
Define the matrix $\bm \Sigma_N^{(t)}\equiv (\mathrm{Cov}_N^{(t)})_{2\times 2}$, where the covariance is defined as
\begin{align}\label{eq:sigma_N_E_1n_def}
	\mathrm{Cov}_N^{(t)}\equiv \frac{-(H_N^{(t)}(0)H_N^{(t)}(1))^{1/2}}{(V_{N}^{(t)}(0))^{1/2}(V_{N}^{(t)}(1))^{1/2}}\E[Y_{uN}(0)]\E[Y_{uN}(1)].
\end{align}

\paragraph{Random vector joining two stages.}

Recalling the definitions of $H^{(t)}(s)$ and $V^{(t)}(s)$ in \eqref{eq:H_N_def} and \eqref{eq:V_2_def}, we define the limiting vectors
\begin{align}\label{eq:mathbb_A_definition}
	\mathbb{W}\equiv (W_1,W_2)\quad\text{where}\quad  W_t\equiv (Z_t ,V^{(t)},H^{(t)}, \mathrm{Vec} ( (\bm\Sigma^{(t)})^{1/2})).
\end{align} 
where $Z_1,Z_2$ are independently generated from $ N(\bm 0,\bm I_2)$. We use $\mathrm{Vec}(\bm\Sigma)$ to denote the vectorization of matrix $\bm\Sigma$ and the order is by column. Now we define the empirical counterparts for $W_t$:
\begin{align}\label{eq:E_N_t_def}
	E_{N}^{(t)}\equiv \left((\bm \Sigma_N^{(t)})^{-1/2}\Lambda_{N}^{(t)}, V_N^{(t)},H_N^{(t)},\mathrm{Vec}((\bm \Sigma_N^{(t)})^{1/2}),(H_N^{(t)})^{1/2}\right),\ t=1,2.
\end{align}
and 
\begin{align}\label{eq:bar-E-N-def}
	\bar E_{N}^{(1)}\equiv \left((\bm \Sigma_N^{(1)})^{-1/2}\Lambda_{N}^{(1)}, V^{(1)},H^{(1)},\mathrm{Vec}((\bm \Sigma^{(1)})^{1/2}),(H^{(1)})^{1/2}\right).
\end{align}
It is easy to see that $S_N^{(1)}(0)-S_N^{(1)}(1)=h_N(E_N^{(1)})$, recalling the definition of $h_N$ as in~\eqref{eq:def_h_c_function}. In fact,
\begin{align*}
	S_N^{(1)}(0)-S_N^{(1)}(1)=\Lambda_{N}^{(1)}(0)\cdot \frac{(V_{N}^{(1)}(0))^{1/2}}{(H_N^{(1)}(0))^{1/2}}-\Lambda_{N}^{(1)}(1)\cdot \frac{(V_{N}^{(1)}(1))^{1/2}}{(H_N^{(1)}(1))^{1/2}}+\frac{1}{\sqrt{2}}c_N=h_N(E_N^{(1)}).
\end{align*}
Next, we define an intermediate \textbf{auxiliary} random vector joining the limiting random vector $M_2$ and empirical random vector $E_N^{(2)}$, with $Z_2$ considered in~\eqref{eq:mathbb_A_definition}:
\begin{align}\label{eq:E_N_a_x}
	E_{N}^{\textbf{a}}(x)\equiv (Z_2, V_N^{(2)}(x),H_N^{(2)}(x), \mathrm{Vec}((\bm \Sigma_N^{(2)}(x))^{1/2}), (H_N^{(2)}(x))^{1/2}),\ \forall x\in\mathbb{R}^{10}.
\end{align}
Recall $V_N^{(2)}(x),H_N^{(2)}(x)$ and $\bm\Sigma_N^{(2)}(x)$ are defined as in Section~\ref{sec:notation_main_proof}.

For the reader's convenience, we summarize the key random vectors used in the proof and their roles:
\begin{itemize}
	\item \textbf{Limiting vectors:} $\mathbb{W}=(W_1,W_2)$ defined in~\eqref{eq:mathbb_A_definition}, where $W_t=(Z_t,V^{(t)},H^{(t)},\mathrm{Vec}((\bm\Sigma^{(t)})^{1/2}))$ collects the stage-$t$ limiting quantities.
	\item \textbf{Empirical vectors:} $E_N^{(t)}$ defined in~\eqref{eq:E_N_t_def}, the finite-sample counterpart of $W_t$. The goal is to show $(E_N^{(1)},E_N^{(2)})\Rightarrow \mathbb{W}$.
	\item \textbf{Auxiliary vectors:} $\bar E_N^{(1)}$ defined in~\eqref{eq:bar-E-N-def}, a hybrid that uses the empirical first-stage Gaussian component $(\bm\Sigma_N^{(1)})^{-1/2}\Lambda_N^{(1)}$ but replaces all other entries by their population limits. Also, $E_N^{\mathbf{a}}(x)$ defined in~\eqref{eq:E_N_a_x}, an intermediate vector that substitutes the limiting Gaussian $Z_2$ for the empirical second-stage component while retaining the empirical variance, probability, and covariance evaluated at a generic first-stage realization $x$.
\end{itemize}

Proofs for both weighting choices in Theorem~\ref{thm:weak_convergence_W_N} follow from the same proof roadmap. Now we present this general proof roadmap.

\paragraph{Proof roadmap.}

First, the following lemma shows the consistency of $\WIPW(s)$ and $\WIPWS(s)$. 

\begin{lemma}[Consistency of $\WIPW$ and $\WIPWS$]\label{lem:consistency_WIPW}
	Under Assumption \ref{assu:moment_condition}-\ref{assu:sampling_design} and either Assumption \ref{assu:constant_weighting} or Assumption \ref{assu:adaptive_weighting}, we have
	\begin{align*}
		\WIPW(s)-\E[Y_{uN}(s)]=O_p(N^{-1/2})\quad\text{and}\quad\WIPWS(s)-\E[Y_{uN}^2(s)]=O_p(N^{-1/2}),
	\end{align*}
	where we recall the estimators $\WIPW(s)$ and $\WIPWS(s)$ as in \eqref{eq:WIPW_estimator} and \eqref{eq:WIPWS_estimator}.
\end{lemma}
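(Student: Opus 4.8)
\textbf{Proof proposal for Lemma~\ref{lem:consistency_WIPW}.}

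The plan is to establish the two consistency statements in parallel, since $\WIPWS(s)$ has exactly the same structure as $\WIPW(s)$ but with $Y_{uN}^{(t)}$ replaced by $(Y_{uN}^{(t)})^2$ and $\hat\Lambda_{uN}^{(t)}(s)$ replaced by $\tilde\Lambda_{uN}^{(t)}(s)$. First I would write $\WIPW(s)-\E[Y_{uN}(s)]$ as a convex combination over the two stages: since the weights $N_t h_N^{(t)}(s)/\sum_{t'} N_{t'} h_N^{(t')}(s)$ are nonnegative and sum to one, it suffices to show that each stage-wise centered average $\hat\Lambda_N^{(t)}(s)-\E[Y_{uN}(s)]$ converges to zero in probability (the weights are bounded in $[0,1]$, so no control of their limit is needed for consistency, only boundedness). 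For $t=1$ this is the classical (unconditional) weak law for the i.i.d.\ triangular array $\hat\Lambda_{uN}^{(1)}(s)=\indicator(A_{uN}^{(1)}=s)Y_{uN}^{(1)}/e(s)$: by unconfoundedness and consistency, $\E[\hat\Lambda_{uN}^{(1)}(s)]=\E[Y_{uN}(s)]$, and the $(1+\delta)$-th moment with $\delta=1$ is controlled by $\E[Y_{uN}^2(s)]/e(s)^2$, which is uniformly bounded by Assumption~\ref{assu:moment_condition} and the lower bound $e(s)>c_l$; hence $\hat\Lambda_N^{(1)}(s)\convp\E[Y_{uN}(s)]$.

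The stage-two term is the substantive one and is where I would invoke the conditional weak law of large numbers, Lemma~\ref{lem:wlln_conditional}, with the conditioning $\sigma$-algebra $\mathcal{F}_N=\mathcal{H}_1$. Conditionally on $\mathcal{H}_1$, the variables $\hat\Lambda_{uN}^{(2)}(s)=\indicator(A_{uN}^{(2)}=s)Y_{uN}^{(2)}/\bar e_N(s,\mathcal{H}_1)$ are i.i.d.\ (this is exactly the conditional-i.i.d.\ structure of the follow-up stage), with conditional mean $\E[\hat\Lambda_{uN}^{(2)}(s)\mid\mathcal{H}_1]=\E[Y_{uN}(s)]$ by unconfoundedness. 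To apply Lemma~\ref{lem:wlln_conditional} I need the sufficient condition $\sup_{u}\E[|\hat\Lambda_{uN}^{(2)}(s)|^{1+\delta}\mid\mathcal{H}_1]=o_p(N^\delta)$ for some $\delta>0$; taking $\delta=1$, this conditional second moment equals $\E[(Y_{uN}^{(2)})^2\indicator(A_{uN}^{(2)}=s)\mid\mathcal{H}_1]/\bar e_N(s,\mathcal{H}_1)^2 = \E[Y_{uN}^2(s)]/\bar e_N(s,\mathcal{H}_1)$. Under Assumption~\ref{assu:constant_weighting} the clipping gives $\bar e_N(s,\mathcal{H}_1)\ge \bar l>c_l>0$, so this quantity is $O_p(1)=o_p(N)$ and we are done immediately. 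Under Assumption~\ref{assu:adaptive_weighting} the clipping rate satisfies $l_N\to 0$ but $Nl_N\to\infty$, so $\bar e_N(s,\mathcal{H}_1)\ge l_N$ gives $\E[Y_{uN}^2(s)]/\bar e_N(s,\mathcal{H}_1) = O_p(1/l_N) = o_p(N)$, which again verifies the sufficient condition. Hence $\hat\Lambda_N^{(2)}(s)-\E[Y_{uN}(s)]\mid\mathcal{H}_1\convpp 0$, and by the dominated-convergence remark in Lemma~\ref{lem:wlln_conditional} this upgrades to unconditional convergence in probability.

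I would then combine the two stages: $|\WIPW(s)-\E[Y_{uN}(s)]|\le \sum_{t=1}^2 |\hat\Lambda_N^{(t)}(s)-\E[Y_{uN}(s)]|\convp 0$, using only that the convex weights lie in $[0,1]$. The argument for $\WIPWS(s)$ is identical, with the only change being that the relevant conditional second moment becomes $\E[Y_{uN}^4(s)]/\bar e_N(s,\mathcal{H}_{t-1})$, which is still uniformly bounded by the fourth-moment control in Assumption~\ref{assu:moment_condition} (for stage one) and $o_p(N)$ after dividing by $l_N$ (for stage two under adaptive weighting). The main obstacle is purely bookkeeping: carefully checking that the rate $Nl_N\to\infty$ in Assumption~\ref{assu:adaptive_weighting} is exactly what is needed to make the $1/l_N$ blow-up of the conditional moment negligible against $N$, and confirming that $\bar e_N(s,\mathcal{H}_1)\ge l_N$ holds for \emph{both} treatment labels simultaneously (which follows from the $\min\{1-l_N,\max\{l_N,\cdot\}\}$ clipping in~\eqref{eq:clip} together with $\bar e(0,x)+\bar e(1,x)=1$). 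No new probabilistic tools beyond Lemmas~\ref{lem:wlln_conditional} and the moment assumptions are required.
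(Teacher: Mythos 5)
Your argument is correct, and it reaches the conclusion by a genuinely different route from the paper's. The paper proves consistency by a direct global moment computation on $\WIPW(s)$ itself: under constant weighting it computes $\E[(\WIPW(s)-\E[Y_{uN}(s)])^2]$ via the tower property and the bound $N\bar e_N(s,\mathcal H_{t-1})\geq N\min\{e(s),\bar l\}$, and under adaptive weighting it first lower-bounds the random normalizer $\sum_t N_t h_N^{(t)}(s)\geq \tfrac12 N^{1/2}e^{1/2}(s)$, bounds the centered second moment by $O(1/N)$, and then \emph{separately} controls the bias $|\E[\WIPW(s)]-\E[Y_{uN}(s)]|=O(N_1^{-1/2})$ via Jensen (the bias term arises because the adaptive weights are correlated with the stage-one average). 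You instead split $\WIPW(s)-\E[Y_{uN}(s)]$ into the two stage-wise centered IPW averages, use only that the convex weights lie in $[0,1]$ to reduce to showing each $\hat\Lambda_N^{(t)}(s)-\E[Y_{uN}(s)]\convp 0$, and then apply the classical WLLN at stage one and the conditional WLLN (Lemma~\ref{lem:wlln_conditional}, with $\mathcal F_N=\mathcal H_1$ and the moment bound $\E[Y_{uN}^2(s)]/\bar e_N(s,\mathcal H_1)\leq \E[Y_{uN}^2(s)]/l_N=o(N)$ from $Nl_N\to\infty$) at stage two. Your verification of the conditional second moment, the treatment of both clipping regimes, and the extension to $\WIPWS(s)$ via the fourth-moment bound in Assumption~\ref{assu:moment_condition} are all sound. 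What your route buys is a cleaner, uniform treatment of both weighting schemes: the triangle inequality over the convex combination sidesteps the bias bookkeeping entirely, since you never need to take an expectation of a ratio involving the random weights. What the paper's route buys is explicit rates (the $O(N^{-1/2})$ bias and $O((Nl_N)^{-1})$-type variance bounds), which your qualitative $o_p(1)$ argument does not produce, though none are needed for the lemma as stated.
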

\noindent The proof of Lemma~\ref{lem:consistency_WIPW} can be found in Appendix~\ref{sec:proof_lem:consistency_WIPW}. Now we prove the weak convergence. We summarize the roadmap as follows:

\paragraph{Step 1:}
Define
\begin{align*}
	R_{V}^{(t)}(s)\equiv \frac{\sum_{u=1}^{N_t}(\hat{\Lambda}_{uN}^{(t)}(s)-\WIPW(s))^2}{\sum_{u=1}^{N_t}(\hat{\Lambda}_{uN}^{(t)}(s)-\E[Y_{uN}(s)])^2}\quad\text{and}\quad S_{V}^{(t)}(s)\equiv \frac{H_N^{(t)}(s)}{N_tV^{(t)}_{N}(s)}\sum_{u=1}^{N_t}(\hat{\Lambda}_{uN}^{(t)}(s)-\E[Y_{uN}(s)])^2.
\end{align*}
We first prove that for $t=1,2$
\begin{align*}
	|R_V^{(t)}(s)-1|=O_p(N^{-1/2})\quad\text{and}\quad |S_V^{(t)}(s)-1|=
	\begin{cases}
		O_p(N^{-1/2}l_N^{-1/2}). & \text{under Assumption \ref{assu:adaptive_weighting}},\\
		O_p(N^{-1/2}).& \text{under Assumption \ref{assu:constant_weighting}}.
	\end{cases}
\end{align*}

\paragraph{Step 2:}

Prove $(E_{N}^{(1)},E_{N}^{(2)})\convd \mathbb W$ where $E_N^{(t)}$ is defined as in \eqref{eq:E_N_t_def} and $\mathbb W$ is defined as in \eqref{eq:mathbb_A_definition}. This step involves the analysis on the different choice of $h_{N}^{(t)}(s)$ for Assumption \ref{assu:constant_weighting} and \ref{assu:adaptive_weighting}. 
\edit{
In fact, it suffices to show that for any bounded Lipschitz function $f$,
\begin{align}\label{eq:lip_convergence_E1_E2}
	\E\left[f(E_{N}^{(1)},E_{N}^{(2)})\right]\rightarrow\E\left[f(\mathbb{W})\right]
\end{align}
Without loss of generality, we assume 
\begin{align}\label{eq:choice_of_f}
	\|f\|_{\infty}\leq \frac{1}{2},\ \sup_{x,y\in\mathbb{R}^{24}}|f(x)-f(y)|\leq 1,\sup_{x,y\in\mathbb{R}^{24}}\frac{|f(x)-f(y)|}{\|x-y\|}\leq \frac{1}{2}.
\end{align}
We now state a stronger result, provide a quantitative CLT bound for $(E_N^{(1)},E_N^{(2)})$.
\begin{lemma}[Quantitative CLT for $(E_{N}^{(1)},E_{N}^{(2)})$]\label{lem:quantitative-CLT}
	Suppose the conidionts of Theorem~\ref{thm:weak_convergence_W_N}, then, for $f$ satisfying conditions in~\eqref{eq:choice_of_f}, we have 
	\begin{align*}
		\left|\E[f(E_N^{(1)},E_N^{(2)})]-\E[f(\mathbb{W})]\right|\rightarrow0.
	\end{align*} 	
	Furthermore, under conditions in Theorem~\ref{thm:quantitative_CLT_W_N}, we have 
	\begin{align}\label{eq:step2-rate}
		\left|\E[f(E_N^{(1)},E_N^{(2)})]-\E[f(\mathbb{W})]\right|\lesssim 
		\begin{cases}
			\frac{1}{N^{1/2}}& \text{under Assumption~\ref{assu:constant_weighting}};\\[5pt]
			l_N^{1/2}+\frac{1}{\sqrt{N}l_N^{1/2}} & \text{under Assumption~\ref{assu:adaptive_weighting}},
		\end{cases}
	\end{align}
\end{lemma}
}

\paragraph{Step 3:}
We define 
\small
\begin{align*}
	\hat{I}_N\equiv W_N-\frac{\sqrt{N}(\E[Y_{uN}(0)]-\E[Y_{uN}(1)])}{(N\hat{V}_N(0)+N\hat{V}_N(1))^{1/2}}\quad\text{and}\quad\hat{I}_{U}\equiv \sqrt{N}(T_N-(\E[Y_{uN}(0)]-\E[Y_{uN}(1)])).
\end{align*}
\normalsize
In order to find the asymptotic distribution of $\hat{I}_N,\hat{I}_U$, we can rewrite $\hat{I}_N,\hat{I}_U$ with different weighting methods as a function of the weak limit of $(S_V^{(1)}, R_V^{(1)}, E_N^{(1)}, S_V^{(2)}, R_V^{(2)}, E_N^{(2)})$ and apply Slutsky's Lemma and the continuous mapping theorem. We consider the following four cases:
\begin{enumerate}
	\item \textbf{$W_N$ with constant weighting:} we can write $\hat{I}_N$ with constant weighting as
	\begin{align*}
		\frac{\sum_{t=1}^2\Lambda_N^{(t)}(0)(V_N^{(t)}(0)/H_N^{(t)}(0))^{1/2} -\sum_{t=1}^2\Lambda_N^{(t)}(1) (V_N^{(t)}(1)/H_N^{(t)}(1))^{1/2}}{(\sum_{t=1}^2V_N^{(t)}(0) S_V^{(t)}(0) R_V^{(t)}(0)/H_N^{(t)}(0)+\sum_{t=1}^2V_N^{(t)}(1) S_V^{(t)}(1)R_V^{(t)}(1)/H_N^{(t)}(1))^{1/2}};
	\end{align*}
	\item \textbf{$W_N$ with adaptive weighting:} we can write $\hat{I}_N$ with adaptive weighting as
	\begin{align*}
		\frac{R_N(0)\sum_{t=1}^2\Lambda_N^{(t)}(0)(V_N^{(t)}(0))^{1/2} -R_N(1) \sum_{t=1}^2\Lambda_N^{(t)}(1) (V_N^{(t)}(1))^{1/2}}{(R_N^2(0)\sum_{t=1}^2V_N^{(t)}(0) S_V^{(t)}(0) R_V^{(t)}(0)+R_N^2(1)\sum_{t=1}^2V_N^{(t)}(1) S_V^{(t)}(1)R_V^{(t)}(1))^{1/2}},
	\end{align*}
	where $R_N^{-1}(s)\equiv \sum_{t=1}^2 (H_N^{(t)}(s))^{1/2}$;
	\item \textbf{$T_N$ with constant weighting:} we can write $\hat{I}_U$ with constant weighting as
	\begin{align*}
		\frac{1}{\sqrt{2}}\left(\sum_{t=1}^2\Lambda_N^{(t)}(0)(V_N^{(t)}(0)/H_N^{(t)}(0))^{1/2} -\sum_{t=1}^2\Lambda_N^{(t)}(1) (V_N^{(t)}(1)/H_N^{(t)}(1))^{1/2}\right);
	\end{align*}
	\item \textbf{$T_N$ with adaptive weighting:} we can write $\hat{I}_U$ with adaptive weighting as 
	\begin{align*}
		\sqrt{2}\left(R_N(0)\sum_{t=1}^2\Lambda_N^{(t)}(0)(V_N^{(t)}(0))^{1/2} -R_N(1) \sum_{t=1}^2\Lambda_N^{(t)}(1) (V_N^{(t)}(1))^{1/2}\right).
	\end{align*}
\end{enumerate} 

\paragraph{Proof of qualitative CLT.} We use the results $R_V^{(t)}(s),S_V^{(t)}(s)\convp 1,t=1,2$ as well as the weak convergence of $(E_{N}^{(1)},E_{N}^{(2)})$ to derive the weak convergence with the help of Slutsky's Lemma and continuous mapping theorem. This proves the qualitative CLT part. To see this, we will only work out the case for $W_N$ with constant weighting since the proof for the other cases are similar and even simpler. We define the random vector $(\breve E_N^{(1)},\breve E_N^{(2)})$ and $(\check E_N^{(1)},\check E_N^{(2)})$, where $\breve E_N^{(t)}\equiv (E_N^{(t)},S_V^{(t)},R_V^{(t)}),\ \check E_N^{(t)}\equiv (E_N^{(t)},\bm 1_2,\bm 1_2)$ and
\begin{align*}
	S_V^{(t)}\equiv (S_V^{(t)}(0),S_V^{(t)}(1)),\ R_V^{(t)}\equiv (R_V^{(t)}(0),R_V^{(t)}(1)),\ \bm 1_2\equiv (1,1).
\end{align*}
Then we apply Lemma \ref{lem:Slutsky} with $X_N=(\check E_N^{(1)},\check E_N^{(2)})$ and $Y_N = (\breve E_N^{(1)},\breve E_{N}^{(2)})$ by noticing that $X_N$ converge weakly to $(W_1,\bm 1_2,\bm 1_2,W_2,\bm 1_2,\bm 1_2)$ and $R_V^{(t)},S_V^{(t)}\convp \bm 1_2$ for $t=1,2$. Therefore we obtain $Y_N\convd (W_1,\bm 1_2,\bm 1_2,W_2,\bm 1_2,\bm 1_2)$. Then we can use continuous mapping lemma (Lemma \ref{lem:continuous_mapping_lem}) to derive the weak convergence of $\hat I_N$ by writing $\hat I_N$ as a continuous function of $Y_N$.

\paragraph{Proof of quantitative CLT.} Since all 4 random variables can be written as a continuous function of the random vector $(S_V^{(1)}, R_V^{(1)}, E_N^{(1)}, S_V^{(2)}, R_V^{(2)}, E_N^{(2)})$, it suffices to show that the derivatives of these continuous functions are bounded at the domain of the random vector. This is not hard to justify and we will use $T_N$ with adaptive weighting to illustrate. In fact, by Lemma~\ref{lem:uniform_lower_upper_bound_variance}, $V_N^{(t)}(s)$ are lower and upper bouned uniformly. Moreover, $R_N(s)=(\sum_{t=1}^2(H_N^{(t)}(s))^{1/2})^{-1}$ as a function of $((H_N^{(1)}(s))^{1/2},(H_N^{(2)}(s))^{1/2})$, its dervative is uniformly upper bounded because $H_N^{(1)}(s)=e(s)>0$. Therefore, since for any bounded Lipschitz function $f$, the following holds: 
\begin{align*}
	&
	\left|f(S_V^{(1)}, R_V^{(1)}, E_N^{(1)}, S_V^{(2)}, R_V^{(2)}, E_N^{(2)})-f(\bm 1_2, \bm 1_2, W_1, \bm 1, \bm 1_2, W_2)\right|\\
	&
	\qquad \lesssim 
		\begin{cases}
			\frac{1}{N^{1/2}}& \text{under Assumption~\ref{assu:constant_weighting}};\\[5pt]
			l_N^{1/2}+\frac{1}{\sqrt{N}l_N^{1/2}} & \text{under Assumption~\ref{assu:adaptive_weighting}},
		\end{cases}
\end{align*}
then $f(T_N)-f(\mathbb{W}_{\mathcal{U}}^{\mathcal{A}})$ has the same rate of convergence by the above argument.
	
\subsection{Proof of Step 1 in Appendix \ref{sec:proof_roadmap}}\label{sec:R_V_S_V_proof}

We will show the convergence of $R_V^{(t)}(s)$ and $S_V^{(t)}(s)$ for $s=0,1$ with different choice of $h_N^{(t)}(s)$.

\subsubsection{Proof of convergence of $R_V^{(t)}(s)$}\label{sec:proof_R_V_convergence}

To first show the convergence of $R_V^{(t)}(s)$, we give the following lemma.

\begin{lemma}[Convergence of $R_V^{(t)}(s)$]\label{lem:sufficient_condition_R_V_convergence}
	Suppose the Assumption \ref{assu:moment_condition}-\ref{assu:sampling_design} hold and either Assumption \ref{assu:adaptive_weighting} or Assumption \ref{assu:constant_weighting} holds. If the following statements are true: 
	\begin{enumerate}
		\item $\WIPW(s)-\E[Y_{uN}(s)]=O_p(N^{-1/2})$ for any $s\in \{0,1\}$;
		\item $W_N^{(t)}(s)\equiv\sum_{u=1}^{N_t}e_N(s,\mathcal{H}_{t-1})(\hat{\Lambda}_{uN}^{(t)}-\E[Y_{uN}(s)])^2/N_t$ is asymptotically lower bounded;
	\end{enumerate}
	then we have for any $s,t,|R_V^{(t)}(s)-1|=O_p(N^{-1/2})$.
\end{lemma}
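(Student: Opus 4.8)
\textbf{Proof plan for Lemma~\ref{lem:sufficient_condition_R_V_convergence}.}

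The plan is to show that $R_V^{(t)}(s)$ converges to $1$ in probability by expanding the numerator $\sum_{u=1}^{N_t}(\hat\Lambda_{uN}^{(t)}(s)-\WIPW(s))^2$ around the population center $\E[Y_{uN}(s)]$ and controlling the resulting cross term and quadratic remainder. Writing $\hat\Lambda_{uN}^{(t)}(s)-\WIPW(s) = (\hat\Lambda_{uN}^{(t)}(s)-\E[Y_{uN}(s)]) - (\WIPW(s)-\E[Y_{uN}(s)])$ and expanding the square, one obtains
\begin{align*}
	\sum_{u=1}^{N_t}(\hat\Lambda_{uN}^{(t)}(s)-\WIPW(s))^2
	&= \sum_{u=1}^{N_t}(\hat\Lambda_{uN}^{(t)}(s)-\E[Y_{uN}(s)])^2
	- 2(\WIPW(s)-\E[Y_{uN}(s)])\sum_{u=1}^{N_t}(\hat\Lambda_{uN}^{(t)}(s)-\E[Y_{uN}(s)]) \\
	&\quad + N_t(\WIPW(s)-\E[Y_{uN}(s)])^2.
\end{align*}
Dividing through by the denominator $D_N^{(t)}(s)\equiv\sum_{u=1}^{N_t}(\hat\Lambda_{uN}^{(t)}(s)-\E[Y_{uN}(s)])^2$, it suffices to show the last two terms, normalized by $D_N^{(t)}(s)$, are $o_p(1)$. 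The key reduction is that $D_N^{(t)}(s)/N_t = W_N^{(t)}(s)/\bar e_N(s,\mathcal{H}_{t-1})\cdot(\text{something})$; more precisely, $\indicator(A_{uN}^{(t)}=s)/\bar e_N(s,\mathcal{H}_{t-1})^2$ appears, so I would instead directly lower bound $D_N^{(t)}(s)/N_t$ using assumption~2 together with $\bar e_N(s,\mathcal{H}_{t-1})\le 1$, giving $D_N^{(t)}(s)/N_t \geq W_N^{(t)}(s)$, which is asymptotically bounded away from $0$.

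With that lower bound in hand, the third term is handled immediately: $N_t(\WIPW(s)-\E[Y_{uN}(s)])^2 / D_N^{(t)}(s) \leq (\WIPW(s)-\E[Y_{uN}(s)])^2 / W_N^{(t)}(s) = o_p(1)$ by assumption~1. For the cross term I would apply Cauchy--Schwarz: $|\sum_{u}(\hat\Lambda_{uN}^{(t)}(s)-\E[Y_{uN}(s)])| \leq \sqrt{N_t}\, (D_N^{(t)}(s))^{1/2}$, so the normalized cross term is bounded by $2|\WIPW(s)-\E[Y_{uN}(s)]|\sqrt{N_t}/(D_N^{(t)}(s))^{1/2} \leq 2|\WIPW(s)-\E[Y_{uN}(s)]|/W_N^{(t)}(s)^{1/2} = o_p(1)$, again using assumptions~1 and~2. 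Combining, $R_V^{(t)}(s) = 1 + o_p(1)$. The argument is uniform over $s\in\{0,1\}$ and $t\in\{1,2\}$ since there are only finitely many cases.

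The main obstacle is ensuring that the lower bound on $D_N^{(t)}(s)/N_t$ is genuinely available under both weighting regimes (Assumptions~\ref{assu:constant_weighting} and~\ref{assu:adaptive_weighting}); under adaptive weighting, $\bar e_N(s,\mathcal{H}_{t-1})$ can decay toward $0$, which does not hurt the inequality $D_N^{(t)}(s)/N_t \geq W_N^{(t)}(s)$ (since dividing by a smaller probability only inflates $D_N^{(t)}(s)$), but it does mean that verifying assumption~2 itself—i.e., that $W_N^{(t)}(s)$ stays bounded away from zero—is the delicate ingredient, and this is deferred to a separate lemma. Consequently, the present lemma is essentially a clean deterministic-plus-Cauchy--Schwarz argument conditional on its two hypotheses, and the real work lies in later establishing those hypotheses (assumption~1 is Lemma~\ref{lem:consistency_WIPW}, and assumption~2 will require a conditional second-moment computation using Assumption~\ref{assu:moment_condition}).
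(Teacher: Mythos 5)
Your proposal is correct. It starts from the same expansion as the paper — writing $\hat\Lambda_{uN}^{(t)}(s)-\WIPW(s)$ as $(\hat\Lambda_{uN}^{(t)}(s)-\E[Y_{uN}(s)])-(\WIPW(s)-\E[Y_{uN}(s)])$ and splitting the numerator into the denominator plus a cross term and a quadratic remainder — and uses the same exact identity $D_N^{(t)}(s)/N_t = W_N^{(t)}(s)/\bar e_N(s,\mathcal{H}_{t-1}) \ge W_N^{(t)}(s)$ (your hedging about "something" is unnecessary; the relation is exact, and the inequality you actually use is right). Where you genuinely depart from the paper is the cross term: the paper normalizes by $W_N^{(t)}(s)$ and then must separately prove that $\bar e_N(s,\mathcal{H}_{t-1})N_t^{-1}\sum_{u}(\hat\Lambda_{uN}^{(t)}(s)-\E[Y_{uN}(s)])=o_p(1)$ via a conditional variance computation invoking Assumption~\ref{assu:moment_condition}, whereas your Cauchy--Schwarz bound $|\sum_u(\hat\Lambda_{uN}^{(t)}(s)-\E[Y_{uN}(s)])|\le\sqrt{N_t}\,(D_N^{(t)}(s))^{1/2}$ disposes of it deterministically, yielding the bound $2|\WIPW(s)-\E[Y_{uN}(s)]|/(W_N^{(t)}(s))^{1/2}=o_p(1)$ directly from the two stated hypotheses. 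The trade-off: your route makes the lemma genuinely self-contained given its hypotheses (no extra moment computation), at the cost of a slightly cruder bound on the cross term; the paper's route produces an intermediate convergence fact that it reuses nowhere else here, so your version is arguably the cleaner modularization. Your closing remarks about where the real work lies (Lemma~\ref{lem:consistency_WIPW} for hypothesis~1, a conditional second-moment argument for hypothesis~2) match the paper's structure exactly.
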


\noindent Since the consistency has been proved in Lemma \ref{lem:consistency_WIPW}, it suffices to prove $W_N^{(t)}(s)$ is stochastically lower bounded. We first present a useful lemma. 

\begin{lemma}[Asymptotic representation of $W_N^{(t)}(s)$]\label{lem:weak_law_W_N}
	Suppose the Assumption \ref{assu:moment_condition}-\ref{assu:sampling_design} hold and either Assumption \ref{assu:adaptive_weighting} or Assumption \ref{assu:constant_weighting} holds. Then we have 
	\begin{align*}
		W_N^{(t)}(s)=\E[Y_{uN}^2(s)]-e_N(s,\mathcal{H}_{t-1})(\E[Y_{uN}(s)])^2+		\begin{cases}
		    O_p(N^{-1/2}l_N^{-1/2}). & \text{under Assumption \ref{assu:adaptive_weighting}},\\
		    O_p(N^{-1/2}).& \text{under Assumption \ref{assu:constant_weighting}}.
		\end{cases}
	\end{align*}
\end{lemma}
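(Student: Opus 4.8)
\textbf{Proof proposal for Lemma~\ref{lem:weak_law_W_N}.}

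The plan is to expand the square inside $W_N^{(t)}(s)$ and show that the cross term and the centered quadratic term both vanish in probability, leaving only the deterministic expectation. Writing $\hat{\Lambda}_{uN}^{(t)}(s)-\E[Y_{uN}(s)]=(\hat{\Lambda}_{uN}^{(t)}(s)-\bar{\Lambda}_{N}^{(t)}(s))+(\bar{\Lambda}_{N}^{(t)}(s)-\E[Y_{uN}(s)])$ where $\bar{\Lambda}_{N}^{(t)}(s)\equiv\E[\hat{\Lambda}_{uN}^{(t)}(s)\mid\mathcal{H}_{t-1}]$, one first checks that the conditional mean of $\hat{\Lambda}_{uN}^{(t)}(s)$ given $\mathcal{H}_{t-1}$ equals $\E[Y_{uN}(s)]$ (this is where consistency and unconfoundedness enter, together with the fact that $\bar e_N(s,\mathcal{H}_{t-1})$ is exactly the normalizing probability). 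So $\bar{\Lambda}_{N}^{(t)}(s)=\E[Y_{uN}(s)]$ and the decomposition reduces to the centered term $\hat{\Lambda}_{uN}^{(t)}(s)-\E[Y_{uN}(s)]$ having conditional mean zero. Then expand
\[
\frac{1}{N_t}\sum_{u=1}^{N_t}\bar e_N(s,\mathcal{H}_{t-1})\big(\hat{\Lambda}_{uN}^{(t)}(s)-\E[Y_{uN}(s)]\big)^2,
\]
and the goal is to replace this average by its conditional expectation given $\mathcal{H}_{t-1}$, invoking a conditional weak law of large numbers.

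Concretely, I would apply Lemma~\ref{lem:wlln_conditional} with $\mathcal{F}_N=\mathcal{H}_{t-1}$ and the summands $W_{uN}\equiv\bar e_N(s,\mathcal{H}_{t-1})(\hat{\Lambda}_{uN}^{(t)}(s)-\E[Y_{uN}(s)])^2$, which are i.i.d.\ conditional on $\mathcal{H}_{t-1}$ by the data-generating procedure. To verify the sufficient condition~\eqref{eq:wlln_cond_sufficient}, take $\delta=1$ and bound $\E[W_{uN}^2\mid\mathcal{H}_{t-1}]$: since $\bar e_N(s,\mathcal{H}_{t-1})\le 1$ and $\hat{\Lambda}_{uN}^{(t)}(s)=\indicator(A_{uN}^{(t)}=s)Y_{uN}^{(t)}/\bar e_N(s,\mathcal{H}_{t-1})$, we get $\E[W_{uN}^2\mid\mathcal{H}_{t-1}]\lesssim \E[Y_{uN}^4(s)]/\bar e_N^2(s,\mathcal{H}_{t-1})$ (up to a constant absorbing the $(\E[Y_{uN}(s)])^4$ term, which is bounded by Assumption~\ref{assu:moment_condition}). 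Under Assumption~\ref{assu:constant_weighting}, $\bar e_N(s,\mathcal{H}_{t-1})$ is bounded away from $0$, so this is $O_p(1)=o_p(N^{\delta})$ trivially. Under Assumption~\ref{assu:adaptive_weighting}, $\bar e_N(s,\mathcal{H}_{t-1})\ge l_N$ with $Nl_N\to\infty$, so $\E[W_{uN}^2\mid\mathcal{H}_{t-1}]\lesssim \E[Y_{uN}^4(s)]/l_N^2 = o(N^2)$, which (being uniform in $u$) gives the required $o_p(N)$. Then Lemma~\ref{lem:wlln_conditional} yields
\[
\frac{1}{N_t}\sum_{u=1}^{N_t}\Big(W_{uN}-\E[W_{uN}\mid\mathcal{H}_{t-1}]\Big)\convp 0,
\]
and $\E[W_{uN}\mid\mathcal{H}_{t-1}]=\bar e_N(s,\mathcal{H}_{t-1})\E[(\hat{\Lambda}_{uN}^{(t)}(s)-\E[Y_{uN}(s)])^2\mid\mathcal{H}_{t-1}]$. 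A direct computation of this conditional second moment, using unconfoundedness and consistency, gives $\bar e_N(s,\mathcal{H}_{t-1})\cdot\big(\E[Y_{uN}^2(s)]/\bar e_N(s,\mathcal{H}_{t-1})-2\E[Y_{uN}(s)]^2+\E[Y_{uN}(s)]^2\big)=\E[Y_{uN}^2(s)]-\bar e_N(s,\mathcal{H}_{t-1})\E[Y_{uN}(s)]^2$, which is exactly the claimed deterministic part.

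The main obstacle is the moment-control step under adaptive weighting: because $l_N$ may decay almost as fast as $1/N$, the conditional fourth moment of $\hat{\Lambda}_{uN}^{(t)}(s)$ can blow up like $1/l_N^3$, so one must be careful that the chosen exponent $1+\delta$ in the conditional WLLN (here $\delta=1$, i.e.\ controlling a conditional second moment of the squared summand, equivalently a conditional fourth moment of $\hat{\Lambda}$) is still compatible with $Nl_N\to\infty$. The bound $\E[W_{uN}^2\mid\mathcal{H}_{t-1}]\lesssim 1/l_N^2=o(N^2)$ works precisely because $Nl_N\to\infty$, so the rate budget is tight but sufficient; for the case $t=1$ one has $\bar e_N(s,\mathcal{H}_0)=e(s)\in(c_l,c_u)$ bounded away from $0$ so there is nothing to worry about. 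Everything else — interchanging conditional expectation and the deterministic moments via Assumption~\ref{assu:moment_condition}, and passing from conditional to unconditional convergence in probability via the dominated convergence statement already built into Lemma~\ref{lem:wlln_conditional} — is routine.
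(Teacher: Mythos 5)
Your overall strategy is exactly the paper's: reduce to the conditional WLLN (Lemma~\ref{lem:wlln_conditional}) with $\delta=1$ and $W_{uN}=\bar e_N(s,\mathcal{H}_{t-1})(\hat{\Lambda}_{uN}^{(t)}(s)-\E[Y_{uN}(s)])^2$, then identify $\E[W_{uN}\mid\mathcal{H}_{t-1}]$ with the claimed deterministic expression. The identification of the conditional mean and second moment is correct. However, there is a genuine gap in your verification of the moment condition under adaptive weighting, in two compounding ways. First, your bound $\E[W_{uN}^2\mid\mathcal{H}_{t-1}]\lesssim \E[Y_{uN}^4(s)]/\bar e_N^2(s,\mathcal{H}_{t-1})$ is needlessly loose: since $W_{uN}^2=\bar e_N^2(s,\mathcal{H}_{t-1})(\hat{\Lambda}_{uN}^{(t)}(s)-\E[Y_{uN}(s)])^4$ and $\E[(\hat{\Lambda}_{uN}^{(t)}(s))^4\mid\mathcal{H}_{t-1}]=\E[Y_{uN}^4(s)]/\bar e_N^3(s,\mathcal{H}_{t-1})$, the prefactor $\bar e_N^2$ cancels two of the three inverse powers and the sharp bound is $\E[W_{uN}^2\mid\mathcal{H}_{t-1}]\lesssim \E[Y_{uN}^4(s)]/\bar e_N(s,\mathcal{H}_{t-1})+\bar e_N^2(s,\mathcal{H}_{t-1})(\E[Y_{uN}(s)])^4$. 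Second, and more importantly, your conclusion from the loose bound is logically wrong: condition~\eqref{eq:wlln_cond_sufficient} with $\delta=1$ demands $\E[W_{uN}^2\mid\mathcal{H}_{t-1}]=o_p(N^{\delta})=o_p(N)$, not $o_p(N^2)$. Your estimate $1/l_N^2$ is only $o(N^2)$ under $Nl_N\to\infty$; it is \emph{not} $o(N)$ in general (take $l_N=N^{-3/4}$, allowed by Assumption~\ref{assu:adaptive_weighting}, for which $1/l_N^2=N^{3/2}\gg N$). So as written the sufficient condition is not verified in the adaptive-weighting regime; the statement that "the rate budget is tight but sufficient" is not justified by your bound.

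The fix is exactly the cancellation above, which is what the paper does: with the sharp bound one gets $\E[W_{uN}^2\mid\mathcal{H}_{t-1}]\lesssim \E[Y_{uN}^4(s)]/l_N+O(1)$, and $N l_N\to\infty$ (Assumption~\ref{assu:adaptive_weighting}) together with the uniform fourth-moment bound of Assumption~\ref{assu:moment_condition} gives $o_p(N)$ as required; under Assumption~\ref{assu:constant_weighting}, $\bar e_N$ is bounded below and the bound is $O(1)$. Everything else in your argument — the conditional i.i.d.\ structure, the computation $\bar e_N(s,\mathcal{H}_{t-1})\E[(\hat{\Lambda}_{uN}^{(t)}(s)-\E[Y_{uN}(s)])^2\mid\mathcal{H}_{t-1}]=\E[Y_{uN}^2(s)]-\bar e_N(s,\mathcal{H}_{t-1})(\E[Y_{uN}(s)])^2$, and passing from conditional to unconditional convergence via dominated convergence inside Lemma~\ref{lem:wlln_conditional} — matches the paper and is fine.
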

\noindent By Lemma \ref{lem:weak_law_W_N}, we know $W_N^{(t)}(s)=\E[Y_{uN}^2(s)]-e_N(s,\mathcal{H}_{t-1})(\E[Y_{uN}(s)])^2+o_p(1)$. Since Assumption \ref{assu:moment_condition} guarantees that
\begin{align*}
	\liminf_{N\rightarrow\infty}(\E[Y_{uN}^2(s)]-e_N(s,\mathcal{H}_{t-1})(\E[Y_{uN}(s)])^2)\geq \liminf_{N\rightarrow\infty}(\E[Y_{uN}^2(s)]-(\E[Y_{uN}(s)])^2)>0,
\end{align*}
we know $W_N^{(t)}(s)$ is asymptotically lower bounded.

\subsubsection{Proof of convergence of $S_V^{(t)}(s)$}

We write 
\begin{align*}
	S_{V}^{(t)}(s)
	&
	=\frac{H_N^{(t)}(s)}{N_tV^{(t)}_{N}(s)}\sum_{u=1}^{N_t}(\hat{\Lambda}_{uN}^{(t)}(s)-\E[Y_{uN}(s)])^2\\
	&
	=\frac{1}{N_t}e_N(s,\mathcal{H}_{t-1})\sum_{u=1}^{N_t} (\hat{\Lambda}_{uN}^{(t)}(s)-\E[Y_{uN}(s)])^2\cdot\frac{1}{V_N^{(t)}(s)}\\
	&
	= \frac{W_N^{(t)}(s)}{V_N^{(t)}(s)}.
\end{align*}
Then we know from Lemma \ref{lem:weak_law_W_N} that
\begin{align*}
	&
	W_N^{(t)}(s)-\left(\E[Y_{uN}^2(s)]-e_N(s,\mathcal{H}_{t-1})\E[Y_{uN}(s)]^2\right)\\
	&
	=W_N^{(t)}(s)-V_N^{(t)}(s)\\
	&
	=		
	\begin{cases}
		O_p(N^{-1/2}l_N^{-1/2}). & \text{under Assumption \ref{assu:adaptive_weighting}},\\
		O_p(N^{-1/2}).& \text{under Assumption \ref{assu:constant_weighting}}.
	\end{cases}
\end{align*}
By Lemma \ref{lem:uniform_lower_upper_bound_variance}, we know $\liminf_{N\rightarrow\infty}V_N^{(t)}(s)>0$. Therefore we have 
\begin{align*}
	\left|S_{V}^{(t)}(s)-1\right|=\left|\frac{W_N^{(t)}(s)}{V_{N}^{(t)}(s)}- 1\right|=
	\begin{cases}
		O_p(N^{-1/2}l_N^{-1/2}). & \text{under Assumption \ref{assu:adaptive_weighting}},\\
		O_p(N^{-1/2}).& \text{under Assumption \ref{assu:constant_weighting}}.
	\end{cases}
\end{align*}

\edit{
\subsection{Proof of Step 2 in Appendix \ref{sec:proof_roadmap}}\label{sec:proof_step_2}
}

We first present a useful lemma, characterizing the asymptotic behavior of sampling function in the second stage. 

\begin{lemma}[Convergence of sampling function]\label{lem:as_convergence_sampling_function}
	Suppose the Assumption \ref{assu:moment_condition}-\ref{assu:sampling_design} hold and either Assumption \ref{assu:constant_weighting} or Assumption \ref{assu:adaptive_weighting} holds. Then if a sequence of random variable $M_N$ satisfying
	\begin{align*}
		M_N \rightarrow W_1 \text{ almost surely}
	\end{align*}
	where $W_1$ is defined as in \eqref{eq:mathbb_A_definition}, then we have 
	\begin{align*}
		e(s,h_N(M_N))\rightarrow e(s,h(W_1,c))\quad\text{and}\quad e^{1/2}(s,h_N(M_N))\rightarrow e^{1/2}(s,h(W_1,c))
	\end{align*}
	almost surely. Consequently, we know 
	\begin{align*}
		e(s,h_N(M_N))-e(s, h_N(W_1))\quad\text{and}\quad e^{1/2}(s,h_N(M_N))-e^{1/2}(s, h_N(W_1))
	\end{align*}
	coverge to $0$ almost surely.
\end{lemma}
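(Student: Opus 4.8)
\textbf{Proof plan for Lemma~\ref{lem:as_convergence_sampling_function}.}

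The plan is to reduce the statement to a pointwise-in-$\omega$ continuity argument for the composition $x\mapsto \bar e(s,h_N(x))$ and then handle the two cases of Assumption~\ref{assu:sampling_design} separately, since the sampling function $\bar e(s,\cdot)$ is continuous under the Lipschitz condition but only piecewise continuous under the step-function condition. First I would observe that $h_N(x)=h(x,c_N)$ and that the map $x\mapsto h(x,c)$ defined in~\eqref{eq:def_h_c_function} is continuous on the relevant domain, by Lemma~\ref{lem:uniform_lower_upper_bound_variance} the denominators $\sqrt{x_5},\sqrt{x_6}$ (which at the argument $M_N$ are variance components $V_N^{(t)}(s)$, and in the limit $V^{(1)}(s)$) are bounded away from zero almost surely, so no division-by-zero pathology arises. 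Since $c_N\to c$ by Assumption~\ref{assu:moment_condition}, and $M_N\to W_1$ almost surely, we get $h_N(M_N)=h(M_N,c_N)\to h(W_1,c)$ almost surely on the event where the limiting variance coordinates of $W_1$ are strictly positive (which has probability one). When $c=-\infty$ one must be slightly careful: $h(W_1,c)=-\infty$, and one argues that $h_N(M_N)\to -\infty$ because the bounded fluctuation term $(\sqrt{x_3/x_5},-\sqrt{x_4/x_6})A(x_1,x_2)^\top$ evaluated at $M_N$ stays bounded (again using Lemma~\ref{lem:uniform_lower_upper_bound_variance} for the ratios) while $\tfrac{1}{\sqrt 2}c_N\to-\infty$.

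Given $h_N(M_N)\to h(W_1,c)$ almost surely, under the \textbf{Lipschitz condition} of Assumption~\ref{assu:sampling_design} the function $\bar e(s,\cdot)$ is continuous on $\bar{\mathbb R}$ (including at $-\infty$, where it takes a value in $\{0,1\}$), hence $\bar e(s,h_N(M_N))\to \bar e(s,h(W_1,c))$ and likewise for the square roots. Under the \textbf{step-function condition}, $\bar e(s,x)=\sum_{k=1}^K c_k\mathbbm 1(g(x)\in C_k)$ with $g$ continuous, $C_1=[-\infty,m_1]$, and $C_k$ open for $k\ge 2$; so $x\mapsto\bar e(s,x)$ is continuous except possibly at points $x$ with $g(x)\in\partial C_k$ for some $k$, i.e.\ on the boundary set $g^{-1}(\{m_1\}\cup\bigcup_k\partial C_k)$. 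The key point is that $h(W_1,c)$ lands in this discontinuity set with probability zero: writing out $S((A^{(1)},V^{(1)}),c)$ from~\eqref{eq:def_h_function} (which matches $h(W_1,c)$ up to the deterministic variance factors), the random part is a nondegenerate Gaussian-type quantity — $A^{(1)}$ has covariance $\bm\Sigma^{(1)}$ with $|\mathrm{Cov}^{(1)}|<1$ by Lemma~\ref{lem:upper_bound_cov}, so $A^{(1)}(0)$ and $A^{(1)}(1)$ are jointly nondegenerate — hence $h(W_1,c)$ has a density on $\mathbb R$ (for finite $c$; for $c=-\infty$ it equals $-\infty\in C_1$, an interior point of $C_1$ in $\bar{\mathbb R}$, so again not a discontinuity). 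Therefore $g(h(W_1,c))$ avoids the finite/countable boundary set almost surely, continuity of $\bar e(s,\cdot)\circ$ holds at $h(W_1,c)$ almost surely, and the convergence $\bar e(s,h_N(M_N))\to\bar e(s,h(W_1,c))$ follows; the square-root version is immediate since $t\mapsto\sqrt t$ is continuous. The final ``consequently'' claim follows by applying the just-proven convergence twice — once with $M_N$ and once with the constant sequence $W_1$ (which trivially converges to $W_1$ a.s.) — and subtracting.

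The main obstacle is the step-function case: one must verify carefully that the limiting argument $h(W_1,c)$ almost surely avoids the discontinuity set $g^{-1}(\partial(\cup C_k))$ of $\bar e(s,\cdot)$, which requires establishing the absolute continuity of the relevant Gaussian functional, and one must separately and carefully treat the degenerate-looking but genuinely fine point $c=-\infty$ (where the limit escapes to the boundary point $-\infty$ of $\bar{\mathbb R}$, but this boundary point is in the \emph{interior} region $C_1=[-\infty,m_1]$ of the step function, so continuity is not actually violated there). Once this measure-zero boundary issue is dispatched, the rest is a routine application of almost-sure continuity plus $c_N\to c$ and $M_N\to W_1$.
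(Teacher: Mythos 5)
Your proposal is correct and follows essentially the same route as the paper's proof: first establish $h_N(M_N)\to h(W_1,c)$ almost surely (including the $c=-\infty$ case), then split on the two conditions of Assumption~\ref{assu:sampling_design}, using continuity of $\bar e(s,\cdot)$ on $\bar{\mathbb{R}}$ in the Lipschitz case and, in the step-function case, the fact that $g(h(W_1,c))$ is a continuous random variable so it avoids the boundary sets $\partial C_k$ almost surely (with $g(-\infty)=-\infty\in C_1$ handling $c=-\infty$). The only deviations are cosmetic — you misidentify the coordinates $x_5,x_6$ as variance rather than sampling-probability components (both are bounded away from zero, so nothing changes), and you supply a slightly more explicit nondegeneracy argument for the density of $h(W_1,c)$ than the paper does.
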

We divide the proofs into the following steps
\begin{align*}
	\E\left[f(E_{N}^{(1)},E_{N}^{(2)})\right]-\E\left[f(\mathbb{W})\right]
	&
	=\E[f(W_1,E_N^{\textbf{a}}(W_1))]-\E[f(W_1,W_2)]\\
	&
	\qquad+\E[f(E_{N}^{(1)},E_{N}^{(2)})]-\E[f(E_N^{(1)},E_N^{\textbf{a}}(E_N^{(1)}))]\\ 
	&
	\qquad+\E[f(E_N^{(1)},E_N^{\textbf{a}}(E_N^{(1)}))]- \E[f(\bar E_N^{(1)},E_N^{\textbf{a}}(\bar E_N^{(1)}))]\\
	&
	\qquad + \E[f(\bar E_N^{(1)},E_N^{\textbf{a}}(\bar E_N^{(1)}))]-\E[f(W_1,E_N^{\textbf{a}}(W_1))]\\
	&
	\equiv F_1+F_2+F_3+F_4,
\end{align*}
where $\bar E_N^{(1)}$ is defined as in~\eqref{eq:bar-E-N-def}. We will derive the rate for $F_1,F_2,F_3$ and $F_4$, respectively.

\subsubsection{Proof for $F_1$}

By the Lipschitz property of $f$, we can bound 
\begin{align*}
	F_1
	&
	\lesssim \E[\|H_N^{(2)}(W_1)-H^{(2)}\|_2] + \E[\|V_N^{(2)}(W_1)-V^{(2)}\|_2] \\
	&
	\qquad+\E[\|\mathrm{Vec}((\bm \Sigma_N^{(2)}(W_1))^{1/2})-\mathrm{Vec}((\bm \Sigma^{(2)})^{1/2})\|_{2}]+\E[\|(H_N^{(2)}(W_1))^{1/2}-(H^{(2)})^{1/2}\|_2]\\
	&
	\equiv F_{1}^{H_N}+F_{1}^{V_N}+ F_{1}^{\bm\Sigma_N}+F_1^{H_N^{1/2}}
\end{align*}
Now we bound $ F_{1}^{H_N}, F_{1}^{V_N}$ and $F_{1}^{\bm\Sigma_N}$, separately.
   
\paragraph{Upper bound on $F_{N}^{H}$:}
Recall the definition $H_N^{(2)}(W_1)=(H_N(0,W_1),H_N(1,W_1))$, where $H_N(s,W_1)=\min\{1-l_N,\max\{l_N,e(s, h_N(W_1))\}\}$. We will bound $|H_{N}(s,W_1)-H^{(2)}(s)|$ for $s\in \{0,1\}$. 
\begin{enumerate}
	\item \textbf{Under Assumption \ref{assu:constant_weighting}:} In this case, $0<c_l<\bar l=l_N<c_u<1/2$. By the Lipschitz property of $\min\{1-\bar l,\max\{\bar l,x\}\}$ in $x$, we have
	\begin{align*}
		|H_N(s,W_1)-H^{(2)}(s)|\leq |e(s,h_N(W_1))-e(s,h(W_1,c))|.
	\end{align*}
	
	\item \textbf{Under Assumption \ref{assu:adaptive_weighting}:} In this case $\lim_{N\rightarrow\infty}l_N=0$. Then we have
	\begin{align}
		|H_N(s,W_1)-H^{(2)}(s)|
		&\nonumber
		=|\min\{1-l_N,\max\{l_N,e(s,h_N(W_1))\}\}-e(s,h(W_1,c))|\\
		&\nonumber
		\leq |e(s,h_N(W_1))-e(s,h(W_1,c))|\\
		&\nonumber
		\qquad+|l_N-e(s,h(W_1,c))|\indicator(e(s,h_N(W_1))<l_N)\\
		&\nonumber
		\qquad + |1-l_N-e(s,h(W_1,c))|\indicator(e(s,h_N(W_1))>1-l_N)\\
		&\label{eq:bound-on-H-N}
		\leq 3|e(s,h_N(W_1))-e(s,h(W_1,c))|+2l_N.
	\end{align}
\end{enumerate}
Therefore, we can bound 
\begin{align*}
F_1^{H_N}\lesssim e_{H_N}\equiv
\begin{cases}
    \sum_{s=0,1}\E\!\left[\left|e(s,h_N(W_1))-e\bigl(s,h(W_1,c)\bigr)\right|\right] & \text{under Assumption~\ref{assu:constant_weighting}},\\[5pt]
    \sum_{s=0,1}\E\!\left[\left|e(s,h_N(W_1))-e\bigl(s,h(W_1,c)\bigr)\right|\right] + l_N & \text{under Assumption~\ref{assu:adaptive_weighting}}.
\end{cases}
\end{align*}
	
\paragraph{Upper bound on $F_1^{V_N}$:}
	
Recall the expression 
\small
\begin{align*}
	V_N^{(2)}(W_1)=(V_N(0,W_1),V_N(1,W_1))\quad\text{and}\quad V_N(s,W_1)=\E[Y_{uN}^2(s)]-H_N(s,W_1)\E[Y_{uN}(s)]^2.
\end{align*}
\normalsize
Then we can bound, using the bound~\eqref{eq:Y-limit-rate} and bound~\eqref{eq:Y-limit-rate}, 
\begin{align*}
	F_1^{V_N}\lesssim \sum_{s=0,1} \mathcal{L}_{Y_{uN}^2}(s)+\E[|H_N(s,W_1)-H^{(2)}(s)|]+\mathcal{L}_{Y_{uN}}(s)\lesssim \frac{1}{N}+e_{H_N}.
\end{align*}
	
\paragraph{Upper bound on $F_1^{\bm\Sigma_N}$:}
	
We notice that by Lemma \ref{lem:holder_continuity_Frobenius},
\begin{align*}
	F_1^{\bm\Sigma_N}\lesssim \|\bm \Sigma_N^{(2)}(W_1)-\bm \Sigma^{(2)}\|_{\mathrm{F}}= \sqrt{2}|\mathrm{Cov}_N^{(2)}(W_1)-\mathrm{Cov}^{(2)}|.
\end{align*}
Recall the definition 
\begin{align*}
	\mathrm{Cov}_N^{(2)}(W_1)=\frac{-(H_N(0,W_1)H_N(1,W_1))^{1/2}}{V_N^{1/2}(0,W_1)V_N^{1/2}(1,W_1)}\E[Y_{uN}(0)]\E[Y_{uN}(1)]
\end{align*}
and 
\begin{align*}
	\mathrm{Cov}^{(2)}=- \frac{(H^{(2)}(0)H^{(2)}(1))^{1/2}}{(V^{(2)}(0)V^{(2)}(1))^{1/2}}\lim_{N\rightarrow\infty}\left(\E[Y_{uN}(0)]\E[Y_{uN}(1)]\right).
\end{align*}
Now by the calculation for $F_1^{V_N}$, Lemma~\ref{lem:as_convergence_sampling_function} and bound~\eqref{eq:Y-limit-rate}, we have
\begin{align*}
	F_1^{\bm\Sigma_N}\lesssim \frac{1}{N}+\sum_{s=0,1}\E[\big|H_N^{1/2}(s,W_1)-(H^{(2)}(s))^{1/2}\big|]
\end{align*}
Now we bound $|H_N^{1/2}(s,W_1)-(H^{(2)}(s))^{1/2}|$ for any $s\in\{0,1\}$, separately.
\begin{enumerate}
	\item \textbf{Under Assumption \ref{assu:constant_weighting}:} We can easily obtain
	\begin{align*}
		|H_N^{1/2}(s,W_1)-(H^{(2)}(s))^{1/2}|\leq \frac{1}{2\bar l^{1/2}}|e(s,h_N(W_1))-e(s,h(W_1, c))|;
	\end{align*}
	\item \textbf{Under Assumption \ref{assu:adaptive_weighting}:} we develop two type of bounds. First, using bound~\eqref{eq:bound-on-H-N}, we have
	\begin{align*}
		|H_N^{1/2}(s,W_1)-(H^{(2)}(s))^{1/2}|
		&
		=\frac{|H_N(s,W_1)-H^{(2)}(s)|}{H_N^{1/2}(s,W_1)+(H^{(2)}(s))^{1/2}}\\
		&
		\lesssim  \frac{|e(s,h_N(W_1))-e(s,h(W_1,c))|+l_N}{l_N^{1/2}}\\
		&
		\leq l_N^{1/2}+\frac{|e(s,h_N(W_1))-e(s,h(W_1,c))|}{l_N^{1/2}}.
	\end{align*}
	Suppose $e$ is Lipschitz in Assumption~\ref{assu:sampling_design}, then we can further bound using condition~\eqref{eq:Y-limit-rate},
	\begin{align*}
		|H_N^{1/2}(s,W_1)-(H^{(2)}(s))^{1/2}|\lesssim l_N^{1/2}+\frac{|c_N-c|}{l_N^{1/2}}\lesssim l_N^{1/2}+\frac{1}{l_N^{1/2}N^{1/2}}
	\end{align*}
	Now we develop the second type of bound 
	\begin{align*}
		&
		|H_N^{1/2}(s,W_1)-(H^{(2)}(s))^{1/2}|\\
		&
		=|\min\{(1-l_N)^{1/2},\max\{l_N^{1/2},e^{1/2}(s,h_N(W_1))\}\}-e^{1/2}(s,h(W_1, c))|\\
		&
		\leq  |\min\{(1-l_N)^{1/2},\max\{l_N^{1/2},e^{1/2}(s,h_N(W_1))\}\}-e^{1/2}(s,h_N(W_1))|\\
		&
		\qquad + |e^{1/2}(s,h_N(W_1))-e^{1/2}(s,h(W_1,c))|\\
		&
		\lesssim l_N^{1/2}+|e^{1/2}(s,h_N(W_1))-e^{1/2}(s,h(W_1,c))|.
	\end{align*}
\end{enumerate}
Therefore, defining the sequence $e_{\bm\Sigma_{N}}$ as
\begin{align*}
	\begin{cases}
		\sum_{s=0,1}\E\!\left[\left|e(s,h_N(W_1))-e\bigl(s,h(W_1,c)\bigr)\right|\right]& \text{under Assumption~\ref{assu:constant_weighting}};\\[5pt]
		l_N^{1/2}+\sum_{s=0,1}\E\!\left[\left|e^{1/2}(s,h_N(W_1))-e^{1/2}\bigl(s,h(W_1,c)\bigr)\right|\right] & \text{under Assumption~\ref{assu:adaptive_weighting} and~\ref{assu:sampling_design}(2)};\\[5pt]
        l_N^{1/2}+\frac{1}{l_N^{1/2}N^{1/2}} & \text{under Assumption~\ref{assu:adaptive_weighting} and~\ref{assu:sampling_design}(1)},
	\end{cases}
\end{align*}
we can bound $F_1^{\bm\Sigma_N}\lesssim e_{\bm\Sigma_N}$.

\paragraph{Upper bound on $F_1^{H_N^{1/2}}$:} We can bound, by the derivation for $F_1^{\bm\Sigma_N}$: 
\begin{align*}
	F_1^{H_N^{1/2}}\lesssim \sum_{s=0,1}\E[\big|H_N^{1/2}(s,W_1)-(H^{(2)}(s))^{1/2}\big|]\lesssim e_{\bm\Sigma_N}.
\end{align*}

\paragraph{Concluding the bound for $F_1$.} Combining above estimates, we have 
\begin{align*}
	F_1\lesssim F_1^{H_N}+F_1^{V_N}+F_1^{\bm\Sigma_N}+F_1^{H_N^{1/2}}\lesssim \frac{1}{N}+e_{H_N}+e_{\bm\Sigma_N}.
\end{align*}
By Lemma~\ref{lem:as_convergence_sampling_function}, we know $F_1=o(1)$. Moreover, under Assumption~\ref{assu:sampling_design}(1), we have 
\begin{align}\label{eq:F-1-rate}
	F_1\lesssim 		
	\begin{cases}
		\frac{1}{N^{1/2}}& \text{under Assumption~\ref{assu:constant_weighting}};\\[5pt]
		l_N^{1/2}+\frac{1}{\sqrt{N}l_N^{1/2}} & \text{under Assumption~\ref{assu:adaptive_weighting}},
	\end{cases}
\end{align}

\subsubsection{Proof for $F_2$}

We will use Lemma \ref{lem:CLT_BL} to prove the result. Recall the definition of $E_N^{(2)}$ as in definition \eqref{eq:E_N_t_def}
\begin{align*}
	E_N^{(2)}=\left((\bm\Sigma_{N}^{(2)})^{-1/2}\Lambda_{N}^{(2)},V_N^{(2)},H_N^{(2)},\textnormal{Vec}((\bm\Sigma_{N}^{(2)})^{1/2}),(H_N^{(2)})^{1/2}\right).
\end{align*}
Define 
\begin{align*}
	\Lambda_{uN}^{(2)}(s)\equiv \frac{(H_N^{(2)}(s))^{1/2}(\hat{\Lambda}_{uN}^{(2)}(s)-\E[Y_{uN}(s)])}{(V_N^{(2)}(s))^{1/2}}.
\end{align*}
Recall $\Lambda_{N}^{(2)}=\frac{1}{N_2^{1/2}}\sum_{u=1}^{N_2} (\Lambda_{uN}^{(2)}(0),\Lambda_{uN}^{(2)}(1)),V_N^{(2)}=(V_N(0,E_N^{(1)}),V_N(1,E_N^{(1)})),\bm\Sigma_{N}^{(2)}=\bm\Sigma_{N}^{(2)}(E_N^{(1)})$ and $H_N^{(2)}=(H_N(0,E_N^{(1)}),H_N(1,E_N^{(1)}))$. Define $\xi_{u}\equiv  (\bm\Sigma_{N}^{(2)})^{-1/2}(\Lambda_{uN}^{(2)}(0),\Lambda_{uN}^{(2)}(1))$, which plays the role of $W_{uN}$ in Lemma \ref{lem:CLT_BL} with $N=N_2$. It suffices to bound $N_2^{-1/2}\E[\|\xi_{u}\|_2^3|\mathcal{H}^{(1)}_{N}]$. Notice we can bound
\begin{align*}
	\|\xi_{u}\|_2\leq \|(\bm \Sigma_N^{(2)})^{-1/2}\|_2\sum_{s=0,1}(H_N^{(2)}(s))^{1/2}\left|\frac{\hat{\Lambda}_{uN}^{(2)}(s)-\E[Y_{uN}(s)]}{(V_N^{(2)}(s))^{1/2}}\right|.
\end{align*}
so that
\begin{align*}
	\frac{\E\left[\|\xi_{u}\|_2^3|\mathcal{H}^{(1)}_{N}\right]}{N_2^{1/2}}\lesssim \|(\bm \Sigma_N^{(2)})^{-1/2}\|_2^3\sum_{s=0,1}\frac{(H_N^{(2)}(s))^{3/2}\E\left[|\hat{\Lambda}_{uN}^{(2)}(s)-\E[Y_{uN}(s)]|^3|\mathcal{H}^{(1)}_{N}\right]}{N_2^{1/2}(V_{N}^{(2)}(s))^{3/2}}.
\end{align*}
It suffices to prove
\begin{align}\label{eq:upper_bound_Sigma_N_2_eigenvalue}
	\|(\bm \Sigma_N^{(2)})^{-1/2}\|_2=O(1)\quad\text{almost surely},
\end{align}
and for any $s\in\{0,1\}$,
\begin{align}\label{eq:upper_bound_Normalized_variable}
	\frac{(H_N^{(2)}(s))^{3/2}}{N_2^{1/2}(V_N^{(2)}(s))^{3/2}}\E\left[|\hat{\Lambda}_{uN}^{(2)}(s)-\E[Y_{uN}(s)]|^3|\mathcal{H}^{(1)}_{N}\right]\lesssim	
	\begin{cases}
		\frac{1}{N^{1/2}}& \text{under Assumption~\ref{assu:constant_weighting}};\\[5pt]
		\frac{1}{\sqrt{N}l_N^{1/2}} & \text{under Assumption~\ref{assu:adaptive_weighting}},
	\end{cases}.
\end{align}
In fact, claim~\eqref{eq:upper_bound_Sigma_N_2_eigenvalue} has been proved in Lemma~\ref{lem:continuity_sqrt_sampling_function}. Now we prove claim~\eqref{eq:upper_bound_Normalized_variable}. Consider 
\begin{align*}
	\E\left[|\hat{\Lambda}_{uN}^{(2)}(s)-\E[Y_{uN}(s)]|^3|\mathcal{H}^{(1)}_{N}\right]
	&
	\lesssim \E\left[|\hat{\Lambda}_{uN}^{(2)}(s)|^3|\mathcal{H}^{(1)}_{N}\right]+|\E[Y_{uN}(s)]|^3\\
	&
	=\frac{\E[|Y_{uN}^3(s)|]}{(H_N^{(2)}(s))^2}+|\E[Y_{uN}(s)]|^3.
\end{align*}
Then, since $H_N^{(t)}(s)\leq 1$, we have 
\begin{align*}
	(H_N^{(2)}(s))^{3/2}\E\left[|\hat{\Lambda}_{uN}^{(2)}(s)-\E[Y_{uN}(s)]|^3|\mathcal{H}^{(1)}_{N}\right]\lesssim\frac{\E[|Y_{uN}^3(s)|]}{(H_N^{(2)}(s))^{1/2}}+|\E[Y_{uN}(s)]|^3.
\end{align*}
Then by Lemma~\ref{lem:uniform_lower_upper_bound_variance}, we can bound 
\begin{align*}
	\frac{1}{N_2^{1/2}(V_N^{(2)}(s))^{3/2}}(H_N^{(2)}(s))^{3/2}\E\left[|\hat{\Lambda}_{uN}^{(2)}(s)-\E[Y_{uN}(s)]|^3|\mathcal{H}^{(1)}_{N}\right]
	&
	\lesssim \frac{1}{\sqrt{N}(H_N^{(2)}(s))^{1/2}}\\
	&
	\leq \frac{1}{l_N^{1/2}N^{1/2}}.
\end{align*}
Therefore, we have
\begin{align}\label{eq:F-2-estimate}
	F_2\lesssim 	
	\begin{cases}
		\frac{1}{N^{1/2}}& \text{under Assumption~\ref{assu:constant_weighting}};\\[5pt]
		\frac{1}{\sqrt{N}l_N^{1/2}} & \text{under Assumption~\ref{assu:adaptive_weighting}},
	\end{cases}
\end{align}

\subsubsection{Proof for $F_3+F_4$}

We will separate the proof for the convergence and derivation of the rate.

\paragraph{Proof of the convergence: $F_3+F_4=o(1)$.}
We first show that $E_N^{(1)}\convd W_1$ in the following lemma.

\begin{lemma}[Weak convergence of $E_N^{(1)}$]\label{lem:weak_convergence_E_N_1}
	Suppose the Assumption \ref{assu:moment_condition}-\ref{assu:sampling_design} hold and either Assumption \ref{assu:adaptive_weighting} or Assumption \ref{assu:constant_weighting} holds. Then we have $E_N^{(1)}\convd W_1$.
\end{lemma}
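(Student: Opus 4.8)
The plan is to reduce the claim to a two-dimensional CLT for the first block of $E_N^{(1)}$, combined with deterministic convergence of the remaining (nuisance) blocks, and then to glue these together with Slutsky's lemma (Lemma~\ref{lem:Slutsky}). Write $E_N^{(1)} = \big((\bm\Sigma_N^{(1)})^{-1/2}\Lambda_N^{(1)},\, V_N^{(1)},\, H_N^{(1)},\, \mathrm{Vec}((\bm\Sigma_N^{(1)})^{1/2})\big)$ and $W_1 = \big(Z_1,\, V^{(1)},\, H^{(1)},\, \mathrm{Vec}((\bm\Sigma^{(1)})^{1/2})\big)$ with $Z_1 \sim N(\bm 0, \bm I_2)$. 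First I would handle the last three coordinates, which are deterministic sequences: one has $H_N^{(1)}(s) = e(s) = H^{(1)}(s)$ exactly, while $V_N^{(1)}(s) = \E[Y_{uN}^2(s)] - e(s)\E[Y_{uN}(s)]^2 \to V^{(1)}(s)$ by the convergence of the first two moments in Assumption~\ref{assu:moment_condition}, with $V^{(1)}(s)$ bounded away from $0$ and $\infty$ by Lemma~\ref{lem:uniform_lower_upper_bound_variance}. It follows that $\mathrm{Cov}_N^{(1)} \to \mathrm{Cov}^{(1)}$; since $|\mathrm{Cov}_N^{(1)}|$ and $|\mathrm{Cov}^{(1)}|$ are bounded away from $1$ by Lemma~\ref{lem:upper_bound_cov} (applied with the constant weight $a_N \equiv e(0)$), the matrices $\bm\Sigma_N^{(1)}$ and $\bm\Sigma^{(1)}$ are positive definite with smallest eigenvalue bounded below uniformly for large $N$, so $(\bm\Sigma_N^{(1)})^{\pm 1/2}$ have uniformly bounded operator norm and $(\bm\Sigma_N^{(1)})^{1/2} \to (\bm\Sigma^{(1)})^{1/2}$ by Lemma~\ref{lem:holder_continuity_Frobenius}.

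Next I would establish the CLT for the first block. For fixed $N$, set $\xi_{uN}(s) \equiv e(s)^{1/2}(V_N^{(1)}(s))^{-1/2}\big(\hat\Lambda_{uN}^{(1)}(s) - \E[Y_{uN}(s)]\big)$ and $\xi_{uN} \equiv (\xi_{uN}(0),\xi_{uN}(1))^\top$, so that $\Lambda_N^{(1)} = N_1^{-1/2}\sum_{u=1}^{N_1}\xi_{uN}$ and $(\bm\Sigma_N^{(1)})^{-1/2}\Lambda_N^{(1)} = N_1^{-1/2}\sum_{u=1}^{N_1}W_{uN}$ with $W_{uN} \equiv (\bm\Sigma_N^{(1)})^{-1/2}\xi_{uN}$. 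The $\xi_{uN}$ are i.i.d.\ across $u$ because $\mathcal{D}_P$ is i.i.d. Using consistency one has $\hat\Lambda_{uN}^{(1)}(s) = \indicator(A_{uN}^{(1)}=s)Y_{uN}^{(1)}(s)/e(s)$, and using unconfoundedness (with $\mathcal{H}_0$ trivial) $\E[\hat\Lambda_{uN}^{(1)}(s)] = \E[Y_{uN}(s)]$ and $\E[(\hat\Lambda_{uN}^{(1)}(s))^2] = \E[Y_{uN}^2(s)]/e(s)$; since $\indicator(A_{uN}^{(1)}=0)\indicator(A_{uN}^{(1)}=1)=0$, also $\E[\hat\Lambda_{uN}^{(1)}(0)\hat\Lambda_{uN}^{(1)}(1)] = 0$. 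A short computation then gives $\E[\xi_{uN}] = \bm 0$ and $\E[\xi_{uN}\xi_{uN}^\top] = \bm\Sigma_N^{(1)}$ — the diagonal entries equal $1$ by the definition of $V_N^{(1)}$, and the off-diagonal entry equals $\mathrm{Cov}_N^{(1)}$ from \eqref{eq:sigma_N_E_1n_def} — so $\E[W_{uN}] = \bm 0$ and $\E[W_{uN}W_{uN}^\top] = \bm I_2$. The boundedness of $\sup_N\E[Y_{uN}^4(s)]$ (Assumption~\ref{assu:moment_condition}), the bound $e(s) > c_l$, the uniform lower bound on $V_N^{(1)}(s)$, and the uniform operator-norm bound on $(\bm\Sigma_N^{(1)})^{-1/2}$ together yield $\sup_N\E[\|W_{uN}\|_2^4] < \infty$, hence $\E[\|W_{uN}\|_2^4]/N_1 \to 0$ and $\E[\|W_{uN}\|_2^3]/N_1^{1/2} \to 0$; Lemma~\ref{lem:W_1_bound} then gives $(\bm\Sigma_N^{(1)})^{-1/2}\Lambda_N^{(1)} \convd N(\bm 0, \bm I_2) = Z_1$.

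Finally I would write $E_N^{(1)} = \check E_N + \rho_N$, where $\check E_N \equiv \big((\bm\Sigma_N^{(1)})^{-1/2}\Lambda_N^{(1)},\, V^{(1)},\, H^{(1)},\, \mathrm{Vec}((\bm\Sigma^{(1)})^{1/2})\big)$ and $\rho_N$ collects the deterministic discrepancies in the last three coordinates, which tend to $0$ by the first paragraph. Since $\check E_N \convd W_1$ (its last three coordinates being constant equal to those of $W_1$, and its first coordinate converging weakly to $Z_1$) and $\|\rho_N\|_2 \to 0$, Lemma~\ref{lem:Slutsky} delivers $E_N^{(1)} \convd W_1$. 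I do not expect a serious obstacle: this is essentially the ``base case'' of the two-stage argument, and the only point requiring care is the uniform lower bound on the eigenvalues of $\bm\Sigma_N^{(1)}$, which simultaneously controls the moment conditions for the CLT, the boundedness of $(\bm\Sigma_N^{(1)})^{-1/2}$, and the convergence of the matrix square root, and which is exactly what Lemma~\ref{lem:upper_bound_cov} provides.
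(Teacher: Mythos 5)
Your proposal is correct and follows essentially the same route as the paper: deterministic convergence of the $V_N^{(1)}$, $H_N^{(1)}$ and $(\bm\Sigma_N^{(1)})^{1/2}$ blocks (via Lemma~\ref{lem:holder_continuity_Frobenius} and the moment assumptions), a normal-approximation lemma for the standardized first block (the paper invokes Lemma~\ref{lem:CLT_BL}, which with trivial conditioning reduces to the Lemma~\ref{lem:W_1_bound} you use), and Slutsky to combine. You in fact supply the moment computations and the eigenvalue lower bound via Lemma~\ref{lem:upper_bound_cov} that the paper leaves implicit ("obvious"/"we omit the proof"), and these details check out.
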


\noindent By Skorohod's representation theorem (Lemma \ref{lem:skorohod}) and Lemma \ref{lem:weak_convergence_E_N_1}, there exists a sequence of random variables $\tilde{E}_{N}^{(1)}$ such that $\tilde{E}_{N}^{(1)}\overset{d}{=}E_N^{(1)}$ and $\lim_{N\rightarrow\infty}\tilde{E}_{N}^{(1)}=W_1$ almost surely. Then we write 
\begin{align*}
	(\tilde E_N^{(1)},E_N^{\textbf{a}}(\tilde E_N^{(1)}))=(\tilde E_N^{(1)},Z_2, V_N^{(2)}(\tilde E_N^{(1)}), H_N^{(2)}(\tilde E_N^{(1)}),\mathrm{Vec}((\bm\Sigma_N^{(2)}(\tilde E_N^{(1)}))^{1/2}),(H_N^{(2)}(\tilde E_N^{(1)}))^{1/2})
\end{align*}
and 
\begin{align*}
	(W_1, E_N^{\textbf{a}}(W_1))=(W_1,Z_2, V_N^{(2)}(W_1), H_N^{(2)}(W_1),\mathrm{Vec}((\bm\Sigma_N^{(2)}(W_1))^{1/2}), (H_N^{(2)}(W_1))^{1/2}).
\end{align*}
We intend to apply Lemma \ref{lem:continuous_map_varying} and in order to do so, we need to show, in addition to $\tilde E_N^{(1)}\rightarrow W_1$, that 
\begin{align*}
	\|V_N^{(2)}(\tilde{E}_N^{(1)})-V_N^{(2)}(W_1)\|_2,\ \|H_N^{(2)}(\tilde{E}_N^{(1)})- H_N^{(2)}(W_1)\|_2
\end{align*}
and 
\begin{align*}
	\|(\bm\Sigma_N^{(2)}(\tilde{E}_N^{(1)}))^{1/2}-(\bm\Sigma_N^{(2)}(W_1))^{1/2}\|_{\mathrm{F}},\ \|(H_N^{(2)}(\tilde{E}_N^{(1)}))^{1/2}- (H_N^{(2)}(W_1))^{1/2}\|_2
\end{align*}
converge to $0$ almost surely. In fact, by Lemma \ref{lem:continuity_sqrt_sampling_function}, it suffices to prove
\small
\begin{align*}
	e(s,h_N(\tilde E_N^{(1)}))-e(s,h_N(W_1))=o(1)\quad\text{and}\quad e^{1/2}(s,h_N(\tilde E_N^{(1)}))-e^{1/2}(s,h_N(W_1))=o(1).
\end{align*}
\normalsize
Applying Lemma \ref{lem:as_convergence_sampling_function} by noticing $\tilde E_N^{(1)}\rightarrow W_1$ almost surely, we complete the proof for $F_3+F_4=o(1)$. 

\paragraph{Derivation of the rate of $F_3+F_4$ under Assumption~\ref{assu:sampling_design}(1).} 

It is not hard to show $F_3\lesssim N^{-1/2}l_N^{-1/2}$ under Assumption~\ref{assu:adaptive_weighting} and $F_3\lesssim N^{-1/2}$ under Assumption~\ref{assu:constant_weighting} so that we will only work on $F_4$. Now define, for $x\in\mathbb{R}^2$,
\begin{align}
	\bar h_N(x)\equiv \left(\frac{(V^{(1)}(0))^{1/2}}{(H^{(1)}(0))^{1/2}},-\frac{(V^{(1)}(1))^{1/2}}{(H^{(1)}(1))^{1/2}}\right)\bm\Sigma^{1/2}x+\frac{1}{\sqrt{2}}c_N.
\end{align}
Then we know $\bar h_N(\bm\Sigma_{N}^{-1/2}\Lambda_{N}^{(1)})=h_N(\bar E_N^{(1)})$ and $\bar h_N(Z_1)=h_N(W_1)$. It is not hard to observe that $\bar h_N(x)$ is a Lipschitz function for any $N\in\mathbb{N}_{+}$ with a uniformly bounded Lipschitz constant. Thus revisiting $E_N^{\textbf{a}}(\bar E_N^{(1)})$ and $E_N^{\textbf{a}}(W_1)$, we find the map only depends on the first two arguments, $\bm\Sigma_{N}^{-1/2}\Lambda_{N}^{(1)}$ and $Z_1$. Thus we can bound, under Assumption~\ref{assu:adaptive_weighting},
\begin{align*}
	F_3+F_4\lesssim\frac{1}{\sqrt{N}}+\left(1+\frac{1}{l_N^{1/2}}\right)d_{\mathrm{BL}}(\bm\Sigma_{N}^{-1/2}\Lambda_{N}^{(1)}, Z_1)\lesssim \frac{1}{N^{1/2}}+\frac{1}{l_N^{1/2}N^{1/2}},
\end{align*}
and under Assumption~\ref{assu:constant_weighting}, we have 
\begin{align*}
	F_3+F_4\lesssim \frac{1}{\sqrt{N}}+d_{\mathrm{BL}}(\bm\Sigma_{N}^{-1/2}\Lambda_{N}^{(1)}, Z_1)\lesssim \frac{1}{\sqrt{N}}.
\end{align*}
Therefore, under Assumption~\ref{assu:sampling_design}(1), we have 
\begin{align}\label{eq:F-3-4-estimate}
	F_3+F_4\lesssim
	\begin{cases}
		\frac{1}{N^{1/2}}& \text{under Assumption~\ref{assu:constant_weighting}};\\[5pt]
		l_N^{1/2}+\frac{1}{\sqrt{N}l_N^{1/2}} & \text{under Assumption~\ref{assu:adaptive_weighting}},
	\end{cases}
\end{align}

\subsubsection{Combining the estimates}

Combining~\eqref{eq:F-1-rate},\eqref{eq:F-2-estimate} and~\eqref{eq:F-3-4-estimate}, we have proved claim~\eqref{eq:step2-rate}.

\section{Proof of lemmas in Appendix \ref{sec:proof_weak_convergence}}\label{sec:supporting_lemmas}

\subsection{Proof of lemmas in Appendix \ref{sec:proof_roadmap}}\label{sec:proof_E_2}

\subsubsection{Proof of Lemma \ref{lem:consistency_WIPW}}\label{sec:proof_lem:consistency_WIPW}

\begin{proof}[Proof of Lemma \ref{lem:consistency_WIPW}]
	We will only prove the consistency for $\E[Y_{uN}(s)]$. The proof for the second part is similar. We divide the proof into two cases depending if Assumption~\ref{assu:adaptive_weighting} or Assumption~\ref{assu:constant_weighting} holds.
	\begin{enumerate}
		\item \textbf{Under Assumption \ref{assu:constant_weighting}:} Compute 
		\small
		\begin{align*}
			\mathrm{Var}[\WIPW(s)-\E[Y_{uN}(s)]]
			&
			=\E\left[\frac{1}{N^2}\sum_{t=1}^2\sum_{u=1}^{N_t}\E\left\{\left(\frac{\indicator(A_{uN}^{(t)}=s)}{e_N(s,\mathcal{H}_{t-1})}Y_{uN}^{(t)}-\E[Y_{uN}(s)]\right)^2|\mathcal{H}_1\right\}\right]\\
			&
			=\frac{1}{N^2}\sum_{t=1}^2\sum_{u=1}^{N_t}\E\left[\left(\frac{\E[Y_{uN}^2(s)]}{e_N(s,\mathcal{H}_{t-1})}-\E[Y_{uN}(s)]^2\right)\right]\\
			&
			\leq \E[Y_{uN}^2(s)]\E\left[\frac{1}{2Ne_N(s,\mathcal{H}_{0})}+\frac{1}{2Ne_N(s,\mathcal{H}_{1})}\right].
		\end{align*}
		\normalsize
		Thus we know by Assumption \ref{assu:constant_weighting} that $Ne_N(s,\mathcal{H}_t)\geq N\min\{e(s),\bar l\}$ for any $t=0,1$. This implies
		\begin{align*}
			\mathrm{Var}[\WIPW(s)-\E[Y_{uN}(s)]]=O(N^{-1/2}),
		\end{align*}
		and thus 
		\begin{align*}
			\WIPW(s)-\E[Y_{uN}(s)]=O_p(N^{-1/2}).
		\end{align*}

		\item \textbf{Under Assumption \ref{assu:adaptive_weighting}:} We first can show that 
		\begin{align}
		W_N(s)\equiv \sum_{t=1}^2\sum_{u=1}^{N_t}h_{N}^{(t)}(s)
		&\nonumber
		= \sum_{t=1}^{2} N_t h_N^{(t)}(s)\\
		&\nonumber
		=\frac{1}{2}(N^{1/2}e_N^{1/2}(s,\mathcal{H}_0)+N^{1/2}e_N^{1/2}(s,\mathcal{H}_1))\\
		&\nonumber
		\geq \frac{1}{2}\left(N^{1/2}l_N^{1/2}+N^{1/2}e^{1/2}(s)\right)\\
		&\label{eq:lower_bound_W_N_aw}
		\geq \frac{1}{2}N^{1/2}e^{1/2}(s).
	\end{align}
	Compute 
	\begin{align*}
		&
		\mathrm{Var}[\WIPW(s)-\E[Y_{uN}(s)]]\\
		&
		=\E\left[\frac{\left(\sum_{t=1}^2\sum_{u=1}^{N_t}h_N^{(t)}(s)\left(\frac{\indicator(A_{uN}^{(t)}=s)}{e_N(s,\mathcal{H}_{t-1})}Y_{uN}^{(t)}-\E[Y_{uN}(s)]\right)\right)^2}{W_N^2(s)}\right]\\
		&
		\leq \frac{4}{Ne(s)}\E\left[\left(\sum_{t=1}^2\sum_{u=1}^{N_t}h_N^{(t)}(s)\left(\frac{\indicator(A_{uN}^{(t)}=s)}{e_N(s,\mathcal{H}_{t-1})}Y_{uN}^{(t)}-\E[Y_{uN}(s)]\right)\right)^2\right]\\
		&
		=\frac{4}{N^2e(s)}\sum_{t=1}^2\sum_{u=1}^{N_t}\left(\E[Y_{uN}^2(s)]-e_N(s,\mathcal{H}_{t-1})\E[Y_{uN}(s)]^2\right)\\
		&
		\leq \frac{4\E[Y_{uN}^2(s)]}{Ne(s)}.
	\end{align*}
	Then it suffices to show 
	\begin{align*}
		|\E[\WIPW(s)]-\E[Y_{uN}(s)]|\rightarrow0.
	\end{align*}
	In fact, we can compute 
	\begin{align*}
		|\E[\WIPW(s)]-\E[Y_{uN}(s)]|
		&
		=\left|\E\left[\frac{h_N^{(1)}(s)\sum_{u=1}^{N_1}(\hat{\Lambda}_{uN}^{(1)}(s)-\E[Y_{uN}(s)])}{W_N(s)}\right]\right|\\
		&
		\leq\E\left[\frac{h_N^{(1)}(s)|\sum_{u=1}^{N_1}(\hat{\Lambda}_{uN}^{(1)}(s)-\E[Y_{uN}(s)])|}{W_N(s)}\right]\\
		&
		\leq \E\left[\frac{|\sum_{u=1}^{N_1}(\hat{\Lambda}_{uN}^{(1)}(s)-\E[Y_{uN}(s)])|}{N_1}\right]\\
		&
		\leq \sqrt{\E\left[\left(\frac{\sum_{u=1}^{N_1}(\hat{\Lambda}_{uN}^{(1)}(s)-\E[Y_{uN}(s)])}{N_1}\right)^2\right]}\\
		&
		=\sqrt{\frac{\E[Y_{uN}^2(s)]-e(s)(\E[Y_{uN}(s)])^2}{N_1}}\rightarrow0,
	\end{align*}
	where the second inequality is due to the lower bound \eqref{eq:lower_bound_W_N_aw} and the third inequality is due to Jensen's inequality. Then we know 
	\begin{align*}
		\WIPW(s)-\E[Y_{uN}(s)]=O_p(N^{-1/2}).
	\end{align*}
	\end{enumerate}

\end{proof}

\subsection{Proof of lemmas in Appendix \ref{sec:R_V_S_V_proof}}\label{sec:proof_E_3}

\subsubsection{Proof of Lemma \ref{lem:sufficient_condition_R_V_convergence}}

\begin{proof}[Proof of Lemma \ref{lem:sufficient_condition_R_V_convergence}]
	The idea is to decompose $R_V^{(t)}(s)$ into different pieces and show the convergence rate of each piece. Recall the definition of $W_N^{(t)}(s)$ as in Appendix \ref{sec:proof_R_V_convergence}. Then consider
	\begin{align*}
		R_{V}^{(t)}(s)
		&
		= \frac{\sum_{u=1}^{N_t}(\hat{\Lambda}_{uN}^{(t)}(s)-\WIPW(s))^2}{\sum_{u=1}^{N_t}(\hat{\Lambda}_{uN}^{(t)}(s)-\E[Y_{uN}(s)])^2}\\
		&
		=\frac{e_N(s,\mathcal{H}_{t-1})}{N_tW_N^{(t)}(s)}\sum_{u=1}^{N_t}\left(\hat{\Lambda}_{uN}^{(t)}(s)-\E[Y_{uN}(s)]+\E[Y_{uN}(s)]-\WIPW(s)\right)^2\\
		&
		= 1+\frac{2\left(\E[Y_{uN}(s)]-\WIPW(s)\right)}{W_N^{(t)}(s)}\frac{e_N(s,\mathcal{H}_{t-1})}{N_t}\sum_{u=1}^{N_t}\left(\hat{\Lambda}_{uN}^{(t)}(s)-\E[Y_{uN}(s)]\right)\\
		&
		\qquad +\frac{e_N(s,\mathcal{H}_{t-1})}{W_N^{(t)}(s)}(\E[Y_{uN}(s)]-\WIPW(s))^2.
	\end{align*}
	By the assumption, it suffices to derive the rate for 
	\begin{align}\label{eq:R_V_convergence_step3}
		\frac{e_N(s,\mathcal{H}_{t-1})}{N_t}\sum_{u=1}^{N_t}(\hat{\Lambda}_{uN}^{(t)}(s)-\E[Y_{uN}(s)])=O_p(N^{-1/2}).
	\end{align}
	The intuition behind the validity of \eqref{eq:R_V_convergence_step3} is the summand is mean zero and thus we only need to show the variance of the summand converges to $0$. To this end, we can compute 
	\begin{align*}
		&
		\mathrm{Var}\left[e_N(s,\mathcal{H}_{t-1})\frac{1}{N_t}\sum_{u=1}^{N_t}(\hat{\Lambda}_{uN,s}^{(t)}-\E[Y_{uN}(s)])\right]\\
		&
		=\E\left[\mathrm{Var}\left[\frac{1}{N_t}e_N(s,\mathcal{H}_{t-1})\sum_{u=1}^{N_t}\left(\hat{\Lambda}_{uN}^{(t)}(s)-\E[Y_{uN}(s)]\right)|\mathcal{H}_{t-1}\right]\right]\\
		&
		=\E\left[\frac{e_N^2(s,\mathcal{H}_{t-1})}{N_t^2}\sum_{u=1}^{N_t}\E\left[\left(\hat{\Lambda}_{uN}^{(t)}(s)-\E[Y_{uN}(s)]\right)^2|\mathcal{H}_{t-1}\right]\right]\\
		&
		=\frac{1}{N_t}\E\left[e_N(s,\mathcal{H}_{t-1})\left(\E\left[Y_{uN}^2(s)\right]-e_N(s,\mathcal{H}_{t-1}) \E[Y_{uN}(s)]^2\right)\right]\leq \frac{1}{N_t}\E[Y_{uN}^2(s)]=O(N^{-1/2}).
	\end{align*}
	Thus we proved \eqref{eq:R_V_convergence_step3} by Markov's inequality.
\end{proof}

\subsubsection{Proof of Lemma \ref{lem:weak_law_W_N}}

\begin{proof}[Proof of Lemma \ref{lem:weak_law_W_N}]
	We use Makrov's inequality to prove the claim. To see this, we can compute 
	\begin{align*}
		&
		\E\left[(W_N^{(t)}(s)-(\E[Y_{uN}^2(s)]-e_N(s,\mathcal{H}_{t-1})(\E[Y_{uN}(s)])^2))^2|\mathcal{H}_{t-1}\right]\\
		&
		\leq\frac{1}{N_t}\E\left[e_N^2(s,\mathcal{H}_{t-1})(\hat{\Lambda}_{uN}^{(t)}(s)-\E[Y_{uN}(s)])^4|\mathcal{H}_{t-1}\right]\\
		&
		\leq \frac{8\E[Y_{uN}^4(s)]}{N_te_N(s,\mathcal{H}_{t-1})}+8e_N^2(s,\mathcal{H}_{t-1})\frac{(\E[Y_{uN}(s)])^4}{N_t}.
	\end{align*}
	Since $Ne_N(s,\mathcal{H}_{t-1})=\Omega_p (Nl_N)$ under Assumption \ref{assu:adaptive_weighting} or $Ne_N(s,\mathcal{H}_{t-1})=\Omega_p (N)$ under Assumption~\ref{assu:constant_weighting} and Assumption \ref{assu:moment_condition} guarantees that $\E[Y_{uN}(s)],\E[Y_{uN}^4(s)]$ are uniformly bounded, then we know $\frac{(\E[Y_{uN}(s)])^4}{N}=O(N^{-1})$ and 
	\begin{align*}
		\frac{\E[Y_{uN}^4(s)]}{Ne_N(s,\mathcal{H}_{t-1})}=	
		\begin{cases}
		    O_p(N^{-1/2}l_N^{-1/2}). & \text{under Assumption \ref{assu:adaptive_weighting}},\\
		    O_p(N^{-1/2}).& \text{under Assumption \ref{assu:constant_weighting}}.
		\end{cases}
	\end{align*}
	Therefore, by Markov's inequality, we have
	\begin{align*}
		&
		W_N^{(t)}(s)-\E[e_N(s,\mathcal{H}_{t-1})(\hat{\Lambda}_{uN}^{(t)}(s)-\E[Y_{uN}(s)])^2|\mathcal{H}_{t-1}]\\
		&
		=	
		\begin{cases}
		    O_p(N^{-1/2}l_N^{-1/2}). & \text{under Assumption \ref{assu:adaptive_weighting}},\\
		    O_p(N^{-1/2}).& \text{under Assumption \ref{assu:constant_weighting}},
	\end{cases}
	\end{align*}
\end{proof}

\subsection{Proof of lemmas in Appendix \ref{sec:proof_step_2}}\label{sec:proof_E_4}

\subsubsection{Proof of Lemma \ref{lem:as_convergence_sampling_function}}

\begin{proof}[Proof of Lemma \ref{lem:as_convergence_sampling_function}]
	We notice by the definition of $h(\cdot,c)$ in \eqref{eq:def_h_c_function}, and the definition of $W_1$ in definition \eqref{eq:mathbb_A_definition}, we know $h_N(M_N)\rightarrow h(W_1,c),\text{ almost surely}$, using continuous mapping theorem, for any $c\in [-\infty,0]$. Based on the Assumption \ref{assu:sampling_design}, we divide the proof into two cases.

	\begin{enumerate}
		\item \textbf{When $e(s,x)$ is Lipschitz continuous on $x$.} 
		We note by Lemma \ref{lem:continuous_map_varying} that 
		\begin{align*}
			|e(s,h_N(M_N))-e(s,h(W_1,c))|\overset{a.s.}{\rightarrow}0
		\end{align*}
		is true. Moreover, if a nonnegative function $f$ is Lipschitz continuous and the range is in $[0,1]$, then $f^{1/2}$ is uniformly continuous. This is because $\sqrt{x}$ is a uniformly continuous function in the compact support $[0,1]$. Thus we apply Lemma \ref{lem:continuous_map_varying} again with $f=e(s,x)$ to get
		\begin{align*}
			|e^{1/2}(s,h_N(M_N))-e^{1/2}(s,h(W_1,c))|\overset{a.s.}{\rightarrow}0.
		\end{align*}
		
		\item \textbf{When $e(s,x)$ takes the form $\sum_{k=1}^K c_k \indicator(g(x)\in C_k)$.}
		For both functions $e^{1/2}(s,x)$ and $e(s,x)$, we only need to prove that 
		\begin{align*}
			\indicator(g(h_N(M_N))\in C_k)-\indicator(g(h(W_1,c))\in C_k)\overset{a.s.}{\rightarrow}0,\ \forall k\in[K]
		\end{align*}
		is true. Notice when $c=-\infty$, we know by Assumption \ref{assu:sampling_design} that $g(-\infty)=-\infty\in C_1$. Then we know  
		\begin{align*}
			\indicator(g(h_N(M_N))\in C_1)-\indicator(g(h(W_1,-\infty))\in C_1)=\indicator(g(h_N(M_N))\in C_1)-1\overset{a.s.}{\rightarrow}0.
		\end{align*}
		When $c\in (-\infty,0]$, we know $g(h(W_1,c))$ is a continuous random variable. Indeed, $h(W_1,c)$ is a continuous random variable and $g$ is a continuous function. This means $\P[g(h(W_1,c))\in \partial C_k]=0$ since $\partial C_k$ is of Lebesgue measure zero by the definition of $C_k$ in Assumption \ref{assu:sampling_design}. Then by Lemma \ref{lem:continuous_mapping_lem}, we know 
		\begin{align*}
			\indicator(g(h_N(M_N))\in C_k)-\indicator(g(h(W_1,c))\in C_k)\overset{a.s.}{\rightarrow}0.
		\end{align*}
	\end{enumerate}

	This completes the proof.
\end{proof}

\subsubsection{Proof of Lemma \ref{lem:weak_convergence_E_N_1}}

\begin{proof}[Proof of Lemma \ref{lem:weak_convergence_E_N_1}]
	Recall the expression of $E_N^{(1)}$. We have 
	\begin{align*}
		E_{N}^{(1)}\equiv \left((\bm \Sigma_N^{(1)})^{-1/2}\Lambda_{N}^{(1)}, V_N^{(1)},H_N^{(1)},\mathrm{Vec}((\bm \Sigma_N^{(1)})^{1/2})\right).
	\end{align*}
	The proof can be decomposed to two steps. 
	\begin{enumerate}
		\item We first prove that  
		\begin{align}\label{eq:E_N_1_weak_convergence_step1}
			\|H_N^{(1)}-H^{(1)}\|_2=o(1),\|V_N^{(1)}-V^{(1)}\|_2=o(1),\|(\bm \Sigma_N^{(1)})^{1/2}-(\bm \Sigma^{(1)})^{1/2}\|_{\mathrm{F}}=o(1).
		\end{align}
		\item Then we prove 
		\begin{align}\label{eq:E_N_1_weak_convergence_step2}
			(\bm \Sigma_N^{(1)})^{-1/2}\Lambda_{N}^{(1)}\convd Z,\ Z\sim N(\bm 0,\bm I_2).
		\end{align}
	\end{enumerate}

	\paragraph{Proof of \eqref{eq:E_N_1_weak_convergence_step1}:}

	The convergence of $H_N^{(1)}$ and $V_N^{(1)}$ are obvious. For $\bm \Sigma_N^{(1)}$, we use Lemma \ref{lem:holder_continuity_Frobenius} so that it suffices to prove 
	\begin{align*}
		\|\bm \Sigma_N^{(1)}-\bm \Sigma^{(1)}\|_{\mathrm{F}}=\sqrt{2}|\mathrm{Cov}_N^{(1)}-\mathrm{Cov}^{(1)}|=o(1).
	\end{align*}
	To this end, recall the definition of $\mathrm{Cov}_N^{(1)}$ as in \eqref{eq:sigma_N_E_1n_def},
	\begin{align*}
		\mathrm{Cov}_N^{(1)}= \frac{-(H_N^{(1)}(0)H_N^{(1)}(1))^{1/2}}{(V_{N}^{(1)}(0))^{1/2}(V_{N}^{(1)}(1))^{1/2}}\E[Y_{uN}(0)]\E[Y_{uN}(1)].
	\end{align*}
	Since $\|V_{N}^{(1)}-V^{(1)}\|_2,\|H_N^{(1)}-H^{(1)}\|_2=o(1)$, and 
	\begin{align*}
		0<\liminf_{N\rightarrow\infty}V_N^{(1)}(s)\leq \limsup_{N\rightarrow\infty}V_N^{(1)}(s)<\infty
	\end{align*}
	as proved in Lemma \ref{lem:uniform_lower_upper_bound_variance}, we know $|\mathrm{Cov}_N^{(1)}-\mathrm{Cov}^{(1)}|=o(1)$. The completes the proof for \eqref{eq:E_N_1_weak_convergence_step1}. 

	\paragraph{Proof of \eqref{eq:E_N_1_weak_convergence_step2}:}

	This can be proved easily by applying Lemma \ref{lem:CLT_BL}. We omit the proof. 

\end{proof}

\section{Proof of Theorem \ref{thm:smooth_transition}}\label{sec:proof_smooth_transition}

Given two random variables \(X\) and \(Y\), the \(1\)-Wasserstein distance is defined as 
\[
d_{W_1}(X, Y) \equiv \sup_{\|f\|_{\mathrm{L}} \leq 1} \big|\E[f(X)] - \E[f(Y)]\big|.
\]

\subsection{Proof preparation for Theorem \ref{thm:smooth_transition}}

We will only prove the result for $\mathbb{W}_{\mathcal{U}}^{\mathcal{C}}(c)$ since the other proofs are very similar. We will drop the subscript in $\bar w_{\mathcal{C}}^{(t)}(s)$ and just write $w^{(t)}(s)$. Similarly, we use $\mathbb{W}(c)$ to denote $\mathbb{W}_{\mathcal{U}}^{\mathcal{C}}(c)$. Recall the definition of $\mathbb{W}(c)$ as 
\begin{align*}
	\mathbb{W}(c)=\sum_{t=1}^2 A^{(t)}(0) w^{(t)}(0) -\sum_{t=1}^2 A^{(t)}(1) w^{(t)}(1),\ w^{(t)}(s)=\frac{(2V^{(t)}(s))^{1/2}}{2\sqrt{H^{(t)}(s)}}.
\end{align*}
To further ease the burden of notation, we define $V_p(s)\equiv \lim_{N\rightarrow\infty}\E[Y_{uN}^p(s)]$. Then we can rewrite $V^{(2)}(s)\equiv V_2(s)-H^{(2)}(s)V_1^2(s)$, where 
\begin{align*}
	H^{(2)}(s)=\min\{1-\bar l,\max\{\bar l,e(s,\mathcal{S}^\infty((A^{(1)},V^{(1)}),c))\}\}.
\end{align*}
We will use $V^{(2)}(s,c),H^{(2)}(s,c)$ to denote $V^{(2)}(s)$ and $H^{(2)}(s)$ stress the dependence on $c$. In particular, we can write 
\small
\begin{align*}
	H^{(2)}(s,-\infty)\equiv \min\{1-\bar l,\max\{\bar l,e(s,-\infty)\}\}\quad\text{and}\quad V^{(2)}(s,-\infty)\equiv V_2(s)-H^{(2)}(s,-\infty)V_1^2(s).
\end{align*}
\normalsize
Similarly, we can define 
\begin{align*}
	\mathrm{Cov}^{(2)}(-\infty)=-(H^{(2)}(0,-\infty)H^{(2)}(1,-\infty))^{1/2}/(V^{(2)}(0,-\infty)V^{(2)}(1,-\infty))^{1/2}V_1(0)V_1(1).
\end{align*}
Then with $V^{(2)}(s,-\infty),H^{(2)}(s,-\infty)$, we can define the corresponding weight $w_{-\infty}^{(2)}(s)\equiv (V^{(2)}(s,-\infty))^{1/2}/(2H^{(2)}(s,-\infty))^{1/2}$. Moreover, we define
\begin{align*}
	(A_{-\infty}^{(2)}(0),A_{-\infty}^{(2)}(1))^\top \sim N(\bm 0,\bm \Sigma^{(2)}_{-\infty})\quad\text{where}\quad \bm \Sigma^{(2)}_{-\infty}\equiv (\mathrm{Cov}^{(2)}(-\infty))_{2\times 2}.
\end{align*}
Finally, define $S_{2,s}\equiv A_{-\infty}^{(2)}(s) w_{-\infty}^{(2)}(s)$.

\subsection{Proof of Theorem~\ref{thm:smooth_transition}}

\begin{proof}[Proof of Theorem \ref{thm:smooth_transition}]
	We rewrite the random variable $\mathbb{W}(-\infty)$ as 
	\begin{align*}
		\mathbb{W}(-\infty)= A^{(1)}(0) w^{(1)}(0)-A^{(1)}(1)  w^{(1)}(1)+ S_{2,0} - S_{2,1}.
	\end{align*}
	Notice that $\mathbb{W}(-\infty)$ is a Gaussian random variable with mean zero since $S_{2,0}-S_{2,1}$ is independent with $A^{(1)}(0) w^{(1)}(0)-A^{(1)}(1) w^{(1)}(1)$. The proof will be divded into three steps:
	\begin{itemize}
		\item We first decompose the desired $d_{W_1}(\mathbb{W}(-\infty),\mathbb{W}(c))$ distance into different pieces and bound different pieces by $d_{W_1}$ distances; 
		\item We bound the $d_{W_1}$ distance by $| w^{(2)}(s)- w_{-\infty}^{(2)}(s)|$ and further obtain a bound $| w^{(2)}(s)- w_{-\infty}^{(2)}(s)|$, which just involves $|e(s, \mathcal{S}^\infty((A^{(1)},V^{(1)}),c))-e(s, -\infty)|$; 
		\item We collect all the results to prove the claim.
	\end{itemize}

	\paragraph{Decomposition of $d_{W_1}(\mathbb{W}(-\infty),\mathbb{W}(c))$.}

	Now we first decompose the KS distance into two parts, using the triangle inequality:
	\begin{align*}
		d_{W_1}(\mathbb{W}(-\infty),\mathbb{W}(c))\leq d_{W_1}(A^{(2)}(0) w^{(2)}(0),S_{2,0})+d_{W_1}(A^{(2)}(1) w^{(2)}(1),S_{2,1})\equiv K_0+K_1.
	\end{align*}
	By triangle inequality, we have for $s\in\{0,1\}$,
	\begin{align*}
		K_s\leq d_{W_1}(A^{(2)}(s)w^{(2)}(s),A^{(2)}(s) w_{-\infty}^{(2)}(s))+d_{W_1}(A^{(2)}(s) w_{-\infty}^{(2)}(s),S_{2,s}).
	\end{align*}
	In fact, $d_{W_1}(A^{(2)}(s)w_{-\infty}^{(2)}(s),S_{2,s})=0$ since $w_{-\infty}^{(2)}(s)$ is a constant and $A^{(2)}(s), A_{-\infty}^{(2)}(s)$ have the same distribution. It suffices to study $d_{W_1}(A^{(2)}(s)w^{(2)}(s),A^{(2)}(s)w_{-\infty}^{(2)}(s))$. 

	\paragraph{Bounding $d_{W_1}(A^{(2)}(s)w^{(2)}(s),A^{(2)}(s)w_{-\infty}^{(2)}(s))$.}

	We compute
	\begin{align*}
		&
		d_{W_1}(A^{(2)}(s) w^{(2)}(s),A^{(2)}(s) w_{-\infty}^{(2)}(s))\\
		&
		=\sup_{\|f\|_{\mathrm{L}}\leq 1}\left|\E\left[f(A^{(2)}(s)w^{(2)}(s))\right]-\E\left[f(A^{(2)}(s) w_{-\infty}^{(2)}(s))\right]\right| \\
		&
		\leq \E\left[|A^{(2)}(s) w^{(2)}(s)-A^{(2)}(s) w_{-\infty}^{(2)}(s)|\right]\\
		&
		\leq \sqrt{\E[|A^{(2)}(s)|^2]\E[| w^{(2)}(s)-w_{-\infty}^{(2)}(s)|^2]}=\sqrt{\E[|w^{(2)}(s)- w_{-\infty}^{(2)}(s)|^2]}.
	\end{align*}
	Define 
	\begin{align*}
		D(c,-\infty)\equiv (2V^{(2)}(s,c)H^{(2)}(s,-\infty))^{1/2}+(2V^{(2)}(s,-\infty)H^{(2)}(s,c))^{1/2}.
	\end{align*}
	Notice that when $N$ is large,
	\begin{align*}
		D(c,-\infty)\geq (2V^{(2)}(s,c)H^{(2)}(s,-\infty))^{1/2}\geq  (2\liminf_{N\rightarrow\infty}\mathrm{Var}[Y_{uN}(s)]\bar l)^{1/2}.
	\end{align*}
	Then we can bound
	\small
	\begin{align}
		\left| w^{(2)}(s)- w_{-\infty}^{(2)}(s)\right|
		&\nonumber
		=\left|\frac{(2V^{(2)}(s,c)H^{(2)}(s,-\infty))^{1/2}-(2V^{(2)}(s,-\infty)H^{(2)}(s,c))^{1/2}}{2(H^{(2)}(s,c)H^{(2)}(s,-\infty))^{1/2}}\right|\\
		&\label{eq:upper_bound_diff_bar_w}
		= \left|\frac{V^{(2)}(s,c)H^{(2)}(s,-\infty)-V^{(2)}(s,-\infty)H^{(2)}(s,c)}{(H^{(2)}(s,c)H^{(2)}(s,-\infty))^{1/2}}\right|\frac{1}{D(c,-\infty)}\\
		&\nonumber
		\lesssim \left|\frac{V^{(2)}(s,c)H^{(2)}(s,-\infty)-V^{(2)}(s,-\infty)H^{(2)}(s,c)}{(H^{(2)}(s,c)H^{(2)}(s,-\infty))^{1/2}}\right|.
	\end{align}
	\normalsize
	Next, notice that $\min\{H^{(2)}(s,-\infty),H^{(2)}(s,c)\}\geq \bar l$, so that we can futher bound 
	\small
	\begin{align*}
		\left| w^{(2)}(s)- w_{-\infty}^{(2)}(s)\right|
		&
		\lesssim\left|V^{(2)}(s,c)H^{(2)}(s,-\infty)-V^{(2)}(s,-\infty)H^{(2)}(s,c)\right|\\
		&
		\lesssim V^{(2)}(s,c)|H^{(2)}(s,-\infty)-H^{(2)}(s,c)|+|V^{(2)}(s,c)-V^{(2)}(s,-\infty)|H^{(2)}(s,c).
	\end{align*}
	\normalsize
	We now further bound the RHS by the quantity invovling just $|e(s, \mathcal{S}^\infty((A^{(1)},V^{(1)}),c))-e(s, -\infty)|$. Then we show respectively that 
	\begin{enumerate}
		\item $|H^{(2)}(s,c)-H^{(2)}(s,-\infty)|\leq |e(s, \mathcal{S}^\infty((A^{(1)},V^{(1)}),c))-e(s, -\infty)|$;
		\item $|V^{(2)}(s,c)-V^{(2)}(s,-\infty)|\leq V_1^2(s)|e(s, \mathcal{S}^\infty((A^{(1)},V^{(1)}),c))-e(s, -\infty)|$.
	\end{enumerate}
	For $H^{(2)}(s,c)-H^{(2)}(s,-\infty)$, the claim is true by the Lipschitz property of $\min\{1-l_N,\max\{l_N,x\}\}$. For $V^{(2)}(s,c)-V^{(2)}(s,-\infty)$, we can compute 
	\begin{align*}
		|V^{(2)}(s,c)-V^{(2)}(s,-\infty)|
		&
		=V_1^2(s)|H^{(2)}(s,c)-H^{(2)}(s,-\infty)|\\
		&
		\leq V_1^2(s)|e(s, \mathcal{S}^\infty((A^{(1)},V^{(1)}),c))-e(s, -\infty)|.
	\end{align*}
	By Lemma \ref{lem:uniform_lower_upper_bound_variance}, we know $V_2(s)-V_1^2(s)\lesssim V^{(2)}(s,c)\lesssim V_2(s)$. By the definition of $H^{(2)}(s,c)$ we know $H^{(2)}(s,c)\in(0,1)$. Then combining bound \eqref{eq:upper_bound_diff_bar_w}, we have 
	\begin{align*}
		\left|w^{(2)}(s)-w_{-\infty}^{(2)}(s)\right|\lesssim (V_2(s)+V_1^2(s))|e(s, \mathcal{S}^\infty((A^{(1)},V^{(1)}),c))-e(s, -\infty)|.
	\end{align*}
	Therefore, we obtain the bound 
	\begin{align*}
		d_{W_1}(A^{(2)}(s) w^{(2)}(s),A^{(2)}(s) w_{-\infty}^{(2)}(s))
		&
		\leq \sqrt{\E[|w^{(2)}(s)- w_{-\infty}^{(2)}(s)|^2]}\\
		&
		\lesssim \left(\E[|e(s, \mathcal{S}^\infty((A^{(1)},V^{(1)}),c))-e(s, -\infty)|^2]\right)^{1/2}.
	\end{align*}

	\paragraph{Concluding the proof:}

	Therefore we can bound 
	\begin{align*}
		K_s\leq d_{W_1}(A^{(2)}(s) w^{(2)}(s),A^{(2)}(s) w_{-\infty}^{(2)}(s))\lesssim \left(\E[|e(s, \mathcal{S}^\infty((A^{(1)},V^{(1)}),c))-e(s, -\infty)|^2]\right)^{1/2}.
	\end{align*}
	Thus we conclude 
	\begin{align*}
		d_{W_1}(\mathbb{W}(-\infty),\mathbb{W}(c))\leq C\sum_{s=0,1}\left(\E[|e(s, \mathcal{S}^\infty((A^{(1)},V^{(1)}),c))-e(s, -\infty)|^2]\right)^{1/2}.
	\end{align*}
	Now we prove the convergence of $\E[|e(s, \mathcal{S}^\infty((A^{(1)},V^{(1)}),c))-e(s, -\infty)|^2]$ as $c\rightarrow-\infty$. This is true by dominated convergence theorem since $e(s, \mathcal{S}^\infty((A^{(1)},V^{(1)}),c))-e(s, -\infty)\rightarrow0$ by Assumption \ref{assu:sampling_design}.
\end{proof}

\section{Proof of Theorem \ref{thm:bootstrap}}\label{sec:proof_bootstrap}

\subsection{Necessary definitions}

\paragraph{Random vectors.} We define the random vector 	
\begin{align*}
	W^{(1,b)}\equiv \left(S_1^{(b)}, \hat V^{(1)}, H^{(1)},\mathrm{Vec}((\hat{\bm\Sigma}^{(1)})^{1/2}), (H^{(1)})^{1/2}\right)
\end{align*}
where $H^{(1)}=(H^{(1)}(0),H^{(1)}(1))$ is defined as in \eqref{eq:H_N_def}, and 
\begin{align*}
	W^{(2,b)}\equiv \left(S_2^{(b)}, \hat V^{(2,b)}, \hat H^{(2,b)},\mathrm{Vec}((\hat{\bm\Sigma}^{(2,b)})^{1/2}), (\hat H^{(2,b)})^{1/2}\right)
\end{align*}	
where $\hat H^{(2,b)}=(\hat H^{(2,b)}(0),\hat H^{(2,b)}(1))$ and $\hat{V}^{(1)}\equiv (\hat{V}^{(1)}(0),\hat{V}^{(1)}(1))$. 

\paragraph{Random functions.} For $x\in\mathbb{R}^{12}$, recalling the definiton of function $h$ in \eqref{eq:def_h_c_function}, the weight function is defined as
\begin{align*}
	\hat{H}^{(2,b)}(s,x)\equiv 
	\begin{cases}
		e(s, h(x,0)) & \text{under Assumption \ref{assu:adaptive_weighting}},\\
		\min\{1-\bar l,\max\{\bar l,e(s, h(x, 0))\}\}& \text{under Assumption \ref{assu:constant_weighting}},
	\end{cases}
\end{align*}
and the variance function is defined as $\hat{V}^{(2,b)}(s,x)\equiv \hat{\E}[Y_{uN}^2(s)]-\hat{H}^{(2,b)}(s,x)(\hat{\E}[Y_{uN}(s)])^2$. By slightly abusing the notation, we define $\hat{H}^{(2,b)}(x)\equiv (\hat{H}^{(2,b)}(0,x),\hat{H}^{(2,b)}(1,x))$, the random variance vector function $\hat{V}^{(2,b)}(x)\equiv (\hat{V}^{(2,b)}(0,x),\hat{V}^{(2,b)}(1,x))$ and the covariance function 
\begin{align*}
	\hat{\mathrm{Cov}}^{(2,b)}(x)\equiv -\frac{(\hat{H}^{(2,b)}(0,x)\hat{H}^{(2,b)}(1,x))^{1/2}}{(\hat{V}^{(2,b)}(0,x)\hat{V}^{(2,b)}(1,x))^{1/2}}\hat{\E}[Y_{uN}(0)]\hat{\E}[Y_{uN}(1)]
\end{align*}
and the covariance matrix function $\hat{\bm \Sigma}^{(2,b)}(x)\equiv (\hat{\mathrm{Cov}}^{(2,b)}(x))_{2\times 2}$. Last, define the function 
\begin{align*}
	W^{(2,b)}(x)
	&
	\equiv \left(S_2^{(b)},\hat{V}^{(2,b)}(x),\hat{H}^{(2,b)}(x),\mathrm{Vec}((\hat{\bm \Sigma}^{(2,b)}(x))^{1/2}),(\hat H^{(2,b)}(x))^{1/2}\right)\\
	W^{(a,b)}(x)
	&
	\equiv \left(Z_2,\hat{V}^{(2,b)}(x),\hat{H}^{(2,b)}(x),\mathrm{Vec}((\hat{\bm \Sigma}^{(2,b)}(x))^{1/2}),(\hat H^{(2,b)}(x))^{1/2}\right),
\end{align*}
where $Z_2$ is defined as in \eqref{eq:mathbb_A_definition}.

\edit{
\subsection{Proof of Theorem \ref{thm:bootstrap}}
}

\begin{proof}[Proof of Theorem \ref{thm:bootstrap}]
	Recall $\mathbb{W}$ as defined in \eqref{eq:mathbb_A_definition}. We use Lemma \ref{lem:sufficient_condition_CMT} to prove the $o_p(1)$ part, with $W_N=(W^{(1,b)},W^{(2,b)})$ and $W=\mathbb{W}$. The continuous function $g$ is chosen to be as in \textbf{Step 3} in Section \ref{sec:proof_roadmap}. Thus it suffices to prove the following statement is true: for any $f$ satisfying condition~\eqref{eq:choice_of_f},
	\begin{align*}
		\E[f(W^{(1,b)},W^{(2,b)})|\mathcal{G}_N]-\E[f(\mathbb{W})]\convp 0.
	\end{align*}
	Under Assumption~\ref{assu:sampling_design}(1), the following statement can be proved under more delicate analyses:
	\begin{align}\label{eq:convergence-bootstrap-Lipschitz-function}
		\E[f(W^{(1,b)},W^{(2,b)})|\mathcal{G}_N]-\E[f(\mathbb{W})]=	O_p(N^{-1/2}).
	\end{align}
	Consider the following decomposition: 
	\begin{align*}
		&
		\E[f(W^{(1,b)},W^{(2,b)})|\mathcal{G}_N]-\E[f(\mathbb{W})]\\
		&
		=\E[f(W^{(1,b)},W^{(2,b)}(W^{(1,b)}))|\mathcal{G}_N]-\E[f(W_1,W_2)]\\
		&
		=\E[f(W_1,W^{(a,b)}(W_1))|\mathcal{G}_N]-\E[f(W_1,W_2)]\\
		&
		\qquad+\E[f(W^{(1,b)},W^{(a,b)}(W^{(1,b)}))|\mathcal{G}_N]-\E[f(W_1,W^{(a,b)}(W_1))|\mathcal{G}_N]\\
		&
		\qquad + \E[f(W^{(1,b)},W^{(2,b)}(W^{(1,b)}))|\mathcal{G}_N]-\E[f(W^{(1,b)},W^{(a,b)}(W^{(1,b)}))|\mathcal{G}_N]\\
		&
		\equiv M_1+M_2+M_3.
	\end{align*}
	It is easy to see that $W^{(2,b)}|W^{(1,b)},\mathcal{G}_N\overset{d}{=}W^{(a,b)}|W^{(1,b)},\mathcal{G}_N$ so we know the following statement is true almost surely:
	\begin{align*}
		\E[f(W^{(1,b)},W^{(2,b)}(W^{(1,b)}))|\mathcal{G}_N]-\E[f(W^{(1,b)},W^{(a,b)}(W^{(1,b)}))|\mathcal{G}_N]=0.
	\end{align*}
	Thus $M_3=0$ and we will prove $M_1,M_2=o_p(1)$ and derive their rates under Assumption~\ref{assu:sampling_design}(1). 
	\begin{enumerate}
		\item \textbf{Proof of $M_1=O_p(N^{-1/2})$.} Notice $(W_1,W_2)\indep \mathcal{G}_N$. Then we have
		\begin{align*}
			M_1=\left|\E[f(W_1,W^{(a,b)}(W_1))|\mathcal{G}_N]-\E[f(W_1,W_2)|\mathcal{G}_N]\right|.
		\end{align*}
		Then by the Lipschitz property and boundedness of $f$, we can bound $M_1\lesssim \E[\left\|W^{(a,b)}(W_1)-W_2\right\|_2]$. In other words, we need to bound, by Lemma \ref{lem:holder_continuity_Frobenius},
		\begin{align*}
			\E[\|\hat{V}^{(2,b)}(W_1)-V^{(2)}\|_2],\ \E[\|\hat{H}^{(2,b)}(W_1)-H^{(2)}\|_2],\ \E[\|\hat{\bm \Sigma}^{(2,b)}-\bm\Sigma^{(2)}\|_{\mathrm{F}}].
		\end{align*}
		Also notice $\|\hat{\bm \Sigma}^{(2,b)}-\bm\Sigma^{(2)}\|_{\mathrm{F}}=\sqrt{2}|\hat{\mathrm{Cov}}^{(2,b)}-\mathrm{Cov}^{(2)}|$. By observing that $\hat{H}^{(2,b)}(W_1)=H^{(2)}$, we know $M_1=O_p(N^{-1/2})$ by the rate of convergence $\WIPW(s)$ and $\WIPWS(s)$, proved in Lemma~\ref{lem:consistency_WIPW}.
		
		\item \textbf{Proof for $M_2$.} Define $W^{(c,b)}\equiv (S_1^{(b)}, V^{(1)}, H^{(1)},\mathrm{Vec}(({\bm\Sigma}^{(1)})^{1/2}),(H^{(1)})^{1/2})$. Since
		\begin{align*}
			(W^{(c,b)},W^{(a,b)}(W^{(c,b)}))|\mathcal{G}_N\overset{d}{=}(W_1,W^{(a,b)}(W_1))|\mathcal{G}_N,
		\end{align*}
		it suffices to work with
		\begin{align*}
			M_2=\left|\E[f(W^{(1,b)},W^{(a,b)}(W^{(1,b)}))|\mathcal{G}_N]-\E[f(W^{(c,b)},W^{(a,b)}(W^{(c,b)}))|\mathcal{G}_N]\right|.
		\end{align*}
		By the Lipschitz property and boundedness of $f$ and Lemma~\ref{lem:holder_continuity_Frobenius}, we can bound  
		\begin{align*}
			M_2
			&
			\lesssim \|W^{(1,b)}-W^{(c,b)}\|_2+\|\hat{V}^{(2,b)}(W^{(1,b)})-\hat{V}^{(2,b)}(W^{(c,b)})\|_2\\
			&
			\qquad+\|\hat{H}^{(2,b)}(W^{(1,b)})-\hat{H}^{(2,b)}(W^{(c,b)})\|_2+\|\hat{\bm \Sigma}^{(2,b)}(W^{(1,b)})-\hat{\bm \Sigma}^{(2,b)}(W^{(c,b)})\|_{\mathrm{F}}.
		\end{align*}
		By the consistency of $\hat{\mathbb E}[Y_{uN}(s)]$ and $\hat{\mathbb E}[Y^2_{uN}(s)]$ proved in Lemma \ref{lem:consistency_WIPW}, we know 
		\begin{align}\label{eq:upper-bound-M-2}
			M_2
			&
			\lesssim \|W^{(1,b)}-W^{(c,b)}\|_2+\frac{1}{N^{1/2}}+\|\hat{H}^{(2,b)}(W^{(1,b)})-\hat{H}^{(2,b)}(W^{(c,b)})\|_2.
		\end{align}
		We consider these terms subsequently. 
		\paragraph{Proof of $\|W^{(1,b)}-W^{(c,b)}\|_2=O_p(N^{-1/2})$:}
		It suffices and it is easy to show
		\begin{align}\label{eq:convergence_V_Sigma_bootstrap}
			\|\hat{V}^{(1)}-V^{(1)}\|_2=O_p(N^{-1/2})\quad\text{and}\quad\|\hat{\bm\Sigma}^{(1)}-\bm\Sigma^{(1)}\|_{\mathrm{F}}=O_p(N^{-1/2}).
		\end{align}
		
		\paragraph{Proof for $\|\hat{H}^{(2,b)}(W^{(1,b)})-\hat{H}^{(2,b)}(W^{(c,b)})\|_2$}

		By the Lipschitz property of $\min\{1-\bar l,\max\{\bar l,x\}\}$, we get the bound: for any $s\in\{0,1\}$,
		\begin{align*}
			|e(s, h(W^{(1,b)},0))-e(s, \mathcal{S}^\infty(((\bm \Sigma^{(1)})^{1/2} S_1^{(b)},V^{(1)}),0))|\lesssim|e(s, h(W^{(1,b)},0))-e(s, h(W^{(c,b)},0))|.
		\end{align*}
		Depending on the smoothness of $e(s,x)$, we divide the proof into two cases based on Assumption \ref{assu:sampling_design}. 
		\begin{itemize}
			\item \textbf{When $e(s,x)$ is Lipschitz continuous in $x$:} We will bound $|h(W^{(1,b)},0)-h(W^{(c,b)},0)|$. By the definition of $h$ in \eqref{eq:def_h_c_function}, it suffices to bound
			\begin{align*}
				\|(\hat{\bm \Sigma}^{(1)})^{1/2} S_1^{(b)}-(\bm \Sigma^{(1)})^{1/2} S_1^{(b)}\|_2=O_p(N^{-1/2})\quad\text{and}\quad\|\hat V^{(1)}-V^{(1)}\|_2=O_p(N^{-1/2}).
			\end{align*}
			This is obvious by result \eqref{eq:convergence_V_Sigma_bootstrap} and 
			\begin{align*}
				\|(\hat{\bm \Sigma}^{(1)})^{1/2} S_1^{(b)}-(\bm \Sigma^{(1)})^{1/2} S_1^{(b)}\|_2
				&
				\leq \|S_1^{(b)}\|_2\|(\hat{\bm \Sigma}^{(1)})^{1/2}-(\bm \Sigma^{(1)})^{1/2}\|_2\\
				&
				\leq \|S_1^{(b)}\|_2\|(\hat{\bm \Sigma}^{(1)})^{1/2}-(\bm \Sigma^{(1)})^{1/2}\|_{\mathrm{F}}\\
				&
				\leq \|S_1^{(b)}\|_2\sqrt{2}\|\hat{\bm \Sigma}^{(1)}-\bm \Sigma^{(1)}\|_{\mathrm{F}}.
			\end{align*}
			Therefore, by result~\ref{eq:convergence_V_Sigma_bootstrap}, we know 
			\begin{align}\label{eq:convergence_S}
				|h(W^{(1,b)},0)-h(W^{(c,b)},0)|=O_p(N^{-1/2}).
			\end{align}
			so that 
			\begin{align}\label{eq:convergence-H-2-b}
				\|\hat{H}^{(2,b)}(W^{(1,b)})-\hat{H}^{(2,b)}(W^{(c,b)})\|_2=O_p(N^{-1/2}).
			\end{align}
		
			\item \textbf{When $e(s,x)=\sum_{k=1}^K c_k\indicator(g(x)\in C_k)$:} It suffices to prove for any $k\in [K]$,
			\begin{align*}
				|\indicator(g(h(W^{(1,b)},0))\in C_k)-\indicator(g(h(W^{(c,b)},0))\in C_k)|\convp 0.
			\end{align*}
			Then with result \eqref{eq:convergence_S}, applying Lemma \ref{lem:continuous_mapping_lem}, we know the claim is true since $g(h(W^{(c,b)},0))$ is a continuous random variable and $\P[g(h(W^{(c,b)},0))\in\partial C_k]=0$.
		\end{itemize}
	\end{enumerate}
	\paragraph{Conclusion for the Lipschitz case.} Under Assumption~\ref{assu:sampling_design}(1), by bound~\eqref{eq:upper-bound-M-2},\eqref{eq:convergence_V_Sigma_bootstrap} and~\eqref{eq:convergence_S}, we know $M_2=O_p(N^{-1/2})$. Thus, we have proved claim~\eqref{eq:convergence-bootstrap-Lipschitz-function}.

\end{proof}

\section{Extenstion}\label{sec:extension}

\subsection{Extension to $m=1$}\label{sec:extension_m_1}

In some adaptive experimental designs, unpromising treatments are dropped in the second stage—a strategy known as the ``drop-the-loser" approach \citep{sampson2005drop,sill2009drop}. In such cases, the sampling probability for one of the treatment arms is set to exactly zero in the second stage. 
Recall Assumptions \ref{assu:constant_weighting} and \ref{assu:adaptive_weighting}; neither allows the sampling probability of any treatment to be exactly zero in the follow-up stage. In this section, we further extend the results in Theorem \ref{thm:weak_convergence_W_N} to incorporate the weighting $m=1$ in $h_{N}^{(t)}(s)=e_N^{m}(s,\mathcal{H}_{t-1})/N^{1/2}$ considered in the general estimator \eqref{eq:WIPW_estimator}. In fact, we can write the resulting test statistic as
\begin{align}\label{eq:WIPW_m_1}
	\WIPW(s)=\frac{\sum_{t=1}^2 N_t \indicator(A_{uN}^{(t)}=s)Y_{uN}^{(t)}}{\sum_{t=1}^2N_te_N(s,\mathcal{H}_t)}.
\end{align}
Consider the following assumption and theorem.

\begin{assumption}[Adaptive weighting with $m=1$]\label{assu:adaptive_weighting_m_1}
	Suppose adpative weighting ($m=1$) is used and clipping rate $l_N=0$ as in Assumption \ref{assu:sampling_design}.
\end{assumption}

\begin{theorem}[Adaptive weighting with $m=1$]\label{thm:weak_convergence_m_1}
	Suppose Assumption \ref{assu:moment_condition}-\ref{assu:sampling_design} and Assumption \ref{assu:adaptive_weighting_m_1} hold. Then, for any $s\in \{0,1\}$, we have $\WIPW(s)-\E[Y_{uN}(s)]=o_p(1)$. Furthermore, define 
	\begin{align*}
		M^{(t)}(s)\equiv q_t\left(\frac{H^{(t)}(s)}{\sum_{t=1}^2 q_t H^{(t)}(s)}\right)^2\quad\text{and}\quad\bar w^{(t)}(s)=\left(M^{(t)}(s)/(R^{(t)}(s))^2\right)^{1/2}.
	\end{align*}
	Then considering the test statistic~\eqref{eq:WIPW_m_1}, we have
	\begin{align*}
		\sqrt{N}(\WIPW(s)-\E[Y_{uN}(s)])
		&
		\convd \sum_{t=1}^2 A^{(t)}(0) \bar w^{(t)}(0)\\
		\sqrt{N}\{T_N-(\E[Y_{uN}(0)]-\E[Y_{uN}(1)])\}
		&
		\convd\sum_{t=1}^2 A^{(t)}(0) \bar w^{(t)}(0) -\sum_{t=1}^2 A^{(t)}(1) \bar w^{(t)}(1).
	\end{align*}
	Defining $w^{(t)}(s)=\bar w^{(t)}(s)/(\sum_{s=0}^1\sum_{t=1}^2 (\bar w^{(t)}(s))^2)^{1/2}$, then we have 
	\begin{align*}
		W_N-\frac{\sqrt{N}(\E[Y_{uN}(0)]-\E[Y_{uN}(1)])}{(N\hat{V}_N(0)+N\hat{V}_N(1))^{1/2}}\convd\sum_{t=1}^2 A^{(t)}(0) w^{(t)}(0) -\sum_{t=1}^2 A^{(t)}(1) w^{(t)}(1).		
	\end{align*}
\end{theorem}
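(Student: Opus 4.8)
The plan is to run the roadmap of the proof of Theorem~\ref{thm:weak_convergence_W_N}, exploiting that for $m=1$ the inverse-probability weight cancels: $h_N^{(t)}(s)\hat\Lambda_{uN}^{(t)}(s)=N^{-1/2}\indicator(A_{uN}^{(t)}=s)Y_{uN}^{(t)}$, which is exactly what makes $l_N=0$ admissible. Set $\xi_{uN}^{(t)}(s)\equiv \indicator(A_{uN}^{(t)}=s)Y_{uN}^{(t)}-\E[Y_{uN}(s)]\,\bar e_N(s,\mathcal H_{t-1})$ and $\zeta_N^{(t)}(s)\equiv N_t^{-1/2}\sum_{u=1}^{N_t}\xi_{uN}^{(t)}(s)$; from \eqref{eq:WIPW_m_1} one gets
\[
\sqrt N\big(\WIPW(s)-\E[Y_{uN}(s)]\big)=\frac{\sum_{t=1}^2 q_t^{1/2}\zeta_N^{(t)}(s)}{\sum_{t=1}^2 q_t H_N^{(t)}(s)},
\]
with $H_N^{(1)}(s)=e(s)$ and $H_N^{(2)}(s)=\bar e_N(s,\mathcal H_1)$. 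Since $e(s)>c_l>0$, the denominator is $\ge q_1 c_l>0$ surely, so no normalization problem arises even when $H_N^{(2)}(s)=0$. Given $\mathcal H_{t-1}$, $\xi_{uN}^{(t)}(s)$ has conditional mean $0$, conditional variance $H_N^{(t)}(s)V_N^{(t)}(s)$, and — the crucial point — conditional third and fourth moments of order $H_N^{(t)}(s)\le 1$ by Assumption~\ref{assu:moment_condition}. Consistency (part~1) then follows from $\E[(\sum_t\sum_u\xi_{uN}^{(t)}(s))^2]\lesssim N$ against a denominator $\gtrsim N$, giving an $L^2$-bound $o(1)$ for the ratio.

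First I would prove joint weak convergence of $\big(\zeta_N^{(1)},H_N^{(1)},V_N^{(1)};\zeta_N^{(2)},H_N^{(2)},V_N^{(2)}\big)$ following the two-step decomposition of Appendix~\ref{sec:proof_step_2} ($F_1$: replace the stage-2 weight/variance functions by their limits via Lemmas~\ref{lem:continuity_sqrt_sampling_function} and \ref{lem:as_convergence_sampling_function}; $F_2$: conditional CLT on the stage-2 increments, then Skorohod plus the conditional CMT gluing), but with the \emph{un-normalized} increments $\zeta_N^{(2)}(s)$ in place of the standardized $\Lambda_N^{(2)}(s)$ from the main proof. The stage-1 part is an ordinary CLT giving $\zeta_N^{(1)}(s)\Rightarrow (H^{(1)}(s)V^{(1)}(s))^{1/2}A^{(1)}(s)$ jointly over $s$; the stage-2 part gives, conditionally on $\mathcal H_1$, that $(\zeta_N^{(2)}(0),\zeta_N^{(2)}(1))$ converges to a centered Gaussian with covariance of diagonal $H^{(2)}(s)V^{(2)}(s)$ and off-diagonal $-H^{(2)}(0)H^{(2)}(1)\lim\E[Y_{uN}(0)]\E[Y_{uN}(1)]$, i.e.\ to $\big((H^{(2)}(s)V^{(2)}(s))^{1/2}A^{(2)}(s)\big)_{s}$. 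Applying the continuous map $\mapsto\sum_t q_t^{1/2}\zeta^{(t)}(s)/\sum_t q_t H^{(t)}(s)$ (continuous since the denominator is $\ge q_1 c_l$) and simplifying the coefficients yields $\sqrt N(\WIPW(s)-\E[Y_{uN}(s)])\Rightarrow\sum_t \bar w^{(t)}(s)A^{(t)}(s)$ with $\bar w^{(t)}(s)=(q_t H^{(t)}(s)V^{(t)}(s))^{1/2}/\sum_{t'}q_{t'}H^{(t')}(s)$, which equals the stated $(M^{(t)}(s)/(R^{(t)}(s))^2)^{1/2}$. The $T_N$ claim is then immediate from joint convergence and the continuous mapping theorem; for $W_N$ I would show $N\hat V_N(0)+N\hat V_N(1)\convp\sum_s\sum_t(\bar w^{(t)}(s))^2$ by an $R_V$/$S_V$-type argument as in Step~1 of Appendix~\ref{sec:proof_roadmap}, again carried out on the un-normalized summands $\indicator(A_{uN}^{(t)}=s)Y_{uN}^{(t)}-H_N^{(t)}(s)\E[Y_{uN}(s)]$ (conditional fourth moments $O(1)$, so Lemma~\ref{lem:wlln_conditional} applies without any lower bound on $H_N^{(t)}(s)$, and $\WIPW(s)$ can be swapped for $\E[Y_{uN}(s)]$ by part~1), and finish with Slutsky's theorem.

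The main obstacle is the stage-2 conditional CLT when the conditional covariance degenerates: under Assumption~\ref{assu:adaptive_weighting_m_1} the clipping is absent, so $H_N^{(2)}(s)=\bar e_N(s,\mathcal H_1)$ may equal $0$ or $1$ on positive-probability events, whence $\zeta_N^{(2)}(s)\equiv 0$ and the limiting covariance is singular. One therefore cannot standardize the stage-2 increments and invoke Lemma~\ref{lem:CLT_BL} verbatim as in the main proof, and Lemma~\ref{lem:upper_bound_cov} no longer applies (it requires $H_N^{(2)}(0)\in(0,1)$ strictly). I would resolve this with a degeneracy-robust conditional CLT: apply the Chatterjee bound (Lemma~\ref{lem:CLT_BL_distance}) and the Gaussian-smoothing bound (Lemma~\ref{lem:smoothing}) \emph{directly to the un-normalized increments} $(\xi_{uN}^{(2)}(0),\xi_{uN}^{(2)}(1))$, comparing against a Gaussian with their matching (possibly degenerate) conditional covariance $\bm\Sigma_N^{(2)}$ rather than $N(\bm 0,\bm I_2)$ — legitimate because the Stein/Lindeberg swapping bound depends only on the conditional third and fourth moments, which are $O(H_N^{(2)}(s))=O(1)$ here, so the error term $N_2^{-1/2}\E[\|\cdot\|_2^3\mid\mathcal H_1]$ is $o_p(1)$ without any normalization — while the convergence of $\bm\Sigma_N^{(2)}$ to its limit is handled exactly as the terms $A_N,B_N,C_N$ in Appendix~\ref{sec:proof_step_2}. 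Equivalently, one may split on $\{\min_s H_N^{(2)}(s)\ge\epsilon\}$, using Lemma~\ref{lem:CLT_BL} on that event and a conditional Chebyshev bound on its complement, then let $\epsilon\downarrow0$; in either route the $m=1$ cancellation is precisely what keeps the argument from collapsing once $l_N=0$.
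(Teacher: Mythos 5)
Your proposal is correct and takes essentially the same route as the paper: the paper's own proof of this theorem is only a remark instructing one to re-run the roadmap of Theorem~1 with the joint vector rebuilt from the un-normalized increments $\indicator(A_{uN}^{(t)}=s)Y_{uN}^{(t)}-\bar e_N(s,\mathcal{H}_{t-1})\E[Y_{uN}(s)]$ together with $H_N^{(t)}=\bar e_N(s,\mathcal{H}_{t-1})$, which is exactly your $\xi_{uN}^{(t)}(s)$-based decomposition, and your weight algebra correctly recovers $\bar w^{(t)}(s)=(q_tH^{(t)}(s)V^{(t)}(s))^{1/2}/\sum_{t'}q_{t'}H^{(t')}(s)$. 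Your treatment of the possibly singular conditional covariance when $l_N=0$ (a Lindeberg/Stein swap against the matching, possibly degenerate, Gaussian covariance, or the $\epsilon$-truncation) supplies a detail the paper's remark leaves implicit, and is a sound way to carry out the step where the whitening used in the main proof breaks down.
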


\noindent Theorem \ref{thm:weak_convergence_m_1} demonstrates that $m=1$ weighting can accommodate the early-dropping experiments. We now comment on the proof of Theorem \ref{thm:weak_convergence_m_1}. 

\begin{remark}[Comment on the proof of Theorem~\ref{thm:weak_convergence_m_1}]
	The proof of Theorem~\ref{thm:weak_convergence_m_1} is similar to the proof of Theorem~\ref{thm:weak_convergence_W_N}, following the general proof roadmap sketched in Appendix~\ref{sec:proof_roadmap}. The difference happens in the \textbf{Step 2}, where we need to show the weak convergence of the random vector $(E_N^{(1)},E_{N}^{(2)})$, where
	\begin{align*}
		E_N^{(t)}=\Big(\frac{1}{N_t^{1/2}}\sum_{u=1}^{N_t} \indicator(A_{uN}^{(t)}=s)(Y_{uN}^{(t)}-H_N^{(t)}\E[Y_{uN}^{(t)}]),H_N^{(t)}\Big)\quad\text{and}\quad H_N^{(t)}=e_N(s,\mathcal{H}_{t-1}).
	\end{align*}
	The other proofs are similar.
\end{remark}

\subsection{Extension to test statistics with augmentation}\label{sec:extension_augmentation}

When data is generated independently as in non-adaptive experiments, efficiency of IPW estimator may be improved by augmenting the statistic with a consistent sample mean estimator. In particular, we consider the weighted augmented inverse probability weighted ($\WAIPW$) estimator, $\WAIPW(s)$, defined as
\begin{align}\label{eq:WAIPW}
	\sum_{t=1}^2 \frac{N_t h_{N}^{(t)}(s)}{\sum_{t=1}^2 N_t h_N^{(t)}(s)}\left(\frac{1}{N_t}\sum_{u=1}^{N_t}\frac{\indicator(A_{uN}^{(t)}=s)(Y_{uN}^{(t)}-\hat \E[Y_{uN}(s)])}{e_N(s,\mathcal{H}_{t-1})}+\hat \E[Y_{uN}(s)]\right).
\end{align} 
Furthermore, define $\WIPW_a(s)$ as 
\begin{align*}
	\sum_{t=1}^2 \frac{N_t h_{N}^{(t)}(s)}{\sum_{t=1}^2 N_t h_N^{(t)}(s)}\left(\frac{1}{N_t}\sum_{u=1}^{N_t}\frac{\indicator(A_{uN}^{(t)}=s)(Y_{uN}^{(t)}- \E[Y_{uN}(s)])}{e_N(s,\mathcal{H}_{t-1})}\right)+\E[Y_{uN}(s)].
\end{align*}
It is not hard to show that $\sqrt{N}\left(\WAIPW(s)-\WIPW_a(s)\right)$ converges to $0$ in probability as long as $\hat \E[Y_{uN}(s)]-\E[Y_{uN}(s)]\convp 0$. Then we can apply Theorem~\ref{thm:weak_convergence_W_N} and Theorem~\ref{thm:weak_convergence_m_1} with $Y_{uN}=Y_{uN}-\E[Y_{uN}(s)]$ to prove that $\sqrt{N}(\WIPW_a(s)-\E[Y_{uN}(s)])$ converges weakly. Similarly, we can show the weak limits of $T_N$ and $W_N$.

\paragraph{Asymptotic equivalence between $\WAIPW(s)$ and sample mean.}

We can show that when $m=1$, the $\WAIPW(s)$ is asymptotically equivalent to the sample mean estimator, $\mathrm{SM}(s)$, defined as
\begin{align*}
	\mathrm{SM}(s)\equiv\frac{\sum_{t=1}^2\sum_{u=1}^{N_t}\indicator(A_{uN}^{(t)}=s)Y_{uN}^{(t)}}{\sum_{t=1}^2\sum_{u=1}^{N_t}\indicator(A_{uN}^{(t)}=s)}.
\end{align*}
By convention, we define $0/0=1$. We formalize the equivalence claim in the following lemma.

\begin{lemma}[Asymptotic equivalence between sample mean and $\WAIPW(s)$]\label{lem:sample_mean_test_statistic}
	Suppose $m=1$ and $\hat\E[Y_{uN}(s)]-\E[Y_{uN}(s)]\convp 0$. Furthermore, suppose Assumption~\ref{assu:moment_condition} holds. Then we have $\sqrt{N}\left(\mathrm{SM}(s)-\WAIPW(s)\right)\convp 0$.
\end{lemma}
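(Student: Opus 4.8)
\textbf{Proof proposal for Lemma~\ref{lem:sample_mean_test_statistic}.}

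The plan is to show that both $\mathrm{SM}(s)$ and $\WAIPW(s)$ (with $m=1$) admit the same asymptotic linearization, so that their $\sqrt{N}$-scaled difference vanishes in probability. First I would rewrite both estimators in terms of the pooled counts and sums. Writing $N_s^{(t)} \equiv \sum_{u=1}^{N_t}\indicator(A_{uN}^{(t)}=s)$ and $\bar{e}_N^{(t)}(s) \equiv \bar e_N(s,\mathcal{H}_{t-1})$, note that with $m=1$ the weights $h_N^{(t)}(s) = \bar e_N(s,\mathcal{H}_{t-1})/N^{1/2}$ make the normalizing denominator $\sum_{t=1}^2 N_t h_N^{(t)}(s) = N^{-1/2}\sum_{t=1}^2 N_t \bar e_N^{(t)}(s)$, and the IPW piece inside $\WAIPW(s)$ collapses so that
\begin{align*}
	\WAIPW(s) = \frac{\sum_{t=1}^2 \sum_{u=1}^{N_t}\indicator(A_{uN}^{(t)}=s)(Y_{uN}^{(t)} - \hat{\E}[Y_{uN}(s)])}{\sum_{t=1}^2 N_t \bar e_N^{(t)}(s)} + \hat{\E}[Y_{uN}(s)],
\end{align*}
using that the $\bar e_N^{(t)}(s)$ in the numerator of each IPW term cancels against the weight. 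Meanwhile $\mathrm{SM}(s)$ has the same numerator-sum structure but with $\sum_{t=1}^2 N_s^{(t)}$ in the denominator rather than $\sum_{t=1}^2 N_t \bar e_N^{(t)}(s)$.

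Next I would control the discrepancy between the two denominators. The key observation is that $N_s^{(t)} - N_t \bar e_N^{(t)}(s) = \sum_{u=1}^{N_t}(\indicator(A_{uN}^{(t)}=s) - \bar e_N^{(t)}(s))$ is, conditionally on $\mathcal{H}_{t-1}$, a centered sum of i.i.d.\ bounded terms, so its conditional variance is $N_t \bar e_N^{(t)}(s)(1-\bar e_N^{(t)}(s)) \le N_t/4$; hence $N_s^{(t)} - N_t\bar e_N^{(t)}(s) = O_p(N^{1/2})$. Writing $\hat\Delta_s \equiv \sum_{t}(N_s^{(t)} - N_t\bar e_N^{(t)}(s))$, I then expand the difference $\mathrm{SM}(s) - \WAIPW(s)$ by putting both fractions over a common structure. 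Setting $\hat n_s \equiv \sum_t N_t\bar e_N^{(t)}(s)$ (the $\WAIPW$ denominator) and $\hat m_s \equiv \sum_t N_s^{(t)} = \hat n_s + \hat\Delta_s$ (the $\mathrm{SM}$ denominator), and letting $\hat\Sigma_s \equiv \sum_t\sum_u \indicator(A_{uN}^{(t)}=s) Y_{uN}^{(t)}$, one checks the algebraic identity that the numerator of $\mathrm{SM}(s)-\WAIPW(s)$, after clearing denominators, equals a term proportional to $\hat\Delta_s$ times the residual $(\mathrm{SM}(s) - \hat{\E}[Y_{uN}(s)])$ divided by $\hat n_s$; more precisely
\begin{align*}
	\mathrm{SM}(s) - \WAIPW(s) = -\frac{\hat\Delta_s}{\hat n_s}\big(\mathrm{SM}(s) - \hat{\E}[Y_{uN}(s)]\big).
\end{align*}
Multiplying by $\sqrt N$, it suffices to show $\sqrt{N}\cdot(\hat\Delta_s/\hat n_s)\cdot(\mathrm{SM}(s)-\hat{\E}[Y_{uN}(s)]) \convp 0$. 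We have $\hat\Delta_s/\hat n_s = O_p(N^{1/2})/\Theta_p(N) = O_p(N^{-1/2})$ since $\hat n_s \ge N\min\{e(s), l_N\}$ and (under $m=1$ with $l_N\to 0$) a small extra argument shows $\hat n_s$ is of exact order $N$ because the first-stage contribution alone gives $\hat n_s \ge N_1 e(s)$; meanwhile $\mathrm{SM}(s) - \hat{\E}[Y_{uN}(s)] = (\mathrm{SM}(s)-\E[Y_{uN}(s)]) + (\E[Y_{uN}(s)]-\hat{\E}[Y_{uN}(s)]) \convp 0$, the first term by a conditional WLLN argument analogous to Lemma~\ref{lem:consistency_WIPW} (or by the established consistency of the pooled sample mean, which coincides with the $m=1$ WIPW estimator) and the second by the assumed consistency of $\hat{\E}[Y_{uN}(s)]$. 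Combining, $\sqrt N(\mathrm{SM}(s)-\WAIPW(s)) = O_p(N^{1/2})\cdot O_p(N^{-1/2})\cdot o_p(1) = o_p(1)$.

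The main obstacle I anticipate is getting the exact algebraic identity relating $\mathrm{SM}(s)-\WAIPW(s)$ to $\hat\Delta_s$ clean and rigorous — in particular handling the $0/0 \equiv 1$ convention when $N_s^{(t)}=0$ (which can happen if $l_N\to 0$ and a batch downsamples an arm to near-zero), and verifying that $\hat n_s$ genuinely has order $N$ rather than something smaller; this is where Assumption~\ref{assu:moment_condition}'s uniform bounds on moments and the lower bound $e(s) \in (c_l, c_u)$ on the pilot-stage assignment probabilities enter decisively, since the first-stage term $N_1 e(s)$ alone keeps both denominators of order $N$. Once that bookkeeping is in place, the probabilistic content is entirely routine: a conditional Chebyshev bound for $\hat\Delta_s$ and consistency of the pooled mean, both of which follow from tools already developed (Lemma~\ref{lem:wlln_conditional}, Lemma~\ref{lem:consistency_WIPW}).
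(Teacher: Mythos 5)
Your proposal is correct and follows essentially the same route as the paper: your identity $\mathrm{SM}(s)-\WAIPW(s) = -(\hat\Delta_s/\hat n_s)\bigl(\mathrm{SM}(s)-\hat\E[Y_{uN}(s)]\bigr)$ is exactly the paper's factorization $A_N(s)\times B_N(s)$ with $B_N(s)=1-\mathcal{I}_N(s)/\hat n_s$, and you establish the same two rates ($B_N = O_p(N^{-1/2})$ by a conditional Chebyshev bound, $A_N = o_p(1)$ via consistency of the pooled mean plus the assumed consistency of $\hat\E[Y_{uN}(s)]$). The only caveat is the degenerate-denominator bookkeeping you flag at the end: the paper resolves it exactly as you anticipate, by conditioning on the first-stage count $\mathcal{I}_N^{(1)}(s)\sim\mathrm{Bin}(N_1,e(s))$ and invoking an inverse-binomial-moment bound, so your sketch is complete in substance (note only that $\mathrm{SM}(s)$ does not literally coincide with the $m=1$ WIPW estimator — the denominators differ by $\hat\Delta_s$ — though they are asymptotically equivalent, which is all your argument needs).
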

\noindent The proof of Lemma~\ref{lem:sample_mean_test_statistic} can be found in Appendix~\ref{sec:proof_sample_mean_equivalence}.

\subsection{Extension to selection with nuisance parameter}\label{sec:extension_nuisance}

In practice, the selection algorithm can depend on statistic beyond $S_{N}^{(1)}(0)-S_{N}^{(1)}(1)$. The example includes the case when the selection depends on the interim $p$-value, which involves the standard deviation estimate, going beyond the difference of the two statistics. The following theorem shows that our results can be extended to such case.

\begin{theorem}[Extension to selection algorithm with nuisance parameter]\label{thm:weak_convergence_W_N_nuisance}
	Suppose the follow-up stage sampling probability is given by
	\begin{align}\label{eq:adaptive_sampling_nuisance}
		\mathcal{S}_N(S_N^{(1)}(0)-S_N^{(1)}(1))=\min\{1-l_N,\max\{l_N, e(0, (S_N^{(1)}(0)-S_N^{(1)}(1))/\hat\sigma)\}\},
	\end{align}
	where $\hat\sigma\in(0,\infty)$. Then if there exists $\sigma>0$ such that $\hat\sigma\convp \sigma\in(0,\infty)$, then the conclusion of Theorem~\ref{thm:weak_convergence_W_N} still holds.
\end{theorem}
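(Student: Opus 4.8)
\textbf{Proof proposal for Theorem~\ref{thm:weak_convergence_W_N_nuisance}.}

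The plan is to reduce the nuisance-parameter case to the original setup of Theorem~\ref{thm:weak_convergence_W_N} by absorbing $\hat\sigma$ into the sampling function and verifying that the resulting (random, $N$-dependent) sampling function still fits within Assumption~\ref{assu:sampling_design}, at least asymptotically. Concretely, I would define $\tilde e_N(s,x) \equiv \bar e(s, x/\hat\sigma)$ and $\tilde e(s,x) \equiv \bar e(s,x/\sigma)$, and observe that the follow-up sampling probability \eqref{eq:adaptive_sampling_nuisance} is exactly the clipped version of $\tilde e_N(0, S_N^{(1)}(0)-S_N^{(1)}(1))$. The key point is that dividing the argument by a fixed positive constant $\sigma$ preserves both the Lipschitz condition (with Lipschitz constant rescaled to $L/\sigma$) and the step-function condition (replacing the continuous function $g$ by $x \mapsto g(x/\sigma)$, which still sends $-\infty$ to $-\infty$, and the breakpoint $m_1$ by $\sigma m_1$). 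Hence $\tilde e(s,\cdot)$ satisfies Assumption~\ref{assu:sampling_design} whenever $\bar e(s,\cdot)$ does, and Theorem~\ref{thm:weak_convergence_W_N} applies verbatim to the experiment whose sampling function is $\tilde e(s,\cdot)$, yielding the stated limits with $H^{(2)}(s)$ now built from $\tilde e(s, S((A^{(1)},V^{(1)}),c)) = \bar e(s, S((A^{(1)},V^{(1)}),c)/\sigma)$.

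The remaining work is to show that replacing $\sigma$ by the consistent estimator $\hat\sigma$ does not change the weak limit. I would follow the proof roadmap in Appendix~\ref{sec:proof_roadmap} and locate precisely where the sampling function enters: it appears only through the quantities $H_N^{(2)}(x) = H_N(s,x)$, $V_N^{(2)}(x)$, and $\bm\Sigma_N^{(2)}(x)$ in the definitions of $E_N^{(2)}$ and $E_N^{\mathbf{a}}(x)$. With the nuisance parameter, these become $\hat H_N(s,x) \equiv \min\{1-l_N, \max\{l_N, \bar e(s, h_N(x)/\hat\sigma)\}\}$ and so on. The argument for Step~2 then goes through two key lemmas that must be re-proven in this setting: first, an analogue of Lemma~\ref{lem:as_convergence_sampling_function}, stating that if $M_N \to W_1$ almost surely and $\hat\sigma \to \sigma$ in probability (hence, along a subsequence, almost surely, by Lemma~\ref{lem:sub_subseq}), then $\bar e(s, h_N(M_N)/\hat\sigma) \to \bar e(s, h(W_1,c)/\sigma)$; second, the Lipschitz bounds of Lemma~\ref{lem:continuity_sqrt_sampling_function}, which transfer immediately since the clipping and square-root operations are unaffected by the $\hat\sigma$-rescaling inside $\bar e$. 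For the Lipschitz branch of Assumption~\ref{assu:sampling_design} the convergence of $\bar e(s, h_N(M_N)/\hat\sigma)$ is routine from joint continuity and $|h_N(M_N)/\hat\sigma - h(W_1,c)/\sigma| \to 0$; for the step-function branch one uses that $h(W_1,c)/\sigma$ is a continuous random variable when $c \in (-\infty,0]$, so $\P[g(h(W_1,c)/\sigma) \in \partial C_k] = 0$, and that $g(-\infty/\sigma)=g(-\infty)=-\infty \in C_1$ when $c=-\infty$, exactly as in the original proof. Step~1 (convergence of $R_V^{(t)}(s)$ and $S_V^{(t)}(s)$) and Step~3 (the Slutsky/continuous-mapping assembly) are entirely unchanged, since neither touches the functional form of the sampling rule beyond the already-established boundedness in Lemma~\ref{lem:uniform_lower_upper_bound_variance}, which holds uniformly over $x$ regardless of the $\hat\sigma$-rescaling.

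The main obstacle I anticipate is handling the step-function (non-smooth) case of Assumption~\ref{assu:sampling_design} when both $\hat\sigma$ and the first-stage data fluctuate: one needs $g(h_N(M_N)/\hat\sigma)$ to avoid the boundary sets $\partial C_k$ in the limit, and with the estimated $\hat\sigma$ one cannot simply invoke continuity of a fixed map. The clean way around this is to pass to a common probability space via Skorohod's representation theorem (Lemma~\ref{lem:skorohod}) for the joint vector $(E_N^{(1)}, \hat\sigma)$ --- noting that $\hat\sigma \convp \sigma$ implies joint weak convergence of $(E_N^{(1)},\hat\sigma)$ to $(W_1,\sigma)$ --- and then argue almost-sure convergence of the indicators $\indicator(g(h_N(\tilde M_N)/\tilde\sigma_N) \in C_k)$ using that $g(h(W_1,c)/\sigma)$ has no atom on $\partial C_k$; this is the natural generalization of the Lemma~\ref{lem:as_convergence_sampling_function} argument. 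One should also double-check that the bootstrap Theorem~\ref{thm:bootstrap} extends correspondingly, since the plug-in estimator $\hat{\mathrm{Cov}}^{(2)}(\cdot)$ would then need to incorporate a consistent estimate of $\sigma$; but the consistency of that estimate plus the same continuous-mapping machinery used for $M_2$ in the proof of Theorem~\ref{thm:bootstrap} makes this a routine, if tedious, addition rather than a genuine difficulty.
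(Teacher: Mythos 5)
Your proposal is correct and follows essentially the same route as the paper, which itself only sketches the argument: rerun the roadmap of Appendix~\ref{sec:proof_roadmap} with $E_N^{(2)}$ redefined so that $H_N^{(2)}$ incorporates $\hat\sigma$, and use $\hat\sigma\convp\sigma$ to show the limiting sampling probabilities are driven by $\bar e(s,\cdot/\sigma)$, which still satisfies Assumption~\ref{assu:sampling_design}. Your additional care with the step-function case (joint Skorohod representation of $(E_N^{(1)},\hat\sigma)$ and the no-atom condition on $\partial C_k$) is exactly the right generalization of Lemma~\ref{lem:as_convergence_sampling_function} and fills in a detail the paper leaves implicit.
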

\paragraph{Proof sketch for Theorem~\ref{thm:weak_convergence_W_N_nuisance}.}
The proof is similar to the proof of Theorem~\ref{thm:weak_convergence_W_N}. In particular, it follows the general proof roadmap sketched in Appendix~\ref{sec:proof_roadmap}. We need to choose the appropriate joint vector $E_N^{(t)}$ to accommodate the nuisance parameter $\hat\sigma$. In particular, keeping $E_N^{(1)}$ as defined in~\eqref{eq:E_N_t_def}, we can define the new $E_N^{(2)}$ as
\begin{align*}
	E_N^{(2)}=\left((\bm \Sigma_N^{(2)})^{-1/2}\Lambda_{N}^{(2)}, V_N^{(2)},H_N^{(2)},\mathrm{Vec}((\bm \Sigma_N^{(2)})^{1/2})\right)
\end{align*}
where $H_N^{(2)}$ is defined as in~\eqref{eq:adaptive_sampling_nuisance}. The other proofs are similar.

\subsection{Extension to adaptive experiments with stopping time}\label{sec:extension_stopping_time}

We outline how our main results can be extended to adaptive experiments involving a stopping time. Specifically, we consider a stopping time $\tau$ that depends on the quantity $S_N^{(1)}(0) - S_N^{(1)}(1)$. To describe the stopping criterion, we define the event $\mathcal{E} \equiv \{ D(S_N^{(1)}(0) - S_N^{(1)}(1)) \in [0,\beta] \subset \mathbb{R} \}$, where $D$ is a decision rule and $\beta >0$ lies on the decision boundary. The stopping time is then defined as $\tau \equiv \indicator(\mathcal{E})$, where $\tau = 1$ indicates continuation of the experiment and $\tau = 0$ indicates early termination. This setup captures scenarios in which strong preliminary evidence warrants stopping the experiment at the pilot stage. For example, if $D(x) = x$, then $\tau = 0$ when the evidence in favor of treatment 0 over treatment 1 is sufficiently strong, if $\beta$ is large. Then the test statistic can be written as 
\small
\begin{align}\label{eq:stopping_time_test_statistic}
	\WIPW(s)=\frac{N_1 h_{N}^{(1)}(s)}{N_1 h_{N}^{(1)}(s)+N_2 h_{N}^{(2)}(s)\indicator(\mathcal{E})} \hat{\Lambda}_{N}^{(1)}(s)+\frac{N_2 h_{N}^{(2)}(s)\indicator(\mathcal{E})}{N_1 h_{N}^{(1)}(s)+N_2 h_{N}^{(2)}(s)\indicator(\mathcal{E})} \hat{\Lambda}_{N}^{(2)}(s).
\end{align}
\normalsize

\paragraph{Sketched derivation of weak limit.}

Weak limit $\WIPW(s)$ defined in~\eqref{eq:stopping_time_test_statistic} can be derived following the similar proof roadmap sketched in Appendix~\ref{sec:proof_roadmap}. In particular, we will need to derive the joint weak convergence of the random vector including $\hat \Lambda_N^{(1)}(s),\hat \Lambda_N^{(2)}(s),h_{N}^{(1)}(s), h_{N}^{(2)}(s)\indicator(\mathcal{E})$.

\subsection{Proof of Lemma~\ref{lem:sample_mean_test_statistic}}\label{sec:proof_sample_mean_equivalence}

\begin{proof}[Proof of Lemma~\ref{lem:sample_mean_test_statistic}]
	When $m=1$, we can write the $\WAIPW(s)$ as
	\begin{align*}
		\WAIPW(s)=\frac{\sum_{t=1}^2\sum_{u=1}^{N_t}\indicator(A_{uN}^{(t)}=s)(Y_{uN}^{(t)}-\hat\E[Y_{uN}(s)])}{\sum_{t=1}^2 N_te_N(s,\mathcal{H}_{t-1})}+\hat\E[Y_{uN}(s)].
	\end{align*}
	For the ease of notation, define 
	\begin{align*}
		\mathcal{I}_{N}(s)\equiv \sum_{t=1}^2\sum_{u=1}^{N_t}\indicator(A_{uN}^{(t)}=s)\quad\text{and}\quad \mathcal{Y}_N(s)\equiv \sum_{t=1}^2\sum_{u=1}^{N_t}\indicator(A_{uN}^{(t)}=s)Y_{uN}^{(t)}.
	\end{align*}
	Also, define 
	\begin{align*}
		\mathcal{R}_N(s)=\mathcal{Y}_N(s)-\sum_{t=1}^2\sum_{u=1}^{N_t}\indicator(A_{uN}^{(t)}=s)\E[Y_{uN}^{(t)}].
	\end{align*}
	Then we can write the difference as
	\begin{align*}
		\mathrm{SM}(s)-\WAIPW(s)
		&
		=\left(\frac{\mathcal{Y}_N(s)}{\mathcal{I}_{N}(s)}-\hat\E[Y_{uN}(s)]\right)\left(1-\frac{\mathcal{I}_{N}(s)}{\sum_{t=1}^2 N_te_N(s,\mathcal{H}_{t-1})}\right)\\
		&
		\equiv A_N(s)\times B_N(s).
	\end{align*}
	It suffices to prove $A_N(s)=o_{p}(1)$ and $B_N(s)=O_{p}(1/\sqrt{N})$. 
	\paragraph{Proof of $A_N(s)=o_{p}(1)$.} By the definition of $\mathcal{I}_{N}(s)$, we can write
	\begin{align*}
		A_N(s)=\frac{\mathcal{R}_N(s)}{\mathcal{I}_N(s)}+\E[Y_{uN}(s)]-\hat\E[Y_{uN}(s)]\equiv A_{1N}(s)+\E[Y_{uN}(s)]-\hat\E[Y_{uN}(s)].
	\end{align*}
	Since by the assumption, $\E[Y_{uN}(s)]-\hat\E[Y_{uN}(s)]=o_p(1)$, it suffices to prove that $\mathrm{Var}[A_{1N}(s)]=o(1)$. In fact, we can show something stronger: $\mathrm{Var}[\sqrt{N}A_{1N}(s)]=O(1)$. Since the denominator $\mathcal{I}_N=\sum_{t=1}^2\sum_{u=1}^{N_t}\indicator(A_{uN}^{(t)}=s)$ is likely to be $0$, we divide the proof into three cases. Define $\mathcal{I}^{(t)}_N(s)\equiv \sum_{u=1}^{N_t}\indicator(A_{uN}^{(t)}=s)$.
	\begin{enumerate}
		\item \textbf{When $\mathcal{I}_N(s)=0$.} In this case, we have $A_{1N}(s)=1$. However, this is event is exponentially unlikely since $\mathcal{I}_N(s)\geq \mathcal{I}^{(1)}_N(s)=\sum_{u=1}^{N_1}\indicator(A_{uN}^{(1)}=s)$ and $\E[\mathcal{I}^{(1)}_N(s)]=N_1e(s)$. Therefore, $\mathrm{Var}[\sqrt{N}A_{1N}(s)\indicator(\mathcal{I}_{N}(s)=0)]\rightarrow0$.
		\item \textbf{When $\mathcal{I}_N(s)>0$ but $\mathcal{I}_{N}^{(1)}(s)=0$.} In this case, we have $|A_{1N}(s)|\leq |Y_{uN}^{(2)}(s)-\E[Y_{uN}(s)]|$. Then we have 
		\begin{align*}
			\mathrm{Var}[\sqrt{N}A_{1N}(s)\indicator(\mathcal{I}_{N}^{(1)}(s)=0)]
			&
			\leq \E[NA_{1N}^2(s)\indicator(\mathcal{I}_{N}^{(1)}(s)=0)]\\
			&
			= N\mathrm{Var}[Y_{uN}^{(2)}(s)]\P[\mathcal{I}_{N}^{(1)}(s)=0].
		\end{align*} 
		Since $\P[\mathcal{I}_{N}^{(1)}(s)=0]\rightarrow 0$ exponentially, we have $\mathrm{Var}[\sqrt{N}A_{1N}(s)\indicator(\mathcal{I}_{N}^{(1)}(s)=0)]\rightarrow 0$.
		\item \textbf{When $\mathcal{I}_{N}^{(1)}(s)>0$.} We compute 
		\small
		\begin{align*}
			\mathrm{Var}[A_{1N}(s)\indicator(\mathcal{I}_{N}^{(1)}(s)>0)]\leq \E\left[\frac{\left(\mathcal{R}_N(s)\right)^2}{(\mathcal{I}_N(s))^2}\indicator(\mathcal{I}_{N}^{(1)}(s)>0)\right].
		\end{align*}
		\normalsize
		Since $\mathcal{I}_N(s)\geq \mathcal{I}_N^{(1)}(s)= \sum_{u=1}^{N_1}\indicator(A_{uN}^{(1)}=s)$, we know 
		\begin{align*}
			\E\left[\frac{\left(\mathcal{R}_N(s)\right)^2}{(\mathcal{I}_N(s))^2}\indicator(\mathcal{I}_{N}^{(1)}(s)>0)\right]
			&
			\leq \E\left[\frac{\left(\mathcal{R}_N(s)\right)^2}{(\mathcal{I}_N^{(1)}(s))^2}\indicator(\mathcal{I}_{N}^{(1)}(s)>0)\right]\\
			&
			= \E\left[\frac{\E\left[\left(\mathcal{R}_N(s)\right)^2|\mathcal{H}_1\right]}{(\sum_{u=1}^{N_1}\indicator(A_{uN}^{(1)}=s))^2}\indicator(\mathcal{I}_{N}^{(1)}(s)>0)\right].
		\end{align*}
		Further, we can decompose 
		\small
		\begin{align*}
			\E\left[\left(\mathcal{R}_N(s)\right)^2|\mathcal{H}_1\right]
			&
			=\sum_{u=1}^{N_1}\indicator(A_{uN}^{(1)}=s)(Y_{uN}^{(1)}-\E[Y_{uN}(s)])^2 + N_2e_N(s,\mathcal{H}_1)\mathrm{Var}[Y_{uN}(s)]\\
			&
			\leq \sum_{u=1}^{N_1}(Y_{uN}^{(1)}-\E[Y_{uN}(s)])^2 + N_2e_N(s,\mathcal{H}_1)\mathrm{Var}[Y_{uN}(s)].
		\end{align*}
		\normalsize
		Then define
		\begin{align*}
			A_{2N}(s)\equiv \E\left[\frac{\sum_{u=1}^{N_1}(Y_{uN}^{(1)}-\E[Y_{uN}(s)])^2}{(\sum_{u=1}^{N_1}\indicator(A_{uN}^{(1)}=s))^2}\indicator(\mathcal{I}_{N}^{(1)}(s)>0)\right]
		\end{align*}
		and 
		\begin{align*}
			A_{3N}(s)\equiv \E\left[\frac{N_2\indicator(\mathcal{I}_{N}^{(1)}(s)>0)}{(\sum_{u=1}^{N_1}\indicator(A_{uN}^{(1)}=s))^2}\right]\mathrm{Var}[Y_{uN}(s)].
		\end{align*}
		It suffices to prove that $A_{2N}(s)=O(1/N)$ and $A_{3N}(s)=O(1/N)$. We first prove the claim for $A_{2N}(s)$. By Cauchy-Schwarz inequality, we have
		\begin{align*}
			(A_{2N}(s))^2
			&
			\leq \E\left[\left(\frac{\sum_{u=1}^{N_1}(Y_{uN}^{(1)}-\E[Y_{uN}(s)])^2}{N_1}\right)^2\right]\E\left[\left(\frac{N_1\indicator(\mathcal{I}_{N}^{(1)}(s)>0)}{(\sum_{u=1}^{N_1}\indicator(A_{uN}^{(1)}=s))^2}\right)^2\right]\\
			&
			\leq \E[(Y_{uN}^{(1)}-\E[Y_{uN}(s)])^4]\E\left[\left(\frac{N_1\indicator(\mathcal{I}_{N}^{(1)}(s)>0)}{(\sum_{u=1}^{N_1}\indicator(A_{uN}^{(1)}=s))^2}\right)^2\right]\\
			&
			\leq \E[(Y_{uN}^{(1)}-\E[Y_{uN}(s)])^4]\E\left[\left(\frac{4N_1}{(1+\sum_{u=1}^{N_1}\indicator(A_{uN}^{(1)}=s))^2}\right)^2\right].
		\end{align*}
		For the first term, by Assumption~\ref{assu:moment_condition}, we have $\E[(Y_{uN}^{(1)}-\E[Y_{uN}(s)])^4]=O(1)$. For the second term, we use the following lemma to conclude the proof. 
		\begin{lemma}[\citet{cribari2000note}]\label{lem:inverse_binomial_moment}
			Suppose $X_1,\ldots,X_N$ are i.i.d. Bernoulli random variables with $\E[X_i]=p$. Then we have 
			\begin{align*}
				\E\left[\frac{N^k}{(1+\sum_{i=1}^N X_i)^k}\right]=O\left(1/p^k\right).
			\end{align*}
		\end{lemma}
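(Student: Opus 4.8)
The plan is to set $S \equiv \sum_{i=1}^N X_i$, which is $\mathrm{Binomial}(N,p)$, and to reduce the claim to a bound on the negative moment $\E[(1+S)^{-k}]$; concretely it suffices to show $\E[(1+S)^{-k}] \leq k^k/(N^k p^k)$, which upon multiplying by $N^k$ gives the stated $O(1/p^k)$ (recall $k$ is a fixed integer, so $k^k$ is an absolute constant). The whole argument is elementary and splits into a pointwise inequality followed by an exact binomial computation.

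The first step is a pointwise comparison of $(1+S)^{-k}$ with an inverse rising factorial. Writing $(S+1)(S+2)\cdots(S+k) = (S+k)!/S!$, I would observe that for each $1 \leq j \leq k$ one has $S+j \leq S+k \leq k(1+S)$ (the last inequality because $k(1+S) - (S+k) = (k-1)S \geq 0$). Taking the product over the $k$ factors yields $(S+1)(S+2)\cdots(S+k) \leq k^k (1+S)^k$, hence
\begin{align*}
	\frac{1}{(1+S)^k} \leq k^k \cdot \frac{S!}{(S+k)!}.
\end{align*}
This converts the awkward power into a ratio of factorials that interacts cleanly with the binomial weights.

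The second step is to evaluate $\E[S!/(S+k)!]$ exactly. Expanding over the binomial pmf and using $\frac{j!}{(j+k)!}\binom{N}{j} = \frac{N!}{(j+k)!(N-j)!} = \frac{N!}{(N+k)!}\binom{N+k}{j+k}$, the substitution $m = j+k$ gives
\begin{align*}
	\E\!\left[\frac{S!}{(S+k)!}\right] = \frac{N!}{(N+k)!}\,p^{-k}\sum_{m=k}^{N+k}\binom{N+k}{m}p^{m}(1-p)^{N+k-m} \leq \frac{N!}{(N+k)!}\,p^{-k},
\end{align*}
where the inequality holds because the truncated sum is dominated by the full binomial sum, which equals $1$. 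Since $\frac{N!}{(N+k)!} = \frac{1}{(N+1)(N+2)\cdots(N+k)} \leq N^{-k}$, combining with the first step yields $\E[(1+S)^{-k}] \leq k^k N^{-k} p^{-k}$, and multiplying by $N^k$ completes the proof.

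There is no serious obstacle here; the only point requiring care is the index bookkeeping in the binomial identity $\frac{N!}{(j+k)!(N-j)!} = \frac{N!}{(N+k)!}\binom{N+k}{j+k}$ and the associated shift of summation range, which must be handled precisely so that the residual sum is correctly recognized as at most one. If one prefers to avoid the pointwise $k^k$ loss, an alternative is to bound $(1+S)^{-k}$ directly against $\E[(S+1)^{-k}]$ via a convexity or monotonicity argument, but the rising-factorial route above is the cleanest and gives an explicit constant.
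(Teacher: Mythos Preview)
Your proof is correct: the rising-factorial bound $(1+S)^{-k} \leq k^k\,S!/(S+k)!$ together with the binomial identity $\frac{j!}{(j+k)!}\binom{N}{j} = \frac{N!}{(N+k)!}\binom{N+k}{j+k}$ gives exactly the advertised $\E[N^k/(1+S)^k] \leq k^k/p^k$. The paper does not supply its own proof of this lemma; it is stated inside the proof of Lemma~\ref{lem:sample_mean_test_statistic} as a cited result from \citet{cribari2000note} and then applied with $k=4$. Your self-contained derivation therefore goes beyond what the paper does, and the explicit constant $k^k$ is a minor bonus over a bare asymptotic citation.
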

		Now we can apply Lemma~\ref{lem:inverse_binomial_moment} with $k=4$ so that we have $A_{2N}(s)=O(1/N_1)=O(1/N)$. Similarly, $A_{3N}(s)=O(1/N)$. Thus $\mathrm{Var}[\sqrt{N}A_{1N}(s)\indicator(\mathcal{I}_{N}^{(1)}(s)>0)]\rightarrow 0$.
	\end{enumerate}
	This concludes the proof of $A_N(s)=o_{p}(1)$. 
	\paragraph{Proof of $B_N(s)=O_{p}(1/\sqrt{N})$.} 
	We will prove that $\mathrm{Var}[\sqrt{N}B_N(s)]=O(1)$. The proof follows the similar argument as the proof of $\mathrm{Var}[\sqrt{N}A_{1N}(s)]=O(1)$. We omit it.
\end{proof}

\subsection{Power comparison: $m=1/2$ versus $m=1$}\label{sec:power-comparison}

We have observed from Figure \ref{fig:simulation-rejection-plot-thompson} that the adaptive weighting with $m=1/2$ is more powerful than the constant weighting ($m=0$). The natural question is then to ask which adaptive weighting scheme, $m=1$ or $m=1/2$, is better in terms of power. To investigate this question, we conduct a set of simulation with both unnormalized and normalized tests and the simulation invovles $4$ common distributions: Gaussian, Bernoulli, Poisson and Student distributions. We consider the Thompson sampling \eqref{eq:modified-cliped-TS} with $l_N=0.2$ and consider the sampel size $N=20,000$ and $N_1=N_2=10,000$. We set the significance level to be $0.05$. The distribution information can be summarized as below:
\begin{itemize}
	\item \textbf{Gaussian:} $Y_{u}(0)\sim N(\theta,1),Y_{u}(1)\sim N(0,0.25)$.
	\item \textbf{Bernoulli:} $Y_{u}(0)\sim\mathrm{Bern}(0.5+\theta),Y_{uN}(1)\sim\mathrm{Bern}(0.5)$.
	\item \textbf{Poisson:} $Y_{u}(0)\sim \mathrm{Pois}(1+\theta),Y_{u}(1)\sim \mathrm{Pois}(1)$.
	\item \textbf{Student:} $Y_{u}(0)\sim \theta+\mathrm{t}(4),Y_{u}(1)\sim \mathrm{t}(10)$ where $4$ and $10$ are degrees of freedom in the student distributions.
\end{itemize}
In particular, we choose $\theta\in\{0, 0.01, 0.02, 0.03,0.04\}$. The results are presented in Figure \ref{fig:power_comparison}.

\begin{figure}[!ht]
	\centering
	\includegraphics[width=0.99\textwidth]{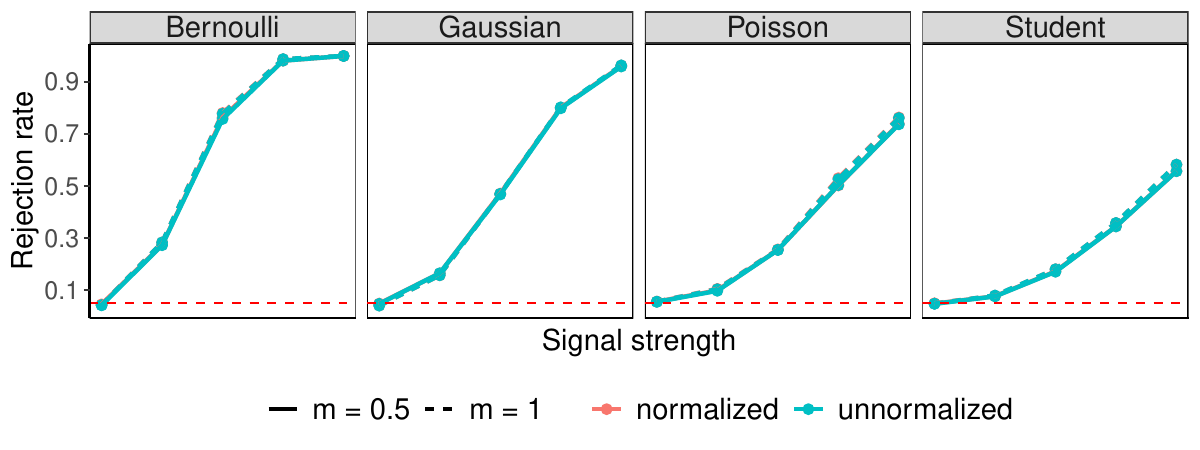}
	\caption{Rejection plots for adaptive weighting with $m=1$ and $m=1/2$ on the centered data $(\tilde Y_{uN}(0),\tilde Y_{uN}(1))$.}
	\label{fig:power_comparison}
\end{figure}

\paragraph{Implementation details.}

We center the outcome $Y_{uN}(s)$ by generating $\tilde Y_{uN}(s)=Y_{uN}(s)-\E[Y_{uN}(1)]$ and compare the test power for the average treatment effect $\E[\tilde Y_{uN}(0)]-\E[\tilde Y_{uN}(1)]$. The motivation for this operation is we do not want the test to be affected by the absolute signal strength. Asymptotically, such operation is equivalent to use $\WAIPW(s)$, defind as in~\eqref{eq:WAIPW}, when testing with the original data $(Y_{uN}(0),Y_{uN}(1))$. This is because the magnitude of $\theta$ is very small. Also, we want to point out that when $\E[\tilde Y_{uN}(s)]\sim 1/\sqrt{N}$ and $\E[\tilde Y_{uN}(1)]=0$ statistic $\WIPW(s)$ with $m=1$ is asymptotically equivalent to the sample mean, as proved in Lemma~\ref{lem:sample_mean_test_statistic}. In other words, the asymptotic power function for the sample mean test statistic is the same as the power function for the $m=1$ weighting when testing is performed on the transformed data $(\tilde Y_{uN}(0), \tilde Y_{uN}(1))$. 

\paragraph{Interpretation of the results.}

For all the other setups, it seems both $m=1$ and $m=1/2$ have very similar power performance. This means we would expect the sample mean test statistic should also have very comparable power performance with the weighting $m=1/2$. It is generally unclear if the test statistics considered in this paper are optimal or not under these distributions. As an exception, the sample mean test stiatistic has been shown to be near-optimal in some Gaussian setup, shown in Section 4.4 of \citet{Hirano2023}.

\end{document}